\newtheorem{theorem}{Theorem}[section]
\newtheorem{definition}{Definition}[section]
\newtheorem{remark}[theorem]{Remark}
\newtheorem{lemma}[theorem]{Lemma}
\providecommand{\1}{\mathbf 1}
\begin{document}

\title{Large deviations and averaging for systems of slow-fast stochastic reaction-diffusion equations}
\author{Wenqing Hu \thanks{Department of Mathematics and Statistics, Missouri University of Science and Technology
(formerly University of Missouri, Rolla). Email: \texttt{huwen@mst.edu}} \ , \
Michael Salins \thanks{Department of Mathematics and Statistics, Boston University. Email: \texttt{msalins@bu.edu}} \ , \
Konstantinos Spiliopoulos \thanks{Department of Mathematics and Statistics, Boston University. Email: \texttt{kspiliop@math.bu.edu}. K.S. was partially supported by NSF DMS 1550918} \
}

\date{\today}

\maketitle

\begin{abstract}
We study a large deviation principle for a system of stochastic reaction--diffusion equations (SRDEs)
with a separation of fast and slow components and small noise in the slow component. The derivation of the large deviation principle is based on the weak convergence method in infinite dimensions, which results in studying averaging for controlled SRDEs. By appropriate choice of the parameters, the fast process and the associated control that arises from the weak convergence method decouple from each other. We show that in this decoupling case one can use the weak convergence method to characterize the limiting process via a ``viable pair" that captures the limiting controlled dynamics and the effective invariant measure simultaneously. The characterization of the limit of the controlled slow-fast processes in terms of viable pair enables us to obtain a variational representation of the large deviation action functional. Due to the infinite-dimensional nature of our set--up, the proof of tightness as well as the analysis of the limit process and in particular the proof of the large deviations lower bound is considerably more delicate here than in the finite-dimensional situation. Smoothness properties of optimal controls in infinite dimensions (a necessary step for the large deviations lower bound) need to be established.  We emphasize that many issues that are present in the infinite dimensional case, are completely absent in finite dimensions.
\end{abstract}

\textit{Keywords}: large deviations, stochastic reaction--diffusion equations, weak convergence method, averaging principle, optimal control.

\textit{2010 Mathematics Subject Classification Numbers}: 60H15, 60F10, 35K57, 70K70.

\

\section{Introduction}

Let $\varepsilon>0$ and $\delta=\delta(\varepsilon)>0$. Let $D$ be a smooth bounded domain of $\mathbb{R}^d$, with $d\geq 1$.
In this paper, we study a system of stochastic reaction--diffusion equations with slow--fast dynamics
on the domain $D\subset \mathbb{R}^d$, $d\geq 1$, as follows:
\begin{equation}\label{Eq:FastSlowStochasticRDE}
\left\{\begin{array}{l}
\displaystyle{\dfrac{\partial X^{\varepsilon,\delta}}{\partial t}(t,x)=\mathcal{A}_1 X^{\varepsilon,\delta}(t,x)+b_1(x, X^{\varepsilon,\delta}(t,x), Y^{\varepsilon,\delta}(t,x))}\\
\displaystyle{\hspace{2cm}+\sqrt{\varepsilon}\sigma_1(x, X^{\varepsilon,\delta}(t,x), Y^{\varepsilon,\delta}(t,x))\dfrac{\partial W^{Q_1}}{\partial t}(t,x)} \ ,
\\
\displaystyle{\dfrac{\partial Y^{\varepsilon,\delta}}{\partial t}(t,x)=\dfrac{1}{\delta^{2}}\left[\mathcal{A}_2 Y^{\varepsilon,\delta}(t,x)+
b_2(x, X^{\varepsilon,\delta}(t,x), Y^{\varepsilon,\delta}(t,x))\right]}\ \\
\displaystyle{\hspace{2cm}+\dfrac{1}{\delta}\sigma_2(x, X^{\varepsilon,\delta}(t,x), Y^{\varepsilon,\delta}(t,x))\dfrac{\partial W^{Q_2}}{\partial t}(t,x)} \ ,
\\
X^{\varepsilon,\delta}(0,x)=X_0(x) \ , \ Y^{\varepsilon,\delta}(0,x)=Y_0(x) \ , \ x\in D \ ,
\\
\mathcal{N}_1 X^{\varepsilon,\delta}(t,x)=\mathcal{N}_2 Y^{\varepsilon,\delta}(t,x)=0 \ , \ t\geq 0 \ , \ x\in \partial D \ .
\end{array}\right.
\end{equation}

Here $\varepsilon>0$ is a small parameter and $\delta=\delta(\varepsilon)>0$ is such that $\delta \rightarrow 0$ as $\varepsilon\downarrow 0$.
The operators $\mathcal{A}_1$ and $\mathcal{A}_2$ are two strictly elliptic operators and $b_i(x,X,Y)$, $i=1,2$
are the nonlinear terms.
The noise processes $W^{Q_1}$ and $W^{Q_2}$ are two cylindrical Wiener processes with covariance matrices $Q_1^2$
and $Q_2^2$, and $\sigma_i(x,X,Y)$, $i=1,2$ are functions multiplied by the noises. The initial values $X_0$
and $Y_0$ are assumed to be in $L^2(D)$.
The boundary conditions are given by operators $\mathcal{N}_i$, $i=1,2$ which may correspond to either Dirichlet or Neumann conditions.

Since $\delta>0$ is  small as $\varepsilon>0$ gets small, one can think of the solution $X^{\varepsilon,\delta}(t,x)$
in (\ref{Eq:FastSlowStochasticRDE})
as the ``slow" process (or slow motion) and the solution $Y^{\varepsilon,\delta}(t,x)$ in (\ref{Eq:FastSlowStochasticRDE})
as the ``fast" process (or fast motion). Notice that the noise term $\sqrt{\varepsilon}\sigma_1(x, X^{\varepsilon,\delta}(t,x), Y^{\varepsilon,\delta}(t,x))\dfrac{\partial W^{Q_1}}{\partial t}(t,x)$
in the equation for the slow process $X^{\varepsilon,\delta}(t,x)$ has a small parameter $\sqrt{\varepsilon}$,
and both the deterministic as well as the noise term in the equation for the fast process $Y^{\varepsilon,\delta}(t,x)$
contain large parameters $\dfrac{1}{\delta^2}$ and $\dfrac{1}{\delta}$.
So that in the limit, we expect an interplay between an averaging effect in the fast process $Y^{\varepsilon,\delta}(t,x)$
and the effect of the diminishing noise in the slow process $X^{\varepsilon,\delta}(t,x)$. In \cite{CerraiRDEAveraging1}, Cerrai demonstrated that as $\varepsilon \to 0$, the slow motion $X^{\varepsilon,\delta}$ converges to a limit derived from averaging the fast motion over its invariant measure. In this work, we will study the large deviations principle (LDP) of $X^{\varepsilon,\delta}$. The characterization of such an interplay between large deviations
and averaging principle at the level of mathematical rigor requires delicate analysis of the asymptotic regimes
while taking the limits $\varepsilon\rightarrow 0$ and $\delta \rightarrow 0$.

Our goal is to derive a large deviation principle for the process $X^{\varepsilon,\delta}(t,x)$ as $\varepsilon\rightarrow 0$ and thus $\delta=\delta(\varepsilon)\rightarrow 0$.
We will be studying a particular regime that roughly speaking, says that
$\delta$ goes to $0$ much faster than $\sqrt{\varepsilon}$ (i.e., $\dfrac{\delta}{\sqrt{\varepsilon}}\rightarrow 0$ as $\varepsilon \downarrow 0$, for details, see Section \ref{S:MainResults}).
The analysis of the problem in other asymptotic regimes is left for future work and briefly discussed in Section \ref{S:Generalizations}.

One of the most effective methods in analyzing large deviation effects is the weak convergence method \cite{LDPInfiniteSDE,VariationalInfniteBM},
which is the method we are using in this paper. Roughly speaking,
by a variational representation (see \cite{VariationalInfniteBM}) of exponential
functionals of Wiener processes, one can represent the exponential functional of the slow process $X^{\varepsilon,\delta}(t,x)$ that
appears in the Laplace principle (which is equivalent to
large deviations principle) as a variational infimum over a family of \textit{controlled} slow processes $X^{\varepsilon,\delta,u}(t,x)$.
In particular, we have for any bounded continuous function $h: C([0,T]; L^2(D))\rightarrow \mathbb{R}$ that
\begin{equation}\label{Introduction:LDPRepresentationFastSlowSRDE}
-\varepsilon\ln \mathbf{E}\left[\exp\left(-\dfrac{1}{\varepsilon}h(X^{\varepsilon,\delta})\right)\right]
=
\displaystyle{\inf\limits_{u \in L^{2}([0,T]; U)}\mathbf{E}\left[\dfrac{1}{2}\int_0^T |u(s)|_U^2ds
+h(X^{\varepsilon,\delta,u})\right]} \ .
\end{equation}

Here the Hilbert space $U$ is called the control space, and the infimum is over all controls $u\in L^2([0,T];U)$ with finite $L^2([0,T]; U)$--norm.
The controlled slow motion $X^{\varepsilon,\delta,u}$
that appears on the right hand side of (\ref{Introduction:LDPRepresentationFastSlowSRDE}) comes from a controlled slow--fast
system $(X^{\varepsilon,\delta,u}, Y^{\varepsilon,\delta,u})$ of reaction--diffusion equations corresponding to (\ref{Eq:FastSlowStochasticRDE}):
\begin{equation}\label{Introduction:FastSlowStochasticRDEWithControl}
\left\{\begin{array}{l}
\displaystyle{\dfrac{\partial X^{\varepsilon,\delta,u}}{\partial t}(t,x)}=\displaystyle{\mathcal{A}_1 X^{\varepsilon,\delta,u}(t,x)+b_1(x, X^{\varepsilon,\delta,u}(t,x), Y^{\varepsilon,\delta,u}(t,x))}
\\
 \qquad \qquad \qquad \qquad \displaystyle{+\sigma_1(x, X^{\varepsilon,\delta,u}(t,x), Y^{\varepsilon,\delta,u}(t,x))(Q_1u(t))(x)}
\\
 \qquad \qquad \qquad \qquad \displaystyle{+\sqrt{\varepsilon}\sigma_1(x, X^{\varepsilon,\delta,u}(t,x), Y^{\varepsilon,\delta,u}(t,x))\dfrac{\partial W^{Q_1}}{\partial t}(t,x)} \ ,
\\
\dfrac{\partial Y^{\varepsilon,\delta,u}}{\partial t}(t,x) =\displaystyle{\dfrac{1}{\delta^{2}}\left[\mathcal{A}_2 Y^{\varepsilon,\delta,u}(t,x)+
b_2(x, X^{\varepsilon,\delta,u}(t,x), Y^{\varepsilon,\delta,u}(t,x))\right]}
\\
 \qquad \qquad \qquad \qquad \displaystyle{+\dfrac{1}{\delta\sqrt{\varepsilon}} \sigma_2(x, X^{\varepsilon,\delta,u}(t,x), Y^{\varepsilon,\delta,u}(t,x))(Q_2u(t))(x)}
\\
\qquad \qquad \qquad \qquad \displaystyle{+\dfrac{1}{\delta}\sigma_2(x, X^{\varepsilon,\delta,u}(t,x), Y^{\varepsilon,\delta,u}(t,x))\dfrac{\partial W^{Q_2}}{\partial t}(t,x)} \ ,
\\
X^{\varepsilon,\delta,u}(0,x)=X_0(x) \   , \ Y^{\varepsilon,\delta,u}(0,x)=Y_0(x) \ , \ x\in D \ ,
\\
\mathcal{N}_1 X^{\varepsilon,\delta,u}(t,x)=\mathcal{N}_2 Y^{\varepsilon,\delta,u}(t,x)=0 \  , \ t\geq 0 \ , \ x\in \partial D \ .
\end{array}\right.
\end{equation}

In view of (\ref{Introduction:LDPRepresentationFastSlowSRDE}) and (\ref{Introduction:FastSlowStochasticRDEWithControl}), we see that
in order to obtain a limit as $\varepsilon\downarrow 0$ of the Laplace functional $-\varepsilon\ln \mathbf{E}\left[\exp\left(-\dfrac{1}{\varepsilon}h(X^{\varepsilon,\delta})\right)\right]$
(i.e., to prove a large deviations principle),
one needs to analyze the limit as $\varepsilon\downarrow 0$ (and thus $\delta \rightarrow 0$) of the \textit{controlled} slow--fast system $(X^{\varepsilon,\delta,u}, Y^{\varepsilon,\delta,u})$
in (\ref{Introduction:FastSlowStochasticRDEWithControl}). This is the first technical part of the current work (Section \ref{S:LLN_ControlledSRDE}).
In fact, to analyze system (\ref{Introduction:FastSlowStochasticRDEWithControl}), the first difficulty
is in that the control term $u$ is only known to be square integrable. %usually takes the form of a feedback control, and we have not much regularity properties known (other than
%being square integrable) for this function.
This makes the proof of tightness much more involved and due to the infinite dimensional aspect of the problem, deriving the necessary bounds is
considerably more involved when compared to the finite dimensional case. Moreover, to characterize the limit
as $\varepsilon\downarrow 0$ of the pair $(X^{\varepsilon,\delta,u}, Y^{\varepsilon,\delta,u})$, we will introduce the so called ``viable pair" construction (compare with
\cite{LDPWeakConvergence}, \cite{LDPFastSlow}, \cite{LDPQuenchedMultiscale} in finite dimensions).

The viable
pair is a pair of a trajectory and an occupation measuree $(\psi, \mathrm{P})$ that captures both the limit averaging dynamics of the controlled slow motion $X^{\varepsilon,\delta,u}$
and the invariant measure of the controlled fast process $Y^{\varepsilon,\delta,u}$. In addition, the measure $\mathrm{P}$ is obtained as the limit of a family of appropriate occupation measures $\mathrm{P}^{\varepsilon,\Delta}$ that live on the product space of fast motion and control, with $\Delta(\varepsilon) \to 0$ to be specified later on. Showing tightness of the family $\{\mathrm{P}^{\varepsilon,\Delta},\varepsilon,\Delta>0\}$  is considerably more delicate in infinite dimensions. Tightness and weak convergence of measure are topological properties and one must be careful about the topologies that are being considered.

In the pair $(\psi, \mathrm{P})$, the function
$\psi$ is the solution of the limiting averaging equation for the process $X^{\varepsilon,\delta,u}$, and the measure $\mathrm{P}$ is a probability measure on
the product space of the function space for the fast motion $Y^{\varepsilon,\delta,u}$ and the control space $U$.  The limiting
measure $\mathrm{P}$ characterizes simultaneously the structure of the invariant measure of $Y^{\varepsilon,\delta,u}$ and the control function $u$.
Note that in general, these two objects are intertwined and coupled together into the measure $\mathrm{P}$, so that the averaging with respect to the
measure $\mathrm{P}$ cannot be done as in the classical averaging principle
(see \cite[Chapters 7,8]{FWbook}). Rather, one has to fulfill the definition of the viable pair
as in Definition \ref{Def:ViablePair} below. The regime $\delta/\sqrt{\varepsilon} \rightarrow 0$ that we study leads to a decoupling of the limiting occupation measure $\mathrm{P}(dudYdt)=\eta_t(du|Y)\mu^{\psi_t}(dY)dt$, where $\eta_t(du|Y)$ is a stochastic kernel characterizing the control and $\mu^{\psi_t}(dY)$ is the invariant measure for the \textit{uncontrolled} fast process $Y^{\varepsilon,\delta}$ in \ref{Eq:FastSlowStochasticRDE}
with $X^{\varepsilon,\delta}$ replaced by $\psi_t$. The result on the weak convergence of the pair $(X^{\varepsilon,\delta,u},\mathrm{P}^{\varepsilon,\Delta})$ is the content of Theorem \ref{T:MainTheorem1}.

With the analysis of the limit of the controlled slow--fast process $(X^{\varepsilon,\delta,u}, Y^{\varepsilon,\delta,u})$ and the construction of viable pair, we then
prove  the Laplace principle (equivalently large deviation principle) for the slow process $X^{\varepsilon,\delta}$ in (\ref{Eq:FastSlowStochasticRDE}), which is the second result of the paper, Theorem \ref{Theorem:LargeDeviationPrinciple}. Proving the Laplace principle amounts to finding an appropriate functional $S(\cdot)$ such that for any bounded and continuous function $h: C( [0,T];L^{2}(D))\rightarrow \mathbb{R}$ we have
$$\lim\limits_{\varepsilon\downarrow 0}\varepsilon\ln \mathbf{E} \left[\exp \left(-\dfrac{1}{\varepsilon}h(X^{\varepsilon,\delta})\right)\right]
=-\inf\limits_{\phi\in C([0,T]; L^{2}(D))}[S(\phi)+h(\phi)] \ .$$

It turns out the Laplace principle upper bound can be proven using the weak convergence of the pair $(X^{\varepsilon,\delta,u},\mathrm{P}^{\varepsilon,\Delta})$ per Theorem \ref{T:MainTheorem1}. The situation is considerably more complicated for the Laplace principle lower bound, which is the second technical part of the paper. In order to prove the Laplace principle lower bound, we need to construct nearly optimal controls that achieve the bound. Due to the dependence on the fast motion $Y$, the nearly optimal controls will in principle be in feedback form and functions of both time $t$ and the fast motion $Y$. Hence, in order for averaging to work one needs to have some regularity of such controls, where in principle we only have that they are square integrable. In addition, given that the ergodic theorem for the cost is used with respect to $\delta\downarrow 0$ for fixed $t$, one also needs to have extra control on the growth of the control in order to be able to conclude that the time integral converges for each fixed length of time. For these reasons, we have been able to rigorously prove the lower bound in two special, but still general, cases, (a): one dimensional case with multiplicative white-noise, $d=1$ and $Q_1=I$, and (b): potential multidimensional case, $d\geq 1$ but with $\sigma_{1}(x,X,Y)=\sigma_{1}(x,X)$ being independent of $Y$. We elaborate in detail the reasons for doing this in Sections \ref{S:LDPproof} and \ref{S:Generalizations}.

Large deviations of stochastic partial differential equations of reaction--diffusion type
has been considered in previous works such as \cite{LDPInfiniteSDE,Cerrai-RocknerLDP,Chenal-MilletLDP,ChowLDP,Kallianpur-Xiong1996,Peszat1994,Sowers1992,Zabczyk}, but without the effect of multiple scales.
Results in the case of slow--fast systems of stochastic reaction--diffusion equations
has been considered in \cite{LDPFastSlowDuanEtAl} in dimension one, with additive noise in the fast motion and no noise component in the slow motion. In finite dimensions the large deviations problem for multiscale diffusions has been well studied, see \cite{Baldi,LDPWeakConvergence,FS, LDPFastSlow, LDPQuenchedMultiscale, Veretennikov, VeretennikovSPA2000}. To the best of our knowledge, the problem of large deviations for multiscale stochastic reaction diffusion equations  in multiple dimensions, with multiplicative noise is being considered for the first time in the present paper.

%Fast--slow stochastic partial differential equations of reaction--diffusion type
%have many applications in chemistry and biology.
%In the classical chemical kinetics, the evolution of concentrations
%of various components in a reaction is described by ordinary differential equations. Such
%a description turns out to be unsatisfactory in a number of applications, especially
%in biology (to this purpose we refer to \cite{[Volkenstein]}). There are several ways to construct a
%more adequate mathematical model. If the reaction is fast enough, one should take
%into account that the concentration is not constant in space in the volume where the
%reaction takes place. Then the change of concentration due to the spatial transport,
%as a role the diffusion, should be taken into consideration and the system of ordinary
%differential equations should be replaced by a system of partial differential equations of
%reaction--diffusion type. In some cases, one should also take into account random change
%in time of the rates of reaction. Then the ordinary differential equation is replaced by a
%stochastic differential equation. If the rates change randomly not just in time but also in
%space, the evolution of concentrations can be described by a system of stochastic partial
%differential equations. On the other hand, the rates of chemical reactions in the system
%and the diffusion coefficients may have, and as a rule have, different orders. Some of
%them are much smaller than others and this leads to mathematical models based on slow--fast stochastic
%reaction--diffusion equations.

{At this point it is instructive to compare the derivations of the large deviations between the finite and the infinite dimensional settings. Following the weak convergence approach, the general strategy for the infinite dimensional case that appears in this paper is similar to the general strategy in the corresponding finite dimensional case, see \cite{LDPWeakConvergence}. However, the infinite dimensionality aspect of the problem means that most of the required a-priori estimates that are needed for tightness and then for convergence of the underlying control problem are considerably more delicate here than in the finite dimensional case. {In \cite{LDPWeakConvergence} the fast motion evolves in a finite-dimensional periodic domain while in the current paper the fast motion evolves in an unbounded infinite dimensional space. Both the unboundedness and the infinite dimensionality make ergodic properties such 爱as the existence of a unique invariant measure more difficult to derive.}

In addition, in the proof of the lower bound for the Laplace principle one needs to identify a nearly optimal control that nearly achieves the lower bound of the action functional. However, this is not enough. Due to the presence of the fast component $Y$ such a control will have to depend on $Y$ and for the subsequent averaging procedure to proceed such a dependency needs to be sufficiently smooth. While, this was clear in the finite dimensional case, see \cite{LDPWeakConvergence}, the situation here is considerably more complicated. The work in this paper rigorously resolves these issues under the appropriate conditions. }

The paper is organized as follows: in Section \ref{S:Notations} we give background definitions, set--up as well as our assumptions.
In Section \ref{S:MainResults} we review basic facts about weak--convergence method in infinite dimensions,
we define the viable pair as well as state our main results on averaging for controlled SRDE and
the large deviation theorem.  Section \ref{S:LLN_ControlledSRDE} is devoted to the  analysis of the limit of the controlled slow--fast processes
$(X^{\varepsilon,\delta,u}, Y^{\varepsilon,\delta,u})$ and the corresponding averaging result. In Section \ref{S:LDPproof} we prove the large deviations theorem.
Section \ref{S:Generalizations} is dedicated to discussions, remarks and generalizations for future work. The Appendix \ref{S:ErgodicProperties}
 collects some classical ergodic results
for the uncontrolled stochastic reaction--diffusion equation, which corresponds to the fast motion of our problem with frozen slow component.

\section{Set up: notations, function spaces and assumptions}\label{S:Notations}
In this section we set up the notation that will be used throughout the paper and state our assumptions.

We denote by $H$
the Hilbert space $L^2(D)$, endowed with the usual scalar product $\langle\bullet, \bullet\rangle_H$ and with the corresponding
norm $|\bullet|_H$. Let the norm in $L^\infty(D)$ be denoted by $|\bullet|_0$. We shall denote by $B_b(H)$ the Banach space of bounded Borel functions $\varphi: H\rightarrow \mathbb{R}$, endowed with the sup--norm
$$|\varphi|_{B_b(H)}:=\sup\limits_{X\in H}|\varphi(X)| \ .$$

The space $C_b(H)$ is the sub--space of uniformly continuous mappings and $C_b^k(H)$ is the subspace of all $k$--times (Fr\'echet) differentiable mappings,
having bounded and uniformly continuous derivatives, up to the $k$--th order ($k\in \mathbb{N}$). The space $C_b^k(H)$
is a Banach space endowed with the norm
$$|\varphi|_{C^k_b(H)}:=|\varphi|_{B_b(H)}+\sum\limits_{i=1}^k \sup\limits_{X\in H}|D^i\varphi(X)|_{\mathcal{L}^i(H)},$$%=:|\varphi|_0+\sum\limits_{i=1}^k [\varphi]_i \ ,$$
where $\mathcal{L}^1(H):=H$ and by recurrence $\mathcal{L}^i(H):=\mathcal{L}(H, \mathcal{L}^{i-1}(H))$ for any $i>1$. We denote by $\text{Lip}(H)$
the set of functions $\varphi: H\rightarrow \mathbb{R}$ such that
$$[\varphi]_{\text{Lip}(H)}:=\sup\limits_{X,Y\in H , X\neq Y}\dfrac{|\varphi(X)-\varphi(Y)|}{|X-Y|_H}<\infty \ .$$

We shall denote by $\mathcal{L}(H)$ the space of bounded linear operators in $H$ and
we shall denote by $\mathcal{L}_2(H)$ the subspace of Hilbert--Schmidt
operators, endowed with the norm $$\|Q\|_2=\sqrt{\text{Tr}[Q^*Q]} \ .$$

The stochastic perturbations in the slow and in the fast motion in system (\ref{Eq:FastSlowStochasticRDE})
are given, respectively, by the Gaussian noise $\dfrac{\partial W^{Q_1}}{\partial t}(t,x)$ and $\dfrac{\partial W^{Q_2}}{\partial t}(t,x)$
for $t\geq 0$ and $x\in D$, which are assumed to be white in time and colored in space, in the case of space dimension
$d>1$. The driving noises may or may not be independent. In order to deal with both cases at once, we define a cylindrical Wiener process on a Hilbert space $\mathbb{R}^\infty$, the space of infinite sequences of real numbers. Formally,
$$W(t) = \bigotimes_{k=1}^\infty \beta_k(t) \ ,$$
where $\{\beta_{k}\}$ is a sequence of independent one--dimensional Brownian motions. The linear operators $Q_i : \mathbb{R}^\infty \to H$, $i=1,2$ add color to the noise and also decide if the noises are independent.
The cylindrical Wiener processes $W^{Q_i}(t,x)$ are defined as
 \[W^{Q_i}(t,x) = Q_i W(t)(x).\]

As an example, in the case of spatial dimension $d=1$, the systems can be perturbed by space--time white noise. Let $f_i$ denote the element of $\mathbb{R}^\infty$ for which the $i$th component is one and all of the other components are zero. Let $e_i$ be a complete orthonormal basis of $H$. If the linear operators satisfy $Q_1 f_{2i} = e_i$, $Q_1 f_{2i - i} =0$, $Q_2 f_{2i} = 0$, and $Q_2 f_{2i-1} = e_i$, then
$\dfrac{\partial W^{Q_1}}{\partial t}(t,x)$ and $\dfrac{\partial W^{Q_2}}{\partial t}(t,x)$ are independent space--time white noises.
On the other hand, if we choose $Q_1 f_i = Q_2 f_i = e_i$, then $W^{Q_1}$ and $W^{Q_2}$ are the same space--time white noise.

We identify a Hilbert space subset $U$ of $\mathbb{R}^\infty$.
If $x \in \mathbb{R}^\infty$, let $x_i$ denote the $i$th component of the sequence. We define the Hilbert space $U \subset \mathbb{R}^\infty$
endowed with inner product $\left<x,y\right>_U = \sum\limits_{i=1}^\infty x_iy_i$. Thus the Hilbert space
$U=\{x\in \mathbb{R}^\infty, \sum\limits_{i=1}^\infty x_i^2<\infty\}$, and the norm for $x\in U$
is given by $|x|^2_U=\sum\limits_{i=1}^\infty x_i^2$.

The operators $\mathcal{A}_1$ and $\mathcal{A}_2$ appearing, respectively, in the slow and in the fast motion equation, are second order
uniformly elliptic differential operators, having continuous coefficients on $D$, and the boundary operators $\mathcal{N}_1$ and $\mathcal{N}_2$
can be either the identity operator (Dirichlet boundary condition) or a first--order operator of the following type
$$\sum\limits_{j=1}^d \beta_j(x)\dfrac{\partial}{\partial x_j}+\gamma(x)I \ , \ x \in \partial D \ , $$
for some $\beta_j, \gamma\in C^1(\bar{D})$ such that $$\inf\limits_{x\in \partial D}|\langle \beta(x), \nu(x)\rangle|>0 \ ,$$
where $\nu(x)$ is the unit normal at $x \in \partial D$ (uniformly non--tangential condition).

The realizations $A_1$ and $A_2$ in $H$ of the differential operators $\mathcal{A}_1$ and $\mathcal{A}_2$, endowed, respectively,
with the boundary conditions $\mathcal{N}_1$ and $\mathcal{N}_2$, generate two analytic semigroups $S_1(t)$ and $S_2(t)$, $t \geq 0$.
In addition, for $\theta\in\mathbb{R}$ and $i=1,2$ we define the Sobolev space $H^{\theta}_i$ with norm
\[
|x|_{\theta,i}=\left|\left(-A_{i}\right)^{\theta/2}x\right|_{_{H}},
\]
where $A_1, A_2$ denote the realizations of $\mathcal{A}_1$ and $\mathcal{A}_2$ in $H$, endowed with their respective boundary conditions. Clearly, for $\theta=0$ we have $H^{0}_{i}=H=L^{2}(D)$ for $i=1,2$.

In what follows, we shall assume that $A_1$, $A_2$ and $Q_1$, $Q_2$ satisfy the following conditions.

\textit{Hypothesis 1.} For $i=1,2$, there exist  complete orthonormal systems $\{e_{i,k}\}_{k\in \mathbb{N}}$
in $H$, and sequences of non--negative real
numbers $\{\alpha_{i,k}\}_{k\in \mathbb{N}}$ ,
such that
$$A_i e_{i,k}=-\alpha_{i,k}e_{i,k} \ , \ k \geq 1 \ .$$
The covariance operators $Q_i: U \to H$, $i=1,2$ are diagonalized by the same orthonormal basis $\{e_{i,k}\}_{k \in \mathbb{N}}$ in the following sense. For $i=1,2$, there exists an orthonormal set $\{f_{i,k}\}_{k\in \mathbb{N}} \subset U$. The set of $\{f_{i,k}\}_{k \in \mathbb{N}}$ is not necessarily complete. There exist sequences of non--negative real numbers $\{\lambda_{i,k}\}_{\substack{i=1,2\\k\in\mathbb{N}}}$ satisfying
\[Q_i f_{i,k} = \lambda_{i,k} e_{i,k}.\]
Notice that if $\text{span}\{f_{1,k}\}_{k \in \mathbb{N}} \perp \text{span}\{f_{2,k}\}_{k \in \mathbb{N}}$, then the driving noises of the fast and the slow motion are independent.

If $d=1$, then we have, recalling that $|\bullet|_0$ is the $L^\infty(D)$ norm,
\begin{equation*}%\label{Eq:Hypothesis1Equation1}
\kappa_i:=\sup\limits_{k\in \mathbb{N}}\lambda_{i,k}|e_{i,k}|_0<\infty \ , \ \zeta_i:=\sum\limits_{k=1}^\infty \alpha_{i,k}^{-\beta_i}|e_{i,k}|_0^2<\infty\nonumber
\end{equation*}
for some constant $\beta_i\in (0,1)$, and if $d\geq 2$, we have
\begin{equation}\label{Eq:Hypothesis1Equation2}
\kappa_i:=\sum\limits_{k=1}^\infty \lambda_{i,k}^{\rho_i}|e_{i,k}|_0^2<\infty \ , \ \zeta_i:=\sum\limits_{k=1}^\infty \alpha_{i,k}^{-\beta_i}|e_{i,k}|_0^2<\infty
\end{equation}
for some constants $\beta_i\in (0,+\infty)$ and $\rho_i\in (2,+\infty)$ such that
\begin{equation}\label{Eq:Hypothesis1Equation3}
\dfrac{\beta_i(\rho_i-2)}{\rho_i}<1 \ .
\end{equation}
Moreover
\begin{equation}\label{Eq:Hypothesis1Equation4}
\inf\limits_{\substack{k\in \mathbb{N}\\i=1,2}}\alpha_{i,k}=:\lambda>0 \ .
\end{equation}

We impose the following conditions on the terms $b_1, b_2$ and $\sigma_1, \sigma_2$.
For $i=1,2$, let us define the Lipschitz constants
\begin{equation*}%\label{Eq:LipConstb1X}
\sup\limits_{x\in D, Y\in \mathbb{R}} \ \sup\limits_{X_1, X_2\in \mathbb{R}, X_1\neq X_2}\dfrac{|b_i(x,X_1,Y)-b_i(x,X_2,Y)|}{|X_1-X_2|}=:L_{b_i}^X \ ,
\end{equation*}
\begin{equation*}%\label{Eq:LipConstb1Y}
\sup\limits_{x\in D, X\in \mathbb{R}} \ \sup\limits_{Y_1, Y_2\in \mathbb{R}, Y_1\neq Y_2}\dfrac{|b_i(x,X,Y_1)-b_i(x,X,Y_2)|}{|Y_1-Y_2|}=:L_{b_i}^Y \ ,
\end{equation*}
\begin{equation*}%\label{Eq:LipConstsig1X}
\sup\limits_{x\in D, Y\in \mathbb{R}} \ \sup\limits_{X_1, X_2\in \mathbb{R}, X_1\neq X_2}\dfrac{|\sigma_i(x,X_1,Y)-\sigma_i(x,X_2,Y)|}{|X_1-X_2|}=:L_{\sigma_i}^X \ ,
\end{equation*}
\begin{equation*}%\label{Eq:LipConstsig1Y}
\sup\limits_{x\in D, X\in \mathbb{R}} \ \sup\limits_{Y_1, Y_2\in \mathbb{R}, Y_1\neq Y_2}\dfrac{|\sigma_i(x,X,Y_1)-\sigma_i(x,X,Y_2)|}{|Y_1-Y_2|}=:L_{\sigma_i}^Y \ .
\end{equation*}

\textit{Hypothesis 2.} 1. The mappings $b_i: D\times \mathbb{R}^2 \rightarrow \mathbb{R}$ and $\sigma_i: D\times \mathbb{R}^2 \rightarrow \mathbb{R}$
are measurable, both for $i=1$ and for $i=2$, and $\sum\limits_{i=1,2}(L_{b_i}^X+L_{\sigma_i}^X+L_{b_i}^Y+L_{\sigma_i}^Y)\leq M$ for some $M>0$.
Moreover,
$$\sup\limits_{x \in D}|b_2(x , 0,0)|<\infty \ , \ \sup\limits_{x\in D}|\sigma_2(x,0,0)|<\infty \ .$$

2. Recalling $\lambda$, the constant introduced in (\ref{Eq:Hypothesis1Equation4}), we have that
\begin{equation}\label{Eq:Hypothesis2Equation1}
L_{b_2}^Y<\lambda. \
\end{equation}

3. $\sigma_2$ grows linearly in $X$, but is bounded in $Y$. There exists $c>0$,
\begin{equation}\label{Eq:Hypothesis2Equation2}
\sup\limits_{x \in D}\sup_{Y \in \mathbb{R}}|\sigma_2(x, X, Y)|\leq c(1+|X|).
\end{equation}

4. The Lipschitz constants $L_{b_2}^Y$ and $L_{\sigma_2}^Y$ are chosen so that
\begin{align}\label{Eq:Hypothesis2Equation3}
%\dfrac{4(L_{b_2}^Y)^2}{\lambda^2}+K_2(L_{\sigma_2}^Y)^2\int_0^{\infty}s^{-\beta_2\frac{\rho_2-2}{\rho_2}}
%e^{-\lambda\frac{\rho_2+2}{\rho_2}s}ds=:\mathfrak{L}_{b_2,\sigma_2}^Y<1
\dfrac{L_{b_2}^Y}{\lambda}+\sqrt{K_2(L_{\sigma_2}^Y)^2\int_0^{\infty}s^{-\beta_2\frac{\rho_2-2}{\rho_2}}
e^{-\lambda\frac{\rho_2+2}{\rho_2}s}ds}=:\mathfrak{L}_{b_2,\sigma_2}^Y&<1
\end{align}
where
\begin{align}
K_2&=\left(\dfrac{\beta_2}{e}\right)^{\beta_2\frac{\rho_2-2}{\rho_2}}\zeta_2^{\frac{\rho_2-2}{\rho_2}}\kappa_2^{\frac{2}{\rho_2}}, \nonumber \end{align}
and $\lambda, \beta_2, \rho_2, \zeta_2, \kappa_2$ are all from Hypothesis 1.
\begin{remark}
 Condition (\ref{Eq:Hypothesis2Equation3}) is a technical condition used in proving that in a certain ergodic sense the presence of the control in the fast dynamics $Y$ does not influence the corresponding invariant measure (a consequence of the regime under consideration $\delta/\sqrt{\varepsilon}\downarrow 0$) and in proving that the invariant measure is weakly Lipschitz continuous with respect to the slow component (see Lemma \ref{Lm:mu-continuous}).
\end{remark}

%In regards to the coefficient of the slow component $X$ we have:

\textit{Hypothesis 3.}
 $b_1$ and $\sigma_1$ grow at most linearly in $X$ and sublinearly in $Y$. To be precise, there exists $0\leq \zeta<1-\frac{\beta_{1}(\rho_{1}-2)}{\rho_{1}}$ and a constant $C>0$ such that
\begin{equation} \label{Eq:Sm1-growth}
  \sup_{x\in D} \left(|b_1(x,X,Y)|+|\sigma_1(x,X,Y)|\right)\leq C(1 + |X| + |Y|^{\zeta})\ .
\end{equation}

\begin{remark}
The proofs of Section \ref{S:LLN_ControlledSRDE} show that if $\sigma_{1}$ is bounded with respect to $X$, then one can relax the growth restrictions with respect to $Y$ for the drift term $b_{1}$ and assume arbitrary sublinear growth of $b_{1}$ with respect to $Y$.
%that
%there exist $0\leq \zeta_{1}<1$, $0\leq \zeta_{2}<1-\frac{\beta_{1}(\rho_{1}-2)}{\rho_{1}}$ and a constant $C>0$ such that
%\begin{equation*}
%  \sup_{x\in D} |b_1(x,X,Y)|\leq C(1 + |X| + |Y|^{\zeta_{1}}),\text{ and } \sup_{x\in D}|\sigma_1(x,X,Y)| \leq C(1  + |Y|^{\zeta_{2}} )\ .
%\end{equation*}
\end{remark}
\begin{remark}
Notice that if $Q_{1}$ is trace class, then $\rho_{1}=2$, in which case one can allow arbitrary sublinear growth of $\sigma_{1}$ with respect to $Y$.
Also, we remark here that since we have (\ref{Eq:Hypothesis1Equation3}), (\ref{Eq:Hypothesis1Equation4})
as well as the fact that $0<\beta_2\frac{\rho_2-2}{\rho_2}<1$, the integral term
$\int_0^{\infty}s^{-\beta_2\frac{\rho_2-2}{\rho_2}}e^{-\lambda\frac{\rho_2+2}{\rho_2}s}ds<\infty$. In fact we can write it in terms of the gamma function
\[
\int_0^{\infty}s^{-\beta_2\frac{\rho_2-2}{\rho_2}}e^{-\lambda\frac{\rho_2+2}{\rho_2}s}ds= \left(\lambda\frac{\rho_2+2}{\rho_2}\right)^{\beta_2\frac{\rho_2-2}{\rho_2}-1}\Gamma\left(1-\beta_2\frac{\rho_2-2}{\rho_2}\right)
\]
 Notice also that if $\sigma_{2}$ does not depend on $Y$, then the requirement (\ref{Eq:Hypothesis2Equation3}) follows directly from the requirement (\ref{Eq:Hypothesis2Equation1}).
\end{remark}
However, for the proof of the upper bound of the Laplace principle and for reasons that will become clearer later on, we need to strengthen these requirements to the following, which is strictly stronger than Hypothesis 3.

\

\textit{Hypothesis 4.}
 $b_1$ is as in Hypothesis 3. In regards to $\sigma_{1}$, either $d=1$ and there are positive constants $0<c_{0}\leq c_{1}<\infty$  such that $0<c_{0}\leq \sigma^{2}_{1}(x,X,Y)\leq c_{1}$, or  $d\geq 1$ and $\sigma_{1}(x,X,Y)=\sigma_{1}(x,X)$ is independent of $Y$ and can grow at most linearly in $X$ uniformly in $x\in D$.

Moreover, for $i=1,2$, we shall set
$$B_i(X,Y)(x):=b_i(x, X(x), Y(x))$$
and
$$[\Sigma_i(X,Y)Z](x):=\sigma_i(x, X(x), Y(x))Z(x)$$
for any $x\in D$, $X,Y,Z\in H$ and $i=1,2$. From Hypothesis 2 we know that the mappings
$$(X,Y)\in H\times H \mapsto B_i(X,Y)\in H \ ,$$
are Lipschitz continuous, as well as the mappings
$$(X,Y)\in H\times H \mapsto \Sigma_i(X,Y)\in \mathcal{L}(H; L^1(D))$$
and
$$(X,Y)\in H\times H \mapsto \Sigma_i(X,Y)\in \mathcal{L}(L^\infty(D); H) \ .$$

For any metric space $E$, we define $\mathscr{P}(E)$ to be the collection of probability measures on $E$.

As known from the existing literature such as \cite{DaPrato-Zabczyk} (also see \cite{CerraiRDEAveraging1}), according to Hypotheses 1 and 2
for any $\varepsilon>0$, $\delta>0$ and $X_0, Y_0\in H$ and for any $p\geq 1$ and $T>0$ there exists a unique
mild solution $(X^{\varepsilon, \delta}, Y^{\varepsilon, \delta})\in L^p(\Omega;C([0,T];H) \times C([0,T];H))$ to system (\ref{Eq:FastSlowStochasticRDE}).

Finally, concerning the small parameters $\varepsilon>0$ and $\delta>0$, we assume that we have the following.

\

\textit{Hypothesis 5.} We assume that $\varepsilon\downarrow 0$, $\delta=\delta(\varepsilon)\downarrow 0$ and $\Delta=\Delta(\delta,\varepsilon)\downarrow 0$, such that
\begin{equation}\label{Eq:AssumptionRegime1Restricted}
\lim\limits_{\varepsilon\downarrow 0}\dfrac{\delta}{\sqrt{\varepsilon}}=0,\text{ and }\lim\limits_{\varepsilon\downarrow 0}\dfrac{\delta}{\Delta\sqrt{\varepsilon}}=0 \ .
\end{equation}

It is clear that when $\varepsilon\downarrow 0$, both $\delta\downarrow 0$ and $\Delta\downarrow 0$. Hence, for notational convenience we will many times simply write $\varepsilon\downarrow 0$, which implicitly implies that $\delta,\Delta\downarrow 0$ as well.
In addition, we note that (\ref{Eq:AssumptionRegime1Restricted}) implies that
$\dfrac{\Delta}{\delta^2}\rightarrow \infty$ as $\varepsilon\downarrow 0$.   %One can choose for example  $\Delta=\delta^{\kappa}$ or  $\Delta=\delta^{\kappa}\log(\frac{1}{\delta})$ for some $0<\kappa<1$.
Parameter
$\Delta$ can be viewed as a time-scale separation parameter. In particular,  as we shall see in Section \ref{S:LLNInvariantMeasure}, Hypothesis 5 enables us to decouple
the invariant measure with respect to which the averaging is being done from the control process.

\section{Weak convergence and large deviations}\label{S:MainResults}

In this section we review the weak convergence approach to large deviations, \cite{DupuisEllis}, and then we state our main results of the paper on the averaging principle for controlled stochastic reaction-diffusion equations and on the large deviations principle for $\{X^{\varepsilon,\delta},\varepsilon>0\}$. As we also mentioned in the introduction, large deviations for SRDEs in the small noise regime (but in the absence of multiple scales), have been derived in \cite[Theorem 9]{LDPInfiniteSDE}. In particular, the authors in \cite{LDPInfiniteSDE} use the weak convergence formulation as well and establish large deviations for infinite dimensional SRDEs in the absence of multiple scales.
Before stating the main result of this paper, we review next the mathematical framework appropriately formulated in our setting of interest.
%Recall that $U \subset \mathbb{R}^\infty$ was defined to be the Hilbert space with norm
%\[|x|_U = \sum_{i=1}^\infty x_i^2.\]
\begin{theorem}[see \cite{VariationalInfniteBM,LDPInfiniteSDE}]\label{Thoerem:WeakConvergenceInfiniteDimensions}
Let $f$ be a bounded, Borel measurable function mapping $C([0,T]; \mathbb{R}^\infty)$
into $\mathbb{R}$. Then
\begin{equation*}%\label{Eq:VariationalRepresentationInfinite}
-\ln\mathbf{E}(\exp\{-f(W)\})
=
\inf\limits_{u\in \mathcal{P}_2(U)}\mathbf{E}\left(\dfrac{1}{2}\int_0^T |u(s)|_U^2ds+f\left(W+\int_0^\bullet u(s)ds\right)\right) \ .
\end{equation*}
\end{theorem}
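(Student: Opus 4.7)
The plan is to prove the two inequalities ``$\leq$'' and ``$\geq$'' separately, exploiting the duality between relative entropy and exponential moments, specialized to Wiener measure via Girsanov's theorem. The result is a Wiener-space analogue of the elementary variational formula $-\ln\mathbf{E}e^{-f} = \inf_\mathbb{Q}\{R(\mathbb{Q}\|\mathbb{P}) + \mathbf{E}_\mathbb{Q}f\}$, where in the cylindrical setting the relative entropy of an admissible drift perturbation equals exactly $\tfrac{1}{2}\mathbf{E}\int_0^T|u|_U^2\,ds$.

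For the upper bound ``$\leq$,'' I would fix an arbitrary $u \in \mathcal{P}_2(U)$ and define the exponential martingale
\begin{equation*}
\mathcal{E}^u_t := \exp\!\left(\int_0^t \langle u(s), dW(s)\rangle_U - \tfrac{1}{2}\int_0^t |u(s)|_U^2\,ds\right).
\end{equation*}
Because $f$ is bounded and $u$ satisfies $\mathbf{E}\int_0^T|u|_U^2\,ds<\infty$, standard Novikov-type arguments (applied componentwise along the orthonormal basis of $U$ and then summed) show $\mathcal{E}^u$ is a true martingale, so it defines an equivalent probability $\mathbb{Q}$ under which $\tilde W := W + \int_0^\cdot u\,ds$ is a cylindrical Wiener process. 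Rewriting $\mathbf{E}[\exp(-f(W))]$ as an expectation under $\mathbb{Q}$ with Radon–Nikodym derivative $(\mathcal{E}^u_T)^{-1}$ and applying Jensen's inequality to $-\ln$ yields
\begin{equation*}
-\ln \mathbf{E}[\exp(-f(W))] \leq \mathbf{E}_\mathbb{Q}\!\left[f\!\left(\tilde W - \textstyle\int_0^\cdot u\,ds\right) + \tfrac{1}{2}\int_0^T|u|_U^2\,ds\right],
\end{equation*}
where the stochastic-integral contribution vanishes in expectation under $\mathbb{Q}$. Relabelling $\tilde W$ as the base Brownian motion and taking the infimum over $u$ completes this direction.

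For the reverse ``$\geq$,'' I would exhibit the optimal control. Let $M := \exp(-f(W))/\mathbf{E}[\exp(-f(W))]$, which is bounded away from $0$ and $\infty$ by boundedness of $f$, and consider the positive martingale $M_t := \mathbf{E}[M\,|\,\mathcal{F}_t^W]$ with $M_0=1$. The martingale representation theorem for the cylindrical Brownian filtration provides a predictable $U$-valued integrand $\eta$ with $M_t = 1 + \int_0^t \langle \eta_s, dW_s\rangle_U$. Setting $u^*_s := -\eta_s/M_s$, so that $dM_t = -M_t\langle u^*_t,dW_t\rangle_U$, Itô's formula applied to $\ln M_t$ identifies $-\ln M_T$ with the cost $\tfrac{1}{2}\int_0^T|u^*|_U^2\,ds + \int_0^T \langle u^*,dW\rangle_U$, and taking expectation under the tilted measure generated by $u^*$ shows the variational expression evaluated at $u^*$ coincides exactly with $-\ln\mathbf{E}[\exp(-f(W))]$. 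The main obstacle is verifying that $u^*$ genuinely lies in $\mathcal{P}_2(U)$ and that the associated change of measure is rigorously justified; both require uniform control on the moments of $\ln M$ and on $\int_0^T|u^*|_U^2\,ds$. In the infinite-dimensional cylindrical setting these a priori bounds are more delicate than in the finite-dimensional Boué–Dupuis argument, and the standard remedy is a truncation scheme for $f$ and $u^*$ combined with a monotone passage to the limit, using boundedness of $f$ to transfer integrability of the target variable to that of the representing integrand.
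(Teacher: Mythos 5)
The paper does not actually prove this theorem: it is quoted as an imported result with a pointer to \cite{VariationalInfniteBM} (Budhiraja--Dupuis, 2000) and \cite{LDPInfiniteSDE}, so there is no internal proof to match your attempt against. That said, your sketch does follow the same Bou\'e--Dupuis/Budhiraja--Dupuis route as the cited references (relative-entropy duality plus Girsanov for one inequality, martingale representation to manufacture a near-optimal control for the other), so the strategy is the right one.

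There is, however, a concrete gap in your argument for the inequality ``$\leq$.'' You assert that ``standard Novikov-type arguments (applied componentwise \ldots) show $\mathcal{E}^u$ is a true martingale.'' For a general $u\in\mathcal{P}_2(U)$ this is false: the class $\mathcal{P}_2(U)$ only requires $\int_0^T|u(s)|_U^2\,ds<\infty$ almost surely, with no moment control, so $\mathcal{E}^u$ is in general only a nonnegative supermartingale with $\mathbf{E}[\mathcal{E}^u_T]\leq 1$, possibly strictly. Novikov's condition $\mathbf{E}\exp\bigl(\tfrac12\int_0^T|u|_U^2\,ds\bigr)<\infty$ is not implied by $u\in\mathcal{P}_2(U)$, and the ``componentwise'' reduction does not help because the failure is already present in one dimension. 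Consequently $\mathbb{Q}$ need not be a probability measure, and the Girsanov step does not apply directly. In Bou\'e--Dupuis and Budhiraja--Dupuis the inequality is first proved for a restricted class of controls (bounded, or simple/piecewise-constant, where the exponential is a genuine martingale), and then extended to all of $\mathcal{P}_2(U)$ by a truncation/stopping-time argument, noting that the cost side is infinite whenever $\mathbf{E}\int_0^T|u|_U^2\,ds=\infty$ and using a monotone-class passage in $f$ (your $f$ is only Borel, not continuous, so the limit $f(W+\int_0^\cdot u_n)\to f(W+\int_0^\cdot u)$ does not come for free from a.s.\ uniform convergence of the drifts). This reduction step is where the bulk of the technical work in the cited proof lives and is entirely missing from your sketch. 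There is also a sign slip: with $\mathcal{E}^u_t=\exp\bigl(\int_0^t\langle u,dW\rangle_U-\tfrac12\int_0^t|u|_U^2\,ds\bigr)$, Girsanov gives that $W-\int_0^\cdot u\,ds$, not $W+\int_0^\cdot u\,ds$, is a cylindrical Wiener process under $\mathbb{Q}$; you would need $\mathcal{E}^u$ with $-\int\langle u,dW\rangle_U$ in the exponent to produce the drift with the sign that appears in the statement. Your treatment of ``$\geq$'' is essentially correct --- boundedness of $f$ bounds $M_t$ away from $0$ and $\infty$, which together with the It\^o isometry bound on the representing integrand $\eta$ gives $u^\star\in\mathcal{P}_2(U)$ --- and you correctly flag that identifying the $\mathbb{Q}$-expectation with an expectation under the base measure (so that the exhibited pair $(W,u^\star)$ is an admissible competitor in the infimum) requires a transfer-of-measure argument that must be carried out carefully.
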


Here the set $\mathcal{P}_2(U)$ consists of all $U$--valued predictable processes $\phi(s)$
for which $\displaystyle{\int_0^T |\phi(s)|_U^2ds}<\infty$
almost surely. %That is $\mathcal{P}_2(U)=L^2([0,T]; U)$.

Let $\mathcal{E}$ and $\mathcal{E}_0$ be Polish spaces. For each $\varepsilon>0$, let $\mathcal{G}^\varepsilon: \mathcal{E}_0\times C([0,T]; \mathbb{R}^\infty)\rightarrow \mathcal{E}$ be a measurable map.
Consider the family of random elements $X^{\varepsilon,x}\equiv \mathcal{G}^\varepsilon(x, \sqrt{\varepsilon}W)$. From Theorem \ref{Thoerem:WeakConvergenceInfiniteDimensions}, we immediately derive that
for any  bounded and continuous function $h: \mathcal{E} \rightarrow \mathbb{R}$,
\begin{align}\label{Eq:EpsilonVariationalRepresentationInfinite}
&-\varepsilon\ln \mathbf{E}\left[\exp\left(-\dfrac{1}{\varepsilon}h(X^{\varepsilon,x^\varepsilon})\right)\right]
=
\inf\limits_{u\in \mathcal{P}_2(U)}\mathbf{E}\left[\dfrac{1}{2}\int_0^T |u(s)|_U^2ds\right.\nonumber\\
&\hspace{5cm}\left.+h\circ\mathcal{G}^\varepsilon\left(x^\varepsilon, \sqrt{\varepsilon}W+\int_0^\bullet u(s)ds\right)\right] \ .
\end{align}

Let us recall that
$H=L^2(D)$ and $A_1, A_2$ denote the realizations of $\mathcal{A}_1$ and $\mathcal{A}_2$ in $H$, endowed with their respective boundary conditions. Also, $A_1$ and $A_2$ generate $C_0$--semigroups $S_1(t)$ and $S_2(t)$. Notice that if $A_2$ is the infinitesimal generator of $S_2(t)$, then $\dfrac{1}{\delta^{2}}A_2$ is the infinitesimal generator of $S_2\left(\dfrac{t}{\delta^{2}}\right)$. We now recall the definition of a mild solution of (\ref{Eq:FastSlowStochasticRDE}). The mild solution to (\ref{Eq:FastSlowStochasticRDE}) solves
\begin{equation}\label{Eq:MildFastSlowStochasticRDE}
\left\{\begin{array}{l}
\displaystyle{X^{\varepsilon,\delta}(t)=S_1(t)X_0+ \int_0^t S_1(t-s)B_1(X^{\varepsilon,\delta}(s), Y^{\varepsilon,\delta}(s))ds
}\\
\displaystyle{\qquad \qquad \qquad  +\sqrt{\varepsilon}\int_0^t S_1(t-s)\Sigma_1(X^{\varepsilon,\delta}(s), Y^{\varepsilon,\delta}(s))dW^{Q_1}(s) \ ,}
\\
\displaystyle{Y^{\varepsilon,\delta}(t)=S_2\left(\dfrac{t}{\delta^{2}}\right)Y_0 + \frac{1}{\delta^{2}}\int_0^t S_2\left(\dfrac{t-s}{\delta^{2}}\right) B_2( X^{\varepsilon,\delta}(s), Y^{\varepsilon,\delta}( s))ds}\\
\displaystyle{\qquad \qquad \qquad  +\dfrac{1}{\delta}\int_0^t S_2\left(\dfrac{t-s}{\delta^{2}}\right)\Sigma_2( X^{\varepsilon,\delta}(s), Y^{\varepsilon,\delta}(s))dW^{Q_2}(s) \ .}
\end{array}\right.
\end{equation}

The solution map (interpreted as in (\ref{Eq:MildFastSlowStochasticRDE})) of (\ref{Eq:FastSlowStochasticRDE}) can be viewed as a Borel measurable map
$$\mathcal{G}^{\varepsilon,\delta}((X_0, Y_0), \sqrt{\varepsilon}W)=X^{\varepsilon,\delta} \ .$$
By (\ref{Eq:EpsilonVariationalRepresentationInfinite}), for any bounded and continuous function $h: C(H)\rightarrow \mathbb{R}$
we have
\begin{equation}\label{Eq:LDPRepresentationFastSlowSRDE}
-\varepsilon\ln \mathbf{E}\left[\exp\left(-\dfrac{1}{\varepsilon}h(X^{\varepsilon,\delta})\right)\right]
=
\displaystyle{\inf\limits_{u\in \mathcal{P}_2(U)}\mathbf{E}\left[\dfrac{1}{2}\int_0^T |u(s)|_{U}^2ds
+h(X^{\varepsilon,\delta,u})\right]} \ .
\end{equation}

Here the process $(X^{\varepsilon,\delta,u}, Y^{\varepsilon,\delta,u})$ is a controlled version of (\ref{Eq:FastSlowStochasticRDE}) where the control $u\in \mathcal{P}_2(U)$. The corresponding mild solutions satisfy the following controlled system of stochastic reaction--diffusion equations,
\begin{equation}\label{Eq:FastSlowStochasticRDEWithControl}
\left\{\begin{array}{l}
\displaystyle{\dfrac{\partial X^{\varepsilon,\delta,u}}{\partial t}(t,x)}=\displaystyle{\mathcal{A}_1 X^{\varepsilon,\delta,u}(t,x)+b_1(x, X^{\varepsilon,\delta,u}(t,x), Y^{\varepsilon,\delta,u}(t,x))}
\\
 \qquad \qquad   \displaystyle{+\sigma_1(x, X^{\varepsilon,\delta,u}(t,x), Y^{\varepsilon,\delta,u}(t,x))(Q_1u(t))(x)}
\\
 \qquad \qquad   \displaystyle{+\sqrt{\varepsilon}\sigma_1(x, X^{\varepsilon,\delta,u}(t,x), Y^{\varepsilon,\delta,u}(t,x))\dfrac{\partial W^{Q_1}}{\partial t}(t,x)} \ ,
\\
\dfrac{\partial Y^{\varepsilon,\delta,u}}{\partial t}(t,x) =\displaystyle{\dfrac{1}{\delta^{2}}\left[\mathcal{A}_2 Y^{\varepsilon,\delta,u}(t,x)+
b_2(x, X^{\varepsilon,\delta,u}(t,x), Y^{\varepsilon,\delta,u}(t,x))\right]}
\\
 \qquad \qquad   \displaystyle{+\dfrac{1}{\delta\sqrt{\varepsilon}} \sigma_2(x, X^{\varepsilon,\delta,u}(t,x), Y^{\varepsilon,\delta,u}(t,x))(Q_2u(t))(x)}
\\
\qquad \qquad   \displaystyle{+\dfrac{1}{\delta}\sigma_2(x, X^{\varepsilon,\delta,u}(t,x), Y^{\varepsilon,\delta,u}(t,x))\dfrac{\partial W^{Q_2}}{\partial t}(t,x)} \ ,
\\
X^{\varepsilon,\delta,u}(0,x)=X_0(x) \   , \ Y^{\varepsilon,\delta,u}(0,x)=Y_0(x) \ , \ x\in D \ ,
\\
\mathcal{N}_1 X^{\varepsilon,\delta,u}(t,x)=\mathcal{N}_2 Y^{\varepsilon,\delta,u}(t,x)=0 \  , \ t\geq 0 \ , \ x\in \partial D \ .
\end{array}\right.
\end{equation}

In particular, the mild formulation of the solution $(X^{\varepsilon,\delta,u}, Y^{\varepsilon,\delta,u})$ is the controlled process that solves
\begin{align}
X^{\varepsilon,\delta,u}(t)&=S_1(t)X_0+ \int_0^t S_1(t-s)B_1(X^{\varepsilon,\delta,u}(s), Y^{\varepsilon,\delta,u}(s))ds
\nonumber\\
&\qquad   +\int_0^t S_1(t-s)\Sigma_1(X^{\varepsilon,\delta,u}(s), Y^{\varepsilon,\delta,u}(s))Q_1 u(s) ds\ ,\nonumber\\
&\qquad  +\sqrt{\varepsilon}\int_0^t S_1(t-s)\Sigma_1(X^{\varepsilon,\delta,u}(s), Y^{\varepsilon,\delta,u}(s))dW^{Q_1}(s) \ ,\nonumber\\
Y^{\varepsilon,\delta,u}(t)&=S_2\left(\dfrac{t}{\delta^{2}}\right)Y_0 +
\frac{1}{\delta^{2}}\int_0^t S_2\left(\dfrac{t-s}{\delta^{2}}\right) B_2( X^{\varepsilon,\delta,u}(s), Y^{\varepsilon,\delta,u}( s))ds \label{Eq:MildFastSlowStochasticRDEWithControl}\\
&\qquad   +\dfrac{1}{\delta\sqrt{\varepsilon}}\int_0^t
S_2\left(\dfrac{t-s}{\delta^{2}}\right)\Sigma_2(X^{\varepsilon,\delta,u}(s), Y^{\varepsilon,\delta,u}(s))Q_2 u(s)ds \ ,\nonumber\\
&\qquad  +\dfrac{1}{\delta}
\int_0^t S_2\left(\dfrac{t-s}{\delta^{2}}\right)\Sigma_2(X^{\varepsilon,\delta,u}(s), Y^{\varepsilon,\delta,u}(s))dW^{Q_2}(s) \ .\nonumber
\end{align}

For each $N\in \mathbb{N}$, we define the set
\begin{equation*}%\label{Eq:SetOfControlsWithFiniteSquareIntegral}
\mathcal{P}_2^N(U)=\left\{u\in \mathcal{P}_2(U): \displaystyle{\int_0^T |u(s)|_U^2ds\leq N}\right\} \ .
\end{equation*}

As in Theorem 10 of \cite{LDPInfiniteSDE} and for each $u\in \mathcal{P}_2^N(U)$ uniformly in $\varepsilon$, there is a unique pair $(X^{\varepsilon,\delta,u}, Y^{\varepsilon,\delta,u})\in L^p(\Omega;C([0,T];H)\times C([0,T];H))$ that satisfies (\ref{Eq:MildFastSlowStochasticRDEWithControl}).

Now, by Section 1.2 of \cite{DupuisEllis}, it is known that the Laplace principle, which amounts to finding the limit of the left hand side of (\ref{Eq:LDPRepresentationFastSlowSRDE}) as $\varepsilon\downarrow 0$, is equivalent to finding the large deviations principle for $\{X^{\varepsilon,\delta},\varepsilon>0\}$. This is the path that we follow in this paper for finding the large deviations principle for the family $\{X^{\varepsilon,\delta},\varepsilon>0\}$ in $C([0,T],H)$. Also, as it is shown in \cite{VariationalInfniteBM}, the representation implies that we can actually consider $u=u^{\varepsilon}\in \mathcal{P}_2^N(U)$  for a sufficiently large but fixed $N\in\mathbb{N}$.

Let us denote the slow motion space to be $\mathcal{X}=H=L^2(D)$, the fast motion space to be $\mathcal{Y}=H=L^2(D)$, and the control space to be
$U$. In addition, let us define $\xi=\xi(X,Y, u): \mathcal{X}\times \mathcal{Y} \times U \rightarrow H$ by
\begin{equation}\label{Eq:xiDef}
\xi(X, Y, u)=\Sigma_1(X,Y)Q_1 u+B_1(X, Y) \ .
\end{equation}

Moreover, for any fixed $X\in \mathcal{X}$,  consider the fast process $Y^{X}$ defined by the equation
\begin{align}
\dfrac{\partial Y^{X}}{\partial t}(t,x)&=[\mathcal{A}_2 Y^{X}(t,x)+
b_2(x, X(x), Y^{X}(t,x))]\nonumber\\
&\qquad+\sigma_2(x, X(x), Y^{X}(t,x))\dfrac{\partial W^{Q_2}}{\partial t}(t,x), t\geq 0 \text{ and } x\in D \ .
\nonumber\\
Y^{X}(0,x)&=Y_0(x) \ , \ x\in D \ , \ \mathcal{N}_2 Y^{X}(t,x)=0 \ , \ t\geq 0 \ , \ x\in \partial D \ .\label{Eq:FastProcessSRDEReg1}
\end{align}

Let $\mathcal{L}=\mathcal{L}^{X}$ be the generator of the process $Y^{X}$. As noted in \cite{Cerrai-FreidlinRDEAveraging1}, this generator has the form
\begin{align}\label{Eq:FastGeneratorReg1}
&\mathcal{L}^{X}\varphi(Y) = \langle A_2Y + B_2(X,Y), D_Y\varphi(Y) \rangle_H + \frac{1}{2}\text{Tr}[\Sigma_2(X,Y)Q_2Q_2^\star\Sigma_2^\star(X,Y)D_Y^2\varphi(Y)],
\end{align}
where $D_Y$ and $D^2_Y$ are the first and second Fr\'echet derivatives in $H$. The domain of definition of the operator $\mathcal{L}=\mathcal{L}^{X}$ is a set $\mathcal{D}(\mathcal{L})\subset H$ such that
any $\varphi\in \mathcal{D}(\mathcal{L})\rightarrow \mathbb{R}$ is twice continuously differentiable with $D^2_Y \varphi(Y)\in H$ for any $Y\in H$,
and the mapping $Y\mapsto \text{Tr}[D^2_Y \varphi(Y)]$ is continuous on $H$ with values in $\mathbb{R}$.

In addition, as we also review in Appendix \ref{S:ErgodicProperties}, Hypothesis 1 and 2, guarantee that the process $Y^{X}$ is, for each $X\in H$, ergodic and strongly mixing with a unique invariant measure, which we denote by $\mu^{X}(dy)$. For any bounded and continuous $f \in C_b(H) $
\begin{equation} \label{Eq:mu-X}
  \int_H f(Y) \mu^X(dY) = \lim_{T \to +\infty} \frac{1}{T} \int_0^T f(Y^{X}(t))dt.
\end{equation}

We now state without proof an important result on the continuity of $X \mapsto \mu^X$, which is a consequence of Lemma 3.1 of \cite{CerraiRDEAveraging1}.
\begin{lemma} \label{Lm:mu-continuous}
  Suppose that $f: H \to \mathbb{R}$ is Lipschitz continuous. For any $X \in H$ define
  $F(X)=\int_{\mathcal{Y}} f(Y)\mu^{X}(dY)$. Then for any $X_1,X_2 \in H$,
\begin{align}
\left|F(X_{1})-F(X_{2})\right|&\leq \|f\|_{\textnormal{Lip}} \left|X_{1}-X_{2}\right|_{H}.\label{Eq:WeaklyContinuousInvariantMeasure}
\end{align}
\end{lemma}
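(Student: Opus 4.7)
The plan is to prove the estimate via synchronous coupling together with the ergodic representation of $\mu^X$. By the strong mixing property recorded in Appendix \ref{S:ErgodicProperties} and the representation (\ref{Eq:mu-X}), for any Lipschitz $f$ and any common initial datum $Y_0 \in H$ one has
\[
F(X) \;=\; \lim_{t\to\infty}\mathbf{E}\,f\!\bigl(Y^X(t)\bigr).
\]
I would realize $Y^{X_1}$ and $Y^{X_2}$ on the same probability space, driven by the same cylindrical Wiener process $W^{Q_2}$ and started from the same $Y_0$. The Lipschitz bound on $f$ then gives
\[
|F(X_1) - F(X_2)| \;\le\; \|f\|_{\mathrm{Lip}}\;\limsup_{t\to\infty}\mathbf{E}\,|Y^{X_1}(t)-Y^{X_2}(t)|_H,
\]
so the problem reduces to showing that the right-most $\limsup$ does not exceed $|X_1 - X_2|_H$.

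Set $Z(t):=Y^{X_1}(t)-Y^{X_2}(t)$, so that $Z(0)=0$ and (\ref{Eq:FastProcessSRDEReg1}) yields the mild representation
\begin{align*}
Z(t) &= \int_0^t S_2(t-s)\bigl[B_2(X_1,Y^{X_1}(s))-B_2(X_2,Y^{X_2}(s))\bigr]\,ds\\
&\quad+\int_0^t S_2(t-s)\bigl[\Sigma_2(X_1,Y^{X_1}(s))-\Sigma_2(X_2,Y^{X_2}(s))\bigr]\,dW^{Q_2}(s).
\end{align*}
I would split each bracket as an \emph{$X$-increment} (freezing the $Y$-argument) plus a \emph{$Y$-increment} (freezing the $X$-argument). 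Using the exponential contraction $\|S_2(t)\|_{\mathcal{L}(H)}\le e^{-\lambda t}$ from (\ref{Eq:Hypothesis1Equation4}), together with the Da Prato--Zabczyk factorization method applied to the stochastic convolution with exponent $\rho_2$, I would pass to $L^{\rho_2}(\Omega)$-expectations. The $Y$-increments yield, via the Lipschitz constants $L_{b_2}^Y$ and $L_{\sigma_2}^Y$ and the weighted sums in Hypothesis 1, a self-referential bound of the form $\mathfrak{L}_{b_2,\sigma_2}^Y\cdot\sup_{s\le t}\bigl(\mathbf{E}|Z(s)|_H^{\rho_2}\bigr)^{1/\rho_2}$, with $\mathfrak{L}_{b_2,\sigma_2}^Y$ exactly the constant defined in (\ref{Eq:Hypothesis2Equation3}); the $X$-increments produce a deterministic forcing controlled by a constant multiple of $|X_1-X_2|_H$.

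Since Hypothesis 2 guarantees $\mathfrak{L}_{b_2,\sigma_2}^Y<1$, the self-referential $Y$-term can be absorbed into the left-hand side, yielding a uniform-in-$t$ estimate $\sup_{t\ge 0}(\mathbf{E}|Z(t)|_H^{\rho_2})^{1/\rho_2}\le C\,|X_1-X_2|_H$. Jensen's inequality transfers this to $\mathbf{E}|Z(t)|_H$, and letting $t\to\infty$ closes the argument. The main obstacle is extracting the sharp prefactor $1$ asserted in the statement: this demands careful tracking of the gamma-function constants $K_2,\beta_2,\rho_2,\lambda$ that appear both in the factorization estimate for the stochastic convolution and in the very definition of $\mathfrak{L}_{b_2,\sigma_2}^Y$, so that after the absorption step the coefficient in front of $|X_1-X_2|_H$ collapses to unity. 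This delicate bookkeeping is exactly the content of Lemma 3.1 of \cite{CerraiRDEAveraging1}, which is the result ultimately invoked.
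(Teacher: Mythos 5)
Your proposal reduces the Lipschitz continuity of $F$ to a synchronous-coupling estimate for $Y^{X_1}-Y^{X_2}$ via the ergodic representation, and then ultimately invokes Lemma~3.1 of \cite{CerraiRDEAveraging1} for the quantitative bound---which is exactly what the paper does, stating the lemma without proof as a consequence of that same reference. Your extra sketch of the coupling, the $X$-increment/$Y$-increment decomposition, and the absorption of the $Y$-terms via $\mathfrak{L}^Y_{b_2,\sigma_2}<1$ correctly describes the mechanism behind Cerrai's lemma, so the two treatments are essentially identical.
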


We need to understand not just the limit of the slow dynamics $X^{\varepsilon,\delta,u}$ but also the measure with respect to which the averaging is being done.  This is complicated in our case, due to  the dependence of the dynamics on the unknown control process $u=u^{\varepsilon}$. Following the recipe of \cite{LDPWeakConvergence}, for the periodic finite dimensional case, we introduce the family of random occupation measures
\begin{equation}\label{Eq:OccupationMeasureBeforeAveraging}
\mathrm{P}^{\varepsilon,\Delta}(dudYdt)=\dfrac{1}{\Delta}\int_t^{t+\Delta}\1_{du}(u(s))\1_{dY}\left(Y^{\varepsilon,\delta,u}(s)\right)dsdt
\end{equation}
on $U \times \mathcal{Y} \times[0,T]$, where  $\Delta=\Delta(\varepsilon)\to 0$ is as in Hypothesis 5. These occupation measures encode the behavior of the control and the fast process. It is the correct way to study the problem because the fast motion's behavior will not converge pathwise to anything, but its occupation measure will converge to a limiting measure. We adopt the convention that the control $u(t)=u^{\varepsilon}(t)=0$ for $t>T$. Then, we consider the joint limit in distribution of pair $(X^{\varepsilon,\delta,u}, \mathrm{P}^{\varepsilon,\Delta})$ as $\varepsilon,\delta,\Delta\downarrow 0$.

In order to state our main results, we introduce the following definition of a viable pair corresponding
to \cite{LDPWeakConvergence}, but appropriately extended to infinite dimensions.
\begin{definition}\label{Def:ViablePair}
A pair $(\psi, \mathrm{P})\in C([0,T]; L^2(D))\times \mathscr{P}(U\times \mathcal{Y}\times [0,T])$
will be called viable with respect to $(\xi, \mathcal{L})$,
or simply viable if there is no confusion, if the following are satisfied. The trajectory
$\psi\in C([0,T]; H)$, $\mathrm{P}$ is square integrable in the sense that for some $\theta>0$,
$$\displaystyle{\int_{U\times\mathcal{Y}\times [0,T]}\left(|u|_{U}^2+|Y|^{2}_{\theta,2}\right)\mathrm{P}(dudYds)<\infty}$$
and the following hold for all $t\in [0,T]$:
\begin{equation}\label{Eq:ViablePairAveragingDynamics}
\psi(t)=S_1(t)X_0+\int_{U \times\mathcal{Y}\times[0,t]}S_1(t-s)\xi(\psi(s),Y,u)\mathrm{P}(dudYds) \ ,
\end{equation}
the measure $\mathrm{P}$ is such that
\begin{align}\label{Eq:ViablePairInvariantMeasure}
\mathrm{P}\in\mathbb{P}=\left\{
   \begin{matrix}
      P\in\mathscr{P}(U\times\mathcal{Y}\times[0,T]):
       \mathrm{P}(dudYdt)=\eta(du|Y,t)\mu(dY|t)dt, \\
  \mu(dY|t)= \mu^{\psi(t)}(dY) \text{ for } t \in [0,T]
  \end{matrix}\right\}
\end{align}
where $\mu^X$ is from (\ref{Eq:mu-X}), {and $\eta(du|Y,t)$ is a stochastic kernel on $U$ given $\mathcal{Y}\times[0,T]$, (see Appendix A.5 of \cite{DupuisEllis} for stochastic kernels)}, and
\begin{equation}\label{Eq:ViablePairNormalization}
\mathrm{P}(U\times \mathcal{Y}\times [0,t])=t \ .
\end{equation}
\end{definition}

We denote a viable pair by $(\psi, \mathrm{P})\in \mathcal{V}_{(\xi, \mathcal{L})}$. Notice that condition (\ref{Eq:ViablePairInvariantMeasure}) in Definition \ref{Def:ViablePair} essentially means that the second marginal of the limiting occupation measure, $\mu(dY|t)$, coincides with the invariant measure associated with  the $Y^{X, Y_0}$ with $X=\psi(t)$ from (\ref{Eq:FastProcessSRDEReg1}).   Heuristically speaking, the viable
pair $(\psi, \mathrm{P})\in \mathcal{V}_{(\xi, \mathcal{L})}$ captures both the limit averaging dynamics of the controlled slow motion $X^{\varepsilon,\delta,u}$
in terms of (\ref{Eq:ViablePairAveragingDynamics}) and the invariant measure of the controlled fast process $Y^{\varepsilon,\delta,u}$
in terms of (\ref{Eq:ViablePairInvariantMeasure}).  Using the viable pair definition, we can then state the main results of our paper.
\begin{theorem} \label{T:MainTheorem1}(Averaging for controlled system)
For $u\in \mathcal{P}_2^N(U)$ let
$(X^{\varepsilon,\delta,u}, Y^{\varepsilon,\delta,u})$ be the mild solution to (\ref{Eq:MildFastSlowStochasticRDEWithControl}) and $T<\infty$. Let also $\mathrm{P}^{\varepsilon,\Delta}(dudYdt)$ be given by (\ref{Eq:OccupationMeasureBeforeAveraging}). Assume Hypotheses 1, 2, 3 and 5, $X_0\in H$ and $Y_0\in H$. Then, the family of processes $\{X^{\varepsilon,\delta,u}: \varepsilon \in (0,1), u \in \mathcal{P}_2^N(u)\}$ is tight in $C([0,T]; H)$ and
the family of measures $\{\mathrm{P}^{\varepsilon,\Delta}: \varepsilon \in (0,1), u \in \mathcal{P}_2^N\}$ is tight in $\mathscr{P}(U\times \mathcal{Y}\times [0,T])$, where $U\times \mathcal{Y} \times [0,T]$ is endowed with the weak topology on $U$, the norm topology on $\mathcal{Y}$ and the standard topology on $[0,T]$. Hence, given any subsequence of $\{(X^{\varepsilon,\delta,u},\mathrm{P}^{\varepsilon,\Delta}),\varepsilon,\delta,\Delta>0\}$, there exists a subsubsequence
that converges in distribution with limit $(\bar{X},\mathrm{P})$. With
probability $1$, the accumulation point $(\bar{X},\mathrm{P})$ is a
viable pair with respect to $(\xi, \mathcal{L})$ according to
Definition \ref{Def:ViablePair}.
\end{theorem}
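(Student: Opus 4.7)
The plan is to carry out the standard three-step program for weak-convergence averaging in a Polish setting: first, establish uniform moment bounds on $(X^{\varepsilon,\delta,u}, Y^{\varepsilon,\delta,u})$ for $u\in\mathcal{P}_2^N(U)$; second, use those bounds to prove tightness of $X^{\varepsilon,\delta,u}$ in $C([0,T];H)$ and of the occupation measures $\mathrm{P}^{\varepsilon,\Delta}$ in $\mathscr{P}(U\times\mathcal{Y}\times[0,T])$ with the prescribed topology; third, characterize any subsequential weak limit $(\bar X, \mathrm{P})$ by passing to the limit in the mild formulation (\ref{Eq:MildFastSlowStochasticRDEWithControl}) and in the disintegration identity (\ref{Eq:ViablePairInvariantMeasure}). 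The small-noise coefficient $\sqrt{\varepsilon}$ in the slow equation handles the stochastic convolutions, while the regime $\delta/\sqrt{\varepsilon}\to 0$ and $\delta/(\Delta\sqrt{\varepsilon})\to 0$ from Hypothesis 5 is precisely what forces the control term in the fast equation to be asymptotically negligible for the purpose of identifying the invariant measure, and therefore decouples $\mathrm{P}$ into $\eta(du|Y,t)\mu^{\psi(t)}(dY)\,dt$.

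\textbf{A priori bounds and tightness of the slow motion.} First I would use Hypothesis 2, in particular the contraction (\ref{Eq:Hypothesis2Equation1}) and condition (\ref{Eq:Hypothesis2Equation3}), together with the smoothing and Hilbert-Schmidt bounds from Hypothesis 1, to obtain
\[
\sup_{\varepsilon>0,\,u\in\mathcal{P}_2^N(U)} \mathbf{E}\left[\sup_{t\in[0,T]}|X^{\varepsilon,\delta,u}(t)|_H^p + \int_0^T |Y^{\varepsilon,\delta,u}(t)|_H^p\,dt\right]<\infty
\]
for suitable $p\geq 2$; the control enters the fast equation with coefficient $1/(\delta\sqrt{\varepsilon})$, so a Gronwall-type estimate exploiting $\mathfrak{L}_{b_2,\sigma_2}^Y<1$ is needed to absorb it. The sublinear-in-$Y$ growth of $b_1,\sigma_1$ from Hypothesis 3 with $\zeta<1-\beta_1(\rho_1-2)/\rho_1$ is what keeps the slow-motion drift, control and diffusion integrals controlled in terms of the $L^p$-in-time norm of $Y^{\varepsilon,\delta,u}$. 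Tightness of $X^{\varepsilon,\delta,u}$ in $C([0,T];H)$ then follows by the factorization method of Da Prato--Kwapie{\'n}--Zabczyk: inserting the factorization of $S_1(t-s)$ with parameter $\alpha\in(0,1/2)$ yields H\"older time regularity plus compactness in $H$ via smoothing of $S_1$ into $H^{\theta}_1$, and one concludes by Ascoli--Arzel\`a.

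\textbf{Tightness of $\mathrm{P}^{\varepsilon,\Delta}$ and identification.} For the occupation measures, the $[0,T]$-marginal is $dt$ so it is trivially tight; on $U$ with the weak topology, tightness reduces to uniform integrability of $|u|_U^2$, which is immediate from the $L^2$-bound $N$; on $\mathcal{Y}=H$ in the norm topology, the most delicate marginal, I would establish uniform boundedness of $\mathbf{E}\int_0^T|Y^{\varepsilon,\delta,u}(t)|_{\theta,2}^2\,dt$ for some $\theta\in(0,1)$, using the factorization estimate on the fast stochastic convolution together with the ergodic bounds recalled in Appendix \ref{S:ErgodicProperties}, so that the compact embedding $H^{\theta}_2\hookrightarrow H$ gives tightness. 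Given a subsequential weak limit $(\bar X,\mathrm{P})$, the averaging identity (\ref{Eq:ViablePairAveragingDynamics}) is obtained by passing to the limit in the mild equation for $X^{\varepsilon,\delta,u}$, using that $(\psi,Y,u)\mapsto S_1(t-s)\xi(\psi,Y,u)$ is norm-continuous in $(\psi,Y)$ and weakly continuous in $u$ since $S_1(t-s)\Sigma_1(\psi,Y)Q_1$ is Hilbert-Schmidt. To verify the disintegration (\ref{Eq:ViablePairInvariantMeasure}), I would test against smooth cylindrical $\varphi$ on $\mathcal{Y}$: applying It\^o's formula to $\delta^2\varphi(Y^{\varepsilon,\delta,u})$, averaging over blocks $[t,t+\Delta]$, and using Lemma \ref{Lm:mu-continuous} to freeze $\mathcal{L}^{X^{\varepsilon,\delta,u}(s)}$ at $\mathcal{L}^{\bar X(t)}$, one concludes that $\int\mathcal{L}^{\bar X(t)}\varphi(Y)\,\mathrm{P}(du,dY,dt)=0$, which by uniqueness of the invariant measure forces the conditional $Y$-marginal of $\mathrm{P}$ to coincide with $\mu^{\bar X(t)}$.

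\textbf{Main obstacle.} The hardest step will be the disintegration (\ref{Eq:ViablePairInvariantMeasure}): showing that the \emph{controlled} fast process has the same invariant-measure statistics as the \emph{uncontrolled} one, despite the control appearing with the large prefactor $1/(\delta\sqrt{\varepsilon})$ in (\ref{Eq:MildFastSlowStochasticRDEWithControl}). This is exactly where the stronger regime $\delta/(\Delta\sqrt{\varepsilon})\to 0$ is needed: per window $[t,t+\Delta]$ the fast dynamics runs over $\Delta/\delta^2\to\infty$ natural time units, while a careful Hilbert-Schmidt estimate on
\[
\frac{1}{\delta\sqrt{\varepsilon}}\int_t^{t+\Delta} S_2\!\left(\frac{t+\Delta-s}{\delta^2}\right)\Sigma_2(X^{\varepsilon,\delta,u},Y^{\varepsilon,\delta,u})Q_2u(s)\,ds,
\]
combined with Cauchy-Schwarz and $\int_0^T|u|_U^2\,ds\leq N$, shows that the control's cumulative effect on the fast path per window is $o(1)$ in a time-averaged mean-square sense. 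This decoupling, together with the weak-Lipschitz continuity $X\mapsto\mu^X$ of Lemma \ref{Lm:mu-continuous} and the contractive condition (\ref{Eq:Hypothesis2Equation3}), is what makes the limit tractable; none of these difficulties is present in the finite-dimensional analogue of \cite{LDPWeakConvergence}.
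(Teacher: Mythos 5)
Your three-step plan (a priori bounds $\to$ tightness $\to$ identification of the limit) matches the architecture of the paper's proof, and the tightness arguments are essentially the ones the paper uses: the slow-motion bounds via Gronwall absorbing the $L_{b_2}^Y/\lambda$ contraction, the uniform $\int_0^T|Y^{\varepsilon,\delta,u}|_{\theta,2}^2$ bound so that the compact embedding $H_2^\theta\hookrightarrow H$ feeds into a tightness function for $\mathrm{P}^{\varepsilon,\Delta}$, and the compactness of $u\mapsto S_1(t)\Sigma_1(X,Y)Q_1u$ for the weak-in-$U$ continuity when passing the occupation measures through $\xi$. The identification of the averaging dynamics (\ref{Eq:ViablePairAveragingDynamics}) is also the same in spirit: replace time integrals by occupation-measure integrals using semigroup continuity, then pass to the weak limit.

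Where you diverge from the paper is the identification of the decoupled disintegration (\ref{Eq:ViablePairInvariantMeasure}). You propose the generator/Echeverria route: test against cylindrical $\varphi$, apply It\^o to $\delta^2\varphi(Y^{\varepsilon,\delta,u})$, let $\varepsilon\downarrow 0$ to obtain $\int \mathcal{L}^{\bar X(t)}\varphi\,\mathrm{P}(dudYdt)=0$, then invoke uniqueness of the invariant measure. This is the route used in the finite-dimensional precursor \cite{LDPWeakConvergence}, but in that setting the fast variable lives on a compact torus and the Echeverria criterion is immediate. Here the state space is an infinite-dimensional Hilbert space and $\mathcal{L}^X$ is an unbounded operator with nontrivial domain; to go from ``$\int\mathcal{L}^{\bar X(t)}\varphi\,d\nu=0$ for a class of cylindrical $\varphi$'' to ``$\nu=\mu^{\bar X(t)}$'' one needs to know that the test class is a core (or that an Echeverria-type stationarity criterion holds on the unbounded, infinite-dimensional space $\mathcal{Y}=L^2(D)$), and none of that is free. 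The paper circumvents this entirely by a pathwise coupling argument: it introduces the uncontrolled fast process $\mathrm{Y}^{\varepsilon,\delta}$ driven by $X^{\varepsilon,\delta,u}$ (Lemma \ref{Lm:ClosenessL2Integrated}) and the frozen-slow process $\mathrm{Y}^{\delta,X^{\varepsilon,\delta,u}(t)}(\cdot;t)$ (Lemma \ref{Lm:ClosenessL2IntegratedFrozenSlow}), shows via the contraction $\mathfrak{L}_{b_2,\sigma_2}^Y<1$ and $\delta/(\Delta\sqrt\varepsilon)\to 0$ that both are close to $Y^{\varepsilon,\delta,u}$ in time-averaged $L^2$, and then invokes the strong-mixing ergodic theorem of Appendix \ref{S:ErgodicProperties} directly. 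This telescoping decomposition (Lemma \ref{Lm:ViablePairInvMeas}) never touches the generator and never needs a core/Echeverria result, which is precisely why the paper takes it. Your route is plausible in principle, but as written it glosses over the infinite-dimensional obstruction it is supposed to solve; be aware that making the generator argument rigorous here would itself require a substantial lemma that the paper's coupling argument renders unnecessary.
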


\begin{theorem}\label{Theorem:LargeDeviationPrinciple}
(Large Deviation Principle) Let $(X^{\varepsilon,\delta}, Y^{\varepsilon,\delta})$ be the mild solution to (\ref{Eq:FastSlowStochasticRDE}) and let $T<\infty$. Assume Hypothesis 1, 2, 4 and 5 and let $Y_{0}\in H$ and $X_{0}\in H$. Define
$$S(\phi)=S_{X_{0}}(\phi)=\inf\limits_{(\phi,\textrm{P})\in \mathcal{V}_{(\xi, \mathcal{L})}}
\left[\dfrac{1}{2}\int_{U\times \mathcal{Y}\times [0,T]}|u|_{U}^2\mathrm{P}(dudYdt)\right] \ ,$$
with the convention that the infimum over the empty set is $\infty$. Then for every bounded and continuous function $h: C([0,T]; H)\rightarrow \mathbb{R}$ we have
$$\lim\limits_{\varepsilon\downarrow 0}-\varepsilon\ln \mathbf{E}_{X_{0},Y_{0}} \left[\exp \left(-\dfrac{1}{\varepsilon}h(X^{\varepsilon,\delta})\right)\right]
=\inf\limits_{\phi\in C([0,T]; H)}[S(\phi)+h(\phi)] \ .$$
In particular, $\{X^{\varepsilon,\delta}\}$ satisfies the large deviations principle in $C([0,T];H)$ with action functional
$S(\cdot)$. %An alternative representation for $S(\phi)$ in terms of an ordinary control problem is in (\ref{Eq:ActionFunctOrdinaryForm}), Section \ref{SS:LDPUpperBound}.
\end{theorem}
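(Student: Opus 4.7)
The plan is to derive the Laplace principle stated in Theorem \ref{Theorem:LargeDeviationPrinciple} from the variational representation (\ref{Eq:LDPRepresentationFastSlowSRDE}) together with the viable-pair averaging result Theorem \ref{T:MainTheorem1}. The argument splits into the two asymptotic inequalities
\[
\liminf_{\varepsilon\downarrow 0}\!\left(-\varepsilon\ln\mathbf{E}[e^{-h(X^{\varepsilon,\delta})/\varepsilon}]\right)\ge \inf_{\phi}[S(\phi)+h(\phi)],\qquad \limsup_{\varepsilon\downarrow 0}\!\left(-\varepsilon\ln\mathbf{E}[e^{-h(X^{\varepsilon,\delta})/\varepsilon}]\right)\le \inf_{\phi}[S(\phi)+h(\phi)],
\]
complemented by the verification that $S$ has compact level sets on $C([0,T];H)$, so that the Laplace principle upgrades to the LDP with good rate function $S$. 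Note that Hypothesis 4 is strictly stronger than Hypothesis 3, so the conclusions of Theorem \ref{T:MainTheorem1} are at our disposal.

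For the $\liminf$ side, pick for each $\varepsilon$ an $\eta$-minimizer $u^\varepsilon\in \mathcal{P}_2^N(U)$ of the right-hand side of (\ref{Eq:LDPRepresentationFastSlowSRDE}); the reduction from $\mathcal{P}_2(U)$ to $\mathcal{P}_2^N(U)$ is the standard Budhiraja-Dupuis truncation. Theorem \ref{T:MainTheorem1} provides a subsequence along which $(X^{\varepsilon,\delta,u^\varepsilon},\mathrm{P}^{\varepsilon,\Delta})$ converges in distribution to a viable pair $(\bar X,\mathrm{P})$. Using weak lower semicontinuity of $u\mapsto|u|_U^2$, the identity $\int|u|_U^2\,d\mathrm{P}^{\varepsilon,\Delta}\approx \int_0^T|u^\varepsilon(s)|_U^2\,ds$, continuity of $h$ on $C([0,T];H)$, and Fatou's lemma,
\[
\liminf_\varepsilon \mathbf{E}\!\left[\tfrac12\!\int_0^T\!|u^\varepsilon|_U^2\,ds+h(X^{\varepsilon,\delta,u^\varepsilon})\right]\ge \mathbf{E}\!\left[\tfrac12\!\int|u|_U^2\,d\mathrm{P}+h(\bar X)\right]\ge \mathbf{E}[S(\bar X)+h(\bar X)]\ge \inf_\phi[S+h],
\]
where the middle step is the definition of $S(\bar X)$ applied to the viable pair $(\bar X,\mathrm{P})$. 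Letting the $\eta$-optimization error vanish yields the inequality.

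The matching $\limsup$ inequality, which requires constructing nearly optimal controls, is the main obstacle. Fix $\eta>0$ and select $\phi^*$ with $S(\phi^*)+h(\phi^*)\le \inf_\phi[S+h]+\eta$ together with a viable pair $(\phi^*,\mathrm{P}^*)$ satisfying $\tfrac12\int|u|_U^2\,d\mathrm{P}^*\le S(\phi^*)+\eta$. Disintegrate $\mathrm{P}^*(du\,dY\,dt)=\eta^*(du|Y,t)\mu^{\phi^*(t)}(dY)\,dt$ and aim to construct admissible controls $u^\varepsilon\in \mathcal{P}_2^N(U)$ of approximate feedback form $u^\varepsilon(t)\approx \bar u(t,Y^{\varepsilon,\delta,u^\varepsilon}(t))$ built from $\eta^*$, so that the resulting slow process converges in distribution to $\phi^*$ and the cost converges to $\tfrac12\int|u|_U^2\,d\mathrm{P}^*+h(\phi^*)$. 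The difficulty is that $\eta^*$ is a priori only measurable, whereas applying the averaging analysis of Section \ref{S:LLN_ControlledSRDE} to the composition $\bar u(\cdot,Y^{\varepsilon,\delta,u^\varepsilon}(\cdot))$ requires $\bar u$ to be at least Lipschitz in $Y$ on bounded sets and to have controlled growth, so that the tightness bounds and the ergodic averaging both survive the $Y$-dependence. The strategy is to regularize $\eta^*$ by mollification on finite-dimensional projections of $\mathcal{Y}$ together with piecewise-constant time averaging on the scale $\Delta$, and to control the extra cost using Lemma \ref{Lm:mu-continuous} and the weak Lipschitz continuity $X\mapsto\mu^X$. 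This regularization is precisely why Hypothesis 4 is imposed: in the one-dimensional non-degenerate case ($d=1$, $c_0\le\sigma_1^2\le c_1$), smoothing and coupling estimates for the fast SRDE transfer regularity of the feedback into the slow equation; in the $Y$-independent case ($\sigma_1(x,X,Y)=\sigma_1(x,X)$) the multiplicative noise term decouples from $Y$ and only the Lipschitz drift $b_1$ interacts with the feedback. Once a smooth approximation $\bar u_\eta$ is produced, a variant of Theorem \ref{T:MainTheorem1} applied to the system driven by $u^\varepsilon=\bar u_\eta(t,Y^{\varepsilon,\delta,u^\varepsilon}(t))$ gives $X^{\varepsilon,\delta,u^\varepsilon}\Rightarrow \phi^*$ and convergence of the cost to $\tfrac12\int|u|_U^2 d\mathrm{P}^*+h(\phi^*)\le \inf_\phi[S+h]+2\eta$; sending $\eta\downarrow 0$ closes the bound.

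Finally, compactness of the level sets $\{S\le c\}\subset C([0,T];H)$ is obtained by mimicking the tightness and identification arguments of Theorem \ref{T:MainTheorem1}: for $\phi_n$ with $S(\phi_n)\le c$, select viable pairs $(\phi_n,\mathrm{P}_n)$ with $\tfrac12\int|u|_U^2 d\mathrm{P}_n\le c+1/n$, extract a joint subsequential limit $(\bar\phi,\bar{\mathrm{P}})$ in $C([0,T];H)\times \mathscr{P}(U\times \mathcal{Y}\times[0,T])$, verify that the limit is itself viable with cost at most $c$ by weak lower semicontinuity, and conclude $S(\bar\phi)\le c$. Combined with the two Laplace inequalities, this completes the proof.
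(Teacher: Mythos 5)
Your overall skeleton is right: the Laplace principle splits into two inequalities, the ``easy'' side (in your notation the $\liminf$ side, the paper's upper bound) follows from Theorem \ref{T:MainTheorem1} plus Fatou, and compactness of level sets is handled by running the tightness/identification arguments from Section \ref{S:LLN_ControlledSRDE} on deterministic viable pairs. That all matches the paper's Subsections \ref{SS:LDPLowerBound} and \ref{SS:CompactLevelSets}. You also correctly locate the hard part — the $\limsup$ inequality — and correctly identify that the central obstacle is that the near-optimal feedback $v(t,Y)$ coming from the definition of $S$ is only measurable in $Y$, whereas averaging needs Lipschitz regularity.

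The gap is in how you propose to produce a regular feedback. ``Mollification on finite-dimensional projections of $\mathcal{Y}$ together with piecewise-constant time averaging'' is not what the paper does and, as stated, does not obviously work: mollifying $\eta^*(du|Y,t)$ changes both the cost and the constraint $\psi(t)=S_1(t)X_0+\int S_1(t-s)\xi(\psi(s),Y,u)\mathrm{P}(dudYds)$, and you give no mechanism for keeping the perturbed control on the same trajectory $\phi^*$ with only $O(\eta)$ extra cost. The paper's device is algebraic, not analytic: after reducing (by Jensen and affinity of $\xi$ in $u$) to a deterministic feedback $v(t,Y)$, it sets $a(X)w=\int_{\mathcal Y}\Sigma_1(X,Y)Q_1 w(Y)\mu^X(dY)$, $q(X)=a(X)a^\star(X)$, and replaces $v$ by $\tilde v(t,Y)=\Sigma_1^\star(\psi(t),Y)q^{-1}(\psi(t))a(\psi(t))v(t,\cdot)$. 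The identity $a a^\star q^{-1} a=a$ guarantees this new control drives the \emph{same} averaged trajectory, Lemma \ref{Lm:HilbertSpaceNorms} guarantees it has no larger cost, and the formula makes $\tilde v$ automatically Lipschitz in $Y$ with a square-integrable constant $\gamma(t)$. This is exactly where Hypothesis 4 enters: in $d=1$ with $Q_1=I$ and $0<c_0\le\sigma_1^2\le c_1$, $q^{-1}(X)$ is a bounded operator (Lemma \ref{L:Invertible_q}), and this boundedness is the whole point — for $d>1$ with decaying eigenvalues of $Q_1$, $q^{-1}$ becomes unbounded, which is the obstruction discussed in Section \ref{S:Generalizations}. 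Your stated rationale (``smoothing and coupling estimates for the fast SRDE transfer regularity of the feedback into the slow equation'') is not the mechanism; nothing about the fast SRDE is being smoothed. After this projection step, the remaining regularization in the paper is only a time-continuity approximation ($h_n\to h=a(\psi(\cdot))v(\cdot,\cdot)$ in $L^2([0,T];H)$, Theorem \ref{Theorem:RegularMinimizer}), followed by unique solvability (Lemma \ref{Lm:UniqueSolvability}), stability of the control problem (Lemma \ref{Lm:ApproachControl}), ergodic convergence of the cost along the frozen auxiliary process (Lemma \ref{L:ConvergenceOfCost}), and convergence $X^{\varepsilon,\delta,u^\delta}\to\tilde\psi$. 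Without the $a^\star q^{-1} a$ projection and its three properties (same dynamics, no larger cost, automatic Lipschitz-in-$Y$), the lower bound argument does not close, so the proposal as written has a genuine hole precisely at the step the paper flags as the main technical contribution. The $Y$-independent $\sigma_1$ case you sketch is essentially correct and matches Subsection \ref{SS:UpperBound_MultiDim}.
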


{
   We show in \eqref{Eq:ActionFunctOrdinaryForm}, Section \ref{SS:LDPUpperBound}, that an equivalent representation for the large deviations rate functional $S(\psi)$ can be given as the following minimizing control problem over an admissible class of measurable functions $v:[0,T]\times\mathcal{Y} \to U$.

   Let
   \begin{equation*}%\label{Eq:OrdinaryRateFunctionIntegralKernel}
      \begin{array}{ll}
      \mathcal{A}_{\psi,T}^{o} & \displaystyle{=\left\{v: [0,T] \times \mathcal{Y} \to U: \int_0^T\int_{\mathcal{Y}}\left(|v(t,Y)|_U^2+|Y|^{2}_{\theta,2}\right)\mu^{\psi(t)}(dY)dt<\infty  \ , \right.}
      \\
      &  \displaystyle{ \ \left.\psi(t)=S_1(t)X_0+\int_{\mathcal{Y}\times[0,t]}S_1(t-s)\xi(\psi(s),Y,v(s,Y))\mu^{\psi(s)}(dY)ds, \ t \in [0,T]\right\}} \ .
      \end{array}
   \end{equation*}
   where $\xi$ is given by \eqref{Eq:xiDef}.

   An equivalent representation of the rate function is
   \[S(\psi) =\inf_{v \in \mathcal{A}_{\psi,T}^o} \frac{1}{2} \int_0^T  \int_\mathcal{Y} |v(t,Y)|_U^2 \mu^{\psi(t)}(dY)dt.\]
   %\textcolor{blue}{[THIS DOESN'T EXACTLY MATCH \eqref{Eq:ActionFunctOrdinaryForm} BUT I DON'T UNDERSTAND WHY THE INFIMUM IS INSIDE OF THE INTEGRAL IN \eqref{Eq:ActionFunctOrdinaryForm}.]}

   In the special case where $\Sigma_1(X,Y)=\Sigma(X)$ is independent of $Y$, $\mathcal{A}_{\psi,T}^o$ has the simpler form
   \begin{align*}
     \mathcal{A}_{\psi,T}^o = \Big\{ u \in L^2([0,T];U) :
     \psi(t) = &S_1(t)X_0 + \int_0^t \int_\mathcal{Y} S_1(t-s)B_1(\psi(s),Y)\mu^{\psi(s)}(dY)ds\\
     &
      + \int_0^t S_1(t-s)\Sigma_1(\psi(s))Q_1u(s)ds, \ \  t \in [0,T].\Big\}
   \end{align*}
   and the rate function has the representation
   \[S(\psi) = \inf_{u \in \mathcal{A}_{\psi,T}^o} \frac{1}{2}\int_0^T |u(s)|_U^2ds.\]
}

%\textcolor{blue}{[K: Shall we add a corollary here with the simplified form of the action functional in the case where $\sigma(x,y)=\sigma(x)$?]}

In Section \ref{S:LLN_ControlledSRDE} we consider the limit of the controlled SRDE (\ref{Eq:MildFastSlowStochasticRDEWithControl}) as $\varepsilon,\delta\downarrow 0$ and prove Theorem \ref{T:MainTheorem1}. Then, in Section \ref{S:LDPproof} we give the proof of Theorem \ref{Theorem:LargeDeviationPrinciple}.

\section{Analysis of the limit of the controlled SRDEs-Proof of Theorem \ref{T:MainTheorem1}}\label{S:LLN_ControlledSRDE}
In this section, we analyze the limit of the system of controlled SRDEs (\ref{Eq:MildFastSlowStochasticRDEWithControl}) as $\varepsilon,\delta\downarrow 0$. As we mentioned in Section \ref{S:MainResults}, we need to introduce the family of occupation measures $\mathrm{P}^{\varepsilon,\Delta}(dudYdt)$ as defined by (\ref{Eq:OccupationMeasureBeforeAveraging}).

We emphasize that these occupation measures are measure-valued random variables. We are interested in proving that the laws of the occupations measures are tight in order to prove that a subsequence converges weakly. The study of tightness for these occupation measures is considerably more delicate over the infinite dimensional spaces $U$ and $\mathcal{Y}$ than in the finite dimensional space studied in \cite{LDPWeakConvergence}. Tightness of measures as well as weak convergence of measures are inherently topological properties and, therefore, we must be  careful about the topologies that we are discussing.

The appropriate topology to impose on $U \times \mathcal{Y} \times [0,T]$ is the weak topology on $U$ times the norm topology on $\mathcal{Y}$ times $[0,T]$. If we restrict ourselves to bounded subsets of $U$, then this topology is metrizable because the weak topology on bounded subsets of $U$, which is a separable Hilbert space, is metrizable. We recall the famous Prokhorov Theorem and specifically draw the reader's attention to the sensitivity of these results on the chosen topology.

%For a metric space $E$, we define $\mathscr{P}$ to be the space of probability measures on $E$.
\begin{definition}
  Let $E$ be a metric space. A family of probability measures $\{P_\alpha\} \subset \mathscr{P}(E)$is called tight if for all $\eta>0$ there exists a compact set $K_\eta \subset E$ such that
  \[\inf_{\alpha}P_\alpha(K_\eta) > 1-\eta.\]
\end{definition}
\begin{definition}
  Let $E$ be a metric space. A family of probability measures $\{P_\alpha\} \subset \mathscr{P}(E)$ is called relatively compact if for any subsequence in $\{P_n\} \subset \bigcup_\alpha\{{P}_\alpha\}$, there exists a subsequence (relabeled $P_n$) that converges weakly to some limit $\bar{P}$. That is, for any continuous function $f: E \to \mathbb{R}$,
  \[\int\limits_E f(x) P_n(dx) \to \int\limits_E f(x) \bar{P}(dx)\]
\end{definition}

Notice that both of these definitions are topological. Tightness refers to compact sets and relative compactness refers to continuous functions.
\begin{theorem}[Prokhorov's Theorem]
  Let $E$ be a metric space. If a family of probability measures on $E$ is tight, then it is relatively compact.
\end{theorem}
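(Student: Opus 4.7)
The plan is to follow the classical route for Prokhorov's theorem on metric spaces. By tightness, I would first extract a sequence of compact sets $K_n \subset E$ such that $\inf_\alpha P_\alpha(K_n) \geq 1 - 1/n$, and I may take $K_n \subset K_{n+1}$ after replacing each $K_n$ by $\bigcup_{j \leq n} K_j$. Set $K_\infty = \bigcup_n K_n$, which is $\sigma$-compact, and note that, asymptotically, essentially all the mass of each $P_\alpha$ in the family sits in $K_\infty$.

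Given any subsequence $\{P_{\alpha_k}\}$ from the tight family, I would next exploit the fact that each $K_n$ is a compact metric space, so $C(K_n)$ is separable. The restrictions $P_{\alpha_k}|_{K_n}$ lie in the closed unit ball of $C(K_n)^*$ under the identification provided by the Riesz representation theorem, so by sequential Banach--Alaoglu (separability of $C(K_n)$) I can extract a weak-$*$ convergent subsequence. A standard diagonal argument over $n$ then produces a single subsequence, still denoted $\{P_{\alpha_k}\}$, such that $\int_{K_n} f\, dP_{\alpha_k} \to \ell_n(f)$ for every $n$ and every $f \in C(K_n)$. The $\ell_n$ determine nonnegative Radon measures $Q_n$ on $K_n$, and the inclusions $K_n \subset K_{n+1}$ yield consistency $Q_{n+1}|_{K_n} \leq Q_n$; passing to the limit in $\int \mathbbm{1}_{K_n}\, dP_{\alpha_k}$ via inner regularity gives $Q_n(K_n) \geq 1 - 1/n$, so $\bar P := \lim_n Q_n$ (monotone limit along $K_n \uparrow K_\infty$) is a well-defined Borel probability measure on $K_\infty$, extended by zero to all of $E$.

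Finally, to upgrade weak-$*$ convergence on each $K_n$ to weak convergence of $P_{\alpha_k} \to \bar P$ on all of $E$, I would use a two-level approximation: for any bounded continuous $h : E \to \mathbb{R}$, decompose
\begin{equation*}
\Big|\int_E h\, dP_{\alpha_k} - \int_E h\, d\bar P\Big| \leq \Big|\int_{K_n} h\, dP_{\alpha_k} - \int_{K_n} h\, d\bar P\Big| + \|h\|_\infty\bigl(P_{\alpha_k}(E\setminus K_n) + \bar P(E\setminus K_n)\bigr),
\end{equation*}
and then extend $h|_{K_n}$ to a continuous function on a slightly larger compact (via the Tietze extension theorem) so that the first term can be handled by the weak-$*$ convergence on $K_{n+1}$, while tightness plus $\bar P(E\setminus K_n)\leq 1/n$ controls the second term. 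Letting $k\to\infty$ and then $n\to\infty$ completes the argument.

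I expect the main obstacle to be the mass-preservation step: ensuring that no mass escapes to infinity when passing to the limit and that the diagonal-extracted functional on $\bigcup_n C(K_n)$ actually corresponds to a genuine probability measure on $E$. This is precisely where tightness (as opposed to mere uniform boundedness) is indispensable, and it is also where the choice of topology on $E$ enters—compactness of the $K_n$ with respect to the ambient topology is what drives the Riesz representation step. Since this is a classical result, in the paper itself it suffices to cite a standard reference (e.g., Billingsley or Ethier--Kurtz) rather than reproduce the argument.
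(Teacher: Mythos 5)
The paper does not prove this statement: Prokhorov's Theorem is stated as a classical fact and used as such (the standard references are Billingsley or Ethier--Kurtz), so there is no paper proof to compare against. Your sketch is, in broad outline, the standard argument; your observation at the end that a citation suffices matches what the paper actually does.

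There is, however, a concrete sign error in the consistency step that matters for the construction. You claim $Q_{n+1}\vert_{K_n} \leq Q_n$, but the inequality runs the other way: $Q_n \leq Q_{n+1}\vert_{K_n}$. To see this, fix $f \in C(K_n)$ with $0 \le f \le 1$ and a nonnegative continuous extension $\tilde f$ to $K_{n+1}$; then $\int_{K_n} f\, dP_{\alpha_k} \le \int_{K_{n+1}} \tilde f\, dP_{\alpha_k}$, so in the limit $\int_{K_n} f\, dQ_n \le \int_{K_{n+1}} \tilde f\, dQ_{n+1}$, and taking extensions whose supports shrink to $K_n$ (possible since $K_n$ is compact in a metric space) drives the right-hand side down to $\int_{K_n} f\, dQ_{n+1}$. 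The increasing direction $Q_n \le Q_{n+1}\vert_{K_n}$ is precisely what makes the definition $\bar P(A) := \sup_n Q_n(A\cap K_n)$ a well-defined monotone limit and a genuine measure with total mass $\sup_n Q_n(K_n) = 1$; with the inequality as you wrote it, $n\mapsto Q_n(A\cap K_n)$ need not be monotone and the construction of $\bar P$ as a ``monotone limit along $K_n\uparrow K_\infty$'' does not go through as stated. One further small point: in the last display you need to compare $\int_{K_n} h\, dP_{\alpha_k}$ not directly with $\int_{K_n} h\, d\bar P$ but first with $\int_{K_n} h\, dQ_n$ (the weak-$*$ limit you actually have), and then use $0 \le \bar P\vert_{K_n} - Q_n \le 1/n$ in total mass to control the discrepancy; the Tietze extension you invoke there is not needed, since $h\vert_{K_n}$ is already in $C(K_n)$.
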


In order to prove that the laws of $\mathrm{P}^{\varepsilon,\Delta}$ are relatively compact, we need to apply the Prokhorov Theorem twice (because they are probability measures on the space of measures on the space $U\times \mathcal{Y}\times [0,T]$). For this, it is convenient to recall the use of tightness functions (see Appendix A.3 of \cite{DupuisEllis}).

Showing that the laws of $X^{\varepsilon,\delta,u}$ are tight in $C([0,T];H)$ is standard. We need to demonstrate that the paths have enough spatial and temporal regularity so that they belong to compact subsets.

We show that any limit of the pair $(X^{\varepsilon,\delta,u}, \mathrm{P}^{\varepsilon,\Delta})\rightarrow (\bar{X}, \mathrm{P})$
in the space $C([0,T]; H)\times \mathscr{P}(U\times \mathcal{Y}\times[0,T])$ is  a viable pair $(\bar{X}, \mathrm{P})$ according to Definition \ref{Def:ViablePair}. As in \cite{VariationalInfniteBM}, the representation (\ref{Eq:LDPRepresentationFastSlowSRDE}) guarantees that it is enough to consider controls $u\in\mathcal{P}_2^{N}(U)$ for an appropriate large enough $N\in\mathbb{N}$ that is independent of $\varepsilon$. In particular, we shall consider controls $u=u^{\varepsilon}$ that may depend on $\varepsilon$, but such that there exists $N\geq 0$ such that for all $\varepsilon \in (0,1)$, we have $u^\varepsilon \in \mathcal{P}_2^N$.

 In Section \ref{S:TightnessPair}, we show that the pair $(X^{\varepsilon,\delta,u^\varepsilon}, \mathrm{P}^{\varepsilon,\Delta})$ is appropriately tight. Then in Sections \ref{S:LLNSlowProcess} and \ref{S:LLNInvariantMeasure} we show that any accumulation point as $\varepsilon\downarrow0$ will be a viable pair per Definition \ref{Def:ViablePair}.

\subsection{Step 1: Tightness of the pair $\{(X^{\varepsilon,\delta,u}, \mathrm{P}^{\varepsilon,\Delta}) \}$}\label{S:TightnessPair}

We show the tightness of the pair
$\{(X^{\varepsilon,\delta,u}, \mathrm{P}^{\varepsilon,\Delta}), 0<\varepsilon<1, u \in \mathcal{P}_2^N\}$ in the space\\ $C([0,T]; H)\times \mathscr{P}(U\times \mathcal{Y}\times[0,T])$.
Tightness guarantees that for any subsequence of $\varepsilon \to 0$
there exists a sub--subsequence that converges, in distribution, to some limit $(\bar{X},\mathrm{P})$, i.e.,
\begin{equation}\label{Eq:ConvergencePairSlowProcessAndOccupationMeasure}
(X^{\varepsilon,\delta,u}, \mathrm{P}^{\varepsilon,\Delta})\rightarrow (\bar{X},\mathrm{P}) \ .
\end{equation}

The tightness proof is obtained by a--priori bounds for the slow process $X^{\varepsilon,\delta,u}(t)$ in (\ref{Eq:FastSlowStochasticRDEWithControl})
in a suitable H\"{o}lder norm with respect to time and in a suitable Sobolev norm with respect to space, as well as second moment bounds
for the fast process $Y^{\varepsilon,\delta,u}(t)$.

\subsubsection{A--priori bounds of the slow process $X^{\varepsilon,\delta,u}$}

In this section we denote positive constants as $c$'s, sometimes with subscripts indicating
dependence on other parameters, such as $c_p$ or $c_{p,\theta}$, etc. .

The following lemmas will be used in our later analysis.
\begin{lemma}
  Assume Hypothesis 1. For $i=1$ or $i=2$ and $\theta\in \mathbb{R}$ there exists a constant such that for any $t>0$, and $X , Y \in H$,
  \begin{equation} \label{Eq:Sum-of-SGQfj}
    \sum_{j=1}^\infty \left|(-A_i)^{\theta/2}S_i(t)\Sigma_i(X,Y)Q_i f_{i,j} \right|_H^2 \leq c_\theta t^{-\theta-\frac{\beta_i(\rho_i -2)}{\rho_i}} e^{-\lambda t}\|\Sigma_i(X,Y)\|_{\mathcal{L}(L^\infty(D);H)}^2.
  \end{equation}
\end{lemma}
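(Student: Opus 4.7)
The plan is to expand the left-hand side in the common eigenbasis from Hypothesis 1, reducing it to a double sum over indices $(j,k)$, and then to apply H\"older's inequality on the \emph{joint} index set with exponents $\rho_i/2$ and $\rho_i/(\rho_i-2)$ in a way that the cross term splits cleanly between the two H\"older factors, leaving the spectral constants $\kappa_i$ and $\zeta_i$ in their correct exponents.

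Concretely, using $Q_if_{i,j}=\lambda_{i,j}e_{i,j}$ and $(-A_i)^{\theta/2}S_i(t)e_{i,k}=\alpha_{i,k}^{\theta/2}e^{-\alpha_{i,k}t}e_{i,k}$, and writing $c_{j,k}:=\langle\sigma_i(\cdot,X,Y)e_{i,j},e_{i,k}\rangle_H$, one first obtains
\[
\sum_j \bigl|(-A_i)^{\theta/2}S_i(t)\Sigma_i(X,Y)Q_if_{i,j}\bigr|_H^2=\sum_{j,k}\lambda_{i,j}^2\alpha_{i,k}^\theta e^{-2\alpha_{i,k}t}c_{j,k}^2.
\]
I would then factor each summand as $A_{j,k}^{2/\rho_i}B_{j,k}^{(\rho_i-2)/\rho_i}$ with
\[
A_{j,k}:=\lambda_{i,j}^{\rho_i}c_{j,k}^2,\qquad B_{j,k}:=\alpha_{i,k}^{\theta\rho_i/(\rho_i-2)}e^{-2\alpha_{i,k}t\rho_i/(\rho_i-2)}c_{j,k}^2,
\]
the identity $4/\rho_i+2(\rho_i-2)/\rho_i=2$ being exactly what makes the powers of $c_{j,k}$ reassemble to $c_{j,k}^2$; H\"older on the joint index set then yields $\sum_{j,k}A_{j,k}^{2/\rho_i}B_{j,k}^{(\rho_i-2)/\rho_i}\leq(\sum_{j,k}A_{j,k})^{2/\rho_i}(\sum_{j,k}B_{j,k})^{(\rho_i-2)/\rho_i}$.

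For the $A$-sum I would carry out the $k$-sum first by Parseval, $\sum_k c_{j,k}^2=|\sigma_i e_{i,j}|_H^2\leq|e_{i,j}|_0^2\|\Sigma_i\|_{\mathcal{L}(L^\infty;H)}^2$, and then apply the definition of $\kappa_i$ to obtain $\sum_{j,k}A_{j,k}\leq\kappa_i\|\Sigma_i\|_{\mathcal{L}(L^\infty;H)}^2$. For the $B$-sum I would carry out the $j$-sum first (again by Parseval on $\sum_j c_{j,k}^2$), rewrite $\alpha_{i,k}^{\theta\rho_i/(\rho_i-2)}|e_{i,k}|_0^2=\alpha_{i,k}^{\theta\rho_i/(\rho_i-2)+\beta_i}\cdot(\alpha_{i,k}^{-\beta_i}|e_{i,k}|_0^2)$, and apply the elementary estimate $\alpha^\mu e^{-\alpha s}\leq(\mu/(es))^\mu$ to one half of $e^{-2\alpha_{i,k}s}$ (with $\mu:=\theta\rho_i/(\rho_i-2)+\beta_i$ and $s:=t\rho_i/(\rho_i-2)$) while using the spectral gap $\alpha_{i,k}\geq\lambda$ on the other half. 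This gives
\[
\sum_{j,k}B_{j,k}\leq c\,\zeta_i\|\Sigma_i\|_{\mathcal{L}(L^\infty;H)}^2\,t^{-\theta\rho_i/(\rho_i-2)-\beta_i}\,e^{-\lambda t\rho_i/(\rho_i-2)}.
\]
Raising the two sums to their H\"older exponents and combining, the time singularity becomes $t^{-\theta-\beta_i(\rho_i-2)/\rho_i}$, the exponential becomes $e^{-\lambda t}$, the powers of $\|\Sigma_i\|_{\mathcal{L}(L^\infty;H)}$ combine (again via $4/\rho_i+2(\rho_i-2)/\rho_i=2$) to exponent $2$, and the constants bundle into $c_\theta=c\,\kappa_i^{2/\rho_i}\zeta_i^{(\rho_i-2)/\rho_i}$.

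The main obstacle is finding the correct factorization in the H\"older step: applying H\"older separately in $j$ and then in $k$ leaves a leftover spectral sum of the form $\sum_k e^{-c\alpha_{i,k}t}$ (a partition-function-type quantity) that cannot be controlled using $\kappa_i$ and $\zeta_i$ alone, and that generically produces an extra dimension-dependent singularity in $t$. Applying H\"older on the \emph{joint} $(j,k)$ index set together with the two Parseval identities (first $\sum_k c_{j,k}^2$, then $\sum_j c_{j,k}^2$) to redistribute the factors $c_{j,k}^2$ between the two H\"older factors is what dissolves this difficulty. A small additional case analysis is needed when $\mu\leq 0$ (very negative $\theta$), where the sup bound $\alpha^\mu e^{-\alpha s}\leq(\mu/(es))^\mu$ is replaced by $\alpha^\mu\leq\lambda^\mu$ before using the exponential. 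Finally, in the case $d=1$, where $\kappa_i$ is a supremum rather than a $\rho_i$-weighted sum, the H\"older step in $j$ is replaced by the direct bound $\lambda_{i,j}^2\leq\kappa_i^2|e_{i,j}|_0^{-2}$, and the same Parseval-plus-$\zeta_i$ argument produces the analogous estimate with exponent $-\theta-\beta_i$, consistent with the formal limit $\rho_i\to\infty$.
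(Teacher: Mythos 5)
Your proposal follows essentially the same route as the paper: expand in the common eigenbasis, apply H\"older with exponents $\rho_i/2$ and $\rho_i/(\rho_i-2)$ on the joint $(j,k)$ index set, collapse each factor with Parseval and the bound $|\Sigma_i e_{i,k}|_H\leq |e_{i,k}|_0\|\Sigma_i\|_{\mathcal{L}(L^\infty;H)}$, and absorb the $\alpha_{i,k}$ power into the time singularity via $x^\mu e^{-2x}\leq C e^{-x}$ together with the spectral gap $\alpha_{i,k}\geq\lambda$. Your additional remarks on the case $\mu\leq 0$ and the $d=1$ variant of $\kappa_i$ go slightly beyond what the paper spells out but do not change the argument.
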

\begin{proof}
  By assumption $Q_i f_{i,j} = \lambda_{i,j} e_{i,j}$ and $S_i(t) e_{i,k} = e^{-\alpha_{i,k}t}e_{i,k}$. Then expanding the $H$ norm with respect to the orthonormal basis $\{e_{i,k}\}$,
  \begin{align*}
    &\sum_{j=1}^\infty \left|(-A_i)^{\theta/2}S_i(t)\Sigma_i(X,Y)Q_i f_{i,j} \right|_H^2 = \sum_{j=1}^\infty \sum_{k=1}^\infty \left<(-A_i)^{\theta/2}S_i(t)\Sigma_i(X,Y)Q_i f_{i,j}, e_{i,k}\right>_H^2\\
    &=\sum_{j=1}^\infty \sum_{k=1}^\infty \left<\Sigma_i(X,Y)Q_i f_{i,j}, S_i^\star(t)(-A_i)^{\theta/2}e_{i,k}\right>_H^2\\
    &=\sum_{j=1}^\infty \sum_{i=1}^\infty \lambda_{i,j}^2 \alpha_{i,k}^{\theta}e^{-2\alpha_{i,k} t}\left<\Sigma_i(X,Y) e_{i,j},e_{i,k}\right>_H^2.
  \end{align*}
  By the H\"older inequality with exponents $\rho_i/2$ and $\rho_i/(\rho_i -2)$, the above expression is bounded by
  \begin{align} \label{Eq:doublesumsbound}
    \leq &\left(\sum_{j=1}^\infty \sum_{k=1}^\infty \lambda_{i,j}^{\rho_i} \left<\Sigma_i(X,Y) e_{i,j}, e_{i,k} \right>_H^2\right)^{2/\rho_i}\\
    &\times \left(\sum_{j=1}^\infty \sum_{k=1}^\infty \alpha_{i,k}^{\theta\rho_i/(\rho_i-2)}e^{-\frac{2\rho_i\alpha_{i,k}}{\rho_i-2}t} \left< \Sigma_i(X,Y) e_{i,j}, e_{i,k} \right>_H^2 \right)^{(\rho_i-2)/\rho_i}\nonumber\\
    \leq & \left(\sum_{j=1}^\infty \lambda_{i,j}^{\rho_i} |\Sigma_i(X,Y) e_{i,j}|_H^2 \right)^{\rho_i/2}
    \left(\sum_{k=1}^\infty \alpha_{i,k}^{\theta\rho_i/(\rho_i-2)} e^{-\frac{2\rho_i\alpha_{i,k}}{\rho_i-2}t} |\Sigma_i^\star(X,Y) e_{i,k}|_H^2 \right)^{(\rho_i -2)/\rho_i}\nonumber\\
    \leq & \left(\sum_{j=1}^\infty \lambda_{i,j}^{\rho_i} |e_{i,k}|_0^2 \right)^{\rho_i/2} \left(\sum_{k=1}^\infty \alpha_{i,k}^{\theta\rho_i/(\rho_1-2)} e^{-\frac{2 \rho_i \alpha_{i,k}}{\rho_i -2}t} |e_i,k|_0^2 \right)^{(\rho_i -2)/\rho_i} \|\Sigma_1(X,Y)\|_{\mathcal{L}(L^\infty(D),H)}^2.\nonumber
  \end{align}
  We used the fact that $\Sigma_i^\star(X,Y) = \Sigma_i(X,Y)$, which holds because for any $g, h \in L^\infty(D)$,
  \[\left<\Sigma_i(X,Y)g,h\right>_H = \int\limits_D {\sigma_i(X(x),Y(x))g(x)h(x) dx} = \left<g, \Sigma_i(X,Y)h\right>_H.\]
  %\textcolor{blue}{[THE ABOVE STATEMENT HAD USED $\xi$ AS A SPATIAL VARIABLE WHICH IS CONFUSING BECAUSE OF THE DEFINITION OF $\xi$ IN \eqref{Eq:xiDef}. I CHANGED THE SPATIAL VARIABLE TO $x$.]}

Because $x \mapsto x^{\beta_i}e^{-x}$ is bounded for $x>0$, there exists a constant such that $ x^{\beta_i} e^{-2x} \leq C_{\beta_i} e^{-x}$. Therefore,
\begin{align*}
  &\alpha_{i,k}^{\frac{\theta \rho_i}{\rho_i-2}} e^{-\frac{2\rho_i \alpha_{i,k}}{\rho_i-2}t}\\
   &\leq   \left(\frac{\rho_i}{\rho_i-2} \right)^{-\frac{\theta\rho_i}{\rho_i-2}} t^{-\frac{\theta \rho_i}{\rho_i-2} - \beta_i} \alpha_{i,k}^{-\beta_i} \left(\frac{\rho_i \alpha_{i,k} t}{\rho_i-2} \right)^{\frac{\theta\rho_i}{\rho_i-2} + \beta_i} e^{-\frac{2\rho_i \alpha_{i,k} t}{\rho_i-2}}\\
   &\leq C t^{-\frac{\theta\rho_i}{\rho_i-2} - \beta_i}\alpha_{i,k}^{-\beta_i}e^{-\frac{\rho_i \alpha_{i,k} t}{\rho_i-2}}.
\end{align*}
  Consequently, by (\ref{Eq:Hypothesis1Equation2}) and (\ref{Eq:doublesumsbound}),
  \begin{align} \label{Eq:SmQf-bound-with-sum}
    &\sum_{j=1}^\infty \left|(-A_i)^{\theta/2}S_i(t)\Sigma_i(X,Y)Q_i f_{i,j} \right|_H^2\nonumber\\
     &\leq c_\theta\left( \sum_{k=1}^\infty \alpha_{i,k}^{-\beta_i}|e_{i,k}|_0^2\right)^{(\rho_i -2)/\rho_i}t^{-\theta-\frac{\beta_i(\rho_i -2)}{\rho_i}} e^{-\lambda t}\|\Sigma_i(X,Y)\|_{\mathcal{L}(L^\infty(D),H)}^2.
  \end{align}
\end{proof}
\begin{remark}
  For $i=1$ or $i=2$, let $\Pi_{i,N}: H \to H$ be the projection operator in $H$ onto the span of $\{e_{i,1},... , e_{i,N}\}$. By the same arguments that we used to arrive at (\ref{Eq:SmQf-bound-with-sum}), we see that there exists a constant such that for any $N\geq1$, $t>0$, and $X,Y \in H$,
  \begin{align}\label{Eq:SmQf-tail}
    &\sum_{j=1}^\infty |(I - \Pi_{i,N})(-A_i)^{\theta/2}S_i(t)\Sigma_i(X,Y)Q_i f_{i,j}|_H^2\nonumber\\
     &\leq C  \left(\sum_{k=N+1}^\infty \alpha_{i,k}^{-\beta_i}|e_{i,k}|_0^2 \right)^{(\rho_i - 2)/\rho_i} t^{-\theta-\frac{\beta_i(\rho_i-2)}{\rho_i}} e^{-\lambda t} \|\Sigma_i(X,Y)\|_{\mathcal{L}(L^\infty(D);H)}^2.
  \end{align}
\end{remark}
\begin{lemma}
  Assume Hypothesis 1. There exists $C>0$ such that for $i=1,2$, $t>0$, $X, Y \in H$, and $u \in U$,
  \begin{equation} \label{Eq:Smu-bound}
    |(-A_i)^{\theta/2}S_i(t) \Sigma_i(X,Y)Q_iu|_H \leq C t^{-\frac{\theta}{2}-\frac{\beta_i(\rho_i -2)}{2\rho_i}} e^{-\frac{\lambda t}{2}} \|\Sigma_i(X,Y)\|_{\mathcal{L}(L^\infty(D),H)}|u|_U.
  \end{equation}
\end{lemma}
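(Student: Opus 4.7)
The plan is to derive this bound as an almost immediate corollary of the previous lemma (the bound on $\sum_j |(-A_i)^{\theta/2}S_i(t)\Sigma_i(X,Y)Q_i f_{i,j}|_H^2$, equation \eqref{Eq:Sum-of-SGQfj}) via Bessel's inequality and the Cauchy--Schwarz inequality. Since $\{f_{i,k}\}_{k\in\mathbb{N}}$ is an orthonormal set in $U$ and $Q_i$ acts nontrivially only through its values $Q_i f_{i,k}=\lambda_{i,k}e_{i,k}$ on the closed span of $\{f_{i,k}\}$, one may expand
\[
Q_i u \;=\; \sum_{j=1}^\infty \langle u, f_{i,j}\rangle_U\, Q_i f_{i,j},
\]
so that, by linearity of $(-A_i)^{\theta/2}S_i(t)\Sigma_i(X,Y)$,
\[
(-A_i)^{\theta/2}S_i(t)\Sigma_i(X,Y)Q_i u \;=\; \sum_{j=1}^\infty \langle u, f_{i,j}\rangle_U \,(-A_i)^{\theta/2}S_i(t)\Sigma_i(X,Y)Q_i f_{i,j}.
\]

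Next, I would take the $H$-norm, apply the triangle inequality, and then Cauchy--Schwarz in $\ell^2$ to split the sum into a factor involving $u$ and a factor involving the previous lemma. Explicitly,
\[
\bigl|(-A_i)^{\theta/2}S_i(t)\Sigma_i(X,Y)Q_i u\bigr|_H \;\leq\; \Bigl(\sum_{j=1}^\infty \langle u,f_{i,j}\rangle_U^2\Bigr)^{1/2} \Bigl(\sum_{j=1}^\infty \bigl|(-A_i)^{\theta/2}S_i(t)\Sigma_i(X,Y)Q_i f_{i,j}\bigr|_H^2\Bigr)^{1/2}.
\]
Bessel's inequality for the orthonormal set $\{f_{i,k}\}\subset U$ bounds the first factor by $|u|_U$, and \eqref{Eq:Sum-of-SGQfj} bounds the second factor by $\sqrt{c_\theta}\, t^{-\theta/2 - \beta_i(\rho_i-2)/(2\rho_i)}\,e^{-\lambda t/2}\,\|\Sigma_i(X,Y)\|_{\mathcal{L}(L^\infty(D),H)}$. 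Multiplying these two estimates yields the claimed inequality with $C=\sqrt{c_\theta}$.

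There is no real obstacle here; the lemma is essentially a Cauchy--Schwarz companion to \eqref{Eq:Sum-of-SGQfj}. The only minor point worth flagging in the write-up is justifying the expansion of $Q_i u$ above, which amounts to the convention that $Q_i$ vanishes on the orthogonal complement of $\overline{\mathrm{span}}\{f_{i,k}\}$ in $U$ (equivalently, only the projection of $u$ onto that span contributes to $Q_i u$), so that Bessel suffices and Parseval is not required. The time-singularity exponent and decay rate transfer directly from the squared estimate by taking square roots, which is why the final bound has $t^{-\theta/2 - \beta_i(\rho_i-2)/(2\rho_i)}$ and $e^{-\lambda t/2}$ rather than $t^{-\theta - \beta_i(\rho_i-2)/\rho_i}$ and $e^{-\lambda t}$.
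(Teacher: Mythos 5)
Your proposal is correct and follows essentially the same route as the paper: expand $Q_i u$ along the orthonormal set $\{f_{i,j}\}$, apply Cauchy--Schwarz, bound one factor by $|u|_U$ via Bessel and the other by the square root of \eqref{Eq:Sum-of-SGQfj}. Your remark about the implicit convention that $Q_i$ annihilates the orthogonal complement of $\overline{\operatorname{span}}\{f_{i,j}\}$ is a reasonable point of rigor but does not change the substance of the argument.
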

\begin{proof}
  If we expand $u$ by its Fourier series,
  \[|(-A_i)^{\theta/2}S_i(t)\Sigma_i(X,Y)Q_iu|_H= \left|\sum_{j=1}^\infty (-A_i)^{\theta/2}S_i(t) \Sigma_i(X,Y)Q_i f_{i,j} \left< u, f_{i,j}\right>_U \right|_H. \]
  By the H\"older inequality, the above expression is bounded by
  \[\left(\sum_{j=1}^\infty |(-A_i)^{\theta/2}S_i(t)\Sigma_i(X,Y)Q_if_{i,j}|_H^2 \right)^{1/2} \left(\sum_{j=1}^\infty \left<u,f_{i,j}\right>_U^2 \right)^{1/2}.\]
  It follows from \ref{Eq:Sum-of-SGQfj} and the fact that $\{f_{i,j}\}_{j \in \mathcal{N}}$ is an orthonormal subset of $U$ that
  \[|(-A_i)^{\theta/2}S_i(t)\Sigma_i(X,Y)Q_iu|_H \leq C t^{-\frac{\theta}{2}-\frac{\beta_i(\rho_i -2)}{2\rho_i}} e^{-\frac{\lambda t}{2}} \|\Sigma_i(X,Y)\|_{\mathcal{L}(L^\infty(D),H)}|u|_U.\]
\end{proof}

\begin{lemma} \label{Lm:Smu-compact}
  Assume Hypothesis 1. For any $t>0$, $i=1,2$, and $X,Y \in H$, the linear mapping $u\mapsto S_1(t)\Sigma_1(X,Y)Q_1u$ is compact.
\end{lemma}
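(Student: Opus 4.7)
The plan is to exhibit the map $T u := S_1(t)\Sigma_1(X,Y)Q_1 u$ as the operator-norm limit of a sequence of finite-rank operators from $U$ to $H$, and then conclude via the standard fact that the set of compact operators is a closed subspace of $\mathcal{L}(U,H)$. The tail estimate \eqref{Eq:SmQf-tail} with $\theta=0$ supplies precisely the bound needed.

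Fix $X,Y\in H$ and $t>0$, and let $\Pi_{1,N}$ denote the orthogonal projection in $H$ onto $\mathrm{span}\{e_{1,1},\ldots,e_{1,N}\}$. Define $T_N u:=\Pi_{1,N}Tu$; each $T_N$ has finite-dimensional range and is therefore compact. Expanding $u\in U$ in the orthonormal family $\{f_{1,j}\}_{j\in\mathbb{N}}$ (any component orthogonal to $\mathrm{span}\{f_{1,j}\}$ is annihilated by $Q_1$) and applying Cauchy--Schwarz exactly as in the derivation of \eqref{Eq:Smu-bound}, one obtains
\begin{equation*}
|(T-T_N)u|_H \leq \left(\sum_{j=1}^\infty |(I-\Pi_{1,N})S_1(t)\Sigma_1(X,Y)Q_1 f_{1,j}|_H^2\right)^{1/2}|u|_U.
\end{equation*}
Invoking \eqref{Eq:SmQf-tail} with $i=1$ and $\theta=0$, the right-hand side is dominated by
\begin{equation*}
C\left(\sum_{k=N+1}^\infty \alpha_{1,k}^{-\beta_1}|e_{1,k}|_0^2\right)^{(\rho_1-2)/(2\rho_1)} t^{-\beta_1(\rho_1-2)/(2\rho_1)}e^{-\lambda t/2}\|\Sigma_1(X,Y)\|_{\mathcal{L}(L^\infty(D);H)}|u|_U.
\end{equation*}
By Hypothesis 1, $\sum_{k}\alpha_{1,k}^{-\beta_1}|e_{1,k}|_0^2=\zeta_1<\infty$, so the tail vanishes as $N\to\infty$, and hence $\|T-T_N\|_{\mathcal{L}(U,H)}\to 0$. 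Since each $T_N$ is compact and $T$ is their uniform limit, $T$ is compact.

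There is no serious obstacle here; the tail bound \eqref{Eq:SmQf-tail} does essentially all the work, and the only mild subtlety is that $\{f_{1,j}\}$ may fail to span $U$, which is harmless because $Q_1$ vanishes on the orthogonal complement. A slightly shorter but less self-contained alternative would be to note that, by \eqref{Eq:Smu-bound} with any small $\theta>0$, the operator $T$ maps $U$ boundedly into $H^\theta_1$, and then invoke compactness of the embedding $H^\theta_1\hookrightarrow H$ (equivalent to $A_1$ having compact resolvent on the bounded domain $D$). The same argument evidently applies with the subscripts $1$ replaced by $2$, so the statement also covers the case $i=2$.
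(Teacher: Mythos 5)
Your proof is correct and follows essentially the same strategy as the paper's: approximate by the finite-rank operators $\Pi_{1,N}S_1(t)\Sigma_1(X,Y)Q_1$ and use the tail estimate \eqref{Eq:SmQf-tail} together with \eqref{Eq:Smu-bound} to show operator-norm convergence, then invoke the fact that compact operators form a closed subspace. Your write-up is in fact slightly more careful than the paper's (the square roots are tracked correctly, and the remark about $\{f_{1,j}\}$ not spanning $U$ is a genuine, if minor, point); the alternative route via the compact embedding $H_1^\theta\hookrightarrow H$ is a nice observation but not needed.
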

\begin{proof}
  Let $\Pi_{i,N}$ be the projection operator in $H$ onto the span of $\{e_{i,1},...,e_{i,N}\}$. Let $\{u_n\}\subset U$ be a bounded sequence. For any fixed $N\geq 1$, $\Pi_{i,N}S_i(t)\Sigma_i(X,Y):U \to H$ is finite dimensional and bounded. By (\ref{Eq:SmQf-tail}) and (\ref{Eq:Smu-bound}),
  \begin{align*}
    &\|(I - \Pi_{i,N})S_i(t)\Sigma_i(X,Y)Q_i\|_{\mathcal{L}(U,H)} \\
    &\leq C \left( \sum_{k=N+1}^\infty \alpha_k^{-\beta_i}|e_{i,k}|_0^2\right)^{(\rho_i -2)/\rho_i}t^{-\frac{\beta_i(\rho_i-2)}{2\rho_i}} e^{-\frac{\lambda}{2} t}  \|\Sigma_i(X,Y)\|^2_{\mathcal{L}(L^\infty(D);H)}.
  \end{align*}
  Any linear operator that is a uniform limit of finite dimensional operators is compact (see for example \cite[Theorem II.4.4]{Conway}).
\end{proof}

Let us now define
$$\begin{array}{ll}
\Gamma^{\varepsilon,\delta,u}_{1}(t)&:=\displaystyle{\int_0^t S_1(t-s)\Sigma_1(X^{\varepsilon,\delta,u}(s), Y^{\varepsilon,\delta,u}(s))dW^{Q_1}(s)} \ ,
\\
\Gamma^{\varepsilon,\delta,u}_{2}(t)&:=\displaystyle{\dfrac{1}{\delta}\int_0^t S_2\left(\dfrac{t-s}{\delta^2}\right)\Sigma_2(X^{\varepsilon,\delta,u}(s), Y^{\varepsilon,\delta,u}(s))dW^{Q_2}(s)} \ .
\end{array}$$
\begin{lemma}\label{Lm:EstimateOfStochasticConvolutionSlowProcess}
Under Hypotheses 1, 2 and 3, there exists $\overline{\theta}>0$ and $\overline{p}=\frac{2}{1-\beta_{1}(\rho_{1}-2)/\rho_{1}}>2$  such that for any $\varepsilon>0$,
$T>0$, $p>\bar{p}$ and $\theta\in [0,\overline{\theta})$, we have
\begin{equation}\label{Eq:EstimateOfStochasticConvolutionSlowProcess}
\mathbf{E}\sup\limits_{t<T}|\Gamma_1^{\varepsilon,\delta,u}(t)|_{\theta,1}^p\leq c_{p,T,\theta}\int_0^T \left(1+\mathbf{E}|X^{\varepsilon,\delta,u}(s)|_H^p+\mathbf{E}|Y^{\varepsilon,\delta,u}(s)|_H^{\zeta p}\right)ds
\end{equation}
for some positive constant $c_{T,\theta}$ which is independent of $\varepsilon>0$, and
\begin{equation}\label{Eq:EstimateOfStochasticConvolutionFastProcess}
\int_0^T \mathbf{E}|\Gamma_2^{\varepsilon,\delta,u}(s)|_{\theta,2}^2ds\leq c_{T,\theta}  \mathbf{E} \int_0^T \left(1+|X^{\varepsilon,\delta,u}(s)|_H^2\right)ds \ .
\end{equation}
\end{lemma}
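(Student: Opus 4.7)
For \eqref{Eq:EstimateOfStochasticConvolutionSlowProcess} I would invoke the Da~Prato--Kwapie\'n--Zabczyk stochastic factorization method. Fix $\alpha\in(0,1)$ (to be chosen later) and use the identity $\int_s^t(t-r)^{\alpha-1}(r-s)^{-\alpha}\,dr=\pi/\sin(\pi\alpha)$ together with the stochastic Fubini theorem to rewrite
\[
\Gamma_1^{\varepsilon,\delta,u}(t)=\frac{\sin(\pi\alpha)}{\pi}\int_0^t(t-r)^{\alpha-1}S_1(t-r)Z_\alpha(r)\,dr,
\]
where $Z_\alpha(r):=\int_0^r(r-s)^{-\alpha}S_1(r-s)\Sigma_1(X^{\varepsilon,\delta,u}(s),Y^{\varepsilon,\delta,u}(s))\,dW^{Q_1}(s)$.
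Applying $(-A_1)^{\theta/2}$, using the uniform boundedness of the analytic semigroup $S_1$ on $H$, and H\"older's inequality with conjugate exponents $p$ and $q=p/(p-1)$ yields
\[
\mathbf{E}\sup_{t\leq T}|\Gamma_1^{\varepsilon,\delta,u}(t)|_{\theta,1}^{p}\leq c_{p,T,\alpha}\int_0^T\mathbf{E}|Z_\alpha(r)|_{\theta,1}^p\,dr,
\]
provided $q(\alpha-1)>-1$, i.e.\ $\alpha>1/p$.

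To estimate $\mathbf{E}|Z_\alpha(r)|_{\theta,1}^p$, I would apply the Burkholder--Davis--Gundy inequality for $H$-valued stochastic integrals driven by $W^{Q_1}$, combined with the Hilbert--Schmidt estimate \eqref{Eq:Sum-of-SGQfj} with $i=1$, to obtain
\[
\mathbf{E}|Z_\alpha(r)|_{\theta,1}^p\leq c_p\,\mathbf{E}\!\left(\int_0^r(r-s)^{-2\alpha-\theta-\beta_1(\rho_1-2)/\rho_1}e^{-\lambda(r-s)}\|\Sigma_1(X(s),Y(s))\|_{\mathcal{L}(L^\infty(D);H)}^2\,ds\right)^{p/2}.
\]
Integrability in $s$ near $r$ forces $\alpha<\tfrac{1}{2}\bigl(1-\theta-\beta_1(\rho_1-2)/\rho_1\bigr)$; combined with the earlier requirement $\alpha>1/p$, the feasibility window is non-empty iff $p>2/\bigl(1-\theta-\beta_1(\rho_1-2)/\rho_1\bigr)$. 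Choosing $\bar{\theta}>0$ small enough that $1-\bar{\theta}-\beta_1(\rho_1-2)/\rho_1>0$ (possible by \eqref{Eq:Hypothesis1Equation3}) recovers precisely the threshold $\bar{p}=2/(1-\beta_1(\rho_1-2)/\rho_1)$ stated in the lemma.

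Next, I would pull the $p/2$-th moment inside via the elementary splitting $(\int g(s)f(s)\,ds)^{p/2}\leq (\int g)^{(p-2)/2}\int g\,f^{p/2}\,ds$, which is valid for $g\geq 0$ and $p\geq 2$ by H\"older. Hypothesis 3 together with H\"older's inequality on the bounded domain $D$ (to absorb the sublinear $|Y|^{\zeta}$ term) gives $\|\Sigma_1(X,Y)\|_{\mathcal{L}(L^\infty(D);H)}^{p}\leq C\bigl(1+|X|_H^{p}+|Y|_H^{\zeta p}\bigr)$. Plugging this in, invoking Fubini to swap the $s$- and $r$-integrals, and absorbing the now finite convolution integral in $(r-s)$ into the constant yields \eqref{Eq:EstimateOfStochasticConvolutionSlowProcess}.

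The bound \eqref{Eq:EstimateOfStochasticConvolutionFastProcess} is simpler because only a second moment is required, with no supremum in time. Direct application of It\^o's isometry followed by \eqref{Eq:Sum-of-SGQfj} with $i=2$ and the change of variable $u=(s-r)/\delta^{2}$ gives
\[
\mathbf{E}|\Gamma_2^{\varepsilon,\delta,u}(s)|_{\theta,2}^{2}\leq C\,\delta^{\,2(1-\theta-\beta_2(\rho_2-2)/\rho_2)}\!\int_0^{s/\delta^{2}}\!u^{-\theta-\beta_2(\rho_2-2)/\rho_2}e^{-\lambda u}\,du\cdot\mathbf{E}\|\Sigma_2(X(s),Y(s))\|_{\mathcal{L}(L^\infty(D);H)}^{2},
\]
where the $u$-integral is finite for $\theta<\bar{\theta}$ by \eqref{Eq:Hypothesis1Equation3}, and Hypothesis~2(3) gives $\|\Sigma_2(X,Y)\|^{2}\leq C(1+|X|_H^{2})$ since $\sigma_{2}$ is bounded in $Y$. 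Integration in $s\in[0,T]$ then produces \eqref{Eq:EstimateOfStochasticConvolutionFastProcess}. The principal technical obstacle is in the first estimate: one must simultaneously choose the factorization exponent $\alpha$ and the Sobolev index $\theta$ so that the window $\bigl(1/p,\tfrac{1}{2}(1-\theta-\beta_1(\rho_1-2)/\rho_1)\bigr)$ is non-empty, which is exactly what pins down the threshold $\bar{p}$ and the admissible range $\theta\in[0,\bar{\theta})$ in the statement; the remaining steps (BDG, the Hilbert--Schmidt bound from Hypothesis~1, and the growth bound from Hypothesis~3) are standard once these parameter constraints are secured.
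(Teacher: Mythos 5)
Your treatment of \eqref{Eq:EstimateOfStochasticConvolutionSlowProcess} is exactly the Da Prato--Kwapie\'n--Zabczyk stochastic factorization argument that the paper invokes by reference to \cite{CerraiRDEAveraging1} and \cite{DaPrato-Zabczyk} without spelling out. Your parameter accounting---the window $\alpha\in\bigl(1/p,\tfrac12(1-\theta-\beta_1(\rho_1-2)/\rho_1)\bigr)$ being nonempty iff $p>2/(1-\theta-\beta_1(\rho_1-2)/\rho_1)$, which pins down $\bar p$ as $\theta\to0$---is correct, as is the H\"older splitting $(\int gf)^{p/2}\le(\int g)^{(p-2)/2}\int g f^{p/2}$ used to move the $p/2$-power inside the BDG bound. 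In this sense you have actually supplied the detail that the paper suppresses.

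Your treatment of \eqref{Eq:EstimateOfStochasticConvolutionFastProcess}, however, contains a genuine error. After the It\^o isometry and \eqref{Eq:Sum-of-SGQfj} the pointwise bound reads
\[
\mathbf{E}|\Gamma_2^{\varepsilon,\delta,u}(s)|_{\theta,2}^{2}
\leq \frac{C}{\delta^2}\int_0^s\Big(\frac{s-r}{\delta^2}\Big)^{-\theta-\frac{\beta_2(\rho_2-2)}{\rho_2}}e^{-\lambda(s-r)/\delta^2}\,\mathbf{E}\big\|\Sigma_2(X^{\varepsilon,\delta,u}(r),Y^{\varepsilon,\delta,u}(r))\big\|_{\mathcal{L}(L^\infty(D);H)}^{2}\,dr,
\]
and this cannot be rewritten as a finite number times $\mathbf{E}\|\Sigma_2(X^{\varepsilon,\delta,u}(s),Y^{\varepsilon,\delta,u}(s))\|^2$: the $\Sigma_2$-factor depends on the inner variable $r$, not on $s$, and cannot be pulled outside the $r$-integral with the outer time argument. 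The factor $\delta^{2(1-\theta-\beta_2(\rho_2-2)/\rho_2)}$ in your display is also spurious: under the substitution $u=(s-r)/\delta^2$ the Jacobian $\delta^2\,du$ exactly cancels the $1/\delta^2$ prefactor from the It\^o isometry and no power of $\delta$ survives. The clean way to conclude is the one the paper uses: integrate in $s\in[0,T]$ first and apply Young's inequality for convolutions, which gives
$\bigl(\int_0^\infty u^{-\theta-\beta_2(\rho_2-2)/\rho_2}e^{-\lambda u}\,du\bigr)\cdot\int_0^T\mathbf{E}\|\Sigma_2(X^{\varepsilon,\delta,u}(r),Y^{\varepsilon,\delta,u}(r))\|^2\,dr$ directly; then invoke \eqref{Eq:Hypothesis2Equation2}. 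Equivalently, after your change of variable you could swap the $r$- and $s$-integrals by Fubini, but the product-form pointwise bound you wrote is not available.
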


\begin{proof}
First we prove (\ref{Eq:EstimateOfStochasticConvolutionFastProcess}). By the It\^{o} isometry,
\begin{align*}
  &\mathbf{E} |\Gamma_2^{\varepsilon,\delta,u}(t)|_{\theta,2}^2  = \frac{1}{\delta^2}\int_{0}^{t}\sum_{j=1}^\infty |(-A_2)^{\theta/2} S_2((t-s)/\delta^2) \Sigma_2(X^{\varepsilon,\delta,u}(s),Y^{\varepsilon,\delta,u}(s))Q_2 f_j|_H^2 ds.
\end{align*}
Then by (\ref{Eq:Sum-of-SGQfj}),
\[\mathbf{E} |\Gamma_2^{\varepsilon,\delta,u}(t)|_{\theta,2}^2 \leq \frac{C}{\delta^2}\int_0^t \left(\frac{t-s}{\delta^2}\right)^{-\theta - \frac{\beta_2(\rho_2-2)}{\rho_2}} e^{-\frac{\lambda(t-s)}{\delta^2}} \left\|\Sigma_2(X^{\varepsilon,\delta,u}(s), Y^{\varepsilon,\delta,u}(s)) \right\|_{\mathcal{L}(L^\infty(D);H)}^2 ds.  \]
By Young's inequality for convolutions,
\begin{align}
\mathbf{E} \int_0^T |\Gamma_2^{\varepsilon,\delta,u}(t)|_{\theta,2}^2 dt &\leq \frac{1}{\delta^2} \left( \int_0^T \left(\frac{s}{\delta^2}\right)^{-\theta - \frac{\beta_2(\rho_2-2)}{\rho_2}} e^{-\frac{\lambda s}{\delta^2}}ds \right)\nonumber\\
&\quad\times\left(\int_0^T\left\|\Sigma_2(X^{\varepsilon,\delta,u}(s), Y^{\varepsilon,\delta,u}(s)) \right\|_{\mathcal{L}(L^\infty(D),H)}^2 ds \right). \nonumber
\end{align}
Time changing the first integral,
\[\leq \left(\int_0^\infty s^{-\theta-\frac{\beta_2(\rho_2-2)}{\rho_2}}e^{-\lambda s} ds \right) \left( \int_0^T \left\|\Sigma_2(X^{\varepsilon,\delta,u}(s),Y^{\varepsilon,\delta,u}(s))\right\|_{\mathcal{L}(L^\infty(D),H)}^2ds\right).\]
If we choose $\theta$ small enough so that $-\theta-\frac{\beta_2(\rho_2-2)}{\rho_2}>-1$ (which is possible by (\ref{Eq:Hypothesis1Equation3})), then the first integral is finite and
\[\mathbf{E} \int_0^T |\Gamma_2^{\varepsilon,\delta,u}(t)|_{\theta,2}^2 dt \leq C\int_0^T\left\|\Sigma_2(X^{\varepsilon,\delta,u}(s), Y^{\varepsilon,\delta,u}(s)) \right\|_{\mathcal{L}(L^\infty(D);H)}^2 ds. \]
The result follows by (\ref{Eq:Hypothesis2Equation2}).

Equation (\ref{Eq:EstimateOfStochasticConvolutionSlowProcess}) is similar to (4.2)
in \cite{CerraiRDEAveraging1} and is also a consequence of the stochastic factorization formula of \cite{DaPrato-Zabczyk}.
\end{proof}

The next lemma estimates the control terms
\[Z_1^{\varepsilon,\delta,u}(t) =\int_0^t S_1(t-s)\Sigma_1(X^{\varepsilon,\delta,u}(s),Y^{\varepsilon,\delta,u}(s))Q_1u(s)ds  \]
and
\[Z_2^{\varepsilon,\delta,u}(t) = \frac{1}{\delta\sqrt{\varepsilon}} \int_0^t S_2\left( \frac{t-s}{\delta^2}\right)\Sigma_2(X^{\varepsilon,\delta,u}(s),Y^{\varepsilon,\delta,u}(s))Q_2u(s)ds.\]
\begin{lemma} \label{Lm:Control-terms-estimates} %{Eq:Z1-estimates}
  Under Hypotheses 1, 2 and 3, there exists $\overline{\theta}>0$ and $\overline{p}=\frac{2}{1-\beta_{1}(\rho_{1}-2)/\rho_{1}}>2$  such that for any $\varepsilon>0$,
$T>0$, $p>\overline{p}$ and $\theta\in [0,\overline{\theta}]$, we have for any $u \in \mathcal{P}^N_2$,
\begin{equation}\label{Eq:Z1-estimates}
  \mathbf{E} \sup_{t<T} |Z_1^{\varepsilon,\delta,u}(t)|_{\theta,1}^p \leq c_{p,T,\theta,N} \mathbf{E}\int_0^T \left(1 + |X^{\varepsilon,\delta,u}(s)|_H^p + |Y^{\varepsilon,\delta,u}(s)|_H^{\zeta p} \right)ds
\end{equation}
and
\begin{equation}\label{Eq:Z2-estimates}
  \mathbf{E}\int_0^T|Z_2^{\varepsilon,\delta,u}(t)|_{\theta,2}^2 ds \leq c_{p,T,\theta,N} \frac{\delta^2}{\varepsilon}\mathbf{E} \left(1 + \sup_{s \leq T}|X^{\varepsilon,\delta,u}(s)|_H^2 \right).
\end{equation}
\end{lemma}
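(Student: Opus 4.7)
The plan is to treat the slow-dynamics control term $Z_1$ and the fast-dynamics control term $Z_2$ separately, using the pointwise bound (\ref{Eq:Smu-bound}) to reduce both to convolution-type estimates that can be combined with Hölder's inequality and the $L^2$-in-time constraint $\int_0^T |u(s)|_U^2\,ds \leq N$ coming from $u \in \mathcal{P}_2^N$. Throughout I set $r_i := \beta_i(\rho_i-2)/\rho_i \in (0,1)$, so that the claimed threshold reads $\bar{p} = 2/(1 - r_1)$.

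For (\ref{Eq:Z1-estimates}), I first insert (\ref{Eq:Smu-bound}) with $i = 1$ inside the integral defining $Z_1^{\varepsilon,\delta,u}(t)$ to bound $|Z_1^{\varepsilon,\delta,u}(t)|_{\theta,1}$ by
\[
C \int_0^t (t-s)^{-\theta/2-r_1/2} e^{-\lambda(t-s)/2} \|\Sigma_1(X^{\varepsilon,\delta,u}(s), Y^{\varepsilon,\delta,u}(s))\|_{\mathcal{L}(L^\infty(D);H)} |u(s)|_U\,ds.
\]
I then apply Hölder in time with conjugate exponents $q = 2p/(p-2)$ and $q' = 2p/(p+2)$. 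The kernel belongs to $L^q(0,\infty)$ precisely when $q(\theta + r_1)/2 < 1$, which rearranges to $p(1 - r_1 - \theta) > 2$; this holds for all $p > \bar p$ once $\bar\theta$ is taken in $(0, 1-r_1)$ small enough. A second Hölder step on the remaining $L^{q'}$-integrand $\|\Sigma_1\|\,|u|$, with exponents $(2/(2-q'), 2/q')$, is dictated by the need to place $|u|$ into $L^2$; together with $\int_0^T |u|_U^2\,ds \leq N$ it produces the $t$-independent bound
\[
|Z_1^{\varepsilon,\delta,u}(t)|_{\theta,1}^p \leq c_{p,\theta}\,N^{p/2} \int_0^T \|\Sigma_1(X^{\varepsilon,\delta,u}(s), Y^{\varepsilon,\delta,u}(s))\|_{\mathcal{L}(L^\infty(D);H)}^p\,ds.
\]
Since the right-hand side is independent of $t$, taking $\sup_{t<T}$ is free. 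Hypothesis 3, combined with Jensen's inequality on the bounded domain $D$ (valid since $\zeta \in [0, 1)$), yields the pointwise bound $\|\Sigma_1(X, Y)\|_{\mathcal{L}(L^\infty(D);H)} \leq C(1 + |X|_H + |Y|_H^\zeta)$, and taking expectation completes (\ref{Eq:Z1-estimates}).

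For (\ref{Eq:Z2-estimates}), (\ref{Eq:Smu-bound}) with $i = 2$ gives $|Z_2^{\varepsilon,\delta,u}(t)|_{\theta,2} \leq \frac{C}{\delta\sqrt{\varepsilon}}(K_\delta * g)(t)$, where $K_\delta(\tau) = (\tau/\delta^2)^{-\theta/2 - r_2/2} e^{-\lambda\tau/(2\delta^2)}$ and $g(s) = \|\Sigma_2(X^{\varepsilon,\delta,u}(s), Y^{\varepsilon,\delta,u}(s))\|_{\mathcal{L}(L^\infty(D);H)} |u(s)|_U \mathbf{1}_{[0,T]}(s)$. The substitution $\sigma = \tau/\delta^2$ gives $\|K_\delta\|_{L^1(0, \infty)} \leq C \delta^2$, so Young's convolution inequality $\|K_\delta * g\|_{L^2}^2 \leq \|K_\delta\|_{L^1}^2 \|g\|_{L^2}^2$ yields
\[
\int_0^T |Z_2^{\varepsilon,\delta,u}(t)|_{\theta,2}^2\,dt \leq \frac{C\delta^2}{\varepsilon} \int_0^T \|\Sigma_2(X^{\varepsilon,\delta,u}(s), Y^{\varepsilon,\delta,u}(s))\|_{\mathcal{L}(L^\infty(D);H)}^2 |u(s)|_U^2\,ds.
\]
By (\ref{Eq:Hypothesis2Equation2}) one has $\|\Sigma_2(X, Y)\|_{\mathcal{L}(L^\infty(D);H)} \leq C(1 + |X|_H)$ uniformly in $Y$, so extracting $\sup_{s \leq T}|X|_H^2$ from the integral, using $\int_0^T |u|_U^2\,ds \leq N$, and taking expectation gives (\ref{Eq:Z2-estimates}).

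The main obstacle is the estimate on $Z_1$: the only regularity available for $u$ is $L^2$ in time, yet the target exponent is $p > 2$ and the kernel has the singularity $\tau^{-\theta/2 - r_1/2}$ with $r_1 \in (0, 1)$. These three constraints essentially force the Hölder pair $(q, q') = (2p/(p-2), 2p/(p+2))$, and it is precisely this choice that fixes the threshold $\bar p = 2/(1 - r_1)$ and the admissible Sobolev range $\theta \in [0, \bar\theta]$.
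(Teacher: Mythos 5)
Your proof is correct and follows essentially the same route as the paper's: pointwise bound via \eqref{Eq:Smu-bound}, two applications of H\"older's inequality for $Z_1$ driven by the constraint $u\in\mathcal{P}_2^N$, and Young's convolution inequality with the $L^1$ kernel norm scaling like $\delta^2$ for $Z_2$. The only (immaterial) difference is the order of the two H\"older steps for $Z_1$ — the paper first peels off $|u|_{L^2}$ with a $(2,2)$-split and then applies H\"older with exponent $p/2$ to the remaining squared integral, while you first separate the kernel from $\|\Sigma_1\|\,|u|$ with exponents $(2p/(p-2),\,2p/(p+2))$ and then isolate $|u|_{L^2}$ — but both orderings yield the identical final estimate and the same threshold $\bar p = 2/(1-\beta_1(\rho_1-2)/\rho_1)$.
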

\begin{proof}
For any $t \in [0,T]$, by (\ref{Eq:Smu-bound})
\[|Z_1^{\varepsilon,\delta,u}(t)|_{\theta,1} \leq C\int_0^t (t-s)^{-\frac{\theta}{2} - \frac{\beta_1(\rho_1-2)}{2\rho_1}} e^{-\frac{\lambda(t-s)}{2}}\|\Sigma_1(X^{\varepsilon,\delta,u}(s),Y^{\varepsilon,\delta,u}(s))\|_{\mathcal{L}(L^\infty(D),H)}|u(s)|_Uds.\]
By a H\"older inequality, along with (\ref{Eq:Sm1-growth}),
\begin{align*}
  &|Z_1^{\varepsilon,\delta,u}(t)|_{\theta,1} \\
  &\leq C|u|_{L^2([0,t];U)} \left(\int_0^t (t-s)^{-{\theta} - \frac{\beta_1(\rho_1-2)}{\rho_1}} e^{-{\lambda(t-s)}}(1 + |X^{\varepsilon,\delta,u}(s)|_H^2 + |Y^{\varepsilon,\delta,u}(s)|_H^{2\zeta})ds \right)^{\frac{1}{2}}.
\end{align*}
Applying another H\"older inequality with $p/2> \bar{p}/2$ and recalling that by assumption $|u|_{L^2([0,T];U)}\leq N^{1/2}$,
\begin{align} \label{Eq:Z_1-bound}
  |Z_1^{\varepsilon,\delta,u}(t)|_{\theta,1} \leq CN^{1/2} &\left(\int_0^t s^{-\frac{p\theta}{(p-2)} - \frac{\beta_1p(\rho_1-2)}{\rho_1(p-2)}} e^{-\frac{\lambda p s}{p-2}} ds  \right)^{\frac{p-2}{2p}}\nonumber\\
   &\times\left(\int_0^t (1 + |X^{\varepsilon,\delta,u}(s)|_H^p + |Y^{\varepsilon,\delta,u}(s)|_H^{p\zeta})ds \right)^{\frac{1}{p}}
\end{align}
The first integral is finite as long as $\frac{p\theta}{(p-2)} + \frac{\beta_1p(\rho_1-2)}{\rho_1(p-2)}<1$. By the definition of $\overline{p}$, $\frac{\beta_1p(\rho_1-2)}{\rho_1(p-2)}<1$. We then can choose $\bar{\theta}$ small enough so that the condition is satisfied.

The analysis for $Z_2^{\varepsilon,\delta,u}$ is a little bit different. By (\ref{Eq:Smu-bound}) and (\ref{Eq:Hypothesis2Equation2})
\begin{align*}
  |Z_2^{\varepsilon,\delta,u}(t)|_{\theta,2} \leq \frac{C}{\delta\sqrt{\varepsilon}} \int_0^t \left(\frac{t-s}{\delta^2} \right)^{-\frac{\theta}{2}- \frac{\beta_2(\rho_2-2)}{2\rho_2}} e^{-\frac{\lambda(t-s)}{2\delta^2}} \left( 1 + |X^{\varepsilon,\delta,u}(s)|_H\right)|u(s)|_Uds.
\end{align*}
By Young's inequality for convolutions,
\begin{align*}
  &\int_0^T|Z_2^{\varepsilon,\delta,u}(t)|_{\theta,2}^2 dt \leq \frac{C}{\delta^2\varepsilon} \left(\int_0^T \left(\frac{s}{\delta^2}\right)^{-\frac{\theta}{2} - \frac{\beta_2(\rho_2-2)}{2\rho_2}} e^{-\frac{\lambda(t-s)}{2\delta^2}}ds \right)^2 \times\nonumber\\
  &\times\left(\int_0^T \left( 1 + |X^{\varepsilon,\delta,u}(s)|_H^2 \right)|u(s)|_U^2 ds\right) \leq \frac{CN\delta^2}{\varepsilon} \left(1 + \sup_{s \leq T} |X^{\varepsilon,\delta,u}(s)|_H^2 \right).
\end{align*}
\end{proof}

\begin{lemma}\label{Lm:LpEstimateSlowAndFastProcess}
Under Hypotheses 1, 2 and 3, for any $T>0$, $p=\frac{2}{\zeta}$ and any $u\in \mathcal{P}_2^N$ for some $N\in\mathbb{N}$,
there exists a positive constant $c_{p,T,N}$ and a positive $\varepsilon_0>0$
such that for any $X_0 , Y_0 \in H$ and $0<\varepsilon<\varepsilon_0$, we have
\begin{equation}\label{Eq:LpEstimateSlowProcess}
\mathbf{E}\sup\limits_{t\in [0,T]}|X^{\varepsilon,\delta,u}(t)|_H^p\leq c_{p,T,N}(1+|X_0|_H^p+|Y_0|_H^2) \ ,
\end{equation}
\begin{equation}\label{Eq:LpEstimateFastProcess}
\int_0^T\mathbf{E}|Y^{\varepsilon,\delta,u}(t)|_H^2dt\leq c_{p,T,N}(1+|X_0|_H^2+|Y_0|_H^2) \ .
\end{equation}
\end{lemma}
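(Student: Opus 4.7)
The plan is to derive a coupled pair of a priori estimates -- an $L^2$-in-time bound on $Y^{\varepsilon,\delta,u}$ expressed in terms of $X^{\varepsilon,\delta,u}$, and an $L^p$ sup bound on $X^{\varepsilon,\delta,u}$ expressed in terms of the $L^2$-in-time norm of $Y^{\varepsilon,\delta,u}$ -- and then close them via Gronwall. Hypothesis 5, which forces $\delta^2/\varepsilon\to 0$, is crucial for absorbing the ``bad'' control contribution that appears in the fast-process bound through (\ref{Eq:Z2-estimates}).

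First I would estimate $Y^{\varepsilon,\delta,u}$. Starting from the mild formulation (\ref{Eq:MildFastSlowStochasticRDEWithControl}) and the spectral bound $|S_2(r)Z|_H \leq e^{-\lambda r}|Z|_H$ coming from (\ref{Eq:Hypothesis1Equation4}), I split $B_2(X,Y) = B_2(X,0) + (B_2(X,Y)-B_2(X,0))$; Hypothesis 2 gives $|B_2(X,0)|_H \leq C(1+|X|_H)$ and $|B_2(X,Y)-B_2(X,0)|_H\leq L_{b_2}^Y|Y|_H$. Applying Young's inequality for convolutions in time on $[0,T]$ (the kernel $\delta^{-2}e^{-\lambda\cdot/\delta^2}$ has $L^1$-norm $\lambda^{-1}$) and taking $L^2([0,T])$ norms produces a self-coupling term with constant $L_{b_2}^Y/\lambda<1$ by (\ref{Eq:Hypothesis2Equation1}), which is absorbed on the left. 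Combining this with (\ref{Eq:EstimateOfStochasticConvolutionFastProcess}) for $\Gamma_2^{\varepsilon,\delta,u}$ and (\ref{Eq:Z2-estimates}) for $Z_2^{\varepsilon,\delta,u}$ yields
\begin{equation*}
\int_0^T \mathbf{E}|Y^{\varepsilon,\delta,u}(t)|_H^2\,dt \leq C_{T,N}\!\left(1+|Y_0|_H^2+\int_0^T \mathbf{E}|X^{\varepsilon,\delta,u}(s)|_H^2\,ds+\frac{\delta^2}{\varepsilon}\,\mathbf{E}\sup_{s\leq T}|X^{\varepsilon,\delta,u}(s)|_H^2\right).
\end{equation*}

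Next I would handle $X^{\varepsilon,\delta,u}$. From its mild formulation, together with the contraction of $S_1$ on $H$ and the pointwise growth condition in Hypothesis 3 upgraded by H\"older on $D$ to $|B_1(X,Y)|_H\leq C(1+|X|_H+|Y|_H^\zeta)$, I take the $p$-th power, the supremum over $t\in[0,T]$, and the expectation. Using Jensen in time and the bounds (\ref{Eq:EstimateOfStochasticConvolutionSlowProcess})--(\ref{Eq:Z1-estimates}) in the stronger norm $|\cdot|_{\theta,1}$ for $\Gamma_1^{\varepsilon,\delta,u}$ and $Z_1^{\varepsilon,\delta,u}$, and taking advantage of the key choice $p=2/\zeta$ (so that $\zeta p=2$), I obtain
\begin{equation*}
\mathbf{E}\sup_{t\leq T}|X^{\varepsilon,\delta,u}(t)|_H^p \leq C_{p,T,N}\!\left(1+|X_0|_H^p+\int_0^T \mathbf{E}\sup_{r\leq s}|X^{\varepsilon,\delta,u}(r)|_H^p\,ds+\int_0^T \mathbf{E}|Y^{\varepsilon,\delta,u}(s)|_H^2\,ds\right).
\end{equation*}

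Substituting the $Y$ estimate into the $X$ estimate and using $|X|_H^2\leq 1+|X|_H^p$ (valid since $p\geq 2$), the term $\frac{\delta^2}{\varepsilon}\mathbf{E}\sup|X|_H^2$ becomes $C\frac{\delta^2}{\varepsilon}(1+\mathbf{E}\sup|X|_H^p)$. By Hypothesis 5 I may fix $\varepsilon_0$ so that $C\delta^2/\varepsilon\leq 1/2$ for all $\varepsilon\in(0,\varepsilon_0)$, which lets me move the resulting $\mathbf{E}\sup|X|_H^p$ contribution to the left. Gronwall's inequality applied to $s\mapsto\mathbf{E}\sup_{r\leq s}|X^{\varepsilon,\delta,u}(r)|_H^p$ then delivers (\ref{Eq:LpEstimateSlowProcess}), and back-substituting yields (\ref{Eq:LpEstimateFastProcess}). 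The hard part is precisely this closing step: it only works because (i) the dissipativity condition (\ref{Eq:Hypothesis2Equation1}) lets the $L_{b_2}^Y$ self-coupling in $Y$ be absorbed, (ii) the choice $p=2/\zeta$ matches the growth exponent of $B_1$ in $Y$ with the $L^2$ bound we can obtain on $Y$, and (iii) Hypothesis 5 makes the ``anomalous'' factor $\delta^2/\varepsilon$ arising from the control term $Z_2^{\varepsilon,\delta,u}$ in (\ref{Eq:Z2-estimates}) small enough to be absorbed.
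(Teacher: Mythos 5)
Your proposal is correct and its overall architecture matches the paper's: a coupled pair of a priori bounds, an $L^2$-in-time estimate on $Y^{\varepsilon,\delta,u}$ in terms of $X^{\varepsilon,\delta,u}$ and an $L^p$-sup estimate on $X^{\varepsilon,\delta,u}$ in terms of $\int_0^T\mathbf{E}|Y^{\varepsilon,\delta,u}|_H^2\,dt$, closed by Gr\"onwall and the smallness of $\delta/\sqrt{\varepsilon}$ from Hypothesis~5, with the choice $p=2/\zeta$ converting the $|Y|_H^{\zeta p}$ growth into the $|Y|_H^2$ quantity one can control. Where you diverge from the paper is in how the $Y$ estimate is produced. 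You work directly with the mild formulation, use the spectral decay $|S_2(r)Z|_H\le e^{-\lambda r}|Z|_H$ (from $(\ref{Eq:Hypothesis1Equation4})$) to peel off the drift, and absorb the self-coupling via Young's convolution inequality, the factor $L_{b_2}^Y/\lambda<1$ coming straight from $(\ref{Eq:Hypothesis2Equation1})$. The paper instead subtracts the stochastic and control convolutions to form $\Lambda_2:=Y^{\varepsilon,\delta,u}-Z_2^{\varepsilon,\delta,u}-\Gamma_2^{\varepsilon,\delta,u}$, which is weakly differentiable in time, differentiates $|\Lambda_2(t)|_H^2$, and obtains a comparison-principle bound with decay rate $(\lambda-L_{b_2}^Y)/2>0$; the same dissipativity condition does the work, but through a pathwise energy/Gronwall argument rather than a mild-formulation convolution estimate. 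Your route is slightly more elementary and avoids introducing $\Lambda_2$. The final absorption is also handled slightly differently: you demand $C\delta^2/\varepsilon\le 1/2$ explicitly, whereas the paper applies Young's inequality $c\,|X|_H^2\le \tfrac12|X|_H^p+C_p c^{p/(p-2)}$ directly, which absorbs the $(1+N\delta/\sqrt{\varepsilon})$-prefactor for any bounded value and only needs $\delta/\sqrt{\varepsilon}$ bounded, not small; both versions are correct here. One minor cautionary note: when you take $L^2([0,T])$ norms of the pathwise triangle inequality, absorb the $\frac{L_{b_2}^Y}{\lambda}\|Y\|_{L^2}$ term, and then square to pass to expectations, you should be explicit that squaring happens after the absorption so that the resulting multiplicative constant $(1-L_{b_2}^Y/\lambda)^{-2}$ is fixed; otherwise a naive ``square each term of the triangle inequality first'' would introduce a factor that could spoil the $<1$ condition.
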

\begin{proof} Let us write
$$\begin{array}{l}
\displaystyle{X^{\varepsilon,\delta,u}(t)=S_1(t)X_0+ \int_0^t S_1(t-s)B_1(X^{\varepsilon,\delta,u}(s), Y^{\varepsilon,\delta,u}(s))ds
}\\
\displaystyle{\qquad \qquad \qquad \qquad \qquad +Z_1^{\varepsilon,\delta,u}(t)+\sqrt{\varepsilon}\Gamma_1^{\varepsilon,\delta,u}(t) \ .}
\end{array}$$

By the growth conditions on $B_1$ (\ref{Eq:Sm1-growth}), and the boundedness of the semigroup,
$$\left|\int_0^t S_1(t-s)B_1(X^{\varepsilon,\delta,u}(s), Y^{\varepsilon,\delta,u}(s))ds\right|_H^p
\leq c_{p,T} \int_0^t (1+|X^{\varepsilon,\delta,u}(s)|_H^p+|Y^{\varepsilon,\delta,u}(s)|_H^{\zeta p} ) ds \ .$$

Thus by using (\ref{Eq:EstimateOfStochasticConvolutionSlowProcess}) and (\ref{Eq:Z1-estimates}) with $\theta=0$, we can conclude with
\begin{align*}
\mathbf{E}\sup\limits_{s\leq t}|X^{\varepsilon,\delta,u}(s)|_H^p&\leq c_{p,T,N}(1+|X_0|_H^p)
+c_{p,T,N}\int_0^t \mathbf{E}|Y^{\varepsilon,\delta,u}(s)|_H^{\zeta p}ds\nonumber\\
&\qquad+c_{p,T,N}\int_0^t \left(1+\mathbf{E}\sup\limits_{r\leq s}|X^{\varepsilon,\delta,u}(r)|_H^p\right)ds \ ,
\end{align*}
so that by Gr\"onwall's inequality we have
\begin{equation}\label{Eq:SlowProcessControlledByFast}
\begin{array}{ll}
\mathbf{E}\sup\limits_{s\leq t}|X^{\varepsilon,\delta,u}(s)|_H^p & \leq \displaystyle{c_{p,T,N}\left(1+|X_0|_H^p+\int_0^t \mathbf{E}|Y^{\varepsilon,\delta,u}(s)|_H^{\zeta p}ds\right)}
\\
& \displaystyle{=c_{p,T,N}\left(1+|X_0|_H^p+\int_0^t \mathbf{E}|Y^{\varepsilon,\delta,u}(s)|_H^2 ds\right) \ ,}
\end{array}
\end{equation}
where we chose $p =2/\zeta$. Next we want to estimate
$$\int_0^t \mathbf{E}|Y^{\varepsilon,\delta,u}(s)|_H^2ds \  .$$
We will be using the assumptions from Hypothesis 2, in particular that $\Sigma_2(X,Y)$ does not grow with respect to $Y$.

Set $\Lambda_2^{\varepsilon,\delta,u}(t):=Y^{\varepsilon,\delta,u}(t)-Z_2^{\varepsilon,\delta,u}(t)-\Gamma_2^{\varepsilon,\delta,u}(t)$, we have $\Lambda_2^{\varepsilon,\delta,u}(0)=Y_0$ and
 $\Lambda_2^{\varepsilon,\delta,u}$ is weakly differentiable in time and
\[\frac{d}{dt}\Lambda_2^{\varepsilon,\delta,u}(t) = \frac{1}{\delta^2}A_2 \Lambda_2^{\varepsilon,\delta,u}(t) + \frac{1}{\delta^2}B_2(X^{\varepsilon,\delta,u}(t),Y^{\varepsilon,\delta,u}(t)).\]
Therefore,
\begin{align*}
& \dfrac{1}{2}\dfrac{d}{dt}|\Lambda_2^{\varepsilon,\delta,u}(t)|_H^2
= \left<\dfrac{d}{dt}\Lambda_2^{\varepsilon,\delta,u}(t), \Lambda_2^{\varepsilon,\delta,u}(t)\right>_H
\\
&\leq  \dfrac{1}{\delta^2}\langle A_2\Lambda_2^{\varepsilon,\delta,u}(t), \Lambda_2^{\varepsilon,\delta,u}(t)\rangle_H + \dfrac{1}{\delta^2}\langle B_2(X^{\varepsilon,\delta,u}(t), Z_2^{\varepsilon,\delta,u}(t)+ \Gamma_2^{\varepsilon,\delta,u}(t)), \Lambda_2^{\varepsilon,\delta,u}(t)\rangle_H
\\
&  \quad + \dfrac{1}{\delta^2}\left< B_2(X^{\varepsilon,\delta,u}(t), \Lambda_2^{\varepsilon,\delta,u}(t)+Z_2^{\varepsilon,\delta,u}(t)+\Gamma_2^{\varepsilon,\delta,u}(t))
-\right.\\
&\hspace{3cm}\left. -B_2(X^{\varepsilon,\delta,u}(t), Z_2^{\varepsilon,\delta,u}(t) + \Gamma_2^{\varepsilon,\delta,u}(t)), \Lambda_2^{\varepsilon,\delta,u}(t)\right>_H
\\
&\leq -\dfrac{1}{\delta^2} \left(\dfrac{\lambda - L_{b_2}^Y}{2} \right)|\Lambda_2^{\varepsilon,\delta,u}(t)|_H^2
+ \dfrac{c}{\delta^2} \left(1 + |X^{\varepsilon,\delta,u}(t)|_H^2 + |Z_2^{\varepsilon,\delta,u}(t) + \Gamma_2^{\varepsilon,\delta,u}(t)|_H^2\right).%\\
\end{align*}

Here the last inequality is due to Young's inequality. By a comparison principle, letting $\rho=\dfrac{\lambda-L_{b2}^X}{2}$, we have
\begin{align}%\label{Eq:L2EstimateLambda}
&|\Lambda_2^{\varepsilon,\delta,u}(t)|_H^2 \nonumber\\
\leq & e^{-\rho t/\delta^2} |Y_0|_H^2 + \displaystyle{\frac{c}{\delta^2} \int_0^t e^{-\rho(t-s)/\delta^2} \left(1 + |X^{\varepsilon,\delta,u}(s)|_H^2 + |Z^{\varepsilon,\delta,u}_2(s)|_H^2 + |\Gamma_2^{\varepsilon,\delta,u}(s)|_H^2 \right) ds}\nonumber
\end{align}

By Young's inequality for convolutions,
\begin{align*}
  \int_0^T |\Lambda_2^{\varepsilon,\delta,u}(t)|_H^2 dt \leq &c\delta^2 |Y_0|_H^2 + c\int_0^T \left(1 + |X^{\varepsilon,\delta,u}(t)|_H^2 + |Z^{\varepsilon,\delta,u}(t)|_H^2 + |\Gamma_2^{\varepsilon,\delta,u}(t)|_H^2 \right)dt \ .
\end{align*}

Thus using (\ref{Eq:EstimateOfStochasticConvolutionFastProcess}) and (\ref{Eq:Z2-estimates})
we see that
\begin{equation}\label{Eq:FastProcessControlledBySlow}
\int_0^T \mathbf{E} |Y^{\varepsilon,\delta,u}(t)|_H^2dt\leq c |Y_0|_H^2 + c+c\int_0^T \mathbf{E}\sup\limits_{r\leq t}|X^{\varepsilon,\delta,u}(r)|_H^2 dt+cN\frac{\delta}{\sqrt{\varepsilon}}\mathbf{E}  \sup_{t\leq T}|X^{\varepsilon,\delta,u}(t)|_H^2.
\end{equation}

Combining (\ref{Eq:FastProcessControlledBySlow}) and (\ref{Eq:SlowProcessControlledByFast}) we see that
$$\mathbf{E}\sup\limits_{s\leq t}|X^{\varepsilon,\delta,u}(s)|_H^p\leq c_{T,N}(1+|X_0|_H^p+|Y_0|_H^2)
+c_{T}\left(1 + N\frac{\delta}{\sqrt{\varepsilon}}  \right) \mathbf{E} \sup\limits_{r\leq T}|X^{\varepsilon,\delta,u}(r)|_H^2 ds \ .$$
By Young's inequality,
$$c_{T}\left(1 + N \frac{\delta}{\sqrt{\varepsilon}} \right)|X^{\varepsilon,\delta,u}(t)|_H^2 \leq \frac{1}{2}|X^{\varepsilon,\delta,u}(t)|_H^p + c_{T}\left(1 + N\frac{\delta}{\sqrt{\varepsilon}} \right)^{(p-2)/p}$$
where the constant on the right hand side is different from the constant on the left.
We have assumed in (\ref{Eq:AssumptionRegime1Restricted}) that $\dfrac{\delta}{\sqrt{\varepsilon}} \to 0$. Consequently, (\ref{Eq:LpEstimateSlowProcess}) follows.
By (\ref{Eq:LpEstimateSlowProcess}) and
(\ref{Eq:FastProcessControlledBySlow})
we obtain (\ref{Eq:LpEstimateFastProcess}). \end{proof}

\begin{lemma}\label{Lm:SobolevEstimateSlowProcess}
Under Hypotheses 1, 2 and 3,
there exists $0<\overline{\theta}< \frac{1-\zeta}{2}$ and $p=\frac{2}{\zeta}$, such that
 for any $u\in \mathcal{P}_2^N$,  $T>0$,  $X_0\in H$
 and $Y_0\in H$ we have
\begin{equation}\label{Eq:SobolevEstimateSlowProcess}
\sup\limits_{\varepsilon\in (0,1)}\mathbf{E}\sup\limits_{t\leq T}|X^{\varepsilon,\delta,u}(t) - S_1(t)X_0|_{\theta,1}^p\leq c_{p,\theta,T,N}(1+|X_0|_H^p+|Y_0|_H^2)
\end{equation}
for some positive constant $c_{p,\theta,T,N}$.
\end{lemma}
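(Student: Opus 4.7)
The plan is to start from the mild formulation and decompose
\begin{equation*}
X^{\varepsilon,\delta,u}(t) - S_1(t)X_0 = \int_0^t S_1(t-s) B_1(X^{\varepsilon,\delta,u}(s), Y^{\varepsilon,\delta,u}(s))\,ds + Z_1^{\varepsilon,\delta,u}(t) + \sqrt{\varepsilon}\,\Gamma_1^{\varepsilon,\delta,u}(t),
\end{equation*}
and bound each of these three contributions in the $H^\theta_1$ norm. Throughout, we fix $p=2/\zeta$; since by Hypothesis 3 we have $\zeta<1-\beta_1(\rho_1-2)/\rho_1$, this choice of $p$ exceeds $\overline{p}=2/(1-\beta_1(\rho_1-2)/\rho_1)$, so the hypotheses of the prior lemmas are in force. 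The parameter $\overline{\theta}$ will be taken as the minimum of the thresholds required by each estimate below, and shrunk further if necessary to land inside $(0,(1-\zeta)/2)$.

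For the deterministic drift term, I would use the smoothing estimate $\|(-A_1)^{\theta/2} S_1(r)\|_{\mathcal{L}(H)}\leq C r^{-\theta/2}e^{-\lambda r/2}$ for the analytic semigroup, together with the pointwise bound from Hypothesis 3 and Jensen's inequality (applicable since $2\zeta<2$) to obtain $|B_1(X,Y)|_H\leq C(1+|X|_H+|Y|_H^\zeta)$. Applying H\"older's inequality in time with conjugate exponents $p$ and $p/(p-1)$ yields
\begin{equation*}
\sup_{t\leq T}\Bigl|\int_0^t S_1(t-s)B_1(X^{\varepsilon,\delta,u}(s),Y^{\varepsilon,\delta,u}(s))\,ds\Bigr|_{\theta,1}^p\leq c_{p,T,\theta}\int_0^T\!\bigl(1+|X^{\varepsilon,\delta,u}(s)|_H^p+|Y^{\varepsilon,\delta,u}(s)|_H^{p\zeta}\bigr)ds,
\end{equation*}
provided $\theta p/(2(p-1))<1$, which is ensured for $\theta<\overline{\theta}$ small enough. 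For the other two terms, I would invoke directly (\ref{Eq:EstimateOfStochasticConvolutionSlowProcess}) and (\ref{Eq:Z1-estimates}), both of which apply since $p>\overline{p}$ and $\theta<\overline{\theta}$.

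Combining the three bounds and using that $p\zeta=2$, the whole left-hand side is controlled by
\begin{equation*}
c_{p,T,\theta,N}\int_0^T\bigl(1+\mathbf{E}|X^{\varepsilon,\delta,u}(s)|_H^p+\mathbf{E}|Y^{\varepsilon,\delta,u}(s)|_H^2\bigr)ds.
\end{equation*}
Now Lemma \ref{Lm:LpEstimateSlowAndFastProcess} bounds $\mathbf{E}\sup_{s\leq T}|X^{\varepsilon,\delta,u}(s)|_H^p$ and $\int_0^T\mathbf{E}|Y^{\varepsilon,\delta,u}(s)|_H^2ds$ uniformly in $\varepsilon\in(0,\varepsilon_0)$ by $c_{p,T,N}(1+|X_0|_H^p+|Y_0|_H^2)$, which delivers (\ref{Eq:SobolevEstimateSlowProcess}).

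The main subtlety is the compatibility of the exponents: the sublinear growth rate $\zeta$ of $B_1$ in $Y$ controls the choice of $p$ (through $p\zeta=2$, matching the only available a priori integrability of $Y$ from Lemma \ref{Lm:LpEstimateSlowAndFastProcess}), while the size of $\beta_1(\rho_1-2)/\rho_1$ controls the admissible $\theta$. The inequality $\zeta<1-\beta_1(\rho_1-2)/\rho_1$ built into Hypothesis 3 is precisely what makes $p>\overline{p}$, leaving a nonempty range of admissible $\theta$; if one instead only had $p=2$ available for the slow process, the drift term could not be absorbed because $|Y|_H^{p\zeta}=|Y|_H^{2\zeta}$ moments cannot be upgraded to the $L^2$-in-time estimate of Lemma \ref{Lm:LpEstimateSlowAndFastProcess} without further loss. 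Everything else is the standard factorization/semigroup smoothing package already developed in the preceding lemmas.
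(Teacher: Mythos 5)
Your proof is correct and follows essentially the same route as the paper's: the same three-term decomposition of $X^{\varepsilon,\delta,u}(t)-S_1(t)X_0$, the same invocation of Lemmas~\ref{Lm:EstimateOfStochasticConvolutionSlowProcess} and~\ref{Lm:Control-terms-estimates} for the noise and control terms, the same semigroup-smoothing plus H\"older argument for the drift, and finally Lemma~\ref{Lm:LpEstimateSlowAndFastProcess} to close the estimate. If anything, your version is more careful than the paper's own (rather terse) proof, which states the bounds on $\Gamma_1^{\varepsilon,\delta,u}$ and $Z_1^{\varepsilon,\delta,u}$ with only the $|X^{\varepsilon,\delta,u}|_H^p$ contribution displayed, implicitly suppressing the $|Y^{\varepsilon,\delta,u}|_H^{\zeta p}$ terms that Lemmas~\ref{Lm:EstimateOfStochasticConvolutionSlowProcess}--\ref{Lm:Control-terms-estimates} actually produce; you correctly track those, use $p\zeta=2$ to match them against the $L^2$-in-time bound~\eqref{Eq:LpEstimateFastProcess}, and explain why $\zeta<1-\beta_1(\rho_1-2)/\rho_1$ is exactly what leaves a nonempty range of admissible $\theta$. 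No gaps.
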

\begin{proof}  Assume that $X_0\in H$.
We have
$$\begin{array}{l}
\displaystyle{X^{\varepsilon,\delta,u}(t)-S_1(t)X_0= \int_0^t S_1(t-s)B_1(X^{\varepsilon,\delta,u}(s), Y^{\varepsilon,\delta,u}(s))ds
}\\
\displaystyle{\qquad \qquad \qquad \qquad \qquad +Z^{\varepsilon,\delta,u}_1(t)
+\sqrt{\varepsilon}\Gamma_1^{\varepsilon,\delta,u}(t) \ .}
\end{array}$$

We showed that $\Gamma_1^{\varepsilon,\delta,u}$ and $Z_1^{\varepsilon,t,u}$ have the required regularity in Lemmas \ref{Lm:EstimateOfStochasticConvolutionSlowProcess} and \ref{Lm:Control-terms-estimates} and that
\[\mathbf{E}\sup_{t \in [0,T]}\left(|\Gamma_1^{\varepsilon,\delta,u}(t)|_{\theta,1}^p + |Z_1^{\varepsilon,\delta,u}(t)|_{\theta,1}^p\right) \leq C( 1  + \mathbf{E}\sup_{t \in [0,T]}|X^{\varepsilon,\delta,u}(t)|_H^p).\]
By the Lipschitz continuity of $B_1$ and the regularizing properties of the semigroup,
\[\sup_{t \in [0,T]}\left|\int_0^t S_1(t-s) B(X^{\varepsilon,\delta,u}(s),Y^{\varepsilon,\delta,u}(s))ds \right|_{\theta,1}^p \leq C(1 + \mathbf{E}\sup_{t \in [0,T]}|X^{\varepsilon,\delta,u}(t)|_H^p).\]
The result then follows from (\ref{Eq:LpEstimateSlowProcess}).
\end{proof}

\begin{lemma} \label{Lm:HighRegFastProcess}
  There exists $\theta>0$ such that for any $T>0$, there exists a constant $C_{T,N,\theta}>0$ such that for any $u \in \mathcal{P}_2^N$, $Y_0, X_0 \in H$
  \begin{equation} \label{Eq:HighRegFastProcess}
    \int_0^T |Y^{\varepsilon,\delta,u}(s)|_{\theta,2}^2 ds \leq C_{T,N,\theta}(1 + |Y_0|_H^2 + |X_0|_H^2).
  \end{equation}
  Notice that these bounds are independent of $\varepsilon$ and $\delta$.
\end{lemma}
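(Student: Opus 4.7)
The plan is to decompose the fast motion as
\[
Y^{\varepsilon,\delta,u}(t) = \Lambda_2^{\varepsilon,\delta,u}(t) + Z_2^{\varepsilon,\delta,u}(t) + \Gamma_2^{\varepsilon,\delta,u}(t),
\]
where $\Lambda_2^{\varepsilon,\delta,u}$ is the ``deterministic part,'' which, in mild form, satisfies
\[
\Lambda_2^{\varepsilon,\delta,u}(t) = S_2(t/\delta^2) Y_0 + \frac{1}{\delta^2}\int_0^t S_2\!\left(\tfrac{t-s}{\delta^2}\right) B_2(X^{\varepsilon,\delta,u}(s), Y^{\varepsilon,\delta,u}(s))\,ds.
\]
The stochastic convolution $\Gamma_2^{\varepsilon,\delta,u}$ is already controlled in the $H^\theta_2$ norm by (\ref{Eq:EstimateOfStochasticConvolutionFastProcess}) in Lemma \ref{Lm:EstimateOfStochasticConvolutionSlowProcess}, and the control--driven term $Z_2^{\varepsilon,\delta,u}$ by (\ref{Eq:Z2-estimates}) in Lemma \ref{Lm:Control-terms-estimates}; the prefactor $\delta^2/\varepsilon$ appearing in the $Z_2$ bound is $o(1)$ by Hypothesis 5, and both bounds depend on $\mathbf{E}\sup_{s\leq T}|X^{\varepsilon,\delta,u}(s)|_H^2$, which is finite by Lemma \ref{Lm:LpEstimateSlowAndFastProcess}.

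For $\Lambda_2^{\varepsilon,\delta,u}$, I would apply $(-A_2)^{\theta/2}$ to both terms and use the analytic semigroup estimate $\|(-A_2)^{\theta/2} S_2(r)\|_{\mathcal{L}(H)}\leq C\, r^{-\theta/2}e^{-\lambda r/2}$ for $r>0$. For the initial datum term, the time change $r=t/\delta^2$ gives
\[
\int_0^T \bigl|(-A_2)^{\theta/2} S_2(t/\delta^2)Y_0\bigr|_H^2\,dt \leq C|Y_0|_H^2\,\delta^2 \int_0^{T/\delta^2} r^{-\theta} e^{-\lambda r}\,dr \leq C|Y_0|_H^2
\]
uniformly in $\delta$, provided $\theta<1$. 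For the convolution term, Hypothesis 2 gives $|B_2(X,Y)|_H\leq C(1+|X|_H+|Y|_H)$, and Young's convolution inequality ($L^1\star L^2\subset L^2$) together with the same time change yields
\[
\int_0^T \left|(-A_2)^{\theta/2} \frac{1}{\delta^2}\int_0^t S_2\!\left(\tfrac{t-s}{\delta^2}\right) B_2\,ds\right|_H^2\,dt \leq \left(\int_0^{T/\delta^2} r^{-\theta/2}e^{-\lambda r/2}\,dr\right)^2 \int_0^T |B_2|_H^2\,ds,
\]
the first factor being uniformly bounded for $\theta<2$. Taking expectation and invoking Lemma \ref{Lm:LpEstimateSlowAndFastProcess} (specifically (\ref{Eq:LpEstimateSlowProcess})--(\ref{Eq:LpEstimateFastProcess}), together with Jensen's inequality to descend from the exponent $p=2/\zeta$ to the exponent $2$ in the $X_0$ dependence), one bounds the right--hand side by $C_{T,N}(1+|X_0|_H^2+|Y_0|_H^2)$. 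Combining the three contributions and choosing $\theta\in(0,1)$ small enough yields the desired estimate.

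The main obstacle is the seemingly singular $\delta^{-2}$ factors in the mild formulation of $\Lambda_2$, which are exactly neutralized by the time change $r=(t-s)/\delta^2$ that rescales the semigroup; the regularization cost is paid for by the analyticity gain from $(-A_2)^{\theta/2}$, but this only works when $\theta$ is small enough that the resulting Gamma--type integrals $\int_0^\infty r^{-\theta/2} e^{-\lambda r/2}dr$ and $\int_0^\infty r^{-\theta} e^{-\lambda r}dr$ converge. This explains why one cannot hope for arbitrary $\theta>0$ but only for a sufficiently small one, uniformly in $\varepsilon,\delta$.
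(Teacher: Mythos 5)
Your proposal is correct and takes essentially the same route as the paper: decompose $Y^{\varepsilon,\delta,u}$ via its mild formulation into the semigroup term, the drift convolution, $Z_2^{\varepsilon,\delta,u}$ and $\Gamma_2^{\varepsilon,\delta,u}$; handle the first two by the analytic estimate $\|(-A_2)^{\theta/2}S_2(r)\|\leq Cr^{-\theta/2}e^{-\lambda r/2}$, the time change $r=s/\delta^2$, and Young's convolution inequality; and invoke (\ref{Eq:EstimateOfStochasticConvolutionFastProcess}), (\ref{Eq:Z2-estimates}), and Lemma \ref{Lm:LpEstimateSlowAndFastProcess} for the remaining pieces. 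Your remark on the necessity of choosing $\theta$ small is apt and consistent with the constraints already implicit in Lemmas \ref{Lm:EstimateOfStochasticConvolutionSlowProcess} and \ref{Lm:Control-terms-estimates}.
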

\begin{proof}
  This proof is a consequence of the analytic properties of the semigroup $|S_i(t)X_0|_{\theta,i} \leq Ct^{-\theta}|X_0|_H$. The mild formulation for $Y^{\varepsilon,\delta,u}$ is
  \begin{align*}
    Y^{\varepsilon,\delta,u}(t) = &S_2 \left( \frac{t}{\delta^2} \right)Y_{0} + \frac{1}{\delta^2}\int_0^t S_2 \left(\frac{t-s}{\delta^2} \right) B(X^{\varepsilon,\delta,u}(s), Y^{\varepsilon,\delta,u}(s))ds\\
    &+Z_2^{\varepsilon,\delta,u}(t) + \Gamma_2^{\varepsilon,\delta,u}(t).
  \end{align*}
  We bound each term of the mild solution separately.
  The semigroup term satisfies
  \begin{equation} \label{Eq:SemigroupL2}
    \int_0^T \left| S_2\left(\frac{t}{\delta^2}\right)Y_0\right|_{\theta,2}^2dt\leq C \int_0^T \left(\frac{t}{\delta^2}\right)^{-\theta}e^{-\lambda t/\delta^2}|Y_0|_H^2 dt \leq \delta^2 C_\theta |Y_0|_H^2.
  \end{equation}
  Denote the drift term
  \[\Lambda_2(t) = \frac{1}{\delta^2}\int_0^t S_2 \left(\frac{t-s}{\delta^2} \right) B(X^{\varepsilon,\delta,u}(s), Y^{\varepsilon,\delta,u}(s))ds.\]
  Then
  \[|\Lambda_2(t)|_{\theta,2} \leq C \frac{1}{\delta^2} \int_0^t \left( \frac{t-s}{\delta^2} \right)^{-\theta/2} e^{-\frac{\lambda(t-s)}{2\delta^2}} |B(X^{\varepsilon,\delta,u}(s),Y^{\varepsilon,\delta,u}(s))|_H ds \]
  and by Young's inequality for convolutions and the linear growth of $B_2$,
  \begin{align} \label{Eq:DriftL2}
    \int_0^T |\Lambda_2(t)|_{\theta,2}^2 dt \leq &C \left(\int_0^\infty s^{-\theta/2}e^{-\lambda s/2}ds \right) \int_0^T \left( 1 + |X^{\varepsilon,\delta,u}(s)|_H^2 + |Y^{\varepsilon,\delta,u}(s)|_H^2  \right)ds.
  \end{align}

  We combine estimates (\ref{Eq:SemigroupL2}), (\ref{Eq:DriftL2}),  along with (\ref{Eq:EstimateOfStochasticConvolutionFastProcess}) and (\ref{Eq:Z2-estimates})  for estimating $\Gamma_2^{\varepsilon,\delta,u}$ and $Z^{\varepsilon,\delta,u}_2$  to see that
  \begin{align*}
    \mathbf{E}\int_0^T |Y^{\varepsilon,\delta,u}(t)|_{\theta,2}^2 dt \leq &C_{T,N,\theta}\mathbf{E} \left( |Y_0|_H^2 + \sup_{s \leq T}|X^{\varepsilon,\delta,u}(s)|_H^2 + \int_0^T |Y^{\varepsilon,\delta,u}(s)|_H^2 ds \right).\\
  \end{align*}
  It follows from (\ref{Eq:LpEstimateSlowProcess}) and (\ref{Eq:LpEstimateFastProcess}) that
  \[\mathbf{E}\int_0^T |Y^{\varepsilon,\delta,u}(t)|_{\theta,2}^2 dt \leq C_{T,N,\theta} \left(1 + |X_0|_H^2 + |Y_0|_H^2 \right).\]
\end{proof}

\begin{lemma}\label{Lm:OscillationEstimateSlowProcess}
Under Hypotheses 1, 2 and 3, there exists $0<\bar{\theta}<\frac{1-\zeta}{2}$ and $p=\frac{2}{\zeta}$,
such that for any $u\in \mathcal{P}_2^N$, $T>0$, $X_0\in H$
%with $\theta\in [0,\bar{\theta})$
and $Y_0\in H$ it holds
\begin{align*}%\label{Eq:OscillationEstimateSlowProcess}
&\sup\limits_{\varepsilon\in (0,1]}\mathbf{E}|X^{\varepsilon,\delta,u}(t)-X^{\varepsilon,\delta,u}(s)|_H^p\\
&\leq c_{\theta,p,T,N}\left(|t-s|^{\beta(\theta)p}(|X_0|_H^p+|Y_0|_H^2+1) + |(S_1(t-s) - I)X_0|_H^p\right).
\end{align*}
for $s,t\in [0,T]$ and some positive constant $c_{\theta,p,T,N}$ and $\beta(\theta)>0$.
\end{lemma}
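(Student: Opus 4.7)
The plan is to use the semigroup property of the mild formulation \eqref{Eq:MildFastSlowStochasticRDEWithControl} to write, for $0\leq s<t\leq T$,
\[
X^{\varepsilon,\delta,u}(t)-X^{\varepsilon,\delta,u}(s) = (S_1(t-s)-I)X^{\varepsilon,\delta,u}(s) + I_1 + I_2 + I_3,
\]
where $I_1=\int_s^t S_1(t-r)B_1(X^{\varepsilon,\delta,u}(r),Y^{\varepsilon,\delta,u}(r))\,dr$, $I_2=\int_s^t S_1(t-r)\Sigma_1(X^{\varepsilon,\delta,u}(r),Y^{\varepsilon,\delta,u}(r))Q_1u(r)\,dr$, and $I_3=\sqrt{\varepsilon}\int_s^t S_1(t-r)\Sigma_1(X^{\varepsilon,\delta,u}(r),Y^{\varepsilon,\delta,u}(r))\,dW^{Q_1}(r)$. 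I would estimate each of the four pieces in $L^p(\Omega;H)$ separately and then combine.

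The first piece is the one that produces the two terms on the right-hand side. I would split
\[
(S_1(t-s)-I)X^{\varepsilon,\delta,u}(s) = S_1(s)(S_1(t-s)-I)X_0 + (S_1(t-s)-I)\bigl(X^{\varepsilon,\delta,u}(s)-S_1(s)X_0\bigr).
\]
Boundedness of $S_1(s)$ and the commutativity of the semigroup give $|S_1(s)(S_1(t-s)-I)X_0|_H\leq C|(S_1(t-s)-I)X_0|_H$, contributing the second summand of the bound. For the remaining term, the standard analytic-semigroup estimate $|(S_1(t-s)-I)Z|_H\leq C(t-s)^{\theta/2}|Z|_{\theta,1}$ together with Lemma \ref{Lm:SobolevEstimateSlowProcess} yields a bound of order $(t-s)^{\theta p/2}(1+|X_0|_H^p+|Y_0|_H^2)$.

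For $I_1$, a single H\"older inequality and the sublinear growth of $B_1$ from \eqref{Eq:Sm1-growth} give
\[
\mathbf{E}|I_1|_H^p\leq C(t-s)^{p-1}\int_s^t\bigl(1+\mathbf{E}|X^{\varepsilon,\delta,u}(r)|_H^p+\mathbf{E}|Y^{\varepsilon,\delta,u}(r)|_H^{\zeta p}\bigr)dr,
\]
which, using $p=2/\zeta$ and Lemma \ref{Lm:LpEstimateSlowAndFastProcess} to bound the $X$-integral (uniformly) and the $Y$-integral in $L^2$ over $[0,T]$, gives a bound of order $(t-s)^{p-1}(1+|X_0|_H^p+|Y_0|_H^2)$. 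For $I_2$, the approach of Lemma \ref{Lm:Control-terms-estimates} applies with $[0,t]$ replaced by $[s,t]$: combining \eqref{Eq:Smu-bound} with two H\"older inequalities (one to isolate $|u|_{L^2([s,t];U)}\leq N^{1/2}$ and another with exponent $p/(p-2)$ to handle the remaining singular kernel), and the constraint $\frac{\beta_1(\rho_1-2)}{\rho_1(1-2/p)}<1$ inherent in the choice of $\bar p$, produces a bound of the form $C_{N,T}(t-s)^{\gamma p}(1+\mathbf{E}\sup_{r\leq T}|X^{\varepsilon,\delta,u}(r)|^p_H+\int_0^T\mathbf{E}|Y^{\varepsilon,\delta,u}(r)|^{\zeta p}_Hdr)$ for a suitable $\gamma>0$. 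Finally, $I_3$ is handled by the Da~Prato--Zabczyk stochastic factorization formula (the same tool used behind \eqref{Eq:EstimateOfStochasticConvolutionSlowProcess}), giving H\"older regularity in time with some positive exponent and $L^p$-moments controlled by $1+\mathbf{E}\sup|X^{\varepsilon,\delta,u}|^p_H+\mathbf{E}\sup|Y^{\varepsilon,\delta,u}|^{\zeta p}_H$.

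Taking $\beta(\theta)$ to be the minimum of the positive exponents obtained above (which we can arrange to be of the form $\theta/2$ by taking $\theta$ sufficiently small) and summing the four contributions yields the claim. The main obstacle is the control term $I_2$: since $u$ is only square-integrable, we cannot treat it pointwise, and we must balance the singular convolution kernel from the analytic semigroup against $|u|_{L^2([s,t];U)}$ through a careful double application of H\"older's inequality, making essential use of the bound $\frac{\beta_1(\rho_1-2)}{\rho_1}<1$ from Hypothesis 1 to keep the relevant integral finite for $p>\bar p$.
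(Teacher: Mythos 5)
Your decomposition of $X^{\varepsilon,\delta,u}(t)-X^{\varepsilon,\delta,u}(s)$ into the semigroup increment on $X^{\varepsilon,\delta,u}(s)$ (further split around $S_1(s)X_0$), the $B_1$ integral, the control integral, and the stochastic convolution is exactly the decomposition used in the paper, and the tools you invoke for each piece (the analytic-semigroup rate combined with Lemma \ref{Lm:SobolevEstimateSlowProcess}, the a-priori bounds of Lemma \ref{Lm:LpEstimateSlowAndFastProcess}, the double-H\"older argument of Lemma \ref{Lm:Control-terms-estimates} / \eqref{Eq:Z_1-bound}, and stochastic factorization) are the same the paper cites, only more explicitly worked out where the paper defers to Cerrai's Proposition~4.4. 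This is essentially the same argument as the paper's, just with the details filled in.
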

\begin{proof} We can proceed in a similar way as in the proof of Proposition 4.4 of \cite{CerraiRDEAveraging1},
but we have to take into account the control $u$. For any $t,h\geq 0$ and $t, t+h\in [0,T]$, we have
\begin{align*}
X^{\varepsilon,\delta,u}(t+h)&-X^{\varepsilon,\delta,u}(t)=(S_1(h)-I)(X^{\varepsilon,\delta,u}(t) - S_1(t)X_0) + (S_1(h) - I)S_1(t)X_0\\
&+\int_t^{t+h} S_1(t+h-s)B_1(X^{\varepsilon,\delta,u}(s), Y^{\varepsilon,\delta,u}(s))ds
\\
&+\int_t^{t+h} S_1(t+h-s)\Sigma_1(X^{\varepsilon,\delta,u}(s), Y^{\varepsilon,\delta,u}(s))Q_1u(s)ds
\\
&+\sqrt{\varepsilon} \int_t^{t+h} S_1(t+h-s)\Sigma_1(X^{\varepsilon,\delta,u}(s), Y^{\varepsilon,\delta,u}(s))dW^{Q_1}(s) \ .
\end{align*}
%Notice that we get uniform bounds due to the fact that \ref{Eq:SobolevEstimateSlowProcess} holds.
We can then argue more or less in the same way as the proof of Proposition 4.4 in \cite{CerraiRDEAveraging1}. The equicontinuity of the integral terms is due to the regularizing properties of $S_1(t)$ along with the a-priori estimates of Lemma \ref{Lm:LpEstimateSlowAndFastProcess}.
For example, H\"older estimates such as (\ref{Eq:Z_1-bound}) with $\theta=0$ show that we have uniform continuity as $h$ goes to zero. The stochastic integral term requires a stochastic factorization argument. The H\"older continuity of the $(S_1(h) -I)(X^{\varepsilon,\delta,u}(t)-S_1(t)X_0)$ as $h \to 0$ is due to the fact that (\ref{Eq:SobolevEstimateSlowProcess}) holds and $H^\theta_1$ is compactly embedded in $H$.
 \end{proof}

\subsubsection{Tightness of the pair $\{(X^{\varepsilon,\delta,u}, \mathrm{P}^{\varepsilon,\Delta}), \varepsilon>0, 0\leq t \leq T\}$}\label{SSS:TightnessViablePair}
\begin{lemma}\label{Lm:TightnessSlowProcess}
Under Hypotheses 1, 2 and 3, for any $T>0$ and $X_0\in H$ and any $Y_0\in H$,
the family of processes $\{X^{\varepsilon,\delta,u}: \varepsilon \in (0,1), u \in \mathcal{P}_2^N\}$ is tight in $C([0,T]; H)$.
\end{lemma}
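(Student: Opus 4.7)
The plan is to reduce tightness in $C([0,T];H)$ to the two standard ingredients: a compact containment condition at each fixed time, plus a uniform (in $\varepsilon$ and $u$) modulus-of-continuity estimate. Write
\[
X^{\varepsilon,\delta,u}(t) = S_1(t)X_0 + R^{\varepsilon,\delta,u}(t), \qquad R^{\varepsilon,\delta,u}(t) := X^{\varepsilon,\delta,u}(t) - S_1(t)X_0.
\]
The deterministic path $t \mapsto S_1(t)X_0$ is continuous in $H$ and independent of $\varepsilon,u$, so it suffices to prove tightness of $\{R^{\varepsilon,\delta,u}\}$.

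For the compact containment step I would invoke Lemma \ref{Lm:SobolevEstimateSlowProcess}, which supplies $\bar\theta>0$ and $p=2/\zeta$ with
\[
\sup_{\varepsilon\in(0,1),\,u\in\mathcal{P}_2^N}\mathbf{E}\sup_{t\leq T}|R^{\varepsilon,\delta,u}(t)|_{\theta,1}^{p} \leq c_{p,\theta,T,N}(1+|X_0|_H^p+|Y_0|_H^2).
\]
Since the embedding $H^{\theta}_1 \hookrightarrow H$ is compact, the balls $K_M=\{x\in H:|x|_{\theta,1}\leq M\}$ are compact in $H$, and Markov's inequality then yields, for every $\eta>0$, a radius $M_\eta$ such that $\mathbf{P}(R^{\varepsilon,\delta,u}(t)\in K_{M_\eta}\text{ for all }t\in[0,T])\geq 1-\eta$ uniformly in $\varepsilon,u$.

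For the temporal equicontinuity I would use Lemma \ref{Lm:OscillationEstimateSlowProcess}, which provides $\beta(\theta)>0$ such that
\[
\sup_{\varepsilon\in(0,1),\,u\in\mathcal{P}_2^N}\mathbf{E}|X^{\varepsilon,\delta,u}(t)-X^{\varepsilon,\delta,u}(s)|_H^{p} \leq c_{\theta,p,T,N}\bigl(|t-s|^{\beta(\theta)p}(1+|X_0|_H^p+|Y_0|_H^2) + |(S_1(t-s)-I)X_0|_H^p\bigr).
\]
The first summand provides a H\"older-type modulus; the second tends to $0$ as $|t-s|\to 0$ by strong continuity of $S_1(\cdot)$ on $H$, uniformly in $\varepsilon,u$ since $X_0$ is fixed. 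Combining both contributions yields a deterministic modulus $\omega(h)\to 0$ as $h\to 0$ that controls $\mathbf{E}|X^{\varepsilon,\delta,u}(t)-X^{\varepsilon,\delta,u}(s)|_H^p$ uniformly.

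With the compact containment and the uniform modulus-of-continuity bound in place, the standard Arzel\`a--Ascoli-based tightness criterion for $C([0,T];H)$ (the Aldous--Kurtz type criterion, as formulated, e.g., in \cite{DaPrato-Zabczyk}) yields tightness of $\{X^{\varepsilon,\delta,u}\}$ in $C([0,T];H)$. I do not anticipate a serious obstacle here since both main ingredients are already in hand through the preceding lemmas; the only mild subtlety is that Lemma \ref{Lm:OscillationEstimateSlowProcess} controls the increment in the $H$-norm rather than an $H^\theta_1$-norm, but this is precisely what a tightness criterion in $C([0,T];H)$ requires, with compactness being supplied separately through the Sobolev bound on $R^{\varepsilon,\delta,u}$.
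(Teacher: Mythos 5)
Your proof follows the same route as the paper's: decompose $X^{\varepsilon,\delta,u}=S_1(\cdot)X_0+R^{\varepsilon,\delta,u}$, use Lemma \ref{Lm:SobolevEstimateSlowProcess} for the compact-containment (Sobolev) bound and Lemma \ref{Lm:OscillationEstimateSlowProcess} for temporal equicontinuity, then invoke an Arzel\`a--Ascoli-type tightness criterion. This is exactly what the paper does, just with more of the bookkeeping made explicit.
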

\begin{proof}
  We apply an Arzela-Ascoli argument to show that $$\{X^{\varepsilon,\delta,u}(\cdot) - S_1(\cdot)X_0: \varepsilon \in (0,1), u \in \mathcal{P}_2^N\}$$ is tight by using Lemmas \ref{Lm:SobolevEstimateSlowProcess} and \ref{Lm:OscillationEstimateSlowProcess}. Therefore, $\{X^{\varepsilon,\delta,u}: \varepsilon \in (0,1), u \in \mathcal{P}_2^N\}$ is also tight because the set differs by a fixed non-random trajectory.
\end{proof}

\begin{lemma}\label{Lm:TightnessOccupationMeasure}
Under Hypotheses 1, 2 and 3, for any $T>0$, $X_0\in H$ and any $Y_0\in H$
the family of measures ${\{\mathrm{P}^{\varepsilon,\Delta}: \varepsilon \in (0,1), u \in \mathcal{P}_2^N\}}$ is tight in $\mathscr{P}(U\times \mathcal{Y}\times [0,T])$, where $U\times \mathcal{Y} \times [0,T]$ is endowed with the weak topology on $U$, the norm topology on $\mathcal{Y}$ and the standard topology on $[0,T]$.
\end{lemma}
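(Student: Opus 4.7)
The plan is to use the tightness function criterion from Appendix A.3 of \cite{DupuisEllis}: if $g$ is a nonnegative Borel measurable function on a metric space $E$ whose sublevel sets $\{g \leq M\}$ are relatively compact, and if $\sup_{\alpha} \mathbf{E}\int_E g \, d\mathrm{P}_\alpha < \infty$, then the laws of the random measures $\{\mathrm{P}_\alpha\}$ are tight. With $E = U \times \mathcal{Y} \times [0,T]$ carrying the weak topology on $U$, the norm topology on $\mathcal{Y}$, and the standard topology on $[0,T]$, the natural choice is
\[ g(u, Y, t) := |u|_U^2 + |Y|_{\theta,2}^2, \]
where $\theta>0$ is fixed as in Lemma \ref{Lm:HighRegFastProcess}.

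To see that $g$ is a tightness function for this topology, observe that a sublevel set $\{g \leq M\}$ is contained in the product of a closed ball of $U$, a closed ball of $H^\theta_2$, and $[0,T]$. Bounded subsets of the separable Hilbert space $U$ are weakly sequentially compact and weakly metrizable by Banach--Alaoglu; bounded subsets of $H^\theta_2$ are norm-relatively compact in $\mathcal{Y}=H$ via the compact embedding $H^\theta_2 \hookrightarrow H$ (a consequence of Hypothesis 1, which implies that $A_2$ has compact resolvent); and $[0,T]$ is already compact. Next, using the definition \eqref{Eq:OccupationMeasureBeforeAveraging} and Fubini,
\begin{align*}
\mathbf{E} \int_E g \, d\mathrm{P}^{\varepsilon,\Delta} &= \int_0^T \frac{1}{\Delta}\int_t^{t+\Delta} \mathbf{E}\left[ |u(s)|_U^2 + |Y^{\varepsilon,\delta,u}(s)|_{\theta,2}^2\right] ds \, dt \\
&\leq \mathbf{E}\int_0^{T+\Delta}\left[ |u(s)|_U^2 + |Y^{\varepsilon,\delta,u}(s)|_{\theta,2}^2\right] ds,
\end{align*}
after exchanging the order of integration (the convention $u(s) = 0$ for $s > T$ handles the tail). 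The control integral is bounded by $N$ because $u \in \mathcal{P}_2^N$, and the spatial-regularity integral is bounded by $C_{T,N,\theta}(1 + |X_0|_H^2 + |Y_0|_H^2)$ by Lemma \ref{Lm:HighRegFastProcess}, uniformly in $\varepsilon$ and $\Delta$.

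By Markov's inequality, for every $\eta>0$ one can choose $M<\infty$ so that $\mathrm{P}^{\varepsilon,\Delta}$ lies in the set $K_M := \{\mu : \int g \, d\mu \leq M\}$ with probability at least $1-\eta$, uniformly in $\varepsilon, \Delta$ and $u \in \mathcal{P}_2^N$. The set $K_M$ is itself tight as a collection of finite measures on $E$ because for any $K$ each $\mu \in K_M$ assigns mass at most $M/K$ to the complement of the relatively compact set $\{g \leq K\}$, while the total mass of $\mathrm{P}^{\varepsilon,\Delta}$ equals $T$. Hence the laws of $\{\mathrm{P}^{\varepsilon,\Delta}\}$ are tight in $\mathscr{P}(E)$. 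The main technical input is the higher-regularity estimate of Lemma \ref{Lm:HighRegFastProcess}; without the gain of spatial smoothness for $Y^{\varepsilon,\delta,u}$ there would be no tightness function for the \emph{norm} topology on $\mathcal{Y}$, and the choice of the weak topology on $U$ is crucial to absorb the fact that the controls are only square integrable.
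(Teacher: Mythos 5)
Your proof is correct and follows essentially the same path as the paper: the same tightness function $g(u,Y,t)=|u|_U^2+|Y|_{\theta,2}^2$, the same compactness of sublevel sets via Banach--Alaoglu and the compact embedding $H^\theta_2\hookrightarrow H$, and the same moment bound using the control constraint and Lemma \ref{Lm:HighRegFastProcess}. The only cosmetic difference is that the paper invokes Theorem A.3.17 of \cite{DupuisEllis} twice to lift the tightness function from $E$ to $\mathscr{P}(E)$ and then to $\mathscr{P}(\mathscr{P}(E))$, whereas you spell out the final step directly via Markov's inequality and the uniform tightness of the sublevel set $K_M$.
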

\begin{proof}
  We use tightness functions (see \cite[Appendix A.3]{DupuisEllis}). For $\theta>0$ satisfying Lemma \ref{Lm:HighRegFastProcess}, let $g: U \times \mathcal{Y} \times [0,T] \to \mathbb{R}$ be defined by
  \[g(u,Y,t) = |u|_U^2 + |Y|_{\theta,2}^2.\]
  If $M>0$, then the set $\{u \in U: |u|_U^2 \leq M\}$ is compact in the weak topology on $U$ by Alaoglu's Theorem. The set $\{Y \in \mathcal{Y}: |Y|_{\theta,2} \leq M\}$ is compact in $\mathcal{Y}$ because the operator $A_2$ is unbounded.
  Let $E = U \times \mathcal{Y} \times [0,T]$ be the metric space endowed with the weak toplogy on $U$ times the norm topology on $\mathcal{Y}$ times the topology on $[0,T]$.
  The function $g$ is a tightness function in $E$ because the set
  \[\{(u,Y,t): g(u,Y,t)\leq M\} \subset \{u:|u|_U^2 \leq M\} \times \{Y: |Y|_{\theta,2}^2\leq M\} \times [0,T]\]
  is precompact.

  By applying Theorem A.3.17 of \cite{DupuisEllis}, we see that the function $G: \mathscr{P}(E) \to \mathbb{R}$ given by
  \[G(\nu) = \int\limits_E g(x) d\nu(x)\]
  is a tightness function on $\mathscr{P}(E)$.
  Applying Theorem A.3.17 of \cite{DupuisEllis} again, we see that the function $\mathscr{G}: \mathscr{P}(\mathscr{P}(E)) \to \mathbb{R}$ given by
  \[\mathscr{G}(\mu) = \int\limits_{\mathscr{P}(E)} G(\nu) d\mu(\nu)\]
  is a tightness function.   If we choose a sequence of controls $u^\varepsilon \in \mathcal{P}_2^N$,
  %\[\int_0^T |u^\varepsilon(s)|_H^2 ds \leq N \textnormal{ with probability } 1,\]
  then by (\ref{Eq:HighRegFastProcess}) and (\ref{Eq:OccupationMeasureBeforeAveraging}), letting $\nu^{\varepsilon,\Delta}$ denote the law of $\mathrm{P}^{\varepsilon,\Delta}$,
  {\begin{align*}
    \sup_{0<\varepsilon<1} \mathscr{G}(\nu^{\varepsilon,\Delta}) &= \sup_{0<\varepsilon<1}\mathbf{E} \left[G\left(\mathrm{P}^{\varepsilon,\Delta}\right)\right]
    = \sup_{0<\varepsilon<1}\mathbf{E} \left[\int\limits_{U\times\mathcal{Y}\times[0,T]} (|u|_U^2 + |Y|_{\theta,2}^2) \mathrm{P}^{\varepsilon,\Delta}(dudYdt) \right] \\
    &= \sup_{0<\varepsilon<1}\mathbf{E} \int_0^T \frac{1}{\Delta} \int_t^{t+\Delta} \left(|u^\varepsilon(s)|_U^2 + |Y^{\varepsilon,\delta,u}(s)|_{\theta,2}^2 \right)dsdt\\
    &\leq \sup_{0<\varepsilon<1}\mathbf{E} \int_0^{T+\Delta} \left( |u^\varepsilon(s)|_U^2 + |Y^{\varepsilon,\delta,u}(s)|_{\theta,2}^2\right)ds <+\infty.
  \end{align*}}
  Since $\mathscr{G}$ is a tightness function, the laws of $\mathrm{P}^{\varepsilon,\Delta}$ are tight.
\end{proof}

\begin{lemma}\label{Lm:UniformIntegrabilityOccupationMeasure}
Under Hypotheses 1 and 2, the family $\{\mathrm{P}^{\varepsilon,\Delta}(dudYdt)=\mathrm{P}^{\varepsilon,\Delta}_t(dudY)dt \ , \ \varepsilon>0\}$ is uniformly integrable
in the sense that
$$\lim\limits_{M\rightarrow\infty}\sup\limits_{\varepsilon>0}\mathbf{E}_{X_0}\left[\int\limits_{\{(u,Y,t): |u|_U> M , |Y|_{\theta,2}>M\}}(|u|_U + |Y|_{\theta,2})
\mathrm{P}^{\varepsilon,\Delta}(dudYdt)\right] = 0 \ .$$
\end{lemma}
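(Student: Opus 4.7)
The plan is to reduce this uniform integrability statement to a uniform $L^1$ bound on the second moments of the integrand under $\mathrm{P}^{\varepsilon,\Delta}$, and then invoke the a-priori estimates already established. The key observation is the elementary Chebyshev-type bound
\[
\mathbf{1}_{\{|u|_U>M,\,|Y|_{\theta,2}>M\}}(|u|_U+|Y|_{\theta,2})
\;\leq\; \mathbf{1}_{\{|u|_U>M\}}|u|_U + \mathbf{1}_{\{|Y|_{\theta,2}>M\}}|Y|_{\theta,2}
\;\leq\; \frac{|u|_U^2}{M}+\frac{|Y|_{\theta,2}^2}{M},
\]
which, upon integrating against $\mathrm{P}^{\varepsilon,\Delta}$ and taking expectations, reduces the problem to showing
\[
\sup_{\varepsilon\in(0,1),\,u\in\mathcal{P}_2^N}\mathbf{E}\left[\int_{U\times\mathcal{Y}\times[0,T]}\bigl(|u|_U^2+|Y|_{\theta,2}^2\bigr)\,\mathrm{P}^{\varepsilon,\Delta}(dudYdt)\right] \;<\; \infty.
\]

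To verify this uniform bound, I would use the explicit definition \eqref{Eq:OccupationMeasureBeforeAveraging} of $\mathrm{P}^{\varepsilon,\Delta}$ together with Fubini. Writing the integral out gives
\[
\mathbf{E}\int_0^T\frac{1}{\Delta}\int_t^{t+\Delta}\bigl(|u^\varepsilon(s)|_U^2+|Y^{\varepsilon,\delta,u}(s)|_{\theta,2}^2\bigr)\,ds\,dt
\;\leq\; \mathbf{E}\int_0^{T+\Delta}\bigl(|u^\varepsilon(s)|_U^2+|Y^{\varepsilon,\delta,u}(s)|_{\theta,2}^2\bigr)\,ds,
\]
after swapping the order of integration and using the convention $u^\varepsilon(s)=0$ for $s>T$. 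The first term is bounded by $N$ uniformly in $\varepsilon$ since $u^\varepsilon\in\mathcal{P}_2^N$, and the second term is bounded uniformly in $\varepsilon$ by Lemma \ref{Lm:HighRegFastProcess}, which gives the estimate $C_{T+\Delta,N,\theta}(1+|X_0|_H^2+|Y_0|_H^2)$ for the value of $\theta>0$ fixed in the tightness proof (Lemma \ref{Lm:TightnessOccupationMeasure}).

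Combining these two steps, the quantity in the statement is bounded above by
\[
\frac{1}{M}\bigl(N + C_{T+\Delta,N,\theta}(1+|X_0|_H^2+|Y_0|_H^2)\bigr),
\]
which vanishes as $M\to\infty$ uniformly in $\varepsilon\in(0,1)$ and $u\in\mathcal{P}_2^N$. No obstacle of substance arises: all the necessary regularity was built into Lemma \ref{Lm:HighRegFastProcess}, and the only subtlety is the trivial but crucial replacement of the joint event $\{|u|_U>M,\,|Y|_{\theta,2}>M\}$ by the union, plus the Chebyshev trick of passing from first moments on a large-value set to second moments globally. The availability of the square-integrability bound for $|Y|_{\theta,2}$ rather than only the $H$-norm is precisely what makes this compactness-type uniform integrability assertion work in the infinite-dimensional setting.
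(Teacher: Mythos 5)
Your proof is correct, and it takes essentially the route the paper has in mind: the paper does not write out a detailed argument, remarking only that the proof ``follows the strategy of \cite[Proposition 3.1]{LDPWeakConvergence}'' with the additional use of the fast-process moment bound, and the uniform-in-$\varepsilon$ second-moment estimate
$\sup_{\varepsilon}\mathbf{E}\int_{U\times\mathcal{Y}\times[0,T]}(|u|_U^2+|Y|_{\theta,2}^2)\,\mathrm{P}^{\varepsilon,\Delta}(du\,dY\,dt)<\infty$
that you invoke is precisely the computation already displayed in the proof of Lemma~\ref{Lm:TightnessOccupationMeasure}. Your Chebyshev reduction from first moments on the large-value set to global second moments, combined with Fubini and the convention $u^\varepsilon\equiv 0$ past time $T$, is the standard and intended argument; the one small improvement worth noting is that you correctly cite Lemma~\ref{Lm:HighRegFastProcess} for the $|Y|_{\theta,2}$-norm bound (rather than the $H$-norm bound~\eqref{Eq:LpEstimateFastProcess} which the paper mentions), since the lemma's statement genuinely requires control in the stronger Sobolev norm.
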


The proof of Lemmas \ref{Lm:TightnessOccupationMeasure} and \ref{Lm:UniformIntegrabilityOccupationMeasure} follows the strategy of the proof of \cite[Proposition 3.1]{LDPWeakConvergence}, where here we also need to use the additional bound (\ref{Eq:LpEstimateFastProcess}) since in this paper the fast process does not take values in a bounded space.

With Lemmas \ref{Lm:TightnessSlowProcess} and \ref{Lm:TightnessOccupationMeasure} and the Prokhorov Theorem, we infer that for any sequence $\varepsilon\rightarrow 0$,
there exist a subsequence along which $(X^{\varepsilon,\delta,u}, \mathrm{P}^{\varepsilon,\Delta})\rightarrow (\bar{X}, \mathrm{P})$ in the space $C([0,T]; H)\times \mathscr{P}(U\times \mathcal{Y}\times[0,T])$.
The next two sections show that any such accumulation point $(\bar{X}, \mathrm{P})$ is a viable pair
in the sense of Definition \ref{Def:ViablePair}.

\subsection{Step 2: Proof of (\ref{Eq:ViablePairAveragingDynamics})}\label{S:LLNSlowProcess}
Let us recall the mild solution $(X^{\varepsilon,\delta,u}, Y^{\varepsilon,\delta,u})$ to the controlled problem (\ref{Eq:MildFastSlowStochasticRDEWithControl}). In particular, let us write for the slow component
\begin{align}\label{Eq:MildFastSlowStochasticRDEWithControl2}
X^{\varepsilon,\delta,u}(t)&=S_1(t)X_0+ \int_0^t S_1(t-s)B_1(X^{\varepsilon,\delta,u}(s), Y^{\varepsilon,\delta,u}(s))ds\nonumber\\
&\quad+\int_0^t S_1(t-s)\Sigma_1(X^{\varepsilon,\delta,u}(s), Y^{\varepsilon,\delta,u}(s))Q_1 u(s) ds\nonumber\\
&\quad +\sqrt{\varepsilon}\int_0^t S_1(t-s)\Sigma_1(X^{\varepsilon,\delta,u}(s), Y^{\varepsilon,\delta,u}(s))dW^{Q_1}(s) =\sum_{i=1}^{4}J^{\varepsilon,\delta,u}_{i}(t),
\end{align}
where $J^{\varepsilon,\delta,u}_{i}(t)$ represents the $i^{\text{th}}$ term on the right hand side of (\ref{Eq:MildFastSlowStochasticRDEWithControl2}).

Our goal is to show that each one of the terms $J^{\varepsilon,\delta,u}_{i}(t)$ is tight in $C([0,T]; H)$ and to identify its limit.  To prove the tightness of paths, we apply an infinite dimensional version of the Arzela-Ascoli Theorem. The Arzela-Ascoli Theorem guarantees that the sets
{\[K_{\theta,\theta_{1},M} = \left\{\varphi \in C([0,T];H): \sup_{t \in [0,T]}|\varphi(t)|_{\theta,1} \leq M, \ \ \sup_{\substack{s,t \in [0,T]\\t\not = s}} \frac{|\varphi(t)-\varphi(s)|_H}{|t-s|^{\theta_{1}}} \leq M \right\}\]}
are compact subsets of $C([0,T];H)$. Such a set consists of equicontinuous paths which live in a compact subset of $H$.

We show that the paths of $J^{\varepsilon,\delta,u}_i$ are tight by proving that they live in sets like $K_{\theta,\theta_{1},M}$ with high probability uniformly with respect to $\varepsilon,\delta$ and $u$.

The term $J^{\varepsilon,\delta,u}_{1}(t)=S_1(t)X_{0}$ is non-random and doesn't depend on $\varepsilon$. Using the same arguments as in the proof of Lemma \ref{Lm:EstimateOfStochasticConvolutionSlowProcess} and Lemma \ref{Lm:Control-terms-estimates}, we can show that for $i=2,3$
\begin{align*}
\sup_{\varepsilon\in(0,1)}\mathbf{E}\sup_{t\in[0,T]}|J^{\varepsilon,\delta,u}_{i}(t)|_{\theta,1}^{p}&<\infty, \text{ with }  p=2/\zeta.
\end{align*}

At the same time, Doob's inequality and Lemmas \ref{Lm:EstimateOfStochasticConvolutionSlowProcess}, \ref{Lm:SobolevEstimateSlowProcess} and \ref{Lm:HighRegFastProcess} give
\begin{align*}
\sup_{\varepsilon\in(0,1)}\mathbf{E}\sup_{t\in[0,T]}|J^{\varepsilon,\delta,u}_{4}(t)|_{\theta,1}^{2}&<\infty.
\end{align*}

Hence, we obtain that for $0<\bar{\theta}<\frac{1-\zeta}{2}$ and for any $\theta\in(0,\bar{\theta}]$, we have for $i=2,3,4$
\begin{align*}
\lim_{M\rightarrow\infty}\sup_{\varepsilon\in(0,1)}\mathbf{P}\left(\sup_{t\in[0,T]}|J^{\varepsilon,\delta,u}_{i}(t)|_{\theta,1}^{p}>M\right)&=0.
\end{align*}

The equicontinuity of the $J^{\varepsilon,\delta,u}_i$ paths is a consequence of Lemma \ref{Lm:OscillationEstimateSlowProcess} and the Kolmogorov continuity criterion.

The latter implies that the terms $J^{\varepsilon,\delta,u}_{i}(t)$ are indeed tight in $\mathcal{C}([0,T]; H)$. By Lemmas \ref{Lm:TightnessSlowProcess} and \ref{Lm:TightnessOccupationMeasure}, we also know that the family  $\{(X^{\varepsilon,\delta,u}(t), \mathrm{P}^{\varepsilon,\Delta}), \varepsilon>0, 0\leq t \leq T\}$ is also tight. Therefore, we can extract a subsequence along which  $J^{\varepsilon,\delta,u}_{i}(\cdot)$  and $(X^{\varepsilon,\delta,u}(\cdot), \mathrm{P}^{\varepsilon,\Delta})$ converge in distribution. Let us denote by $\bar{J}_{i}(\cdot)$  and $(\bar{X}(\cdot), \mathrm{P})$ the corresponding limits. Our next goal is to identify them.

We know that $\bar{J}_{1}(t)=S_1(t)X_0$. Also, the bounds of Lemma \ref{Lm:LpEstimateSlowAndFastProcess} guarantee that $\bar{J}_{4}(t)=0$ for all $t\in[0,T]$. It remains to identify $\bar{J}_{i}(t)$ for $i=2,3$. At this point, we will use Skorokhod representation theorem (Theorem 1.8 in \cite{EithierKurtz1986}), which, for the purposes of identifying the limit, allows us to assume
that the aforementioned convergence holds with probability one. The Skorokhod representation theorem involves the introduction of another probability space, but this distinction is ignored in the
notation.

Let us present the argument only for $\bar{J}_{3}(t)$ as the argument for $\bar{J}_{2}(t)$ is the same but simpler. Because we have proved tightness, we know that $J_i^{\varepsilon,\delta,u}$ all converge in $C([0,T];H)$ to a limit. In order to identify the limit, it is sufficient to identify the pointwise limits of $J^{\varepsilon,\delta,u}(t)$ for any $t \in [0,T]$. We have that
\begin{align}
{J}^{\varepsilon,\delta,u}_{3}(t)&=\int_0^t S_1(t-s)\Sigma_1(X^{\varepsilon,\delta,u}(s), Y^{\varepsilon,\delta,u}(s))Q_1 u(s) ds\nonumber\\
&=\int_{U\times\mathcal{Y}\times[0,t]} S_1(t-s)\Sigma_1(X^{\varepsilon,\delta,u}(s), Y)Q_1 u \mathrm{P}^{\varepsilon,\Delta}(dudYds)\nonumber\\
&\quad+\left( \int_0^t S_1(t-s)\Sigma_1(X^{\varepsilon,\delta,u}(s), Y^{\varepsilon,\delta,u}(s))Q_1 u(s) ds-\right.\nonumber\\
&\hspace{1cm}\left.-\int_{U\times\mathcal{Y}\times[0,t]} S_1(t-s)\Sigma_1(X^{\varepsilon,\delta,u}(s), Y)Q_1 u \mathrm{P}^{\varepsilon,\Delta}(dudYds)\right)\label{Eq:J3term}
\end{align}

By (\ref{Eq:ConvergenceOccupationMeasureSmu}) below, the first term on the right hand side of (\ref{Eq:J3term}) satisfies
\begin{align*}
&\int_{U\times\mathcal{Y}\times[0,t]} S_1(t-s)\Sigma_1(X^{\varepsilon,\delta,u}(s), Y)Q_1 u \mathrm{P}^{\varepsilon,\Delta}(dudYds)\nonumber\\
&\qquad\qquad\rightarrow \int_{U\times\mathcal{Y}\times[0,t]} S_1(t-s)\Sigma_1(\bar{X}(s), Y)Q_1 u \mathrm{P}(dudYds).
\end{align*}

By (\ref{Eq:ConvergenceFastMotion1}) and (\ref{Eq:ConvergenceFastMotion2}) we have the second term of (\ref{Eq:J3term}) converges to zero in probability. Therefore, we have shown that any limit $(\bar{X}, \mathrm{P})$ of $(X^{\varepsilon,\delta,u}, \mathrm{P}^{\varepsilon,\Delta})$ solves (\ref{Eq:ViablePairAveragingDynamics}).
%Uniqueness and continuity of the limits together with the fact that, for given square integrable $\mathrm{P}$, (\ref{Eq:ViablePairAveragingDynamics}) has a unique solution give the result \textcolor{red}{[DID WE EVER PROVE THAT THIS HAS A UNIQUE SOLUTION? WHERE?]}.
\begin{lemma}
\label{L:LemmaForODElimit1} Let $t\in [0,T]$ be given.
Assume that $(X^{\varepsilon,\delta,u}, \mathrm{P}^{\varepsilon,\Delta})\rightarrow (\bar{X}, \mathrm{P})$ in distribution in $C([0,T];H)\times\mathscr{P}(E)$ for some subsequence of $\varepsilon\downarrow 0$,
and Hypotheses 1, 2 and 3 hold. Then the following limits are valid in distribution along this subsequence:
\begin{align}\label{Eq:ConvergenceOccupationMeasureB}
&\int_{U\times \mathcal{Y}\times [0,t]}S_{1}(t-s)B_1(X^{\varepsilon,\delta,u}(s), Y)\mathrm{P}^{\varepsilon,\Delta}(dudYds)\nonumber\\
&\qquad\qquad\rightarrow
\int_{U\times \mathcal{Y}\times [0,t]}S_{1}(t-s)B_1(\bar{X}(s), Y)\mathrm{P}(dudYds),
\end{align}
and
\begin{align}\label{Eq:ConvergenceOccupationMeasureSmu}
&\int_{U\times \mathcal{Y}\times [0,t]}S_{1}(t-s)\Sigma_1(X^{\varepsilon,\delta,u}(s), Y)Q_1 u\mathrm{P}^{\varepsilon,\Delta}(dudYds)\nonumber\\
&\qquad\qquad\rightarrow
\int_{U\times \mathcal{Y}\times [0,t]}S_{1}(t-s)\Sigma_1(\bar{X}(s), Y)Q_1u\mathrm{P}(dudYds).
\end{align}
\end{lemma}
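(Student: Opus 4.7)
The plan is to invoke the Skorokhod representation theorem so that, on an enlarged probability space that we still denote the same way, the subsequence $(X^{\varepsilon,\delta,u},\mathrm{P}^{\varepsilon,\Delta})\to(\bar X,\mathrm P)$ almost surely in $C([0,T];H)\times\mathscr{P}(E)$, with $E=U\times\mathcal{Y}\times[0,T]$ equipped with the product of the weak topology on $U$, the norm topology on $\mathcal{Y}$, and the standard topology on $[0,T]$. Convergence in distribution of the two integrals then follows by identifying their almost sure limits. For both (\ref{Eq:ConvergenceOccupationMeasureB}) and (\ref{Eq:ConvergenceOccupationMeasureSmu}) I would carry out the common decomposition
\begin{equation*}
\int_{U\times\mathcal{Y}\times[0,t]}\!\!S_1(t-s)\,F\,\mathrm{P}^{\varepsilon,\Delta}(dudYds)=\mathcal{I}_1^{\varepsilon}+\mathcal{I}_2^{\varepsilon},
\end{equation*}
where $\mathcal{I}_1^{\varepsilon}$ is the integral whose integrand is the difference $F(X^{\varepsilon,\delta,u}(s),Y,u)-F(\bar X(s),Y,u)$, and $\mathcal{I}_2^{\varepsilon}$ has $\bar X(s)$ frozen in the slow argument. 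The term $\mathcal{I}_1^{\varepsilon}$ vanishes almost surely by Lipschitz continuity of $B_1$, resp.\ $\Sigma_1$, in $X$ (Hypothesis 2), the bound (\ref{Eq:Smu-bound}) in the $\Sigma_1 Q_1 u$ case, the uniform convergence $X^{\varepsilon,\delta,u}\to\bar X$ in $C([0,T];H)$, and the mass/moment bounds $\mathrm{P}^{\varepsilon,\Delta}(E_t)\leq T$ and $\int|u|_U^2\,\mathrm{P}^{\varepsilon,\Delta}\leq N$ (coming from $u\in\mathcal{P}_2^N$).

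For the frozen-slow term $\mathcal{I}_2^{\varepsilon}$ in (\ref{Eq:ConvergenceOccupationMeasureB}), the integrand $(u,Y,s)\mapsto S_1(t-s)B_1(\bar X(s),Y)$ does not depend on $u$ and is jointly continuous in $(Y,s)$ because $s\mapsto S_1(t-s)\bar X(s)$ is continuous in $H$ and $B_1$ is Lipschitz in $Y$. It is not bounded, however. The sublinear growth $|B_1(\bar X(s),Y)|_H\leq C(1+|\bar X(s)|_H+|Y|_H^{\zeta})$, the moment bound (\ref{Eq:HighRegFastProcess}) on $|Y|_{\theta,2}^{2}$, and the uniform integrability from Lemma \ref{Lm:UniformIntegrabilityOccupationMeasure} let us truncate the integrand by $\mathbf{1}_{\{|Y|_{\theta,2}\leq M\}}$, first pass to the limit $\varepsilon\downarrow 0$ with $M$ fixed against the resulting bounded continuous functional using weak convergence $\mathrm{P}^{\varepsilon,\Delta}\to\mathrm{P}$, and then send $M\to\infty$.

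For the frozen-slow term $\mathcal{I}_2^{\varepsilon}$ in (\ref{Eq:ConvergenceOccupationMeasureSmu}) the principal obstacle appears: the integrand $(u,Y,s)\mapsto S_1(t-s)\Sigma_1(\bar X(s),Y)Q_1u$ now depends linearly on $u$, which converges only weakly in $U$ under $\mathrm{P}^{\varepsilon,\Delta}$. This is where Lemma \ref{Lm:Smu-compact} is indispensable: for each fixed $(s,Y)$ the map $u\mapsto S_1(t-s)\Sigma_1(\bar X(s),Y)Q_1u$ is a compact operator $U\to H$, and compact operators send weakly convergent sequences to norm convergent ones. Combined with joint continuity in $(Y,s)$, this makes the integrand continuous on $E$ in the product topology in which $\mathrm{P}^{\varepsilon,\Delta}\to\mathrm P$. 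The passage to the limit is then accomplished by truncating with $\mathbf{1}_{\{|u|_U\leq R,\,|Y|_{\theta,2}\leq M\}}$, whose support is compact in $E$ (balls in $U$ being weakly compact by Alaoglu), applying the weak convergence of $\mathrm{P}^{\varepsilon,\Delta}$ against the resulting bounded continuous functional, and removing the truncation via Lemma \ref{Lm:UniformIntegrabilityOccupationMeasure} together with (\ref{Eq:Smu-bound}) to dominate the tails in $u$ and $Y$. The hard part of the entire lemma is precisely this simultaneous treatment of weak-only convergence in $U$ and unbounded (sublinear) growth of $\Sigma_1$ in $Y$, resolved by the compactness of $Q_1$ after composition with the semigroup and by the uniform occupation-measure integrability already established.
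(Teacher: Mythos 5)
Your overall strategy matches the paper's: pass to a Skorokhod representation so the convergence is almost sure, split the difference into a Lipschitz-in-$X$ piece that vanishes because $X^{\varepsilon,\delta,u}\to\bar X$ uniformly (this is exactly what the paper proves in \eqref{Eq:XCont-B} and \eqref{Eq:XCont-Sig}), and then handle the frozen-$\bar X$ piece by weak convergence of the occupation measures. You also correctly identify Lemma~\ref{Lm:Smu-compact} as what makes $u\mapsto S_1(t-s)\Sigma_1(\bar X(s),Y)Q_1u$ continuous from the weak topology on $U$ to the norm topology on $H$.

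Where your write-up has a real gap is precisely the point the paper flags as delicate: the integrands $S_1(t-s)B_1(\bar X(s),Y)$ and $S_1(t-s)\Sigma_1(\bar X(s),Y)Q_1u$ are $H$-valued, and weak convergence $\mathrm{P}^{\varepsilon,\Delta}\to\mathrm{P}$ in $\mathscr{P}(E)$ only guarantees $\int_E g\,d\mathrm{P}^{\varepsilon,\Delta}\to\int_E g\,d\mathrm{P}$ for bounded continuous \emph{scalar} $g$. Truncating to a compact $K_{R,M}\subset E$ does not by itself let you ``apply weak convergence against the resulting bounded continuous functional,'' because the functional is $H$-valued. You need an extra step to reduce to scalar test functions. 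The paper does this by projecting onto $\mathrm{span}\{e_{1,1},\dots,e_{1,N}\}$: each component $\langle S_1(t-s)\Sigma_1(\bar X(s),Y)Q_1u,e_{1,k}\rangle_H$ is a scalar continuous function of $(u,Y,s)$ (continuity in $u$ via the compactness of the operator), so the finite sum converges, and the tail $(I-\Pi_{1,N})$ is made uniformly small via \eqref{Eq:SmQf-tail}, \eqref{Eq:Smu-bound} and Lemma~\ref{Lm:UniformIntegrabilityOccupationMeasure}. Your truncation-in-$E$ route can be made to work, but only after you add the observation that a continuous $H$-valued map on a compact metric space can be uniformly approximated by finite sums $\sum_i h_i\,\phi_i$ with $h_i\in H$ and $\phi_i$ scalar continuous, which is essentially the finite-dimensional reduction the paper carries out directly. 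Also note that the sharp indicator truncation $\mathbf{1}_{\{|u|_U\le R,\,|Y|_{\theta,2}\le M\}}$ is discontinuous; you would need a continuous cutoff, another small detail the paper's projection approach avoids.

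Finally, your claim that the integrand is ``continuous on $E$'' deserves care near $s=t$, where the bound $(t-s)^{-\beta_1(\rho_1-2)/(2\rho_1)}$ blows up. The paper sidesteps this because the $k$-th projected component carries only $e^{-\alpha_{1,k}(t-s)}$ (no singularity), while the singularity is absorbed into the tail estimate via the integrable kernel. If you truncate in $E$ instead, you would also need to control the neighborhood of $s=t$ separately, using the integrability of the singularity together with the uniform bounds on $\mathrm{P}^{\varepsilon,\Delta}$.
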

\begin{lemma} \label{L:LemmaForODElimit2}
  Let $t\in[0,T]$ be given.
Assume that $(X^{\varepsilon,\delta,u}, \mathrm{P}^{\varepsilon,\Delta})\rightarrow (\bar{X}, \mathrm{P})$ in distribution in $C([0,T];H)$ for some subsequence of $\varepsilon\downarrow 0$,
and Hypotheses 1, 2 and 3 hold. Then the following limits are valid in distribution along this subsequence:
\begin{align}\label{Eq:ConvergenceFastMotion1}
&\int_0^{t}S_{1}(t-s)B_1(X^{\varepsilon,\delta,u}(s), Y^{\varepsilon,\delta,u}(s))ds\nonumber\\
&\qquad\qquad-\int_{U\times\mathcal{Y}\times [0, t]}
S_{1}(t-s)B_1(X^{\varepsilon,\delta,u}(s), Y)\mathrm{P}^{\varepsilon,\Delta}(dudYds) \rightarrow 0 \ ,
\end{align}
and
\begin{align}\label{Eq:ConvergenceFastMotion2}
&\int_0^{t}S_{1}(t-s)\Sigma_1(X^{\varepsilon,\delta,u}(s), Y^{\varepsilon,\delta,u}(s))Q_1 u(s)ds-\nonumber\\
&\qquad\qquad-\int_{U\times\mathcal{Y}\times [0, t]}
S_{1}(t-s)\Sigma_1(X^{\varepsilon,\delta,u}(s), Y)Q_1 u\mathrm{P}^{\varepsilon,\Delta}(dudYds) \rightarrow 0 \ .
\end{align}
\end{lemma}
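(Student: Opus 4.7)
The plan is to substitute the explicit form of the occupation measure $\mathrm{P}^{\varepsilon,\Delta}$ from (\ref{Eq:OccupationMeasureBeforeAveraging}) into the integrals against $\mathrm{P}^{\varepsilon,\Delta}$ in (\ref{Eq:ConvergenceFastMotion1})--(\ref{Eq:ConvergenceFastMotion2}) and perform the change of variable $\tau = r-s$, so that both the ``true'' integral and the one against $\mathrm{P}^{\varepsilon,\Delta}$ become time averages over $\tau\in[0,\Delta]$. For (\ref{Eq:ConvergenceFastMotion1}), the difference then reads
\[\int_0^t\frac{1}{\Delta}\int_0^\Delta S_1(t-s)\bigl[B_1(X^{\varepsilon,\delta,u}(s),Y^{\varepsilon,\delta,u}(s))-B_1(X^{\varepsilon,\delta,u}(s),Y^{\varepsilon,\delta,u}(s+\tau))\bigr]\,d\tau\,ds,\]
and the strategy is to add and subtract $B_1(X^{\varepsilon,\delta,u}(s+\tau),Y^{\varepsilon,\delta,u}(s+\tau))$ inside the bracket to split it into a Lipschitz-in-$X$ piece and a ``time-shift'' piece.

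The Lipschitz-in-$X$ piece is controlled by the Lipschitz continuity of $B_1$ in $X$ (Hypothesis 2) together with the H\"older time-regularity of $X^{\varepsilon,\delta,u}$ from Lemma \ref{Lm:OscillationEstimateSlowProcess}, and vanishes as $\Delta\to 0$. For the time-shift piece, after a Fubini exchange and the change of variable $s'=s+\tau$, one isolates (i) boundary integrals of $\phi^{\varepsilon,\delta}(s):=S_1(t-s)B_1(X^{\varepsilon,\delta,u}(s),Y^{\varepsilon,\delta,u}(s))$ over subintervals of $[0,\Delta]$ and $[t,t+\Delta]$, whose expected $H$-norm is controlled by the sublinear-in-$Y$ growth of $B_1$ and the moment bounds (\ref{Eq:LpEstimateSlowProcess})--(\ref{Eq:LpEstimateFastProcess}); and (ii) a semigroup-difference term $(S_1(t-s-\tau)-S_1(t-s))B_1(\cdots)$, which I would handle via the analytic-semigroup inequalities $|(I-S_1(\tau))v|_H\leq C\tau^{\theta/2}|v|_{\theta,1}$ and $|S_1(\sigma)v|_{\theta,1}\leq C\sigma^{-\theta/2}|v|_H$ after separating off the degenerate region $s\in[t-\Delta,t]$ where $t-s-\tau$ may vanish.

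For (\ref{Eq:ConvergenceFastMotion2}) the same scheme carries through with $B_1$ replaced by the bilinear-in-$u$ expression $\Sigma_1(\cdot,\cdot)Q_1u(\cdot)$, and the kernel estimates (\ref{Eq:Smu-bound})--(\ref{Eq:Z1-estimates}) from Lemma \ref{Lm:Control-terms-estimates} supplant the growth bounds used for $B_1$. The main obstacle is precisely this control contribution: because $u\in\mathcal{P}_2^N$ is only square-integrable in time, pointwise comparisons between $u(s)$ and $u(s+\tau)$ are unavailable. Every differencing step must be carried out in an $L^2$ sense, invoking the Cauchy--Schwarz inequality against $\int_0^T|u(s)|_U^2\,ds\leq N$ and recognizing the occupation-measure averaging of the control as the mollification $u_\Delta(s):=\frac{1}{\Delta}\int_0^\Delta u(s+\tau)\,d\tau$, which converges to $u$ in $L^2([0,T];U)$ by Lebesgue differentiation; combined with the $L^2([0,t];U)\to H$ boundedness of the control-to-state map $v\mapsto\int_0^t S_1(t-s)\Sigma_1(X^{\varepsilon,\delta,u}(s),Y^{\varepsilon,\delta,u}(s))Q_1 v(s)\,ds$ that follows from Lemma \ref{Lm:Control-terms-estimates}, this yields the vanishing of the control-difference piece.
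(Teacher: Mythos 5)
Your proof follows the same basic strategy as the paper's: substitute the occupation measure, use the time-regularity of $X^{\varepsilon,\delta,u}$ from Lemma~\ref{Lm:OscillationEstimateSlowProcess} to handle the argument mismatch in $X$, exchange the order of integration, peel off boundary integrals of vanishing length, and absorb the remaining semigroup mismatch via the compactness/regularization properties of $S_1(\cdot)$ (your analytic bound $|(I-S_1(\tau))v|_H\lesssim\tau^{\theta/2}|v|_{\theta,1}$ together with $|S_1(\sigma)v|_{\theta,1}\lesssim\sigma^{-\theta/2}|v|_H$ is precisely the content of \eqref{Eq:SemigroupAvg}). The only ordering difference is that the paper replaces $X(s)$ by $X(r)$ \emph{before} Fubini, whereas you split off the Lipschitz-in-$X$ piece by adding and subtracting $B_1(X(s+\tau),Y(s+\tau))$ first; these are equivalent. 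Your emphasis on Cauchy--Schwarz against $\int_0^T|u|_U^2\,ds\leq N$ in the Lipschitz-in-$X$ piece is actually a welcome elaboration of a step that the paper leaves terse.

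One remark in your sketch is a red herring and should be dropped: the claim that one must treat the mollified control $u_\Delta(s)=\frac{1}{\Delta}\int_0^\Delta u(s+\tau)\,d\tau$ and invoke its $L^2$-convergence to $u$ by Lebesgue differentiation. After the change of variable $s'=s+\tau$, the ``time-shift'' piece compares $\int_0^t S_1(t-s')(\cdots)(s')\,ds'$ with $\frac{1}{\Delta}\int_0^\Delta\!\int_\tau^{t+\tau}S_1(t-s'+\tau)(\cdots)(s')\,ds'\,d\tau$, where the same argument $u(s')$ appears in both; no pointwise or $L^2$ comparison of $u(s)$ with its translates ever arises. What remains is purely a semigroup-averaging question plus boundary integrals, exactly as in the $B_1$ case, and your Cauchy--Schwarz observation is all that is needed to integrate the $|u(s')|_U$ factor against the singular kernel $(t-s')^{-\theta/2-\beta_1(\rho_1-2)/(2\rho_1)}$, which is square-integrable by \eqref{Eq:Hypothesis1Equation3}.
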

\begin{proof}[Proof of Lemma \ref{L:LemmaForODElimit1}]
We begin by proving (\ref{Eq:ConvergenceOccupationMeasureB}). This is a consequence of the weak convergence of the occupation measures, but the situation is somewhat delicate because we are integrating the measures against $H$-valued functions. By Skorohod's Theorem, there exists a probability space and a subsequence along which on which $X^{\varepsilon,\delta,u}$ converges almost surely to $\bar{X}$ in $C([0,T];H)$.
By the Lipschitz continuity of $B_1$ and $\Sigma_1$,
\begin{align} \label{Eq:XCont-B}
  &\left|\int_{U\times \mathcal{Y}\times [0,t]}S_{1}(t-s)B_1(X^{\varepsilon,\delta,u}(s), Y)\mathrm{P}^{\varepsilon,\Delta}(dudYds)\right.\nonumber\\
  &\hspace{5cm}\left.- \int_{U\times \mathcal{Y}\times [0,t]}S_{1}(t-s)B_1(\bar{X}(s), Y)\mathrm{P}^{\varepsilon,\Delta}(dudYds)\right|_H\nonumber\\
  &\leq C\int_{U\times \mathcal{Y}\times [0,t]}\left|X^{\varepsilon,\delta,u}(s) - \bar{X}(s) \right|_H \mathrm{P}^{\varepsilon,\Delta}(dudYds)\nonumber\\
  &\leq C \left|X^{\varepsilon,\delta,u} - \bar{X}\right|_{C([0,T];H)},
\end{align}
which converges almost surely to zero as $\varepsilon \to 0$. This estimate is uniform with respect to the occupation measures.
Similar arguments show that
\begin{align} \label{Eq:XCont-Sig}
  &\left|\int_{U\times \mathcal{Y}\times [0,t]}S_{1}(t-s)\Sigma_1(X^{\varepsilon,\delta,u}(s), Y)Q_1u\mathrm{P}^{\varepsilon,\Delta}(dudYds)\right.\nonumber\\
  &\hspace{3cm}\left.- \int_{U\times \mathcal{Y}\times [0,t]}S_{1}(t-s)\Sigma_1(\bar{X}(s), Y)Q_1u\mathrm{P}^{\varepsilon,\Delta}(dudYds)\right|_H\nonumber\\
  &\leq C\int_{U\times \mathcal{Y}\times [0,T]}(t-s)^{-\frac{\beta_1(\rho_1-2)}{2\rho_1}}e^{-\frac{\lambda}{2}(t-s)}\left|X^{\varepsilon,\delta,u}(s) - \bar{X}(s) \right|_H|u|_U \mathrm{P}^{\varepsilon,\Delta}(dudYds)\nonumber\\
  &\leq C \left(\int_{U\times \mathcal{Y}\times [0,t]} (t-s)^{-\frac{\beta_1(\rho_1-2)}{2\rho_1}}e^{-\frac{\lambda}{2}(t-s)} |u|_U \mathrm{P}^{\varepsilon,\Delta}(dudYds) \right) \left|X^{\varepsilon,\delta,u} - \bar{X}\right|_{C([0,T];H)}\nonumber\\
  &\leq C \left(\int_{U \times \mathcal{Y} \times [0,t]} |u|_U^2 \mathrm{P}^{\varepsilon,\Delta}(dudYds) \right)\left|X^{\varepsilon,\delta,u}-\bar{X}\right|_{C([0,T];H)}.
\end{align}
The final inequality is due to H\"older's inequality and the fact that $\frac{\beta_1(\rho_1-2)}{\rho_1}<1$.
The integral of $|u|_U^2$ is bounded by assumption.
Based on (\ref{Eq:XCont-B}) and (\ref{Eq:XCont-Sig}), it is sufficient to prove (\ref{Eq:ConvergenceOccupationMeasureB}) and (\ref{Eq:ConvergenceOccupationMeasureSmu}) with $X^{\varepsilon,\delta,u}$ replaced by $\bar{X}$.

By Skorohod's Theorem we can find a probability space on which $\mathrm{P}^{\varepsilon,\Delta} \Rightarrow \mathrm{P}$ almost surely in the topology of weak convergence of measures. Recall that the space $U \times \mathcal{Y} \times [0,T]$ is endowed with the weak topology on $U$ times the norm topology on $\mathcal{Y}$ times the usual topology on $[0,T]$. Weak convergence of measures means that for any bounded continuous function $g: U\times \mathcal{Y} \times [0,T] \to \mathbb{R}$,
\[\int\limits_{U \times \mathcal{Y}\times[0,T]} g(u,Y,s) \mathrm{P}^{\varepsilon,\Delta}(dudYds) \to
   \int\limits_{U \times \mathcal{Y} \times [0,T]} g(u,Y,s) \mathrm{P}(dudYds).\]
It is not automatically true then that similar statements hold for unbounded $H$-valued continuous functions like $S(t-s)B_1(\bar{X}(s),Y)$ and $S(t-s)\Sigma_1(\bar{X}(s),Y)Q_1u$.

We now show that
\begin{align*}
&\int\limits_{U \times \mathcal{Y}\times[0,t]} S_1(t-s)\Sigma_1(\bar{X}(s),Y)Q_1u \mathrm{P}^{\varepsilon,\Delta}(dudYds)\\& \to
  \int\limits_{U \times \mathcal{Y}\times[0,t]} S_1(t-s)\Sigma_1(\bar{X}(s),Y)Q_1u \mathrm{P}(dudYds).
\end{align*}
The argument with $B_1$ will be similar but simpler.

First, we argue that the convergence is valid for any finite dimensional projection. Let $\Pi_{1,N}:H \to H$ be the linear projection operator onto the span of $\{e_{1,1},...,e_{1,N}\}$. Then for any $N\geq 1$,
\begin{align*}
  &\left|\int\limits_{U \times \mathcal{Y} \times[0,t]} \Pi_{1,N} S_1(t-s) \Sigma_1(\bar{X}(s),Y)Q_1u \mathrm{P}^{\varepsilon,\Delta}(dudYds)\right.\\
   &\qquad \qquad\left.- \int\limits_{U \times \mathcal{Y}\times [0,t]}\Pi_{1,N} S_1(t-s) \Sigma_1(\bar{X}(s),Y)Q_1u \mathrm{P}(dudYds)  \right|_H^2\\
  &= \sum_{k=1}^N  \left(\int\limits_{U \times \mathcal{Y} \times[0,t]} \left<\Pi_{1,N} S_1(t-s) \Sigma_1(\bar{X}(s),Y)Q_1u,e_{1,k}\right>_H \mathrm{P}^{\varepsilon,\Delta}(dudYds)\right.\\
  &\qquad\qquad\left. - \int\limits_{U \times \mathcal{Y}\times [0,t]} \left<\Pi_{1,N}S_1(t-s) \Sigma_1(\bar{X}(s),Y)Q_1u,e_{1,k}\right>_H \mathrm{P}(dudYds) \right)^2.
\end{align*}
Each term of this sum converges to zero as
$\left<S_1(t-s)\Sigma_1(\bar{X}(s),Y)Q_1u, e_{1,k}\right>_H$ is a one dimensional function for any $k$. It is continuous in $s$, in the norm topology on $Y$, and in the weak topology on $u$ by Lemma \ref{Lm:Smu-compact} (recall that a compact linear operator is continuous from the weak topology to the norm topology \cite[Proposition VI.3.3(a)]{Conway}). Since it is a finite sum, and by the uniform integrability of the measures (Lemma \ref{Lm:UniformIntegrabilityOccupationMeasure}), the above finite sum converges to zero as $\varepsilon \to 0$.

The remainders are uniformly bounded as a consequence of (\ref{Eq:SmQf-tail}) and (\ref{Eq:Smu-bound})
\begin{align*}
  &\left|\int\limits_{U \times \mathcal{Y} \times [0,t]}(I- \Pi_{1,N}) S_1(t-s) \Sigma_1(\bar{X}(s),Y)Q_1u \mathrm{P}^{\varepsilon,\Delta}(dudYds) \right|_H^2\\
  & \leq C \left(\sum_{k=N+1}^\infty \alpha_{1,k}^{-\beta_1} |e_{1,k}|_0^2\right)^{\frac{1}{2p}} \times\nonumber\\
  &\qquad\times\int\limits_{U\times\mathcal{Y}\times[0,t]} (t-s)^{-\frac{\beta_1(\rho_1-2)}{2\rho_1}}e^{-\frac{\lambda}{2}(t-s)} \left(1 + |\bar{X}(s)|_H + |Y|_H \right)|u|_U \mathrm{P}^{\varepsilon,\Delta}(dudYds).
\end{align*}
The above expression is uniformly bounded and small by Lemma \ref{Lm:UniformIntegrabilityOccupationMeasure} and (\ref{Eq:Hypothesis1Equation2}). The above expression also holds with $\mathrm{P}$ replacing $\mathrm{P}^{\varepsilon,\Delta}$

Since the tails are uniformly bounded and the finite dimensional projections converge, the result holds. The analysis for the $B_1$ terms are similar but less technically difficult.
\end{proof}
\begin{proof}[Proof of Lemma \ref{L:LemmaForODElimit2}]
We focus on addressing the second statement of the lemma, since the first statement of the lemma follows along the same lines, but it is simpler technically. For notational convenience let us also write $g(X,Y,u)=\Sigma_{1}(X,Y)Q_{1}u$ for the purposes of this proof. We notice that
\begin{align*}
&\int_{U\times\mathcal{Y}\times [0, t]}
S_{1}(t-s)g(X^{\varepsilon,\delta,u}(s), Y, u)\mathrm{P}^{\varepsilon,\Delta}(dudYds)\nonumber\\
&\hspace{3cm}=\int_0^{t}\frac{1}{\Delta}\int_{s}^{s+\Delta}S_{1}(t-s)g(X^{\varepsilon,\delta,u}(s), Y^{\varepsilon,\delta,u}(r), u(r))drds
\end{align*}

By the uniform continuity of $g$ in $X$ and the uniform continuity of $X^{\varepsilon,\delta,u}$ from Lemma \ref{Lm:OscillationEstimateSlowProcess}, it follows that
\begin{align*}
  &\int_0^{t}\frac{1}{\Delta}\int_{s}^{s+\Delta}S_{1}(t-s)g(X^{\varepsilon,\delta,u}(s), Y^{\varepsilon,\delta,u}(r), u(r))drds\\
  &- \int_0^{t}\frac{1}{\Delta}\int_{s}^{s+\Delta}S_{1}(t-s)g(X^{\varepsilon,\delta,u}(r), Y^{\varepsilon,\delta,u}(r), u(r))drds
\end{align*}
converges to zero as $\varepsilon \downarrow 0$. Therefore, it is enough to study the limit of
\[\int_0^{t}\frac{1}{\Delta}\int_{s}^{s+\Delta}S_{1}(t-s)g(X^{\varepsilon,\delta,u}(r), Y^{\varepsilon,\delta,u}(r), u(r))drds.\]
By changing the order of integration, the above expression equals
\begin{align*}
  &\int_0^\Delta \frac{1}{\Delta} \int_0^r S_1(t-s) g(X^{\varepsilon,\delta,u}(r), Y^{\varepsilon,\delta,u}(r), u(r)) dsdr\\
  &+ \int_\Delta^t \frac{1}{\Delta} \int_{r-\Delta}^r S_1(t-s) g(X^{\varepsilon,\delta,u}(r), Y^{\varepsilon,\delta,u}(r), u(r)) dsdr\\
  &+ \int_{t}^{t+\Delta} \frac{1}{\Delta} \int_{r-\Delta}^t S_1(t-s) g(X^{\varepsilon,\delta,u}(r), Y^{\varepsilon,\delta,u}(r), u(r)) dsdr.
\end{align*}
The first and third terms in this expression converge to zero as $\Delta \to 0$, so we only need to focus on the second term. To motivate why we need to be careful about averaging the semigroup, we make the following observations. For any fixed $X \in H$,
\[\lim_{\Delta \downarrow 0} \frac{1}{\Delta}\int_0^\Delta S_1(t)Xdt = X.\]
This is due to the continuity of the semigroup. The convergence is, unfortunately, not uniform over $X$ in bounded subsets of $H$.
The convergence is uniform over bounded subsets of $H_1^\theta$ for any $\theta>0$, because of the compact embedding of $H_1^\theta$ into $H$, the set of trajectories $\{t \mapsto S_1(t)X: |X|_{\theta,1}\leq 1\}$ is equicontinuous. This means that
{\begin{equation} \label{Eq:SemigroupAvg}
 \lim_{\Delta\downarrow 0}\left\|\frac{1}{\Delta} \int_0^{\Delta} S_1(s) ds - I \right\|_{\mathcal{L}(H_1^\theta,H)}=0,
\end{equation}}
in operator norm. Consequently,
\begin{align*}
  &\Bigg| \int_\Delta^t \frac{1}{\Delta} \int_{r-\Delta}^r S_1(t-s) g(X^{\varepsilon,\delta,u}(r), Y^{\varepsilon,\delta,u}(r), u(r)) dsdr\\
  &\qquad\qquad- \int_{\Delta}^t S(t-r) g(X^{\varepsilon,\delta,u}(r),Y^{\varepsilon,\delta,u}(r),u(r))dr  \Bigg|_H\\
  & \leq \int_{\Delta}^t \left\|\frac{1}{\Delta} \int_0^{\Delta} S_1(s)ds - I \right\|_{\mathcal{L}(H_1^\theta,H)} \left|S(t-r)g(X^{\varepsilon,\delta,u}(r),Y^{\varepsilon,\delta,u}(r),u(r)) \right|_{\theta,1}dr.
\end{align*}
We bound this expression using (\ref{Eq:Smu-bound}) along with Lemma \ref{Lm:LpEstimateSlowAndFastProcess}, implying that the above display converges to zero.
\end{proof}

\subsection{Step 3: Proof of (\ref{Eq:ViablePairInvariantMeasure}) and (\ref{Eq:ViablePairNormalization})}\label{S:LLNInvariantMeasure}
We show in this section that any limit $\mathrm{P}(dudYdt)$ of $\mathrm{P}^{\varepsilon\Delta}(dudYdt)$ satisfies
 (\ref{Eq:ViablePairInvariantMeasure}) and (\ref{Eq:ViablePairNormalization}) under Definition \ref{Def:ViablePair}.

 We start by showing that (\ref{Eq:ViablePairInvariantMeasure}) holds. This is shown in Lemma \ref{Lm:ViablePairInvMeas}, but before doing that we need some preliminary estimates that we present in Lemmas \ref{Lm:ClosenessL2Integrated} and \ref{Lm:ClosenessL2IntegratedFrozenSlow} below. Recall the controlled fast process $Y^{\varepsilon,\delta,u}$ satisfying the equation
\begin{align}\label{Eq:FastProcessWithControl}
d Y^{\varepsilon,\delta,u}(t)&= \dfrac{1}{\delta^2}
\left[A_2Y^{\varepsilon,\delta,u}(t)+B_2(X^{\varepsilon,\delta,u}(t),Y^{\varepsilon,\delta,u}(t))\right]dt\nonumber\\
&\qquad
+\dfrac{1}{\delta^2}\dfrac{\delta}{\sqrt{\varepsilon}}\Sigma_2(X^{\varepsilon,\delta,u}(t), Y^{\varepsilon,\delta,u}(t))Q_2u(t)dt\nonumber\\
& \qquad \qquad +\dfrac{1}{\delta}\Sigma_2(X^{\varepsilon,\delta,u}(t),Y^{\varepsilon,\delta,u}(t))d W^{Q_2} \ , \ Y^{\varepsilon,\delta,u}(0)=Y_0\in H \ .
\end{align}

With some abuse of notation, let us also consider the uncontrolled fast process $\mathrm{Y}^{\varepsilon,\delta}$ \textit{driven by} the controlled slow process $X^{\varepsilon,\delta,u}$
from (\ref{Eq:MildFastSlowStochasticRDEWithControl}):
\begin{align}
d \mathrm{Y}^{\varepsilon,\delta}(t)&=  \dfrac{1}{\delta^2}
\left[A_2\mathrm{Y}^{\varepsilon,\delta}(t)+B_2(X^{\varepsilon,\delta,u}(t),\mathrm{Y}^{\varepsilon,\delta}(t))\right]dt\label{Eq:FastProcessDrivenControlledSlow}\\
& \qquad \qquad \qquad \qquad +\dfrac{1}{\delta}\Sigma_2(X^{\varepsilon,\delta,u}(t),\mathrm{Y}^{\varepsilon,\delta}(t))d W^{Q_2} \ ,   \mathrm{Y}^{\varepsilon,\delta}(0)=Y_0\in H.\nonumber
\end{align}

Note that the fast process $\mathrm{Y}^{\varepsilon,\delta}$ still depends on the control $u$,
but only through the controlled slow process $X^{\varepsilon,\delta,u}$. The driving slow process
$X^{\varepsilon,\delta,u}$ is the process that comes from (\ref{Eq:MildFastSlowStochasticRDEWithControl}), and we
remind the reader that this slow process in (\ref{Eq:MildFastSlowStochasticRDEWithControl}) depends on the controlled
fast process $Y^{\varepsilon,\delta,u}$ in (\ref{Eq:FastProcessWithControl}): the two driving slow processes
in (\ref{Eq:FastProcessWithControl}) and (\ref{Eq:FastProcessDrivenControlledSlow}) are actually the \textit{same} process.

In Lemma \ref{Lm:ClosenessL2Integrated} we show that the
processes $Y^{\varepsilon,\delta,u}$ and $\mathrm{Y}^{\varepsilon,\delta}$ are close in a time--averaged $L^2$
sense.
\begin{lemma}\label{Lm:ClosenessL2Integrated}
Let $u\in \mathcal{P}_2^N(U)$ and let $\varepsilon,\delta,\Delta>0$ be as in Hypothesis 5.
For any $T \geq 0$,
there exists $\varepsilon_0=\varepsilon_0(T,N)>0$ such that for any $0<\varepsilon\leq \varepsilon_0$, we have
\begin{equation}\label{Eq:ClosenessL2Integrated}
\mathbf{E}\dfrac{1}{\Delta}\int_{0}^{T}|Y^{\varepsilon,\delta,u}(t)-\mathrm{Y}^{\varepsilon,\delta}(t)|_H^2dt \leq C(T, N, \varepsilon) \ ,
\end{equation}
where for each fixed $(T,N)$, we have the upper bound $C(T, N, \varepsilon)\rightarrow 0$ as $\varepsilon \downarrow 0$.
\end{lemma}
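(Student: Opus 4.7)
The plan is to write $D^{\varepsilon,\delta}(t):=Y^{\varepsilon,\delta,u}(t)-\mathrm{Y}^{\varepsilon,\delta}(t)$, observe that its only genuine forcing, beyond Lipschitz perturbations in $D$ itself, is the control convolution $Z_{2}^{\varepsilon,\delta,u}$ already studied in Lemma \ref{Lm:Control-terms-estimates}, and establish
\[
\mathbf{E}\int_{0}^{T}|D^{\varepsilon,\delta}(t)|_{H}^{2}\,dt\;\le\;C_{T,N}\,\frac{\delta^{2}}{\varepsilon},
\]
after which division by $\Delta$ combined with Hypothesis 5 will conclude the proof. Since $Y^{\varepsilon,\delta,u}$ and $\mathrm{Y}^{\varepsilon,\delta}$ share the same initial condition, the same driving slow process $X^{\varepsilon,\delta,u}$, and the same cylindrical noise $W^{Q_{2}}$, the difference solves
\[
dD^{\varepsilon,\delta}=\tfrac{1}{\delta^{2}}\bigl[A_{2}D+\Delta_{B}\bigr]dt+\tfrac{1}{\delta\sqrt{\varepsilon}}\Sigma_{2}(X^{\varepsilon,\delta,u},Y^{\varepsilon,\delta,u})Q_{2}u\,dt+\tfrac{1}{\delta}\Delta_{\Sigma}\,dW^{Q_{2}},\quad D^{\varepsilon,\delta}(0)=0,
\]
where $\Delta_{B}=B_{2}(X^{\varepsilon,\delta,u},Y^{\varepsilon,\delta,u})-B_{2}(X^{\varepsilon,\delta,u},\mathrm{Y}^{\varepsilon,\delta})$ and $\Delta_{\Sigma}$ is the analogous $\Sigma_{2}$-difference.

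Next, mimicking the proof of (\ref{Eq:LpEstimateFastProcess}), split the mild representation $D^{\varepsilon,\delta}=\Lambda+Z_{2}^{\varepsilon,\delta,u}+\tilde{\Gamma}$ into the drift convolution $\Lambda$, the control convolution $Z_{2}^{\varepsilon,\delta,u}$, and the stochastic convolution $\tilde{\Gamma}$ built from $\Delta_{\Sigma}$. The dissipativity inequality $\langle A_{2}\Lambda,\Lambda\rangle\le-\lambda|\Lambda|_{H}^{2}$ together with the Lipschitz bound $|\Delta_{B}|_{H}\le L_{b_{2}}^{Y}|D|_{H}$ from Hypothesis 2.2 produces an exponentially decaying kernel for $\Lambda$, while the It\^o isometry combined with (\ref{Eq:Sum-of-SGQfj}) at $\theta=0$ bounds $\mathbf{E}|\tilde{\Gamma}(t)|_{H}^{2}$ by a convolution of $(L_{\sigma_{2}}^{Y})^{2}\mathbf{E}|D|_{H}^{2}$ against a kernel whose $L^{1}$-norm is exactly the integral in (\ref{Eq:Hypothesis2Equation3}). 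After combining these two estimates and integrating over $[0,T]$, the self-referential $|D|_{H}^{2}$-terms come with total coefficient $\mathfrak{L}^{Y}_{b_{2},\sigma_{2}}<1$ by (\ref{Eq:Hypothesis2Equation3}), so they can be absorbed into the left-hand side, leaving only a multiple of $\mathbf{E}\int_{0}^{T}|Z_{2}^{\varepsilon,\delta,u}|_{H}^{2}\,dt$. Applying (\ref{Eq:Z2-estimates}) with $\theta=0$ and the a priori bound (\ref{Eq:LpEstimateSlowProcess}) controls this final quantity by $C_{T,N}\,\delta^{2}/\varepsilon$.

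Dividing by $\Delta$ gives $\mathbf{E}\frac{1}{\Delta}\int_{0}^{T}|D^{\varepsilon,\delta}|_{H}^{2}\,dt\le C_{T,N}\,\delta^{2}/(\Delta\varepsilon)$. Hypothesis 5 ensures $\delta/(\Delta\sqrt{\varepsilon})\to 0$, hence $\delta^{2}/(\Delta^{2}\varepsilon)\to 0$, and since $\Delta\le 1$ for all sufficiently small $\varepsilon$ we have $\delta^{2}/(\Delta\varepsilon)\le\delta^{2}/(\Delta^{2}\varepsilon)\to 0$, so (\ref{Eq:ClosenessL2Integrated}) holds with $C(T,N,\varepsilon):=C_{T,N}\,\delta^{2}/(\Delta\varepsilon)\downarrow 0$. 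The main obstacle is the absorption step: a naive dissipativity estimate based on $L_{b_{2}}^{Y}<\lambda$ alone (Hypothesis 2.2) does not close the argument, because the Lipschitz perturbation of $\Sigma_{2}$ in $Y$ generates, via It\^o's isometry, a $\delta^{-2}|D|_{H}^{2}$-contribution of the same critical order as the drift term; the strict inequality (\ref{Eq:Hypothesis2Equation3}) is precisely what is needed to absorb both contributions simultaneously and close the Gronwall step in infinite dimensions with multiplicative noise.
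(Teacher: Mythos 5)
Your strategy matches the paper's almost exactly: you set $D=\rho$, decompose the mild solution of the difference equation as drift convolution $\Lambda$ plus control convolution $Z_2^{\varepsilon,\delta,u}$ plus stochastic convolution, use dissipativity of $A_2$ plus Lipschitz continuity of $B_2$ for $\Lambda$, the It\^o isometry for the stochastic convolution, Young's inequality for convolutions throughout, absorb the self-referential $\rho$-terms via the strict inequality in Hypothesis 2(4), and finish with the $Z_2$-estimate and Hypothesis 5. The overall architecture is right, and your observation that $L_{b_2}^Y < \lambda$ alone does not close the argument because the $Y$-Lipschitz perturbation of $\Sigma_2$ contributes at the same critical order is the key insight.

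However, there is one genuine technical gap. You invoke \eqref{Eq:Sum-of-SGQfj} at $\theta = 0$ and claim the resulting convolution kernel's $L^1$-norm ``is exactly the integral in \eqref{Eq:Hypothesis2Equation3}.'' This is not the case. The bound \eqref{Eq:Sum-of-SGQfj} gives the kernel $c_\theta\, t^{-\beta_2(\rho_2-2)/\rho_2}e^{-\lambda t}$ with an unspecified constant $c_\theta$ and exponential rate $\lambda$, whereas the integral in Hypothesis 2(4) and the constant $K_2$ refer to the kernel $K_2\, t^{-\beta_2(\rho_2-2)/\rho_2}e^{-\lambda\frac{\rho_2+2}{\rho_2}t}$, which decays strictly faster because $(\rho_2+2)/\rho_2>1$. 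The paper therefore does \emph{not} use \eqref{Eq:Sum-of-SGQfj} for the stochastic convolution here; it uses the sharper estimate \eqref{Eq:AuxiliaryBoundStochasticConvolution3-5}, taken from \cite{CerraiRDEAveraging1}, precisely so that the resulting $L^1$-kernel norm matches the quantity $\mathfrak{L}^Y_{b_2,\sigma_2}$ defined in Hypothesis 2(4). If you use \eqref{Eq:Sum-of-SGQfj} the absorption coefficient is larger, and Hypothesis 2(4) no longer guarantees it is below one. Replacing \eqref{Eq:Sum-of-SGQfj} by \eqref{Eq:AuxiliaryBoundStochasticConvolution3-5} at this step fixes the argument.

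Two smaller remarks. First, you are vague about how the cross terms in $|D|^2 = |\Lambda + \tilde\Gamma + Z_2|^2$ are handled; you should make the two Young parameters explicit. Taking them fixed and small (rather than $\to 0$ as the paper does) does yield your claimed $\delta^2/\varepsilon$ rate, since $(1+\epsilon)(\mathfrak{L}^Y_{b_2,\sigma_2})^2<1$ for small fixed $\epsilon$; the paper's choice $\eta_2=\eta_3=\delta/\sqrt{\varepsilon}\to 0$ only gives $\delta/\sqrt{\varepsilon}$, but of course both are sufficient after dividing by $\Delta$. Second, the absorption coefficient is actually $(\mathfrak{L}^Y_{b_2,\sigma_2})^2$, not $\mathfrak{L}^Y_{b_2,\sigma_2}$ as you write, though both are below one under the same hypothesis so this has no effect on the conclusion.
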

\begin{proof}[Proof of Lemma \ref{Lm:ClosenessL2Integrated}]  Without loss of generality
we can assume that $Y_0=0$. %Under this assumption
%let us write the equations \ref{Eq:FastProcessWithControl} and \ref{Eq:FastProcessDrivenControlledSlow}
%in their mild forms as
%\begin{equation*}%\label{Eq:MildFastProcessWithControl}
%\begin{array}{ll}
%Y^{\varepsilon,\delta,u}(t)=&\displaystyle{\dfrac{1}{\delta^2}\int_0^t S_2\left(\dfrac{t-s}{\delta^2}\right)B_2(X^{\varepsilon,\delta,u}(s), Y^{\varepsilon,\delta,u}(s))ds}
%\\
%&\displaystyle{\qquad +\dfrac{1}{\delta^2}\dfrac{\delta}{\sqrt{\varepsilon}}\int_0^t S_2\left(\dfrac{t-s}{\delta^2}\right)\Sigma_2(X^{\varepsilon,\delta,u}(s), Y^{\varepsilon,\delta,u}(s))Q_2u(s)ds}
%\\
%&\displaystyle{\qquad +\dfrac{1}{\delta}\int_0^t S_2\left(\dfrac{t-s}{\delta^2}\right)\Sigma_2(X^{\varepsilon,\delta,u}(s), Y^{\varepsilon,\delta,u}(s))dW^{Q_2}(s) \ .}
%\end{array}
%\end{equation*}
%
%\begin{equation*}%\label{Eq:MildFastProcessDrivenControlledSlow}
%\begin{array}{ll}
%\mathrm{Y}^{\varepsilon,\delta}(t)=&\displaystyle{\dfrac{1}{\delta^2}\int_0^t S_2\left(\dfrac{t-s}{\delta^2}\right)B_2(X^{\varepsilon,\delta,u}(s), \mathrm{Y}^{\varepsilon,\delta}(s))ds}
%\\
%&\displaystyle{\qquad +\dfrac{1}{\delta}\int_0^t S_2\left(\dfrac{t-s}{\delta^2}\right)\Sigma_2(X^{\varepsilon,\delta,u}(s), \mathrm{Y}^{\varepsilon,\delta}(s))dW^{Q_2}(s) \ .}
%\end{array}
%\end{equation*}
Set
\begin{equation*}%\label{Eq:DifferenceStochasticConvolutionControlledUncontrolledFast}
\Gamma(t):=
\dfrac{1}{\delta}\int_0^t S_2\left(\dfrac{t-s}{\delta^2}\right)[\Sigma_2(X^{\varepsilon,\delta,u}(s), Y^{\varepsilon,\delta,u}(s))-\Sigma_2(X^{\varepsilon,\delta,u}(s),\mathrm{Y}^{\varepsilon,\delta}(s))]dW^{Q_2}(s) \ .
\end{equation*}

Let $\rho(t):=Y^{\varepsilon,\delta,u}(t)-\mathrm{Y}^{\varepsilon,\delta}(t)$ and set $\Lambda(t):=\rho(t)-\Gamma(t)-Z_{2}^{\varepsilon,\delta,u}(t)$, where we recall that
\[
Z_{2}^{\varepsilon,\delta,u}(t)=\dfrac{1}{\delta^2}\dfrac{\delta}{\sqrt{\varepsilon}}\int_0^t S_2\left(\dfrac{t-s}{\delta^2}\right)\Sigma_2(X^{\varepsilon,\delta,u}(s), Y^{\varepsilon,\delta,u}(s))Q_2u(s)ds.
\]

Notice that $\Lambda(t)$ satisfies the equation
$$\begin{array}{ll}
d\Lambda(t)=&\dfrac{1}{\delta^2}\left[A_2\Lambda(t)+(B_2(X^{\varepsilon,\delta,u}(t), Y^{\varepsilon,\delta,u}(t))
-B_2(X^{\varepsilon,\delta,u}(t), \mathrm{Y}^{\varepsilon,\delta}(t)))\right]dt, \Lambda(0)=0.
\end{array}$$

Therefore by Hypothesis 2 and Young's inequality  we know that, %for any $\eta\in (0,\frac{\lambda}{2})$ we have, as $\varepsilon$ is small, that
$$\begin{array}{ll}
& \dfrac{1}{2}\dfrac{d}{dt}|\Lambda(t)|_H^2=\left\langle\dfrac{d}{dt}\Lambda(t), \Lambda(t)\right\rangle_H
\\
= &\dfrac{1}{\delta^2}[\langle A_2\Lambda(t), \Lambda(t)\rangle_H+\langle (B_2(X^{\varepsilon,\delta,u}(t), Y^{\varepsilon,\delta,u}(t))
-B_2(X^{\varepsilon,\delta,u}(t), \mathrm{Y}^{\varepsilon,\delta}(t))), \Lambda(t)\rangle_H]
\\
\leq & -\dfrac{1}{\delta^2}\lambda|\Lambda(t)|_H^2+\dfrac{1}{\delta^2}L_{b_2}^Y|\rho(t)|_H|\Lambda(t)|_H
\\
\leq & -\dfrac{1}{\delta^2}\left(\lambda-\dfrac{\lambda}{2}\right)|\Lambda(t)|_H^2+\dfrac{1}{\delta^2}\dfrac{(L_{b_2}^Y)^2}{2\lambda}|\rho(t)|_H^2.
\end{array}$$

By comparison principle,  we know that for $0\leq t \leq T$
$$\begin{array}{ll}
|\Lambda(t)|_H^2\leq & \displaystyle{\dfrac{1}{\delta^2}\dfrac{(L_{b_2}^Y)^2}{\lambda}\int_0^t e^{-\lambda(t-s)/\delta^2}|\rho(s)|_H^2ds}.
\end{array}$$

%The above estimate implies
%$$\begin{array}{ll}
%|\Lambda(t)|_H^2\leq & \displaystyle{\dfrac{1}{\delta^2}\dfrac{(L_{b_2}^Y)^2}{\lambda}\int_0^t e^{-\lambda(t-s)/\delta^2}|\rho(s)|_H^2ds}
%\end{array}$$

By applying Young's inequality of convolutions,
we know that
$$\begin{array}{ll}
\displaystyle{\int_{0}^{T}|\Lambda(t)|_H^2dt}&\leq
\displaystyle{\dfrac{(L_{b_2}^Y)^2}{\lambda}\left(\dfrac{1}{\delta^2}\int_{0}^{T}e^{-\lambda t/\delta^2}dt\right) \int_{0}^{T} |\rho(t)|_H^2dt\leq
\dfrac{(L_{b_2}^Y)^2}{\lambda^{2}}
 \int_{0}^{T} |\rho(t)|_H^2dt }.
\end{array}$$

By applying Young's inequality we then obtain with $\eta_{1},\eta_{2},\eta_{3}>0$
\begin{align}
\mathbf{E} \int_0^T &|\rho(t)|_H^2 dt \leq   (1+\eta^{-1}_{1}+\eta_{3})\mathbf{E}\int_0^T |\Gamma(t)|_H^2 dt+ (1+\eta_{1}+\eta_{2})\mathbf{E} \int_0^T |\Lambda(t)|_H^2dt\nonumber\\
&\qquad\qquad+(1+\eta^{-1}_{2}+\eta^{-1}_{3})\mathbf{E} \int_0^T |Z_{2}^{\varepsilon,\delta,u}(t)|_H^2dt\nonumber\\
& \leq  (1+\eta^{-1}_{1}+\eta_{3}) \mathbf{E}\int_0^T |\Gamma(t)|_H^2 dt + (1+\eta_{1}+\eta_{2}) \dfrac{(L_{b_2}^Y)^2}{\lambda^{2}} \int_{0}^{T} \mathbf{E}|\rho(t)|_H^2dt \nonumber\\
& \qquad \qquad +c_{T,N}(1+\eta^{-1}_{2}+\eta^{-1}_{3})\dfrac{\delta^{2}}{\varepsilon}\left(1+\mathbf{E}\sup\limits_{0\leq t\leq T}|X^{\varepsilon,\delta,u}(t)|_H^2\right).\label{Eq:rhoBoundedByGamma}
\end{align}

Now let us bound the term $\mathbf{E}|\Gamma(t)|_H^2$ as in the proof of Lemma 3.1 and (4.10) of \cite{CerraiRDEAveraging1}.
We shall make use of the bound (3.5) in \cite{CerraiRDEAveraging1}, so that
for any $J\in \mathcal{L}(L^\infty(D), H)\cap \mathcal{L}(H, L^1(D))$ with $J=J^*$, and for any $s\geq 0$, we have
\begin{equation}\label{Eq:AuxiliaryBoundStochasticConvolution3-5}
\|S_2(s)JQ_2\|_2^2\leq K_2s^{-\beta_2\frac{\rho_2-2}{\rho_2}}e^{-\lambda \frac{\rho_2+2}{\rho_2}s}\|J\|^2_{\mathcal{L}(L^\infty(D),H)} \ ,
\end{equation}
where $$K_2=\left(\dfrac{\beta_2}{e}\right)^{\beta_2\frac{\rho_2-2}{\rho_2}}\zeta_2^{\frac{\rho_2-2}{\rho_2}}\lambda_2^{\frac{2}{\rho_2}} \ ,$$
and the constants $\beta_2, \rho_2, \zeta_2, \lambda_2, \lambda$ all come from Hypothesis 1.

By using (\ref{Eq:AuxiliaryBoundStochasticConvolution3-5}) and setting
$J=\Sigma_2(X^{\varepsilon,\delta,u}(s),Y^{\varepsilon,\delta,u}(s))-\Sigma_2(X^{\varepsilon,\delta,u}(s),\mathrm{Y}^{\varepsilon,\delta}(s))$ we can estimate
\begin{align}
\mathbf{E}|\Gamma(t)|_H^2&=\dfrac{1}{\delta^2}\int_0^t \mathbf{E}\left\|S_2\left(\dfrac{t-s}{\delta^2}\right)
[(\Sigma_2(X^{\varepsilon,\delta,u}(s),Y^{\varepsilon,\delta,u}(s))-\Sigma_2(X^{\varepsilon,\delta,u}(s), \mathrm{Y}^{\varepsilon,\delta}(s)))Q_2]\right\|_2^2ds\nonumber\\
& \leq \dfrac{1}{\delta^2}K_2\int_0^t \left(\dfrac{t-s}{\delta^2}\right)^{-\beta_2\frac{\rho_2-2}{\rho_2}}
e^{-\lambda\frac{\rho_2+2}{\rho_2}\frac{t-s}{\delta^2}}(L_{\sigma_2}^Y)^2\mathbf{E}|\rho(s)|_H^2ds
\label{Eq:EstimateDifferenceStochasticConvolutionControlledUncontrolledFast}\\
& = K_2(L_{\sigma_2}^Y)^2\dfrac{1}{\delta^2}\int_0^t \left(\dfrac{t-s}{\delta^2}\right)^{-\beta_2\frac{\rho_2-2}{\rho_2}}
e^{-\lambda\frac{\rho_2+2}{\rho_2}\frac{t-s}{\delta^2}}\mathbf{E}|\rho(s)|_H^2ds \ .\nonumber
\end{align}

Thus by applying Young's inequality of convolutions to (\ref{Eq:EstimateDifferenceStochasticConvolutionControlledUncontrolledFast}),
 (\ref{Eq:rhoBoundedByGamma}) and Lemma \ref{Lm:LpEstimateSlowAndFastProcess} give us
\begin{align}
&\int_{0}^{T}\mathbf{E}|\rho(t)|_H^2dt\leq
(1+\eta_{1}+\eta_{2})\dfrac{(L_{b_2}^Y)^2}{\lambda^{2}} \int_{0}^{T} \mathbf{E}|\rho(t)|_H^2dt \nonumber\\
& \qquad +(1+\eta^{-1}_{1}+\eta_{3}) K_2(L_{\sigma_2}^Y)^2\left(\dfrac{1}{\delta^2}\int_0^T\left(\dfrac{t}{\delta^2}\right)^{-\beta_2\frac{\rho_2-2}{\rho_2}}
e^{-\lambda\frac{\rho_2+2}{\rho_2}\frac{t}{\delta^2}}dt\right)\int_0^T \mathbf{E}|\rho(t)|_H^2dt\nonumber\\
&\qquad  +c_{T,N}(1+\eta^{-1}_{2}+\eta^{-1}_{3}) \dfrac{\delta^{2}}{\varepsilon}
\left(1+\mathbf{E}\sup\limits_{0\leq t\leq T}|X^{\varepsilon,\delta,u}(t)|_H^2\right)\nonumber\\
& \quad=(1+\eta_{1}+\eta_{2})\dfrac{(L_{b_2}^Y)^2}{\lambda^{2}} \int_{0}^{T} \mathbf{E}|\rho(t)|_H^2dt \nonumber\\
&  \qquad+(1+\eta^{-1}_{1}+\eta_{3})K_2(L_{\sigma_2}^Y)^2\left(\int_0^{\frac{T}{\delta^2}}s^{-\beta_2\frac{\rho_2-2}{\rho_2}}
e^{-\lambda\frac{\rho_2+2}{\rho_2}s}ds\right)\int_0^T \mathbf{E}|\rho(t)|_H^2dt\nonumber\\
& \qquad+c_{T,N}(1+\eta^{-1}_{2}+\eta^{-1}_{3})\dfrac{\delta^{2}}{\varepsilon}
(1+|X_0|_H^2+|Y_0|_H^2).\nonumber
\end{align}

Let us also choose $\eta_{2}=\eta_{3}=\frac{\delta}{\sqrt{\varepsilon}}\downarrow 0$. Then, we  obtain
\begin{align}
&\int_{0}^{T}\mathbf{E}|\rho(t)|_H^2dt\leq
\left(1+\eta_{1}+\frac{\delta}{\sqrt{\varepsilon}}\right)\dfrac{(L_{b_2}^Y)^2}{\lambda^{2}} \int_{0}^{T} \mathbf{E}|\rho(t)|_H^2dt \nonumber\\
&  \qquad+\left(\frac{1+\eta_{1}}{\eta_{1}}+\frac{\delta}{\sqrt{\varepsilon}}\right)K_2(L_{\sigma_2}^Y)^2\left(\int_0^{\frac{T}{\delta^2}}s^{-\beta_2\frac{\rho_2-2}{\rho_2}}
e^{-\lambda\frac{\rho_2+2}{\rho_2}s}ds\right)\int_0^T \mathbf{E}|\rho(t)|_H^2dt\nonumber\\
& \qquad+c_{T,N}\dfrac{\delta}{\sqrt{\varepsilon}}%\left(\dfrac{1-e^{-\frac{\lambda T}{\delta^2}}}{\lambda}\right)
(1+|X_0|_H^2+|Y_0|_H^2).\nonumber
\end{align}

Let us consider now for $\eta>0$ the function
\[
f(\eta)=(1+\eta)\left[\dfrac{(L_{b_2}^Y)^2}{\lambda^2}+\frac{1}{\eta}K_2(L_{\sigma_2}^Y)^2\int_0^{\infty}s^{-\beta_2\frac{\rho_2-2}{\rho_2}}
e^{-\lambda\frac{\rho_2+2}{\rho_2}s}ds\right].
\]

It is easy to see that $f(\eta)$ is convex with a minimum at
\[
\eta^{*}=\sqrt{K_2(L_{\sigma_2}^Y)^2\int_0^{\infty}s^{-\beta_2\frac{\rho_2-2}{\rho_2}}
e^{-\lambda\frac{\rho_2+2}{\rho_2}s}ds}/\sqrt{\dfrac{(L_{b_2}^Y)^2}{\lambda^2}}.
\]

Then, we compute that
\[
f(\eta^{*})= \left(\dfrac{L_{b_2}^Y}{\lambda}+\sqrt{K_2(L_{\sigma_2}^Y)^2\int_0^{\infty}s^{-\beta_2\frac{\rho_2-2}{\rho_2}}
e^{-\lambda\frac{\rho_2+2}{\rho_2}s}ds}\right)^{2}.
\]

By Hypothesis 2 we know that
the Lipschitz constants $L_{b_2}^Y$ and $L_{\sigma_2}^Y$ are chosen so that
\[
\left(\mathfrak{L}_{b_2,\sigma_2}^Y\right)^{2}=f(\eta^{*})<1,
\]
and therefore we obtain with $\eta_{1}=\eta^{*}$ and for $\delta/\sqrt{\varepsilon}$
sufficiently small
$$\int_0^T \mathbf{E}|\rho(t)|_H^2dt\leq \dfrac{c_{T,N}}{1-\left(\mathfrak{L}_{b_2,\sigma_2}^Y\right)^{2}-O(\delta/\sqrt{\varepsilon})}\dfrac{\delta}{\sqrt{\varepsilon}}
(1+|X_0|_H^2+|Y_0|_H^2)N \ ,$$
where $O(\delta/\sqrt{\varepsilon})\downarrow 0$ as $\delta/\sqrt{\varepsilon}\downarrow 0$ so that
$$\dfrac{1}{\Delta}\int_0^T \mathbf{E}|\rho(t)|_H^2dt\leq \dfrac{c_{T,N}}{1-\left(\mathfrak{L}_{b_2,\sigma_2}^Y\right)^{2}-O(\delta/\sqrt{\varepsilon})}\dfrac{\delta}{\Delta \sqrt{\varepsilon}}
(1+|X_0|_H^2+|Y_0|_H^2)=:C(T,N,\varepsilon) \ ,$$
and $C(T,N,\varepsilon)\rightarrow 0$ as $\varepsilon\downarrow 0$ by our Hypothesis 5.
\end{proof}

Lemma \ref{Lm:ClosenessL2Integrated} shows that $Y^{\varepsilon,\delta,u}$ is close in the appropriate ergodic sense to the process $\mathrm{Y}^{\varepsilon,\delta}$. Notice now that $\mathrm{Y}^{\varepsilon,\delta}$ depends on the controlled slow component $X^{\varepsilon,\delta,u}$. As in the finite dimensional case, one expects that in small time intervals one can regard the effect of $X^{\varepsilon,\delta,u}$ as frozen. To formalize this argument, for $t\leq s$, we introduce the two parameter process $\mathrm{Y}^{\delta, X^{\varepsilon,\delta,u}(t)}(s; t)$.
\begin{align*}%\label{Eq:FastProcessFrozenSlow}
&d\mathrm{Y}^{\delta,X^{\varepsilon,\delta,u}(t)}(s;t)  =\dfrac{1}{\delta^2}\left[A_2 \mathrm{Y}^{\delta,X^{\varepsilon,\delta,u}(t)}(s;t)+B_2(X^{\varepsilon,\delta,u}(t), \mathrm{Y}^{\delta,X^{\varepsilon,\delta,u}(t)}(s;t))\right]ds
\\
&\quad+\dfrac{1}{\delta}\Sigma_2(X^{\varepsilon,\delta,u}(t), \mathrm{Y}^{\delta,X^{\varepsilon,\delta,u}(t)}(s;t))dW^{Q_2}_{s} \ , \ \mathrm{Y}^{\delta, X^{\varepsilon,\delta,u}(t)}(t;t)=\mathrm{Y}^{\varepsilon,\delta}(t) \in H,\nonumber
\end{align*}
where the initial condition $\mathrm{Y}^{\delta, X^{\varepsilon,\delta,u}(t)}(t;t)$ is taken to be $\mathrm{Y}^{\varepsilon,\delta}(t)$ as in (\ref{Eq:FastProcessDrivenControlledSlow}).
Similarly as in the previous lemma, we are going to demonstrate in the next lemma that the processes
$\mathrm{Y}^{\varepsilon,\delta}(s)$ and $\mathrm{Y}^{\delta, X(t)}(s;t)$ are close in a time--averaged $L^2$--sense on the interval $t\leq s \leq t+\Delta$. We have the following.

\begin{lemma}\label{Lm:ClosenessL2IntegratedFrozenSlow}
Let $u\in \mathcal{P}_2^N(U)$ and let $\varepsilon,\delta,\Delta>0$ be as in Hypothesis 5.
For any $t \geq 0$,
there exists $\varepsilon_0=\varepsilon_0(t,N)>0$ such that for any $0<\varepsilon\leq \varepsilon_0$, we have
\begin{equation}\label{Eq:ClosenessL2IntegratedFrozenSlow}
\mathbf{E}\dfrac{1}{\Delta}\int_{t}^{t+\Delta}|\mathrm{Y}^{\varepsilon,\delta}(s)-\mathrm{Y}^{\delta, X^{\varepsilon,\delta,u}(t)}(s;t)|_H^2ds \leq C(t,N,\varepsilon) \ ,
\end{equation}
where for each fixed $N$, we have the upper bound $C(t, N, \varepsilon)\rightarrow 0$ as $\varepsilon \downarrow 0$.
\end{lemma}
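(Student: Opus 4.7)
The plan is to run the Lyapunov-type argument of Lemma \ref{Lm:ClosenessL2Integrated} again, with the role of the ``controlled vs.\ uncontrolled'' discrepancy played instead by the oscillation of the driving slow process $X^{\varepsilon,\delta,u}$ on $[t,t+\Delta]$. Set $\rho(s) := \mathrm{Y}^{\varepsilon,\delta}(s) - \mathrm{Y}^{\delta,X^{\varepsilon,\delta,u}(t)}(s;t)$ for $s \in [t,t+\Delta]$, so that $\rho(t) = 0$; because both processes are uncontrolled and are driven by the same noise $W^{Q_2}$, the difference satisfies
\begin{align*}
d\rho(s) &= \tfrac{1}{\delta^2}\bigl[A_2\rho(s) + B_2(X^{\varepsilon,\delta,u}(s),\mathrm{Y}^{\varepsilon,\delta}(s)) - B_2(X^{\varepsilon,\delta,u}(t),\mathrm{Y}^{\delta,X^{\varepsilon,\delta,u}(t)}(s;t))\bigr]ds \\
&\quad + \tfrac{1}{\delta}\bigl[\Sigma_2(X^{\varepsilon,\delta,u}(s),\mathrm{Y}^{\varepsilon,\delta}(s)) - \Sigma_2(X^{\varepsilon,\delta,u}(t),\mathrm{Y}^{\delta,X^{\varepsilon,\delta,u}(t)}(s;t))\bigr]dW^{Q_2}(s).
\end{align*}
By adding and subtracting $B_2(X^{\varepsilon,\delta,u}(t),\mathrm{Y}^{\varepsilon,\delta}(s))$ (and the analogous manipulation for $\Sigma_2$), each coefficient difference splits into a ``$Y$-Lipschitz'' piece bounded in $H$ by $L_{b_2}^Y|\rho(s)|_H$ or $L_{\sigma_2}^Y|\rho(s)|_H$ (exactly as in the previous lemma), plus a new ``$X$-oscillation'' piece bounded by $L_{b_2}^X|X^{\varepsilon,\delta,u}(s)-X^{\varepsilon,\delta,u}(t)|_H$ or $L_{\sigma_2}^X|X^{\varepsilon,\delta,u}(s)-X^{\varepsilon,\delta,u}(t)|_H$.

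Decompose $\rho = \Lambda + \Gamma$ with $\Gamma$ the stochastic convolution. For the deterministic part, differentiate $|\Lambda(s)|_H^2$, use the dissipativity $\langle A_2\Lambda,\Lambda\rangle_H \leq -\lambda|\Lambda|_H^2$, and apply Young's inequality followed by Young's convolution inequality, as in Lemma \ref{Lm:ClosenessL2Integrated}. For the stochastic part, invoke the trace bound (\ref{Eq:AuxiliaryBoundStochasticConvolution3-5}) on each of the two pieces of the $\Sigma_2$-difference, as in the derivation of (\ref{Eq:EstimateDifferenceStochasticConvolutionControlledUncontrolledFast}), producing convolution estimates governed by the ergodic kernel $s \mapsto s^{-\beta_2(\rho_2-2)/\rho_2}e^{-\lambda(\rho_2+2)s/\rho_2}$. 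Summing and optimizing the Young constants just as before, the strict inequality $\mathfrak{L}_{b_2,\sigma_2}^Y < 1$ from Hypothesis 2 absorbs the $|\rho|_H^2$ contributions on the left-hand side and yields a bound of the shape
\[
\frac{1}{\Delta}\int_t^{t+\Delta}\mathbf{E}|\rho(s)|_H^2\,ds \leq \frac{C_{T,N}}{1 - (\mathfrak{L}_{b_2,\sigma_2}^Y)^2 - o(1)} \cdot \frac{1}{\Delta}\int_t^{t+\Delta}\mathbf{E}|X^{\varepsilon,\delta,u}(s)-X^{\varepsilon,\delta,u}(t)|_H^2\,ds.
\]

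It then remains to show that the oscillation average on the right vanishes as $\varepsilon \downarrow 0$. By Lemma \ref{Lm:OscillationEstimateSlowProcess} applied with $p = 2/\zeta$ together with Jensen's inequality, the right-hand side is bounded by a constant (depending on $|X_0|_H$ and $|Y_0|_H$) times $\Delta^{2\beta(\theta)} + \sup_{0 \leq r \leq \Delta}|(S_1(r)-I)X_0|_H^2$. Both terms go to zero as $\Delta \downarrow 0$: the first by construction and the second by strong continuity of the semigroup $S_1$ on $H$. This gives (\ref{Eq:ClosenessL2IntegratedFrozenSlow}) with a bound $C(t,N,\varepsilon) \to 0$, uniformly in $t \in [0,T]$ once the oscillation estimate is uniform in $t$.

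The main technical obstacle I anticipate is that the contraction constant $\mathfrak{L}_{b_2,\sigma_2}^Y$ of Hypothesis 2 is defined through the full kernel integral $\int_0^\infty s^{-\beta_2(\rho_2-2)/\rho_2}e^{-\lambda(\rho_2+2)s/\rho_2}ds$, whereas here the integration window is only $[t,t+\Delta]$; after rescaling by $\delta^2$ this becomes $[0,\Delta/\delta^2]$, so one needs $\Delta/\delta^2 \to \infty$ in order for the truncated integral to saturate and for the $o(1)$ error above to be absorbed below $1-(\mathfrak{L}_{b_2,\sigma_2}^Y)^2$. Fortunately this is exactly what Hypothesis 5 guarantees, since $\delta/(\Delta\sqrt{\varepsilon}) \to 0$ combined with $\delta/\sqrt{\varepsilon} \to 0$ forces $\Delta/\delta^2 \to \infty$ (as noted right after Hypothesis 5). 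With this in hand, the argument is structurally identical to that of Lemma \ref{Lm:ClosenessL2Integrated}, modulo the extra $X$-oscillation source term which is handled through Lemma \ref{Lm:OscillationEstimateSlowProcess}.
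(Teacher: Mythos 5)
Your proof follows exactly the paper's own route: rerun the Lyapunov contraction argument of Lemma \ref{Lm:ClosenessL2Integrated}, with the oscillation $|X^{\varepsilon,\delta,u}(s)-X^{\varepsilon,\delta,u}(t)|_H$ over $[t,t+\Delta]$ serving as the new source term in place of the control discrepancy, and then kill it via Lemma \ref{Lm:OscillationEstimateSlowProcess}. Your concern about the truncated kernel integral is actually unfounded---restricting the integration window to $[0,\Delta/\delta^2]$ can only make the contraction constant smaller than $\mathfrak{L}_{b_2,\sigma_2}^Y$, never larger---though Hypothesis 5 renders the point moot in any case, exactly as you observe.
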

\begin{proof} The proof of the estimate (\ref{Eq:ClosenessL2IntegratedFrozenSlow}) follows very much the same line as Lemma
\ref{Lm:ClosenessL2Integrated}. Hence, we only describe what is different here. Notice that, for $t\leq s\leq t+\Delta$ and fixed $Y\in H$, we have
$$|B_2(X^{\varepsilon,\delta,u}(s), Y)-B_2(X^{\varepsilon,\delta,u}(t), Y)|_H\leq L_{b_2}^X |X^{\varepsilon,\delta,u}(s)-X^{\varepsilon,\delta,u}(t)|_H \ ,$$
and then by Lemma \ref{Lm:OscillationEstimateSlowProcess}, we get for $p=2/\zeta>2$ that
\begin{equation*}
\lim_{|t-s|\rightarrow 0} \sup\limits_{\varepsilon\in (0,1]}\mathbf{E}|X^{\varepsilon,\delta,u}(t)-X^{\varepsilon,\delta,u}(s)|_H^p=0.
\end{equation*}

With this estimate at hand, we can then proceed using the same estimates as we did in Lemma \ref{Lm:ClosenessL2Integrated}
to obtain (\ref{Eq:ClosenessL2IntegratedFrozenSlow}).
 \end{proof}

Now, we have all the necessary tools to show that (\ref{Eq:ViablePairInvariantMeasure}) holds. In particular we have the following lemma.
\begin{lemma}\label{Lm:ViablePairInvMeas}
Under Hypothesis 1,2 and 3, if $(X^{\varepsilon,\delta,u}, \mathrm{P}^{\varepsilon,\Delta})$ converges in distribution to $(\bar{X}, \mathrm{P})$ in $C([0,T];H) \times \mathscr{P}(E)$, then we have that $P\in\mathbb{P}$, i.e. that for any {$f\in C_{b}(\mathcal{Y})$},
\begin{equation*}\label{Eq:DecouplingOccupatiobMeasureLimit}
\int_{U\times \mathcal{Y}\times[0,T]}f(Y)\mathrm{P}(dudYdt)=\int_{0}^{T}\int_{\mathcal{Y}} f(Y)\mu^{\bar{X}_t}(dY)dt,
\end{equation*}
where $\mu^{\bar{X}_t}(dY)$ is the invariant measure associated to the operator $\mathcal{L}^{X}$ introduced in (\ref{Eq:FastGeneratorReg1})
 with $X=\bar{X}_t$.
\end{lemma}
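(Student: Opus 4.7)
The plan is to reduce the claim to an ergodic statement for the frozen-slow fast process through a chain of three approximations: (i) replace the controlled fast process by the uncontrolled one driven by $X^{\varepsilon,\delta,u}$; (ii) freeze the slow component on small $\Delta$-windows; (iii) apply the ergodic theorem for the frozen fast dynamics to bring in $\mu^{X}$. A density argument then extends the result from Lipschitz $f$ to general $f \in C_b(\mathcal{Y})$.

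First I would observe that, since $\mathrm{P}^{\varepsilon,\Delta} \Rightarrow \mathrm{P}$ in the topology specified in Theorem \ref{T:MainTheorem1}, for any $f \in C_b(\mathcal{Y})$ (norm-continuous and bounded) we automatically have
\[
   \int_{U \times \mathcal{Y} \times [0,T]} f(Y) \mathrm{P}^{\varepsilon,\Delta}(dudYdt) \longrightarrow \int_{U \times \mathcal{Y} \times [0,T]} f(Y) \mathrm{P}(dudYdt).
\]
By the definition \eqref{Eq:OccupationMeasureBeforeAveraging}, the left-hand side equals $\int_0^T \frac{1}{\Delta}\int_t^{t+\Delta} f(Y^{\varepsilon,\delta,u}(s))\, ds\, dt$. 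Hence it is enough to show that this expression converges to $\int_0^T \int_{\mathcal{Y}} f(Y) \mu^{\bar{X}_t}(dY)\, dt$ (in probability, say), and by a standard approximation (continuous bounded functions that are Lipschitz on bounded sets are dense in $C_b(\mathcal{Y})$ in the topology of weak convergence of measures) it suffices to assume $f$ is bounded and Lipschitz.

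Next I would carry out the three approximation steps. Using Lipschitz continuity of $f$ together with Lemma \ref{Lm:ClosenessL2Integrated} and Cauchy--Schwarz,
\[
   \mathbf{E} \left| \int_0^T \frac{1}{\Delta}\int_t^{t+\Delta} \bigl[f(Y^{\varepsilon,\delta,u}(s)) - f(\mathrm{Y}^{\varepsilon,\delta}(s))\bigr] ds\, dt \right| \leq [f]_{\mathrm{Lip}} \sqrt{T \cdot C(T,N,\varepsilon)} \to 0,
\]
after changing the order of integration and bounding by $\mathbf{E}\tfrac{1}{\Delta}\int_0^{T+\Delta}|Y^{\varepsilon,\delta,u}(s)-\mathrm{Y}^{\varepsilon,\delta}(s)|_H\, ds$. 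Then an entirely analogous application of Lemma \ref{Lm:ClosenessL2IntegratedFrozenSlow} lets us replace $\mathrm{Y}^{\varepsilon,\delta}(s)$ on each window $[t,t+\Delta]$ by $\mathrm{Y}^{\delta, X^{\varepsilon,\delta,u}(t)}(s;t)$, the fast process with slow component frozen at the left endpoint $t$. After these reductions it remains to analyze
\[
   \int_0^T \frac{1}{\Delta}\int_t^{t+\Delta} f\bigl(\mathrm{Y}^{\delta, X^{\varepsilon,\delta,u}(t)}(s;t)\bigr) ds\, dt.
\]

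The key step is then the ergodic theorem for the frozen-slow fast process. By Hypothesis 5, $\Delta/\delta^2 \to \infty$, and a time change $s = t + \delta^2 r$ turns the inner average into $\frac{\delta^2}{\Delta}\int_0^{\Delta/\delta^2} f(\tilde{\mathrm{Y}}^{X^{\varepsilon,\delta,u}(t)}(r))\, dr$, where $\tilde{\mathrm{Y}}^X$ solves the unscaled SRDE \eqref{Eq:FastProcessSRDEReg1} with frozen slow input $X$ (in law). The ergodicity of $Y^X$ stated in \eqref{Eq:mu-X}, combined with the exponential mixing reviewed in Appendix \ref{S:ErgodicProperties}, gives that for every $X \in H$ this time average converges to $\int_{\mathcal{Y}} f(Y) \mu^{X}(dY)$; moreover, the rate is uniform in $X$ on $H$-bounded sets thanks to Hypotheses 1--2 (independence of the contraction rate on $X$). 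Combining this with Lemma \ref{Lm:mu-continuous} and the $C([0,T];H)$-convergence $X^{\varepsilon,\delta,u}\to \bar{X}$ shows $\int_{\mathcal{Y}} f \, d\mu^{X^{\varepsilon,\delta,u}(t)} \to \int_{\mathcal{Y}} f\, d\mu^{\bar{X}_t}$ for a.e.\ $t$. Finally, bounded convergence in $t$ (all integrands are uniformly bounded by $\|f\|_\infty$) yields the desired limit, and since the equality holds for every bounded Lipschitz $f$, a standard monotone class/approximation argument identifies the conditional marginal of $\mathrm{P}$ on $\mathcal{Y}$ at time $t$ with $\mu^{\bar{X}_t}$, thereby placing $\mathrm{P}$ in $\mathbb{P}$.

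The main obstacle will be the uniformity in the ergodic step: one needs convergence of the $\Delta$-time average of the frozen fast process to the invariant measure that is robust as $\varepsilon \downarrow 0$ and holds simultaneously for every random initial condition $X^{\varepsilon,\delta,u}(t)$. This is what forces the use of the uniform exponential mixing from Appendix \ref{S:ErgodicProperties} rather than the plain pointwise ergodic theorem, and it is also the reason the rate $\Delta/\delta^2 \to \infty$ built into Hypothesis 5 is essential.
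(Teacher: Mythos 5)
Your proposal is correct and follows essentially the same route as the paper: the five-term telescoping decomposition (occupation-measure convergence, controlled-to-uncontrolled fast process via Lemma \ref{Lm:ClosenessL2Integrated}, freezing the slow component via Lemma \ref{Lm:ClosenessL2IntegratedFrozenSlow}, ergodic theorem after time change via Lemma \ref{Lm:ErgodicTheoremFastProcessReg1}, and Lipschitz continuity of $X\mapsto\mu^X$) is exactly the paper's strategy, with the same chain of lemmas. The only minor difference is ordering: you apply the ergodic theorem with the random frozen state $X^{\varepsilon,\delta,u}(t)$ and then pass to $\bar{X}_t$ via continuity of $\mu^X$, whereas the paper first swaps $X^{\varepsilon,\delta,u}(t)$ for $\bar{X}(t)$ at the level of the fast processes (its term $J_5^{\varepsilon,\delta,\Delta}$) and then applies the ergodic theorem with the deterministic frozen state — a cosmetic rearrangement of the same bounds.
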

\begin{proof}
Without loss of generality we can also assume that $f$ is Lipschitz continuous with Lipschitz constant $L_{f}$. We begin with the following decomposition
\begin{align}
&\int_{U\times \mathcal{Y}\times[0,T]}f(Y)\mathrm{P}(dudYdt)-\int_{0}^{T}\int_{\mathcal{Y}} f(Y)\mu^{\bar{X}(t)}(dY)dt=\nonumber\\
&\quad= \left(\int_{U\times \mathcal{Y}\times[0,T]}f(Y)\mathrm{P}(dudYdt)-\int_{U\times \mathcal{Y}\times[0,T]}f(Y)\mathrm{P}^{\varepsilon,\Delta}(dudYdt)\right)\nonumber\\
&\qquad+\left(\int_{U\times \mathcal{Y}\times[0,T]}f(Y)\mathrm{P}^{\varepsilon,\Delta}(dudYdt)- \int_{0}^{T}\int_{\mathcal{Y}} f(Y)\mu^{\bar{X}_t}(dY)dt\right)\nonumber
\end{align}
\begin{align}
&\quad= \left(\int_{U\times \mathcal{Y}\times[0,T]}f(Y)\mathrm{P}(dudYdt)-\int_{U\times \mathcal{Y}\times[0,T]}f(Y)\mathrm{P}^{\varepsilon,\Delta}(dudYdt)\right)\nonumber\\
&\qquad+\left(\int_{0}^{T} \frac{1}{\Delta}\int_{t}^{t+\Delta}f(Y^{\varepsilon,\delta,u}(s))ds dt - \int_{0}^{T} \frac{1}{\Delta}\int_{t}^{t+\Delta}f(\mathrm{Y}^{\varepsilon,\delta}(s))ds dt\right)\nonumber\\
&\qquad+\left(\int_{0}^{T} \frac{1}{\Delta}\int_{t}^{t+\Delta}f(\mathrm{Y}^{\varepsilon,\delta}(s))ds dt-\int_{0}^{T} \frac{1}{\Delta}\int_{t}^{t+\Delta}f(\mathrm{Y}^{\delta, X^{\varepsilon,\delta,u}(t)}(s;t))ds dt\right)\nonumber\\
&\qquad+\left(\int_{0}^{T} \frac{1}{\Delta}\int_{t}^{t+\Delta}f(\mathrm{Y}^{\delta,\bar{X}(t)}(s;t))ds dt-\int_{0}^{T}\int_{\mathcal{Y}} f(Y)\mu^{\bar{X}(t)}(dY)dt\right)\nonumber\\
&\qquad+\left(\int_{0}^{T} \frac{1}{\Delta}\int_{t}^{t+\Delta}f(\mathrm{Y}^{\delta, X^{\varepsilon,\delta,u}(t)}(s;t))ds dt-\int_{0}^{T} \frac{1}{\Delta}\int_{t}^{t+\Delta}f(\mathrm{Y}^{\delta,\bar{X}(t)}(s;t))ds dt\right)\nonumber\\
&\quad=\sum_{i=1}^{5}J^{\varepsilon,\delta,\Delta}_{i}(T).\nonumber%\label{Eq:InvariantMeasureCharacterization1}
\end{align}

The next goal is to show that each of the $J^{\varepsilon,\delta,\Delta}_{i}(T)$ terms goes to zero in probability as $\varepsilon\downarrow 0$.  %By Lemmas \ref{Lm:TightnessSlowProcess} and \ref{Lm:TightnessOccupationMeasure}, we  know that the family  $\{(X^{\varepsilon,\delta,u}(t), \mathrm{P}^{\varepsilon,\Delta}), \varepsilon>0, 0\leq t \leq T\}$ is  tight.
We assumed that  $(X^{\varepsilon,\delta,u}(\cdot), \mathrm{P}^{\varepsilon,\Delta})$ converge in distribution to  $(\bar{X}(\cdot), \mathrm{P})$. At this point, we will use again the Skorokhod representation theorem (Theorem 1.8 in \cite{EithierKurtz1986}), which, for the purposes of identifying the limit, allows us to assume
that the aforementioned convergence holds with probability one. The Skorokhod representation theorem involves the introduction of another probability space, but this distinction is ignored in the
notation.

We immediately get that $J^{\varepsilon,\delta,\Delta}_{1}(T)$ goes to zero in probability as $\varepsilon\downarrow 0$. Lemma \ref{Lm:ClosenessL2Integrated} and dominated convergence theorem shows that  $J^{\varepsilon,\delta,\Delta}_{2}(T)$ goes to zero in $L^{1}$ as $\varepsilon,\delta\downarrow 0$.  Indeed, we notice that
\begin{align}
&\mathbf{E}\left|\frac{1}{\Delta}\int_{t}^{t+\Delta}f(Y^{\varepsilon,\delta,u}(s))ds-\frac{1}{\Delta}\int_{t}^{t+\Delta}f(Y^{\varepsilon,\delta}(s))ds\right|\leq\nonumber\\
&\qquad\leq L_{f} \mathbf{E} \frac{1}{\Delta}\int_{t}^{t+\Delta}\left|Y^{\varepsilon,\delta,u}(s)-Y^{\varepsilon,\delta}(s)\right|_{H}ds\nonumber\\
&\qquad \leq L_{f}\frac{1}{\Delta}\left(\int_{t}^{t+\Delta}1ds\right)^{1/2} \left(\mathbf{E} \int_{t}^{t+\Delta}\left|Y^{\varepsilon,\delta,u}(s)-Y^{\varepsilon,\delta}(s)\right|^{2}_{H}ds\right)^{1/2}\nonumber\\
& \qquad\leq L_{f} \left(\mathbf{E} \frac{1}{\Delta}\int_{t}^{t+\Delta}\left|Y^{\varepsilon,\delta,u}(s)-Y^{\varepsilon,\delta}(s)\right|^{2}_{H}ds\right)^{1/2}\rightarrow 0.\nonumber
\end{align}

Similarly, Lemma  \ref{Lm:ClosenessL2IntegratedFrozenSlow} and dominated convergence theorem shows that  $J^{\varepsilon,\delta,\Delta}_{3}(T)$ goes to zero in  $L^{1}$ as $\varepsilon,\delta\downarrow 0$. As far as  $J^{\varepsilon,\delta,\Delta}_{4}(T)$ is concerned, we define the time--rescaled process $\mathrm{Y}^{\bar{X}(t)}(s)=\mathrm{Y}^{\delta, \bar{X}(t)}(t+\delta^2 s; t)$
\begin{align*}%\label{Eq:TimeSlowedFastProcessFrozenSlow}
d \mathrm{Y}^{\bar{X}(t)}(s) & =
\left[A_2 \mathrm{Y}^{\bar{X}(t)}(s)+B_2(\bar{X}(t),\mathrm{Y}^{\bar{X}(t)}(s))\right]ds
+\Sigma_2(\bar{X}(t), \mathrm{Y}^{\bar{X}(t)}(s))d W^{Q_2} \nonumber\\
\mathrm{Y}^{\bar{X}(t)}(0) & = \mathrm{Y}^{\varepsilon,\delta}(t) \in H \ , \ 0\leq s\leq \dfrac{\Delta}{\delta^2} \ ,
\end{align*}
and we notice that
$$\dfrac{1}{\Delta}\int_{t}^{t+\Delta} f(\mathrm{Y}^{\delta,\bar{X}(t)}(s;t))ds=
\dfrac{1}{\frac{\Delta}{\delta^2}}\int_{0}^{\frac{\Delta}{\delta^2}}f(\mathrm{Y}^{\bar{X}(t)}(s))ds \ .
$$

Hence, by making use of Lemma \ref{Lm:ErgodicTheoremFastProcessReg1} and of Hypothesis 5, to obtain that in $L^{1}$
$$\lim\limits_{\varepsilon\downarrow 0}
\dfrac{1}{\frac{\Delta}{\delta^2}}\int_{0}^{\frac{\Delta}{\delta^2}}f(\mathrm{Y}^{\bar{X}(t)}(s))ds=\int_{\mathcal{Y}}f(Y)\mu^{\bar{X}(t)}(dY),
$$
which together with dominated convergence indeed implies that $J^{\varepsilon,\delta,\Delta}_{4}(T)$ goes to zero in probability as $\varepsilon\downarrow 0$.
It remains to study the term
\begin{align}
J^{\varepsilon,\delta,\Delta}_{5}(T)&=\int_{0}^{T} \frac{1}{\Delta}\int_{t}^{t+\Delta}f(\mathrm{Y}^{\delta, X^{\varepsilon,\delta,u}(t)}(s;t))ds dt-\int_{0}^{T} \frac{1}{\Delta}\int_{t}^{t+\Delta}f(\mathrm{Y}^{\delta,\bar{X}(t)}(s;t))ds dt\nonumber\\
&=\int_{0}^{T}\left[ \dfrac{1}{\frac{\Delta}{\delta^2}}\int_{0}^{\frac{\Delta}{\delta^2}}f(\mathrm{Y}^{X^{\varepsilon,\delta,u}(t)}(s))ds- \dfrac{1}{\frac{\Delta}{\delta^2}}\int_{0}^{\frac{\Delta}{\delta^2}}f(\mathrm{Y}^{\bar{X}(t)}(s))ds\right] dt.\nonumber
\end{align}

Due to dominated convergence and Lemma 3.1 of \cite{CerraiRDEAveraging1}, this term goes to zero.
\end{proof}

We end this section with the validation of (\ref{Eq:ViablePairNormalization}). As in the finite dimensional case, see \cite{LDPWeakConvergence}, this follows by the fact that  the analogous property holds at the prelimit level together with the fact that  $\mathrm{P}(U\times\mathcal{Y}\times\{t\})=0$ and the continuity of $t\rightarrow \mathrm{P}(U\times \mathcal{Y}\times [0,t])$ to deal with null sets.

\section{Derivation of the large deviation principle -- Proof of Theorem \ref{Theorem:LargeDeviationPrinciple}}\label{S:LDPproof}

In this section we prove the upper and lower bounds for the Laplace principle and compactness of level sets of the action functional. These results then directly imply Theorem \ref{Theorem:LargeDeviationPrinciple}. The upper bound is proven in Subsection \ref{SS:LDPLowerBound}, the lower bound in Subsection \ref{SS:LDPUpperBound} and compactness of level sets of the action functional in Subsection \ref{SS:CompactLevelSets}.

It turns out that based on the representation (\ref{Eq:LDPRepresentationFastSlowSRDE}), Theorem \ref{T:MainTheorem1}, and Fatou's lemma, the Laplace principle upper bound follows immediately. Things, however, are considerably more complicated for the Laplace principle lower bound. For the lower bound, we need to construct a nearly optimal control that achieves the lower bound. Due to the presence of the multiple scales, it turns out that any nearly optimal control, has in principle to depend on $Y$. Hence, averaging principle would then work if regularity properties of such a control were known. In the finite dimensional case \cite{LDPWeakConvergence}, this was done via an explicit construction of the control and possible connections to related Hamilton-Jacobi-Equations. The situation is considerably more complicated here.

If the spatial dimension is higher than one, i.e.  when $d>1$, and if $\sigma_{1}$ depends on both $X$ and $Y$ components it turns out that the available explicit constructions are problematic because of the colored noise, leading to potentially unbounded controls. We will see this in detail in Subsection \ref{SS:UpperBound_OneDim} and Section \ref{S:Generalizations}. As we will see in Subsection \ref{SS:UpperBound_OneDim}, even in the one-dimensional case the proof is quite involved. If, on the other hand $\sigma_{1}(x,X,Y)=\sigma_{1}(x,X)$ does not depend on $Y$, then one can effectively consider a nearly optimal control that depends only on time $t$ and not on $Y$ or $\varepsilon$, in which case the proof is rather straightforward as we shall see in Subsection \ref{SS:UpperBound_MultiDim}.

\subsection{Laplace principle upper bound}\label{SS:LDPLowerBound}
Our goal is to show that for any bounded, continuous functions $h$ mapping
$C([0,T]; H)$ into $\mathbb{R}$ we have
\begin{align*}
  &\limsup\limits_{\varepsilon\downarrow 0}\varepsilon \ln \mathbf{E}\left[\exp\left(-\dfrac{h(X^{\varepsilon,\delta})}{\varepsilon}\right)\right]
  \leq - \inf_{\phi \in C([0,T];H)} \{S(\phi) + h(\phi)\}\\
&= -\inf\limits_{(\phi, \mathrm{P})\in \mathcal{V}_{(\xi,\mathcal{L})}}
\left[\dfrac{1}{2}\int_{U\times \mathcal{Y} \times [0,T]}|u|_U^2\mathrm{P}(dudYdt)+h(\phi)\right] \ ,
\end{align*}
where $S$ is the rate function defined in Lemma \ref{Theorem:LargeDeviationPrinciple}.

It is sufficient to prove the above upper limit along any subsequence such that
$$\varepsilon\ln \mathbf{E}\left[\exp\left(-\dfrac{h(X^{\varepsilon,\delta})}{\varepsilon}\right)\right]$$
converges. From the moment that
$\left|\varepsilon\ln\mathbf{E}\left[\exp\left(-\dfrac{h(X^{\varepsilon,\delta})}{\varepsilon}\right)\right]\right|\leq
\sup\limits_{\phi\in C([0,T]; H)}|h(\phi)|$ such a subsequence will exist.

Recalling that the controlled process $X^{\varepsilon,\delta,u}$  defined via (\ref{Eq:FastSlowStochasticRDEWithControl}),  (\ref{Eq:LDPRepresentationFastSlowSRDE}) implies  that
there exists a family of controls $\{u^\varepsilon, \varepsilon>0\}$ in $\mathcal{P}_2(U)$ such that for every $\varepsilon>0$
{$$
\varepsilon\ln\mathbf{E} \left[\exp\left(-\dfrac{h(X^{\varepsilon,\delta})}{\varepsilon}\right)\right]\leq - \left( \mathbf{E}
\left[\dfrac{1}{2}\int_0^T |u^\varepsilon(t)|_U^2dt+h(X^{\varepsilon,\delta,u^\varepsilon})\right]-\varepsilon\right)
$$}

Without loss of generality, we can assume that $u^\varepsilon \in \mathcal{P}_2^N$ for $N$ large enough using the arguments of Theorem 4.4 of \cite{VariationalInfniteBM}.
%For every $\varepsilon>0$ we have
%$$\mathbf{E}\int_0^T |u^\varepsilon(t)|_U^2dt\leq 4 \sup\limits_{\phi\in C([0,T]; H)}|h(\phi)|+2\varepsilon.$$
Hence, using this family of controls and the associated controlled process $X^{\varepsilon,\delta, u^\varepsilon}$
to construct occupation measures $\mathrm{P}^{\varepsilon,\Delta}$ in (\ref{Eq:OccupationMeasureBeforeAveraging}), the results of Section \ref{SSS:TightnessViablePair}
guarantee that the family $\{(X^{\varepsilon,\delta,u}, \mathrm{P}^{\varepsilon,\Delta}), \varepsilon>0, 0\leq t \leq T\}$ will be tight. As a consequence of Theorem \ref{T:MainTheorem1}, given any subsequence of
$\varepsilon\downarrow0$ there is a further sub--subsequence for which $(X^{\varepsilon,\delta,u^\varepsilon}, P^{\varepsilon,\Delta}) \rightharpoonup (\bar{X}, \mathrm{P})$
in distribution, where $(\bar{X}, \mathrm{P})$ is a viable pair. By Fatou's Lemma we have
$$\begin{array}{ll}
&\limsup\limits_{\varepsilon\downarrow 0} \varepsilon\ln \mathbf{E}\left[\exp\left(-\dfrac{h(X^{\varepsilon,\delta})}{\varepsilon}\right)\right]
\\
\leq & \displaystyle{\limsup\limits_{\varepsilon\downarrow 0} \left(-\mathbf{E}\left[\dfrac{1}{2}\int_0^T |u^\varepsilon(t)|_U^2dt+h(X^{\varepsilon,\delta,u^\varepsilon})\right]+\varepsilon\right)}
\\
\leq & \displaystyle{-\liminf\limits_{\varepsilon\downarrow 0} \left(\mathbf{E}\left[\dfrac{1}{2}\int_0^T \dfrac{1}{\Delta}
\int_t^{t+\Delta}|u^\varepsilon(s)|_U^2dsdt+h(X^{\varepsilon,\delta,u^\varepsilon})\right]\right)}
\\
=    & \displaystyle{-\liminf\limits_{\varepsilon\downarrow 0} \left(\mathbf{E}\left[\dfrac{1}{2}\int_{U\times \mathcal{Y}\times [0,T]}
|u|_U^2\mathrm{P}^{\varepsilon,\Delta}(dudYdt)+h(X^{\varepsilon,\delta,u^\varepsilon})\right]\right)}
\\
\leq &\displaystyle{ -\left[\frac{1}{2} \int_{U\times\mathcal{Y}\times[0,T]} |u|_U^2 \mathrm{P}(dudYdt) + h(\bar{X}) \right]}\\
\leq  &\displaystyle{-\inf\limits_{(\phi, \mathrm{P})\in \mathcal{V}_{(\xi,\mathcal{L})}}\left[\dfrac{1}{2}\int_{U\times \mathcal{Y}\times [0,T]}|u|_U^2
\mathrm{P}(dudYdt)+h(\phi)\right]},
\end{array}$$
which concludes the proof of the Laplace principle upper bound.

\subsection{Laplace principle lower bound}\label{SS:LDPUpperBound}
To prove the Laplace principle lower bound we need to show that for all $h: C([0,T]; H)\rightarrow \mathbb{R}$ bounded and continuous
$$\limsup\limits_{\varepsilon\rightarrow 0}\varepsilon\ln \mathbf{E}\left[\exp\left(-\dfrac{h(X^{\varepsilon,\delta})}{\varepsilon}\right)\right]
\geq - \inf\limits_{\phi\in C([0,T]; H)}[S(\phi)+h(\phi)] \ .$$

For a given constant $\eta>0$,  consider $\psi\in C([0,T]; H)$ with $\psi_0=X_0$ such that
$$
\displaystyle{S(\psi)+h(\psi)}
\displaystyle{ \leq \inf\limits_{\phi\in C([0,T]; H)} [S(\phi)+h(\phi)]+\eta < \infty} \ .
$$

Before we continue with the proof of the lower bound, let us first rewrite the action functional in a more useful form. Using the definition of $S(\psi)$ we can write
\begin{align}
S(\psi)&=\inf\limits_{(\psi, \mathrm{P})\in \mathcal{V}_{(\xi, \mathcal{L})}}
\left[\dfrac{1}{2}\int_{U\times \mathcal{Y}\times [0,T]}|u|_U^2\mathrm{P}(dudYdt)\right]=%\int_{0}^{T}L^{r}_{t}(\psi)dt
{L^r_T(\psi)} \nonumber \ ,
\end{align}
where %for each $t\in[0,T]$ we have
\begin{equation*}%\label{Eq:RelaxedRateFunctionIntegralKernel}
{L^r_{T}(\psi)=\inf\limits_{\mathrm{P}\in \mathcal{A}_{\psi,t}^{r}}\dfrac{1}{2}\int_0^T\int_{U\times \mathcal{Y}}|u|_U^2\mathrm{P}_{s}(dudY)ds} \ ,
\end{equation*}
with
\begin{align*}
\mathcal{A}^{r}_{\psi,T}&=\left\{\mathrm{P}:[0,T] \to \mathcal{P}(U\times \mathcal{Y}): \mathrm{P}_{s}(dudY)=\eta(du|Y,s)\mu^{\psi(s)}(dY) \ ,\right.\nonumber\\
 &\qquad\left.\int_0^T\int_{U\times \mathcal{Y}}\left(|u|_U^2+|Y|^{2}_{\theta,2}\right)\mathrm{P}_s(dudY)ds<\infty \ , \right.
\\
&\qquad\left.  \psi(t)=S_1(t)X_0+\int_{U \times\mathcal{Y}\times[0,t]}S_1(t-s)\xi(\psi(s),Y,u)\mathrm{P}_{s}(dudY)ds, \ \ {t \in [0,T]}\right\} \ ,
\end{align*}
where $\mu^X$ is the invariant measure from (\ref{Eq:mu-X}) {and $\xi$ is defined in \eqref{Eq:xiDef}}. Now, for each $t\in[0,T]$ let us define
\begin{equation*}%\label{Eq:OrdinaryRateFunctionIntegralKernel}
{L^o_{T}(\psi)=\inf\limits_{v\in \mathcal{A}_{\psi,T}^{o}}\dfrac{1}{2}\int_0^T\int_{\mathcal{Y}}|v(s,Y)|_U^2\mu^{\psi(s)}(dY)ds} \ ,
\end{equation*}
where
\begin{equation*}%\label{Eq:OrdinaryRateFunctionIntegralKernel}
\begin{array}{ll}
\mathcal{A}_{\psi,T}^{o} & \displaystyle{=\left\{v:[0,T]\times \mathcal{Y} \to U: \int_0^T\int_{\mathcal{Y}}\left(|v(s,Y)|_U^2+|Y|^{2}_{\theta,2}\right)\mu^{\psi(t)}(dY)ds<\infty  \ , \right.}
\\
&   \displaystyle{ \ \left.\psi(t)=S_1(t)X_0+\int_{\mathcal{Y}\times[0,t]}S_1(t-s)\xi(\psi(s),Y,v(s,Y))\mu^{\psi(s)}(dY)ds, \ \ {t \in [0,T]}\right\}} \ .
\end{array}
\end{equation*}

Our claim is that %for almost every $t\in[0,T]$,
one actually has that $L^r_{T}(\psi)=L^o_{T}(\psi)$. This follows by the quadratic dependence of the cost on the control and by the affine dependence of $\xi$ on the control. Indeed if we let $v(t,Y)=\displaystyle{\int_{U}u\eta(du|Y,t)}$
where $\eta(du|Y,t)$ is the conditional distribution, so that $v\in \mathcal{A}_{\psi,T}^{o}$, then by Jensen's inequality
we get { for any fixed $t \in [0,T]$,}
$$\int_{U\times \mathcal{Y}}\dfrac{1}{2}|u|_U^2\mathrm{P}_{t}(dudY)dt\geq \int_{\mathcal{Y}}\dfrac{1}{2}
\left|\int_{U}u\eta(du|Y,t)\right|_U^2\mu^{\psi(t)}(dY)=\dfrac{1}{2}\int_{\mathcal{Y}}|v(t,Y)|_U^2\mu^{\psi(t)}(dY) \ ,$$
and so $L_T^r(\psi)\geq L_T^o(\psi)$.

For the reverse direction, for given $v \in \mathcal{A}_{\psi,T}^{o}$, we can define  $\mathrm{P}\in \mathcal{A}_{\psi,T}^{r}$
via $\mathrm{P}_{t}(dudY)=\delta_{v(t,Y)}(du)\mu^{\psi(t)}(dY)$. Hence, we have $L_T^r(\psi)\leq L_T^o(\psi)$.

Therefore, we have indeed obtained that %for  $t\in[0,T]$,  $L^r_{t}(\psi)=L^o_{t}(\psi)$, which in turn implies that
\begin{align}
S(\psi)&= \inf\limits_{v\in \mathcal{A}_{\psi,T}^{o}} \dfrac{1}{2}\int_{0}^{T}\int_{\mathcal{Y}}|v(t,Y)|_U^2\mu^{\psi(t)}(dY)dt. \label{Eq:ActionFunctOrdinaryForm}
\end{align}

Having derived the representation of the last display, let us continue with the proof of the lower bound.
Let us consider $\tilde{v}(t,Y)\in \mathcal{A}_{\psi,t}^{o}$ such that
\[
\int_{0}^{T}\dfrac{1}{2}\int_{\mathcal{Y}}|\tilde{v}(t,Y)|_U^2\mu^{\psi(t)}(dY)dt\leq S(\psi)+\eta \ .
\]

In general, it is very difficult to find an explicit representation for $\tilde{v}(t,Y)$. At this point we strengthen Hypothesis 3 to Hypothesis 4. In Subsection \ref{SS:UpperBound_OneDim} we consider the one dimensional case with $\sigma_{1}^{2}$ bounded from below and above. In Subsection \ref{SS:UpperBound_MultiDim} we consider the multidimensional case, $d\geq1$, with $\sigma_{1}(x,X,Y)=\sigma_{1}(x,X)$ independent of $Y$.

\subsubsection{Lower bound for the $d=1$ case with $\sigma_{1}^{2}$ bounded from below and above}\label{SS:UpperBound_OneDim}
We will study the problem in the special case that $U=H = \mathcal{X} = \mathcal{Y}=L^2([0,1])$ and $Q_1 = I$. We also assume that $\sigma_1$ is bounded above and below $0<c_0 \leq \sigma^{2}_1(x,X,Y) \leq c_1$. In this case, we can use the methods from \cite{LDPWeakConvergence} to find an explicit formulation for $\tilde{v}(t,Y)$.

Define $a(X): L^2(\mathcal{Y},\mu^X; U) \to H$ by
\begin{equation}
  a(X)u = \int_\mathcal{Y} \Sigma_1(X,Y)u(Y) \mu^X(dY).
\end{equation}
For any $X \in \mathcal{X}$, $a(X)$ is a bounded operator and for any $u \in L^2(\mathcal{Y},\mu^{X};U)$,
\[|a(X)u|_H \leq \sqrt{c_1} |u|_{L^2(\mathcal{Y},\mu^{X};U)}.\]
Then the adjoint of $a(X)$ is $a^\star(X): H \to L^2(\mathcal{Y},\mu^X;U)$
\begin{equation}
  [a^\star(X)h](Y) = \Sigma_1^\star(X,Y)h.
\end{equation}
Define $q(X): H \to H$ by
\begin{equation}
  q(X)h = a(X)a^\star(X)h = \int_\mathcal{Y} \Sigma_1(X,Y)\Sigma_1^\star(X,Y) h \mu^X(dY).
\end{equation}

For presentation purposes, we first present a few technical lemmas that are essential for the proof. We defer their proof to the end of this subsection.
\begin{lemma}\label{L:Invertible_q}
  The operator $q(X): H \to H$ is invertible and $|q^{-1}(X)h|_H \leq \frac{1}{c_0} |h|_H$ for all $h \in H$, $X \in \mathcal{X}$.
\end{lemma}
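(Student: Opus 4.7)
The strategy is to exploit the fact that in the one dimensional setting with $U = H = L^2([0,1])$ and $\sigma_1$ a scalar, real-valued function, the operator $q(X)$ is simply a multiplication operator, which makes both invertibility and the norm bound transparent.

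First I would observe that $\Sigma_1(X,Y)$ is pointwise multiplication by $\sigma_1(\cdot, X(\cdot), Y(\cdot))$ and, as noted in the proof of \eqref{Eq:doublesumsbound}, this operator is self-adjoint on $H$. Consequently, for any $h \in H$ and $x \in D$,
\[
[\Sigma_1(X,Y)\Sigma_1^\star(X,Y)h](x) = \sigma_1^2(x, X(x), Y(x))\, h(x).
\]
Integrating against $\mu^X(dY)$ and using Fubini (justified by the boundedness of $\sigma_1^2$), one gets $q(X)h = \alpha_X \cdot h$, where
\[
\alpha_X(x) := \int_{\mathcal{Y}} \sigma_1^2(x, X(x), Y(x))\, \mu^X(dY).
\]

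Next I would invoke Hypothesis 4 in the present $d=1$ regime, which guarantees $0 < c_0 \leq \sigma_1^2(x,X,Y) \leq c_1 < \infty$, to conclude that $c_0 \leq \alpha_X(x) \leq c_1$ uniformly in $x \in D$ and $X \in \mathcal{X}$. Hence $\alpha_X \in L^\infty(D)$ is bounded away from zero, so the pointwise reciprocal $1/\alpha_X$ is well-defined with $|1/\alpha_X|_0 \leq 1/c_0$. It follows that $q(X)$ is invertible on $H$ with $q^{-1}(X)h = h/\alpha_X$, and
\[
|q^{-1}(X)h|_H^2 = \int_D \frac{|h(x)|^2}{\alpha_X(x)^2}\, dx \leq \frac{1}{c_0^2}\, |h|_H^2,
\]
which yields the claimed bound after taking square roots.

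The main (modest) obstacle is to justify the representation $q(X) = $ multiplication by $\alpha_X$ rigorously: one has to check that the $H$-valued Bochner integral defining $q(X)$ coincides with pointwise integration against $\mu^X$. This is routine since $Y \mapsto \Sigma_1(X,Y)\Sigma_1^\star(X,Y)h$ is bounded in $H$ uniformly in $Y$ (by $c_1|h|_H$) and measurable, so Fubini applies and the identification is immediate. No further technical difficulty arises.
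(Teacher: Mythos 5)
Your proof is correct, and it takes a slightly different route than the paper. The paper's proof establishes only the lower bound $|q(X)h|_H \geq c_0|h|_H$ via the pointwise estimate $|[\Sigma_1(X,Y)\Sigma_1^\star(X,Y)h](x)| = \sigma_1^2(X(x),Y(x))|h(x)| \geq c_0|h(x)|$ together with $\mu^X(\mathcal{Y})=1$; this gives the claimed norm bound once invertibility is known, but leaves surjectivity implicit (it follows because $q(X)=a(X)a^\star(X)$ is self-adjoint and bounded below, so its spectrum is contained in $[c_0,\infty)$). You instead identify $q(X)$ explicitly as multiplication by the function $\alpha_X(x)=\int_\mathcal{Y}\sigma_1^2(x,X(x),Y(x))\mu^X(dY)\in[c_0,c_1]$ and read off that $q^{-1}(X)$ is multiplication by $1/\alpha_X$. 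This directly constructs the inverse, so surjectivity is manifest, and it also dovetails with the representation for $q^{-1}$ that the paper uses later inside the proof of Theorem \ref{Theorem:RegularMinimizer}. Both arguments rest on the same three facts — $\Sigma_1$ is self-adjoint pointwise multiplication, $\sigma_1^2\geq c_0$, and $\mu^X$ is a probability measure — so the gain in your version is primarily one of completeness and transparency rather than of substance.
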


By the assumption that $S(\psi) <+\infty$ and by the representation $\mathcal{A}^o_{\psi,T}$, there exists $u(t,Y)$ such that
  \[\frac{1}{2} \int_0^T \int_\mathcal{Y} |u(s,Y)|_U^2 \mu^{\psi(s)}(dY)ds \leq  S(\psi) + \eta\]
  and
  \begin{align*}
    \psi(t) = &S_1(t)\psi(0) + \int_0^t \int_\mathcal{Y} S_1(t-s) B_1(\psi(s),Y) \mu^{\psi(s)}(dY) ds \\
    &+ \int_0^t \int_\mathcal{Y} S_1(t-s) \Sigma_1(\psi(s), Y)u(s,Y) \mu^{\psi(s)}(dY)ds.
  \end{align*}
  Of course, there is no guarantee that $u(t,Y)$ is bounded or Lipschitz continuous in $Y$. For this reason, we have the following lemma.

\begin{lemma} \label{Lm:Lipschitz-control}
  If $\psi \in C([0,T];H)$, $S(\psi)<+\infty$, and $\eta>0$, then there exists a control ${v}(t,Y)$ that for each $t \in [0,T]$,  is bounded uniformly and Lipschitz continuous in $Y$ in the sense that there exists $\gamma \in L^2([0,T])$ such that for any $t>0$,
  \[\sup_{Y \in \mathcal{Y}} |{v}(t,Y)|_U^2  \leq \gamma(t),\]
  for any $Y_1, Y_2 \in \mathcal{Y}$,
  \[|{v}(t,Y_1) - {v}(t,Y_2)|_U \leq \gamma(t)|Y_1-Y_2|_H, \]
   and ${v}$ takes the form
  \begin{align} \label{Eq:v-form}
    {v}(t,Y)& = \Sigma_1^\star(\psi(t), Y)q^{-1}(\psi(t))a(\psi(t))u(t,\cdot)\nonumber\\
    &=\Sigma_1^\star(\psi(t), Y)q^{-1}(\psi(t)) \int_\mathcal{Y} \Sigma_1(\psi(t),Y)u(t,Y) \mu^{\psi(t)}(dY).
  \end{align}
  Furthermore,
  \[\frac{1}{2} \int_0^T \int_\mathcal{Y} |{v}(s,Y)|_U^2 \mu^X(dY)ds \leq S(\psi) + \eta\]
  and
  \begin{align} \label{Eq:ControlEq}
    \psi(t) = &S_1(t)\psi(0) + \int_0^t \int_\mathcal{Y} S_1(t-s) B_1(\psi(s),Y) \mu^{\psi(s)}(dY) ds \nonumber\\
    &+ \int_0^t \int_\mathcal{Y} S_1(t-s) \Sigma_1(\psi(s), Y){v}(s,Y) \mu^{\psi(s)}(dY)ds.
  \end{align}
\end{lemma}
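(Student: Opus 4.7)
The plan is to exploit the fact that any control $u(t,Y)$ enters the viable pair equation (\ref{Eq:ControlEq}) only through the averaged quantity $a(\psi(t))u(t,\cdot)=\int_{\mathcal Y}\Sigma_1(\psi(t),Y)u(t,Y)\mu^{\psi(t)}(dY)\in H$, so we may freely replace $u$ by the minimal-$L^2(\mathcal Y,\mu^{\psi(t)};U)$-norm representative producing the same value of $a(\psi(t))u(t,\cdot)$ without altering $\psi$. Concretely, start from an $\eta/2$-near-optimal $u\in\mathcal A_{\psi,T}^o$ (whose existence follows from $S(\psi)<+\infty$ together with the representation (\ref{Eq:ActionFunctOrdinaryForm})) and set
\[
v(t,Y):=\Sigma_1^\star(\psi(t),Y)\,q^{-1}(\psi(t))\,a(\psi(t))u(t,\cdot),
\]
which is formula (\ref{Eq:v-form}) and is well defined by Lemma \ref{L:Invertible_q}. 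The identity $a(\psi(t))v(t,\cdot)=q(\psi(t))q^{-1}(\psi(t))a(\psi(t))u(t,\cdot)=a(\psi(t))u(t,\cdot)$ immediately gives (\ref{Eq:ControlEq}) for the original $\psi$, and since $v(t,\cdot)$ is the orthogonal projection of $u(t,\cdot)$ onto the range of $a^\star(\psi(t))$, standard Hilbert space contractivity yields $\tfrac12\int_0^T\int_{\mathcal Y}|v|_U^2\,d\mu^{\psi(t)}\,dt\le\tfrac12\int_0^T\int_{\mathcal Y}|u|_U^2\,d\mu^{\psi(t)}\,dt\le S(\psi)+\eta/2$.

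Next I would leverage the one-dimensional scalar structure together with the two-sided bound $c_0\le\sigma_1^2\le c_1$. Because $\Sigma_1(X,Y)$ acts as multiplication by $\sigma_1(\cdot,X(\cdot),Y(\cdot))$, the operator $q(X)$ reduces to multiplication by the bounded function $\bar q(X)(x):=\int_{\mathcal Y}\sigma_1^2(x,X(x),Y(x))\mu^X(dY)\in[c_0,c_1]$, so $q^{-1}(X)$ is multiplication by $1/\bar q(X)$. Writing $h(t):=q^{-1}(\psi(t))a(\psi(t))u(t,\cdot)\in H$, formula (\ref{Eq:v-form}) becomes the pointwise product $v(t,Y)(x)=\sigma_1(x,\psi(t)(x),Y(x))\,h(t)(x)$. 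The uniform-in-$Y$ bound $\sup_{Y}|v(t,Y)|_U^2\le c_1|h(t)|_H^2$ and the pointwise Lipschitz estimate $|v(t,Y_1)(x)-v(t,Y_2)(x)|\le L_{\sigma_1}^Y|h(t)(x)||Y_1(x)-Y_2(x)|$ then drop out.

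The main obstacle is promoting the pointwise Lipschitz estimate to the Hilbert-norm bound $|v(t,Y_1)-v(t,Y_2)|_U\le\gamma(t)|Y_1-Y_2|_H$ with $\gamma\in L^2([0,T])$, because Cauchy--Schwarz only yields $|v(t,Y_1)-v(t,Y_2)|_U\le L_{\sigma_1}^Y|h(t)|_{L^\infty(D)}|Y_1-Y_2|_H$, and a generic $h\in L^2([0,T];H)$ need not sit in $L^\infty(D)$. I plan to address this by working with the effective forcing $g(t):=a(\psi(t))u(t,\cdot)\in L^2([0,T];H)$ (which, through (\ref{Eq:ControlEq}), is canonically determined by $\psi$ up to the kernel of $a(\psi(t))$), and approximating $g$ in $L^2([0,T];H)$ by spatial mollifications $g_n\in L^2([0,T];L^\infty(D))$ at an arbitrarily small extra cost (the quadratic cost in this representation being $\tfrac12\int_0^T\langle g_n,q^{-1}(\psi)g_n\rangle_H\,dt$). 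The lifted controls $u_n(t,Y):=\Sigma_1^\star(\psi(t),Y)q^{-1}(\psi(t))g_n(t)$ produce $h_n(t)=q^{-1}(\psi(t))g_n(t)\in L^\infty(D)$ with $\|h_n(\cdot)\|_{L^\infty(D)}\in L^2([0,T])$ and hence satisfy the required bounds with a $\gamma_n\in L^2([0,T])$. A diagonal extraction combined with the continuous dependence estimates already established in Section \ref{S:LLN_ControlledSRDE}, plus an $\eta/2+\eta/2\le\eta$ accounting, then delivers the stated control $v$ for the original $\psi$.
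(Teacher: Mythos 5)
Your construction coincides with the paper's through formula~(\ref{Eq:v-form}), the projection identity $a(\psi(t))v(t,\cdot)=a(\psi(t))u(t,\cdot)$ giving~(\ref{Eq:ControlEq}), the uniform-in-$Y$ bound, and the cost contraction via Lemma~\ref{Lm:HilbertSpaceNorms}. Where you diverge is in refusing to take the Lipschitz-in-$Y$ estimate for granted. You are right to be suspicious there: writing $v(t,Y)(x)=\sigma_1(x,\psi(t)(x),Y(x))h(t)(x)$ with $h(t)=q^{-1}(\psi(t))a(\psi(t))u(t,\cdot)$, the pointwise bound gives
\[
|v(t,Y_1)-v(t,Y_2)|_U^2\le (L_{\sigma_1}^Y)^2\int_D|Y_1(x)-Y_2(x)|^2\,|h(t)(x)|^2\,dx,
\]
and this is controlled by $\gamma(t)^2|Y_1-Y_2|_H^2$ only when $h(t)\in L^\infty(D)$ (the paper's own Lipschitz structure for $\Sigma_1$ is precisely $H\to\mathcal{L}(L^\infty(D),H)$, not $H\to\mathcal{L}(H)$). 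The paper's proof asserts the inequality by invoking ``the Lipschitz continuity of $\Sigma_1^\star$'' and pairing it with the $H$-norm of $h(t)$, which tacitly needs the same $L^\infty$ control; as written, this step is not fully justified, and your observation pinpoints the gap.

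Your proposed repair --- mollify the effective forcing $g(t)=a(\psi(t))u(t,\cdot)$ in space so that $h_n(t)=q^{-1}(\psi(t))g_n(t)\in L^\infty(D)$ with $\|h_n(\cdot)\|_{L^\infty(D)}\in L^2([0,T])$ --- is the right idea, but be careful about what it proves. Since $a(\psi(t))v_n(t,\cdot)=g_n(t)\not=g(t)$, the controlled trajectory determined by $v_n$ is some $\psi_n\not=\psi$, so you do \emph{not} obtain Lemma~\ref{Lm:Lipschitz-control} as stated, which pins the trajectory to the original $\psi$. What your argument does deliver is the weaker conclusion actually used downstream: a sequence $(v_n,\psi_n)$ with $\psi_n\to\psi$ and convergent cost, which is exactly the structure of Theorem~\ref{Theorem:RegularMinimizer} (where the paper separately approximates $h$ by $h_n\in C([0,T];H)$ and invokes Lemmas~\ref{Lm:UniqueSolvability} and~\ref{Lm:ApproachControl}). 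The clean way to integrate your fix is therefore to fold the spatial mollification into that theorem --- i.e., approximate $h$ simultaneously in time and in a space embedding into $L^\infty(D)$ --- rather than presenting it as a proof of Lemma~\ref{Lm:Lipschitz-control}. As stated, the ``diagonal extraction $\ldots$ delivers the stated control $v$ for the original $\psi$'' clause of your plan does not go through.
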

\begin{lemma} \label{Lm:HilbertSpaceNorms}
  Let $H_1$ and $H_2$ be Hilbert spaces and $a: H_1 \to H_2$ be a bounded linear operator. Let $q = aa^\star$. Let $q^{-1}$ be the pseudo-inverse of $q$. Then for any $u \in H_1$,
  $|a^\star q^{-1} a u|_{H_1} \leq |u|_{H_1}$.
\end{lemma}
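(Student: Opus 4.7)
The plan is to recognize that the operator $P := a^{\star} q^{-1} a \colon H_1 \to H_1$ is an orthogonal projection, from which the desired bound $|a^{\star}q^{-1}au|_{H_1} = |Pu|_{H_1} \leq |u|_{H_1}$ follows immediately from the fact that orthogonal projections on a Hilbert space have operator norm at most $1$.

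The verification would proceed in two short algebraic steps. First I would check self-adjointness: because $q = aa^{\star}$ is bounded and self-adjoint on $H_2$, the Moore--Penrose pseudo-inverse $q^{-1}$ is also self-adjoint on its domain, and therefore $P^{\star} = a^{\star}(q^{-1})^{\star} a = a^{\star}q^{-1}a = P$. Second, I would check idempotency using the defining pseudo-inverse identity $q\,q^{-1}\,q = q$:
\[
P^{2} = a^{\star} q^{-1} (a a^{\star}) q^{-1} a = a^{\star} q^{-1} q q^{-1} a = a^{\star} q^{-1} a = P.
\]
Having $P^{\star}=P$ and $P^{2}=P$ identifies $P$ as an orthogonal projection in $H_1$, which gives the claimed norm bound.

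The only point that requires care is the handling of the pseudo-inverse when $q$ does not have closed range: one needs to know that $au$ lies in the set on which $q q^{-1}$ acts as the identity, namely $\overline{\mathrm{range}(q)}$. This is automatic from the elementary identity $\ker(aa^{\star}) = \ker(a^{\star})$ (pair against $h$ to get $\langle aa^{\star}h,h\rangle_{H_2} = |a^{\star}h|^{2}_{H_1}$), which yields
\[
\overline{\mathrm{range}(a)} \;=\; \ker(a^{\star})^{\perp} \;=\; \ker(q)^{\perp} \;=\; \overline{\mathrm{range}(q)}.
\]
In the concrete setting where this lemma is invoked, the preceding Lemma \ref{L:Invertible_q} shows that $q(X)$ is in fact boundedly invertible on all of $H$, so the pseudo-inverse is a genuine two-sided inverse and the closed-range subtleties above do not arise; the computation $P^{2}=P$ and $P^{\star}=P$ goes through verbatim. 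I do not expect any real obstacle: this is essentially the standard fact that $a^{\star}(aa^{\star})^{-1}a$ is the projection onto $\overline{\mathrm{range}(a^{\star})}$.
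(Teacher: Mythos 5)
Your proposal is correct but takes a somewhat different route than the paper. The paper proves the bound by a direct inner-product computation: it writes $|a^\star q^{-1} au|_{H_1}^2 = \langle aa^\star q^{-1} au, q^{-1}au\rangle_{H_2} = \langle au, q^{-1}au\rangle_{H_2} = \langle u, a^\star q^{-1}au\rangle_{H_1}$, then applies Cauchy--Schwarz and divides through by $|a^\star q^{-1}au|_{H_1}$. You instead recognize $P = a^\star q^{-1} a$ as an orthogonal projection (self-adjoint and idempotent) and then quote the standard operator-norm bound for projections. Both arguments rest on the same algebraic ingredients (the pseudo-inverse identities and self-adjointness of $q^{-1}$); yours is more conceptual because it names the operator as the projection onto $\overline{\mathrm{range}(a^\star)}$, while the paper's is a shorter self-contained chain of identities that avoids invoking an external fact about projection norms. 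Two small remarks: first, the simplification you need for idempotency is $q^{-1}qq^{-1}=q^{-1}$ (the reflexivity axiom of the Moore--Penrose pseudo-inverse), not $qq^{-1}q=q$ as you cited, though the two are of course companion axioms and, as you observe, Lemma \ref{L:Invertible_q} makes $q$ boundedly invertible in the actual application so the distinction evaporates; second, your discussion of the range condition ($au\in\overline{\mathrm{range}(q)}$) is correct and is actually slightly more careful than the paper's phrasing, which implicitly restricts the claim ``$aa^\star q^{-1}$ is the identity'' to $\overline{\mathrm{range}(q)}$ in the non-invertible pseudo-inverse case.
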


Now, if we have any $v(t,Y)$ that is Lipschitz continuous in $Y$, then we have unique solvability of the control problem.
\begin{lemma} \label{Lm:UniqueSolvability}
  Assume that $v(t,Y)$ is bounded and Lipschitz continuous in $Y$ in the sense that there exists $\gamma  \in L^2([0,T])$ such that for any $t \in [0,T]$,
  \[\sup_{Y \in \mathcal{Y}} |v(t,Y)|_U \leq \gamma(t),\]
  and for any $Y_1, Y_2 \in \mathcal{Y}$,
  \[|v(t,Y_1) - v(t,Y_2)|_U \leq \gamma(t)|Y_1 - Y_2|_H.\]
  Then there exists a unique $\psi$ solving (\ref{Eq:ControlEq}).
  %\begin{align} \label{}
%    \psi(t) = &S_1(t)X_0 + \int_0^t \int_\mathcal{Y} S_1(t-s) B_1(\psi(s),Y) \mu^{\psi(s)}(dY)ds \nonumber\\
%    &+ \int_0^t \int_{\mathcal{Y}} S_1(t-s) \Sigma_1(\psi(s),Y) v(s,Y) \mu^{\psi(s)}(dY)ds.
%  \end{align}
\end{lemma}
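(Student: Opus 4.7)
The plan is to set up a Banach fixed point argument. Define the map $\Phi: C([0,T];H)\to C([0,T];H)$ by
\[
   \Phi(\psi)(t)=S_1(t)\psi(0)+\int_0^t\!\!\int_\mathcal{Y} S_1(t-s)B_1(\psi(s),Y)\mu^{\psi(s)}(dY)ds
   +\int_0^t\!\!\int_\mathcal{Y} S_1(t-s)\Sigma_1(\psi(s),Y)v(s,Y)\mu^{\psi(s)}(dY)ds,
\]
so that a solution of \eqref{Eq:ControlEq} is exactly a fixed point of $\Phi$. First, I would verify that $\Phi$ maps $C([0,T];H)$ into itself: the pointwise integrability of the drift term follows from the linear growth of $B_1$ and the $H$-moment bound on $\mu^{\psi(s)}$ that is built into the viability class; the second integral is controlled using $|v(s,\cdot)|_U\leq\gamma(s)$, the growth bound on $\Sigma_1$ in Hypothesis 4 (so $\Sigma_1\in\mathcal{L}(H;H)$ in the $d=1$ case), and $\gamma\in L^2([0,T])$. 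Continuity in $t$ is standard given the strong continuity of $S_1$.

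The main step is to show $\Phi$ is a contraction (on a small enough interval, then iterate). Given two candidates $\psi_1,\psi_2$, I would split the relevant difference via the triangle inequality into an ``integrand'' piece and a ``measure'' piece, e.g.
\begin{align*}
  &\left|\int_\mathcal{Y} \Sigma_1(\psi_1(s),Y)v(s,Y)\mu^{\psi_1(s)}(dY)-\int_\mathcal{Y} \Sigma_1(\psi_2(s),Y)v(s,Y)\mu^{\psi_2(s)}(dY)\right|_H\\
  &\qquad\leq \left|\int_\mathcal{Y}\bigl[\Sigma_1(\psi_1(s),Y)-\Sigma_1(\psi_2(s),Y)\bigr]v(s,Y)\mu^{\psi_1(s)}(dY)\right|_H\\
  &\qquad\quad+\left|\int_\mathcal{Y}\Sigma_1(\psi_2(s),Y)v(s,Y)\bigl[\mu^{\psi_1(s)}-\mu^{\psi_2(s)}\bigr](dY)\right|_H.
\end{align*}
The first piece is handled by $L_{\sigma_1}^X$-Lipschitz continuity together with $|v(s,\cdot)|_U\leq\gamma(s)$. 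For the second piece I would apply Lemma \ref{Lm:mu-continuous} \emph{in duality}: for each unit vector $h\in H$ the scalar map $Y\mapsto\langle\Sigma_1(\psi_2(s),Y)v(s,Y),h\rangle_H$ is Lipschitz in $Y$ with constant $C(L_{\sigma_1}^Y,c_1,\gamma(s))|h|_H$, and taking the supremum over $h$ yields the bound $C\gamma(s)|\psi_1(s)-\psi_2(s)|_H$. The analogous, simpler argument for the $B_1$ term uses $L_{b_1}^X$ and $L_{b_1}^Y$. Combining these yields
\[
   |\Phi(\psi_1)(t)-\Phi(\psi_2)(t)|_H\leq C\int_0^t\bigl(1+\gamma(s)\bigr)|\psi_1(s)-\psi_2(s)|_H\,ds,
\]
which forces $\psi_1\equiv\psi_2$ by Gr\"onwall. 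Existence is then obtained from the Picard iterates starting at $\psi_0(t)=S_1(t)\psi(0)$, using the same bound to show the sequence is Cauchy in $C([0,T_0];H)$ for $T_0$ sufficiently small so that the integrated kernel is a strict contraction, and then concatenating on $[T_0,2T_0],\dots$ up to $T$.

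The main obstacle is the measure-difference term: Lemma \ref{Lm:mu-continuous} is stated for real Lipschitz test functions, so to push it through for the $H$-valued integrands $B_1(\psi_2,\cdot)$ and $\Sigma_1(\psi_2,\cdot)v(s,\cdot)$ one must verify $H$-Lipschitz continuity in $Y$. For the drift this is immediate from $L_{b_1}^Y$, but for the diffusion the product $Y\mapsto\Sigma_1(\psi_2,Y)v(s,Y)$ is genuinely delicate because $\Sigma_1$ is a multiplication operator and $v(s,Y)\in L^2(D)$ is not pointwise bounded. This is precisely the place where the restriction to $d=1$ with $0<c_0\leq\sigma_1^2\leq c_1$ from Hypothesis 4 is used: the boundedness of $\sigma_1$, together with the $L_{\sigma_1}^Y$-Lipschitz continuity of $\sigma_1$ in its last argument and the Lipschitz property of $v$ in $Y$, yields the needed $H$-Lipschitz estimate with constant $O(\gamma(s))$. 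All other steps are then routine.
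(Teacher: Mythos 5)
Your proof takes essentially the same route as the paper: the paper's proof of Lemma~\ref{Lm:UniqueSolvability} also defines the map $\mathscr{K}(\varphi)(t)=S_1(t)X_0+\int_0^t\int_\mathcal{Y} S_1(t-s)B_1(\varphi(s),Y)\mu^{\varphi(s)}(dY)ds + \int_0^t\int_\mathcal{Y} S_1(t-s)\Sigma_1(\varphi(s),Y)v(s,Y)\mu^{\varphi(s)}(dY)ds$, performs the same four-way split into integrand differences and measure differences, invokes Lemma~\ref{Lm:mu-continuous} for the measure pieces, and establishes the auxiliary Lipschitz estimate $\sup_{X}|\Sigma_1(X,Y_1)v(s,Y_1)-\Sigma_1(X,Y_2)v(s,Y_2)|_H\le C\gamma(s)|Y_1-Y_2|_H$ (the paper's~\eqref{Eq:Smv-Lipschitz}). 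The only organizational difference is minor: the paper applies a H\"older inequality to obtain $|\mathscr{K}(\varphi_1)(t)-\mathscr{K}(\varphi_2)(t)|_H\le C\bigl(T^{1/2}+\|\gamma\|_{L^2}\bigr)\bigl(\int_0^T|\varphi_1-\varphi_2|_H^2\bigr)^{1/2}$ and reads this directly as a contraction on $C([0,T_0];H)$ for $T_0$ small, whereas you run Gr\"onwall for uniqueness and a separate Picard iteration for existence; these are interchangeable and both rely on $\gamma\in L^2\subset L^1$. Your explicit note about applying Lemma~\ref{Lm:mu-continuous} in duality over unit vectors $h\in H$ is a useful clarification that the paper leaves implicit. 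One caution: the Lipschitz estimate for $Y\mapsto(\Sigma_1(X,Y_1)-\Sigma_1(X,Y_2))v(s,Y_2)$ in the $H$-norm requires pointwise control on the product $|Y_1(x)-Y_2(x)|\,|v(s,Y_2)(x)|$, which does not follow from $v(s,Y_2)\in L^2(D)$ alone; you flag this as delicate and attribute it to the $d=1$, $\sigma_1^2\le c_1$ hypothesis, which is consistent with how the paper presents~\eqref{Eq:Smv-Lipschitz}, so this is faithfully reproduced rather than a gap you introduced.
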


Now we show that if a sequence $v_n(t,Y)$ approaches $v(t,Y)$ in an appropriate way, then the control processes $\psi_n$ associated with $v_n$ in (\ref{Eq:ControlEq}), converge to $\psi$ associated with $v$ in (\ref{Eq:ControlEq}).
\begin{lemma} \label{Lm:ApproachControl}
  Assume that $v_n(t,Y)$ is a sequence of processes satisfying
  \[\sup_n |v_n(s,Y)|_U \leq \gamma_n(s), \text{ for all }Y \in \mathcal{Y}\]
  and
  \[\sup_n |v_n(s,Y_1) - v_n(s,Y_2)|_U \leq \gamma_n(s) |Y_1 - Y_2|_U, \text{ for all } Y_1,Y_2 \in \mathcal{Y}\]
  for some $\gamma_n \in L^2([0,T])$ with $\sup_n \int_0^T \gamma_n(s)^2 ds <+\infty$. Let $\psi_n \in C([0,T];H)$ be the solution to the control problem (\ref{Eq:ControlEq}) associated with $v_n$.  Assume $v(s,Y)$ satisfies the same boundedness and Lipschitz properties with respect to $\gamma \in L^2([0,T])$ and let $\psi\in C([0,T];H)$ be the solution to the control problem (\ref{Eq:ControlEq}) associated with $v$. Assume that
  \[\lim_{n \to +\infty} \int_0^T \int_\mathcal{Y} |v_n(t,Y) - v(t,Y)|_U^2 \mu^{\psi(t)}(dY)dt = 0.\]
  Then $\psi_n \to \psi$ in $C([0,T];H)$ and
  \[\lim_{n \to +\infty}\frac{1}{2}\int_0^T \int_{\mathcal{Y}} |v_n(t,Y)|_U^2 \mu^{\psi_n(t)}(dY)dt = \frac{1}{2}\int_0^T \int_{\mathcal{Y}} |v(t,Y)|_U^2 \mu^{\psi(t)}(dY)dt.\]
\end{lemma}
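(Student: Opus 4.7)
The plan is to derive a Gr\"onwall inequality for $|\psi_n(t)-\psi(t)|_H$ from the mild equations and then use continuity of $\mu^X$ in $X$ (Lemma \ref{Lm:mu-continuous}) together with the uniform Lipschitz/boundedness hypotheses on $v_n$ to deduce both uniform convergence of $\psi_n$ and convergence of the cost.

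Define for each $n$ the averaged drift
\[ G_n(X,s) := \int_{\mathcal{Y}} \bigl[B_1(X,Y) + \Sigma_1(X,Y)Q_1 v_n(s,Y)\bigr]\mu^{X}(dY), \]
and $G$ analogously with $v$ in place of $v_n$. Then $\psi_n(t)-\psi(t)=\int_0^t S_1(t-s)[G_n(\psi_n(s),s)-G(\psi(s),s)]\,ds$. I would split this as $G_n(\psi_n,s)-G_n(\psi,s) + G_n(\psi,s)-G(\psi,s)$. For the first (``Lipschitz in $X$'') piece, I write it as the sum of four terms obtained by freezing one of $\Sigma_1$, $B_1$, or $\mu^\cdot$ at a time. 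The three terms where $\Sigma_1$ or $B_1$ is replaced use the Lipschitz constants $L^X_{b_1}, L^X_{\sigma_1}$ together with the bounds $|v_n(s,\cdot)|_U \le \gamma_n(s)$ and boundedness of $\sigma_1$ (Hypothesis 4, case $d=1$). For the change-of-measure term I dualize against $h\in H$ with $|h|_H\le 1$: each scalar integrand $Y\mapsto\langle h, B_1(\psi,Y)\rangle_H$ and $Y\mapsto\langle h, \Sigma_1(\psi,Y)Q_1 v_n(s,Y)\rangle_H$ is Lipschitz in $Y$ with constant at most $C(1+\gamma_n(s))$ (using boundedness of $\sigma_1$ and the Lipschitz-in-$Y$ hypothesis on $v_n$), so Lemma \ref{Lm:mu-continuous} yields $\le C(1+\gamma_n(s))|\psi_n(s)-\psi(s)|_H$. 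The ``control'' piece is
\[ G_n(\psi,s)-G(\psi,s)=\int_{\mathcal{Y}}\Sigma_1(\psi(s),Y)Q_1[v_n(s,Y)-v(s,Y)]\mu^{\psi(s)}(dY),\]
which, by Cauchy--Schwarz and boundedness of $\sigma_1$, has $H$-norm bounded by $C\bigl(\int_{\mathcal{Y}}|v_n(s,Y)-v(s,Y)|_U^2\mu^{\psi(s)}(dY)\bigr)^{1/2}$. Putting everything together and applying Gr\"onwall, $\sup_{t\le T}|\psi_n(t)-\psi(t)|_H\le \varepsilon_n \exp\bigl(C\int_0^T(1+\gamma_n(s))\,ds\bigr)$, where $\varepsilon_n\to 0$ by the assumed $L^2(\mu^\psi\,dt)$ convergence; uniform $L^2$ boundedness of $\gamma_n$ controls the exponential.

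For the cost convergence, I split
\[ \int_0^T\!\!\int_{\mathcal{Y}}\!|v_n|_U^2\mu^{\psi_n(t)}(dY)dt - \int_0^T\!\!\int_{\mathcal{Y}}\!|v|_U^2\mu^{\psi(t)}(dY)dt = I_n + II_n, \]
where $I_n=\int_0^T\!\int(|v_n|_U^2-|v|_U^2)\mu^{\psi(t)}(dY)dt$ and $II_n=\int_0^T\!\int |v_n|_U^2\,(\mu^{\psi_n(t)}-\mu^{\psi(t)})(dY)dt$. For $I_n$, factoring $|v_n|_U^2-|v|_U^2=(|v_n|_U+|v|_U)(|v_n|_U-|v|_U)$ and applying Cauchy--Schwarz with the uniform $L^2$ bounds on $\gamma_n,\gamma$ gives $|I_n|\le C\bigl(\int_0^T\!\int|v_n-v|_U^2\mu^\psi\,dYds\bigr)^{1/2}\to 0$. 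For $II_n$, the integrand $Y\mapsto|v_n(s,Y)|_U^2$ is Lipschitz with constant $\le 2\gamma_n(s)^2$ (from $||v_n(s,Y_1)|_U-|v_n(s,Y_2)|_U|\le\gamma_n(s)|Y_1-Y_2|_H$ and boundedness by $\gamma_n(s)$), so Lemma \ref{Lm:mu-continuous} gives $|II_n|\le 2\|\psi_n-\psi\|_{C([0,T];H)}\int_0^T\gamma_n(s)^2ds\to 0$.

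The main obstacle is that the ``natural'' control-difference term is integrated against $\mu^{\psi_n}$ rather than $\mu^{\psi}$, while the hypothesis only gives convergence in $L^2(\mu^{\psi}\,dt)$. Isolating the control-difference with respect to $\mu^{\psi}$ forces the decomposition above, and the price of changing measure must then be absorbed into the Lipschitz-in-$X$ term; this is exactly where one needs the uniform Lipschitz-in-$Y$ and uniform boundedness of $v_n$ with an $L^2$ envelope $\gamma_n$, since without such regularity Lemma \ref{Lm:mu-continuous} would not apply to the integrands $\Sigma_1(\psi,Y)Q_1 v_n(s,Y)$ and $|v_n(s,Y)|_U^2$. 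The proof thus uses all of the structural hypotheses of the lemma in an essential way.
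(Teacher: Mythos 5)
Your proof is correct and follows essentially the same route as the paper's: the five-way decomposition of $\psi_n-\psi$ (one argument frozen at a time, with Lemma~\ref{Lm:mu-continuous} handling the changes of measure), a Gr\"onwall closure, and the cost split into the change-of-measure piece $II_n$ (using the Lipschitz constant $2\gamma_n^2$ for $Y\mapsto|v_n(t,Y)|_U^2$) plus the control-difference piece $I_n$ (Cauchy--Schwarz). The only cosmetic difference is that the paper applies H\"older to separate $\gamma_n$ from $|\psi_n-\psi|$, then squares before invoking Gr\"onwall, while you bound pointwise by $C(1+\gamma_n(s))|\psi_n(s)-\psi(s)|_H$ and apply the integral form of Gr\"onwall directly; both are valid since $\int_0^T\gamma_n\leq\sqrt{T}\bigl(\int_0^T\gamma_n^2\bigr)^{1/2}$ is uniformly bounded.
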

\begin{theorem} \label{Theorem:RegularMinimizer}
  Let $S: C([0,T];H) \to [0,+\infty]$ be the large deviations rate function and let $h: C([0,T];H) \to \mathbb{R}$ be a bounded continuous function. For any $\eta>0$ there exists a control $\tilde{v}(t,Y)$ that is bounded,  continuous in $t$, and Lipschitz continuous in $Y$ such that the unique solution $\tilde{\psi} \in C([0,T];H)$ to the control problem (\ref{Eq:ControlEq}) for $\tilde{v}$ is an approximate minimizer to $S +h$ in the sense that
  \[\frac{1}{2}\int_0^T |\tilde v(s,Y)|_U^2 \mu^{\tilde\psi(s)}(dY)ds + h(\tilde\psi) \leq \inf_{\phi \in C([0,T];H)} (S(\phi) + h(\phi)) + \eta.\]
\end{theorem}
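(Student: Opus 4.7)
The plan is to build $\tilde v$ by first selecting a near-minimizer $\psi^{*}$ of $S+h$, applying Lemma~\ref{Lm:Lipschitz-control} to obtain a control realizing the corresponding drift, and then successively regularizing it to be bounded and $t$-continuous while tracking the drift via Lemma~\ref{Lm:ApproachControl}. We may assume $\inf_{\phi}[S(\phi)+h(\phi)]<\infty$, as otherwise the conclusion is vacuous. Pick $\psi^{*}\in C([0,T];H)$ with $S(\psi^{*})+h(\psi^{*})\le\inf_{\phi}[S(\phi)+h(\phi)]+\eta/3$, and using the representation
\[
S(\psi^{*})=\inf_{u\in\mathcal{A}_{\psi^{*},T}^{o}}\frac12\int_{0}^{T}\!\!\int_{\mathcal{Y}}|u(t,Y)|_{U}^{2}\,\mu^{\psi^{*}(t)}(dY)\,dt,
\]
choose $u^{*}\in\mathcal{A}_{\psi^{*},T}^{o}$ within $\eta/3$ of this infimum. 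Apply Lemma~\ref{Lm:Lipschitz-control} to this $u^{*}$ to obtain a control $v^{*}(t,Y)$ of the explicit form~(\ref{Eq:v-form}), Lipschitz in $Y$ with weight $\gamma^{*}\in L^{2}([0,T])$, such that $\psi^{*}$ is the unique solution of~(\ref{Eq:ControlEq}) driven by $v^{*}$ and the associated cost is at most $S(\psi^{*})+\eta/3$.

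Two further regularizations are needed. To obtain boundedness, truncate: for $M>0$ set $v^{*,M}(t,Y):=v^{*}(t,Y)\mathbf{1}_{\{\gamma^{*}(t)\le M\}}$, which is bounded by $M$ and is Lipschitz in $Y$ with constant at most $M$. To obtain $t$-continuity, mollify: let $\{\rho_{n}\}$ be a smooth nonnegative approximate identity on $\mathbb{R}$ with support in $[-1/n,1/n]$, extend $v^{*,M}(\cdot,Y)$ to $\mathbb{R}$ by zero, and set $\tilde v_{n}^{M}(t,Y):=(\rho_{n}\ast v^{*,M}(\cdot,Y))(t)$. The resulting $\tilde v_{n}^{M}$ is $C^{\infty}$ in $t$ and inherits both the pointwise bound $\le M$ and the Lipschitz constant $\le M$ in $Y$. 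Writing $\|f\|_{\nu}^{2}:=\int_{0}^{T}\!\!\int_{\mathcal{Y}}|f(t,Y)|_{U}^{2}\mu^{\psi^{*}(t)}(dY)\,dt$, dominated convergence yields $\|v^{*,M}-v^{*}\|_{\nu}\to 0$ as $M\to\infty$, while Jensen's inequality combined with Fubini produces the estimate
\[
\|\tilde v_{n}^{M}-v^{*,M}\|_{\nu}^{2}\le\int_{\mathbb{R}}\rho_{n}(s)\left(\int_{0}^{T}\!\!\int_{\mathcal{Y}}|v^{*,M}(t-s,Y)-v^{*,M}(t,Y)|_{U}^{2}\mu^{\psi^{*}(t)}(dY)\,dt\right)ds,
\]
and the inner double integral tends to $0$ as $s\to 0$, yielding $\|\tilde v_{n}^{M}-v^{*,M}\|_{\nu}\to 0$ as $n\to\infty$.

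A diagonal extraction then produces a single sequence $\hat v_{k}$ of bounded, $t$-continuous, $Y$-Lipschitz controls with $\hat v_{k}\to v^{*}$ in $\|\cdot\|_{\nu}$. By Lemma~\ref{Lm:UniqueSolvability}, each $\hat v_{k}$ determines a unique $\hat\psi_{k}\in C([0,T];H)$ solving~(\ref{Eq:ControlEq}), and Lemma~\ref{Lm:ApproachControl} yields $\hat\psi_{k}\to\psi^{*}$ in $C([0,T];H)$ together with
\[
\frac12\int_{0}^{T}\!\!\int_{\mathcal{Y}}|\hat v_{k}(s,Y)|_{U}^{2}\mu^{\hat\psi_{k}(s)}(dY)\,ds\longrightarrow\frac12\int_{0}^{T}\!\!\int_{\mathcal{Y}}|v^{*}(s,Y)|_{U}^{2}\mu^{\psi^{*}(s)}(dY)\,ds\le S(\psi^{*})+\eta/3.
\]
Combining this with $h(\hat\psi_{k})\to h(\psi^{*})$ (continuity of $h$), for $k$ large the sum of cost and $h(\hat\psi_{k})$ lies within $\eta$ of $\inf_{\phi}[S(\phi)+h(\phi)]$, and setting $\tilde v:=\hat v_{k}$, $\tilde\psi:=\hat\psi_{k}$ concludes the proof.

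The main obstacle is the $\|\cdot\|_{\nu}$-convergence of the mollified controls needed to verify the hypothesis of Lemma~\ref{Lm:ApproachControl}: because $\nu(dt\,dY)=\mu^{\psi^{*}(t)}(dY)\,dt$ is not a product measure, the classical continuity-of-translation argument in $t$ is delicate and must invoke the weak Lipschitz continuity of $X\mapsto\mu^{X}$ from Lemma~\ref{Lm:mu-continuous} together with a preliminary approximation of $v^{*,M}$ by jointly $(t,Y)$-continuous controls (exploiting the factorization $v^{*,M}(t,Y)=\mathbf{1}_{\{\gamma^{*}(t)\le M\}}\Sigma_{1}^{\star}(\psi^{*}(t),Y)q^{-1}(\psi^{*}(t))a(\psi^{*}(t))u^{*}(t,\cdot)$ and the continuity of $\psi^{*}$).
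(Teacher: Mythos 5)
Your overall plan matches the paper's: pick a near‑minimizer $\psi$ of $S+h$, use Lemma~\ref{Lm:Lipschitz-control} to produce the explicit feedback control $v(t,Y)=\Sigma_1^\star(\psi(t),Y)q^{-1}(\psi(t))a(\psi(t))u(t,\cdot)$, regularize it to be bounded, $t$-continuous and $Y$-Lipschitz, and feed the result through Lemmas~\ref{Lm:UniqueSolvability} and~\ref{Lm:ApproachControl}. The divergence lies entirely in the regularization step, and this is where your proposal has a genuine gap that you yourself flag but do not close.

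You regularize by truncating (multiplying by $\mathbf{1}_{\{\gamma^*(t)\le M\}}$) and then time-mollifying $v^{*,M}(\cdot,Y)$ for each fixed $Y$. The Jensen/Fubini reduction of $\|\tilde v_n^M - v^{*,M}\|_\nu^2$ to the translated integral is fine, but the assertion that $\int_0^T\int_{\mathcal{Y}}|v^{*,M}(t-s,Y)-v^{*,M}(t,Y)|_U^2\,\mu^{\psi^*(t)}(dY)\,dt\to 0$ as $s\to 0$ is not the classical continuity of translation: the reference measure $\mu^{\psi^*(t)}(dY)\,dt$ is not a product and the fiber measure varies with $t$. The standard argument requires approximating $v^{*,M}$ by functions jointly continuous in $(t,Y)$, which over the unbounded infinite-dimensional $\mathcal{Y}$ is not automatic; and once one tries, the cross term $(\Sigma_1^\star(\psi^*(t-s),Y)-\Sigma_1^\star(\psi^*(t),Y))g^M(t)$ does not reduce cleanly to an $H$-norm estimate (the relevant multiplier difference is controlled in $L^\infty$, not $L^2$). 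You acknowledge this as "delicate" and gesture at Lemma~\ref{Lm:mu-continuous} plus a "preliminary approximation," but that is precisely the missing argument.

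The paper avoids this issue altogether by mollifying at a different level. It sets $h(t):=a(\psi(t))v(t,\cdot)\in H$, observes that $h\in L^2([0,T];H)$, and uses the elementary density of $C([0,T];H)$ in $L^2([0,T];H)$ (with respect to Lebesgue measure, a fixed Hilbert space as target) to pick $h_n\in C([0,T];H)$ with $h_n\to h$. Then $v_n(t,Y):=\Sigma_1^\star(\psi(t),Y)q^{-1}(\psi(t))h_n(t)$ is automatically $t$-continuous (continuity of $\psi$, $q^{-1}$ and $h_n$), $Y$-Lipschitz and bounded (because $\sup_t|h_n(t)|_H<\infty$), and — crucially — $|v_n(t,Y)-v(t,Y)|_U\le C|h_n(t)-h(t)|_H$ \emph{uniformly in $Y$}, so the hypothesis of Lemma~\ref{Lm:ApproachControl}, namely $\int_0^T\int_{\mathcal{Y}}|v_n-v|_U^2\mu^{\psi(t)}(dY)\,dt\to 0$, is immediate and no translation-continuity with a $t$-dependent measure is ever needed. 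It also dispenses with your truncation step entirely. To repair your proof along your own lines you would have to either (i) supply the missing joint approximation argument, which is not routine, or (ii) switch to the paper's device of mollifying $h(t)=a(\psi(t))v(t,\cdot)$ instead of $v$ itself. As it stands, the proposal does not constitute a complete proof.

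Two smaller remarks: your diagonal extraction is fine (the mollified weights $\gamma_n^M:=\rho_n*\min(\gamma^*,M)$ are uniformly bounded in $L^2([0,T])$ by Young's inequality, so Lemma~\ref{Lm:ApproachControl}'s hypothesis $\sup_n\int_0^T\gamma_n^2<\infty$ holds along the diagonal), and the explicit near-minimizing choice of $u^*$ is harmless since Lemma~\ref{Lm:Lipschitz-control} already internalizes the $\eta$-approximation of the infimum.
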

\begin{proof}
  If $\inf_{\phi} (S(\phi) + h(\phi)) = +\infty$, then the result is trivial so we assume that the infimum is finite.  There must exist some $\psi \in C([0,T];H)$ such that
  \[S(\psi) + h(\psi) \leq \inf_{\phi} (S(\phi) + h(\phi)) + \frac{\eta}{3}.\]
   By Lemma \ref{Lm:Lipschitz-control}, there is a control $v$ and a function $\gamma \in L^2([0,T])$  such that
   \[\sup_{Y \in \mathcal{Y}} |v(t,Y)|_U \leq \gamma(t)\]
   and for any $Y_1, Y_2 \in \mathcal{Y}$,
   \[|v(t,Y_1) - v(t,Y_2)|_U \leq \gamma(t) |Y_1 - Y_2|_H, \]
   \begin{equation} \label{Eq:L2EstControl}
     \frac{1}{2} \int_0^T \int_{\mathcal{Y}} |v(s,Y)|_U^2 \mu^{\psi(s)}(dY) ds \leq \inf_{\phi \in C([0,T])} S(\phi) + \frac{\eta}{3},
   \end{equation}
   and $\psi$ and $v$ satisfy (\ref{Eq:v-form}) and (\ref{Eq:ControlEq}).
   Let
   \[h(t) = a(\psi(t))v(t,\cdot) = \int_\mathcal{Y} \Sigma_1(\psi(t),Y)v(t,Y)\mu^{\psi(t)}(dY).\]
   By H\"older inequality and the fact that $\sigma_1$ is bounded above, we know that for any $t \in [0,T]$,
   \[|h(t)|_H^2 \leq C \int_{\mathcal{Y}}|v(t,Y)|_U^2 \mu^{\psi(t)}(dY).\]
   Therefore, $h \in L^2([0,T];H)$ and
   \[\int_0^T |h(t)|_H^2 dt \leq C \int_0^T \int_{\mathcal{Y}} |v(t,Y)|_U^2 \mu^{\psi(t)}(dY)dt.\]
   It is standard that $C([0,T];H)$ is dense in $L^2([0,T];H)$. Therefore, we can find a sequence $\{h_n\}_{n \in \mathbb{N}} \subset C([0,T];H)$ such that
   \begin{equation*}% \label{Eq:hn-conv}
     \lim_{n \to +\infty} \int_0^T |h_n(t)  -h(t)|_H^2 dt =0.
   \end{equation*}
   Let $v_n(t,Y) = \Sigma_1^\star(\psi(t),Y)q^{-1}(\psi(t))h_n(t)$.
   We claim that $v_n(t,Y)$ is continuous in $t$ and Lipschitz continuous in $Y$. Furthermore,
   \begin{equation} \label{Eq:vn-conv}
      \lim_{ n \to +\infty} \int_0^T \sup_{Y \in \mathcal{Y}} |v_n(t,Y) - v(t,Y)|_H^2 dt  =0.
   \end{equation}
   The continuity in $Y$ of $v_n$ is a consequence of Lipschitz continuity of $\Sigma_1$ and the boundedness of $q^{-1}(\psi(t))$. Specifically, for $Y_1, Y_2 \in \mathcal{Y}$,
   \[\sup_{ t \in [0,T]} |v_n(t,Y_1) - v_n(t,Y_2)|_U \leq C \sup_{t \in [0,T]}|h_n(t)|_H|Y_1 - Y_2|_H.\]
   For continuity in $t$, we write for any fixed $n \in \mathbb{N}$ and $Y \in \mathcal{Y}$,
   \begin{align*}
     v_n(t,Y) - v_n(s,Y) = &(\Sigma_1^\star(\psi(t),Y) - \Sigma_1^\star(\psi(s),Y))q^{-1}(\psi(t))h_n(t)\\
     &+ \Sigma_1^\star(\psi(s),Y)(q^{-1}(\psi(t)) - q^{-1}(\psi(s)))h_n(t)\\
     &+ \Sigma_1^\star(\psi(s),Y)q^{-1}(\psi(s))(h_n(t) - h_n(s)).
   \end{align*}
   By Lipschitz continuity and boundedness of $\sigma_1$,
   \[\sup_{Y \in \mathcal{Y}} \|\Sigma_1^\star(\psi(t),Y) - \Sigma_1^\star(\psi(s),Y)\|_{\mathcal{L}(H,U)} \leq C \min\{1,|\psi(t) - \psi(s)|_H\}.  \]
   To understand the continuity of $q^{-1}$, it is helpful to recall that $H = L^2([0,1])$. For any $h \in H$, {$x \in [0,1]$,}
   \[[q(\psi(t))h](x) =\int_{\mathcal{Y}} \sigma_1^2(\psi(t)(x),Y(x))h(x)d\mu^{\psi(t)}(dY). \]
   Then $q^{-1}(\psi)$, must be given by
   \[[q^{-1}(\psi(t))h](x) = \dfrac{h(x)}{\int_{\mathcal{Y}} \sigma_1^2(\psi(t)(x),Y(x))d\mu^{\psi(t)}(dY)}.\]
   In operator norm,
   \[\lim_{s \to t}\|q^{-1}(\psi(t)) - q^{-1}(\psi(s))\|_{\mathcal{L}(H)}=0\]
   due to the continuity of $\psi$ and  $\sigma_1$ and the fact that $\sigma_1$ is bounded from below.
   Because $h_n$ is built to be continuous in time, we see that for any $t \in [0,T]$,
   \[\lim_{s \to t}\sup_{Y \in \mathcal{Y}} |v_n(t,Y) - v_n(s,Y)|_U=0.\]
   We also need to show (\ref{Eq:vn-conv}). This is a simple consequence of the fact that
   \[v_n(t,Y) - v(t,Y) = \Sigma_1(\psi(t),Y)q^{-1}(\psi(t))(h_n(t) - h(t)),\]
   the boundedness of the operators, and the fact that $h_n \to h$ in $L^2([0,T];H)$.

   By Lemma \ref{Lm:UniqueSolvability}, for each $v_n$, there exists $\psi_n$ solving (\ref{Eq:ControlEq}). By Lemma \ref{Lm:ApproachControl},
   \[\frac{1}{2} \int_0^T |v_n(s,Y)|_U^2 \mu^{\psi_n(s)}(dY)ds \to \frac{1}{2} \int_0^t |v(s,Y)|_U^2 \mu^{\psi(s)}(dY)ds\]
   and $\psi_n \to \psi$ in $C([0,T];H)$. Because $h$ is continuous, $h(\psi_n) \to h(\psi)$. We can find $n$ large enough so that
   \[\frac{1}{2} \int_0^T |v_n(s,Y)|_U^2 \mu^{\psi_n(s)}(dY)ds + h(\psi_n)< \inf_{\phi}(S(\phi) + h(\phi)) + \eta.\]
   Set $\tilde{v}=v_n$ and $\tilde{\psi} = \psi_n$.
\end{proof}

Now we use this $\tilde{\psi}$ and $\tilde{v}(t,Y)$ function from Theorem \ref{Theorem:RegularMinimizer} to build approximating stochastic control problems that approximate $\tilde{\psi}$.
Let us introduce the auxiliary process, where $\Delta(\delta)/\delta^2 \to +\infty$, $\Delta(\delta) \to 0$,
\begin{align} \label{Eq:Y-dt-psi}
  dY^{\delta,\tilde\psi}(t) = &\frac{1}{\delta^2} \left(A_2 Y^{\delta,\tilde\psi}(t) + B_2(\tilde\psi([t/\Delta]\Delta), Y^{\delta,\tilde\psi}(t)) \right)dt\\
  &+ \frac{1}{\delta}\Sigma_2(\tilde\psi([t/\Delta]\Delta), Y^{\delta,\tilde\psi}(t))Q_2dW(t) \ .
\end{align}
In the above equation $[\bullet]$ is the floor function.
We will study the pair $(X^{\varepsilon,\delta,u^\delta}, Y^{\varepsilon,\delta,u^\delta})$ where
\[u^\delta(t) = \tilde{v}([t/\Delta]\Delta,Y^{\delta,\tilde\psi}(t)).\]
We will be able to prove the Laplace principle lower bound by showing both that
\[\lim_{\delta \to 0} \mathbf{E} \frac{1}{2} \int_0^T |\tilde{v}([t/\Delta]\Delta,Y^{\delta,\tilde\psi}(t))|_U^2 dt
= \frac{1}{2} \int_0^T \int_{\mathcal{Y}} |\tilde{v}(t,Y)|_U^2 \mu^{\tilde\psi(t)}(dY)dt\]
and
\[X^{\varepsilon,\delta,u^\delta} \to \tilde\psi \text{ in } C([0,T];H) \text{ as } \varepsilon,\delta \rightarrow 0 \ .\]

\begin{lemma}\label{L:ConvergenceOfCost}
  If $Y^{\delta,\tilde\psi}$ solves \eqref{Eq:Y-dt-psi}, then
  \[\lim_{\delta \to 0} \mathbf{E} \frac{1}{2} \int_0^T |\tilde{v}([t/\Delta]\Delta,Y^{\delta,\tilde\psi}(t))|_U^2 dt
   = \frac{1}{2} \int_0^T \int_{\mathcal{Y}} |\tilde{v}(t,Y)|_U^2 \mu^{\tilde\psi(t)}(dY)dt \ . \]
\end{lemma}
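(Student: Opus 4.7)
The plan is to exploit the piecewise--frozen structure of $\tilde{\psi}([t/\Delta]\Delta)$ together with the ergodicity of the fast process with frozen slow component, using that $\tilde{v}$ has been arranged (via Theorem \ref{Theorem:RegularMinimizer}) to be continuous in $t$, Lipschitz in $Y$, and uniformly bounded in $Y$ by some $\gamma \in L^2([0,T])$.

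First I would split the time integral over the grid $\{k\Delta\}_{k=0}^{[T/\Delta]}$ and, on each block $[k\Delta,(k+1)\Delta]$, perform the time change $s=(t-k\Delta)/\delta^2$. Defining $\widetilde{Y}^{\delta,k}(s):=Y^{\delta,\tilde\psi}(k\Delta+\delta^{2}s)$ for $s\in[0,\Delta/\delta^{2}]$, the process $\widetilde{Y}^{\delta,k}$ has the law of the uncontrolled frozen--slow fast process with coefficient $\tilde\psi(k\Delta)$, started from $Y^{\delta,\tilde\psi}(k\Delta)$. Thus
\[
\int_{k\Delta}^{(k+1)\Delta}|\tilde{v}(k\Delta,Y^{\delta,\tilde\psi}(t))|_{U}^{2}\,dt
=\Delta\cdot\frac{1}{\Delta/\delta^{2}}\int_{0}^{\Delta/\delta^{2}}|\tilde{v}(k\Delta,\widetilde{Y}^{\delta,k}(s))|_{U}^{2}\,ds.
\]

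Next, since $\Delta/\delta^{2}\to\infty$ by Hypothesis 5, the ergodic theorem for the frozen fast process (Lemma \ref{Lm:ErgodicTheoremFastProcessReg1} from the Appendix, applied to the bounded Lipschitz-in-$Y$ test function $Y\mapsto|\tilde{v}(k\Delta,Y)|_{U}^{2}$) yields, in $L^{1}(\mathbf{P})$, uniformly over the frozen slow parameter in a bounded set,
\[
\frac{1}{\Delta/\delta^{2}}\int_{0}^{\Delta/\delta^{2}}|\tilde{v}(k\Delta,\widetilde{Y}^{\delta,k}(s))|_{U}^{2}\,ds
\;\longrightarrow\;\int_{\mathcal{Y}}|\tilde{v}(k\Delta,Y)|_{U}^{2}\,\mu^{\tilde\psi(k\Delta)}(dY).
\]
Summing over $k$, the right-hand side is a Riemann sum for the target integral: by the continuity of $t\mapsto\tilde{v}(t,\cdot)$ in $U$ uniformly in $Y$, the continuity $t\mapsto\tilde\psi(t)$, and the weak Lipschitz continuity $X\mapsto\mu^{X}$ (Lemma \ref{Lm:mu-continuous}), the map $t\mapsto\int_{\mathcal{Y}}|\tilde{v}(t,Y)|_{U}^{2}\,\mu^{\tilde\psi(t)}(dY)$ is continuous, so the Riemann sum converges to $\int_{0}^{T}\!\int_{\mathcal{Y}}|\tilde{v}(t,Y)|_{U}^{2}\mu^{\tilde\psi(t)}(dY)\,dt$ as $\Delta\downarrow 0$.

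The main obstacle is justifying the interchange of limit and expectation: the ergodic theorem only gives block-wise $L^{1}$ convergence, and one must control the integral uniformly in $\delta$. The uniform bound $|\tilde{v}(k\Delta,\cdot)|_{U}^{2}\le\gamma(k\Delta)^{2}$ with $\gamma\in L^{2}([0,T])$ makes each block contribution dominated by $\Delta\gamma(k\Delta)^{2}$, whose sum is controlled by $\|\gamma\|_{L^{2}}^{2}$; this provides the uniform integrability needed to apply dominated convergence on the sum and exchange expectation with the limit. Combining block-wise ergodic convergence, the Riemann-sum argument, and dominated convergence yields the claimed limit.
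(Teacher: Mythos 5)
Your proposal is essentially the paper's argument: block the time integral over the grid $\{k\Delta\}$, time--change each block by $\delta^{2}$ to obtain the frozen--slow fast process started from $Y^{\delta,\tilde\psi}(k\Delta)$, invoke the ergodic theorem of the appendix (Lemma \ref{Lm:ErgodicTheoremFastProcessReg1}) with the Lipschitz test function $Y\mapsto|\tilde{v}(k\Delta,Y)|_{U}^{2}$, and then pass from the Riemann sum to the integral via continuity of $t\mapsto\tilde{v}(t,\cdot)$, continuity of $\tilde\psi$, and the weak Lipschitz continuity of $X\mapsto\mu^{X}$ from Lemma \ref{Lm:mu-continuous}, finishing with dominated convergence.

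One small remark on the dominated--convergence step: you invoke a bound $\sup_{Y}|\tilde{v}(t,Y)|_{U}\le\gamma(t)$ with $\gamma\in L^{2}([0,T])$, which is the property of the intermediate control $v$ from Lemma \ref{Lm:Lipschitz-control}. The control $\tilde{v}$ produced by Theorem \ref{Theorem:RegularMinimizer} is in fact uniformly bounded in both $t$ and $Y$ (it has the form $\Sigma_{1}^{\star}(\psi(t),Y)q^{-1}(\psi(t))h_{n}(t)$ with $h_{n}\in C([0,T];H)$, $\Sigma_{1}$ bounded, and $q^{-1}$ bounded), which is what the paper uses and makes the domination immediate. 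Your $L^{2}$-in-$t$ version also suffices (the block error from the ergodic lemma scales like $\gamma^{2}(k\Delta)\cdot\delta/\sqrt{\Delta}$, and $\sum_{k}\Delta\,\gamma^{2}(k\Delta)$ stays bounded while $\delta/\sqrt{\Delta}\to0$ by Hypothesis 5), but there is no need to work with the weaker property when the stronger one is available for $\tilde{v}$.
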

\begin{proof}
  For any $\Delta>0$,
  \[\int_{0}^{[T/\Delta]\Delta}  |\tilde{v}([t/\Delta]\Delta,Y^{\delta,\tilde\psi}(t))|_U^2 dt
    = \sum_{k=0}^{[T/\Delta]-1} \int_{k\Delta}^{(k+1)\Delta} |\tilde{v}(k\Delta, Y^{\delta,\tilde\psi}(t)|_U^2 dt \ .\]
  Recall that for $t \in [k\Delta,(k+1)\Delta)$, the process $Y^{\delta,\tilde\psi}$ solves
  \[dY^{\delta,\tilde\psi}(t) = \frac{1}{\delta^2}[A_2 Y^{\delta,\tilde\psi}(t) + B_2(\tilde\psi(k\Delta), Y^{\delta,\tilde\psi}(t))]dt + \frac{1}{\delta} \Sigma_2(\tilde\psi(k\Delta), Y^{\delta,\tilde\psi}(t))dw(t). \]
  Perform the time change $\tilde{Y}^{\delta,\tilde\psi}(t;s) = Y^{\delta,\tilde\psi}(s + \delta^2 t)$. Then $\tilde{Y}^{\delta,\tilde\psi}(t;s)$ solves
  \[d\tilde{Y}^{\delta,\tilde\psi}(t;s) = [A_2 \tilde Y^{\delta,\tilde\psi}(t) + B_2(\tilde\psi(s), \tilde{Y}^{\delta,\tilde\psi}(t;s))]dt + \Sigma_2(\tilde\psi(s), \tilde{Y}^{\delta,\tilde\psi}(t;s))d\tilde{w}(t).\]
  Here $w$ is a cylindrical Wiener process, and $\tilde{w}$ is a cylindrical Wiener process that depends on $\delta$, but we suppress this technicality.
  Then
  \[\int_{k\Delta}^{(k+1)\Delta} |\tilde{v}(k\Delta,Y^{\delta,\tilde\psi}(t))|_U^2dt = \Delta \frac{\delta^2}{\Delta} \int_0^{\frac{\Delta}{\delta^2}} |\tilde{v}(k\Delta, \tilde{Y}^{\delta,\tilde\psi}(t;k\Delta))|_U^2dt.\]

  By Appendix \ref{S:ErgodicProperties}, we have, for any fixed $s>0$, $\tilde{Y}^{\delta,\tilde\psi}(t;s)$ is ergodic with respect to the invariant measure $\mu^{\tilde\psi(s)}$.
  This means that for any $s \in [0,T]$,
  \[\lim_{\delta \to 0}  \frac{\delta^2}{\Delta} \int_0^{\frac{\Delta}{\delta^2}} |\tilde{v}(s, \tilde{Y}^{\delta,\tilde\psi}(t;s))|_U^2dt = \int_\mathcal{Y} |\tilde{v}(s,Y)|_U^2 \mu^{\tilde\psi(s)}(dY).\]
  Therefore, due to the continuity of $\tilde{v}$ in $t$ and Lipschitz continuity in $Y$, and the fact that $[ t/\Delta]\Delta \to t$,
    \begin{align*}
      &\left|\int_{0}^{T} |\tilde{v}([ t/\Delta]\Delta,Y^{\delta,\tilde\psi}(t))|_U^2dt - \int_0^T \int_\mathcal{Y} |\tilde{v}(t,Y)|_U^2 \mu^{\tilde\psi(t)}(dY) \right|\\
      &\leq \Delta \sum_{k=0}^{[T/\Delta]} \left|\frac{\delta^2}{\Delta}\int_{0}^{\frac{\Delta}{\delta^2}} |\tilde{v}(k\Delta,\tilde{Y}^{\delta,\tilde\psi}(t;k\Delta))|_U^2dt -  \int_{\mathcal{Y}}|\tilde{v}(k\Delta,Y)|_U^2 \mu^{\tilde\psi(k\Delta)}(dY) \right|\\
      & \qquad + \int_0^T \left|\int_{\mathcal{Y}} |\tilde{v}(t,Y)|_U^2\mu^{\tilde\psi(t)}(dY)dt - \int_{\mathcal{Y}}|\tilde{v}([t/\Delta]\Delta,Y)|_U^2\mu^{\tilde\psi([t/\Delta]\Delta)}(dY)\right|dt\\
      &\leq \int_0^T \left|\frac{\delta^2}{\Delta}\int_0^{\frac{\Delta}{\delta^2}} |\tilde{v}([s/\Delta]\Delta,\tilde{Y}^{\delta,\tilde\psi}(t;[s/\Delta]\Delta))|_U^2dt - \int_\mathcal{Y} |\tilde{v}([s/\Delta]\Delta,Y)|_U^2 \mu^{\tilde\psi([s/\Delta]\Delta)}(dY) \right|ds\\
      & \qquad + \int_0^T \left|\int_{\mathcal{Y}} |\tilde{v}(t,Y)|_U^2\mu^{\tilde\psi(t)}(dY)dt - \int_{\mathcal{Y}}|\tilde{v}([t/\Delta]\Delta,Y)|_U^2\mu^{\tilde\psi([t/\Delta]\Delta)}(dY)\right|dt \ .
    \end{align*}
    This converges to $0$ by the ergodic theorem (in $Y$), the continuity of $\mu^X$ from Lemma \ref{Lm:mu-continuous}, and the dominated convergence theorem (in $t$) ($\tilde{v}$ was built to be bounded) and the continuity of $\tilde{v}$ in $t$.
\end{proof}

\begin{lemma}
  If $(X^{\varepsilon,\delta,u^\delta},Y^{\varepsilon,\delta,u^\delta})$ solves the stochastic control problem with control
  $u^\delta(t) = \tilde{v}(t,Y^{\delta,\tilde\psi}(t))$, then $X^{\varepsilon,\delta,u^\delta} \to \tilde\psi$ in $C([0,T];H)$ as $\varepsilon\rightarrow 0$ and $\delta\rightarrow 0$.
\end{lemma}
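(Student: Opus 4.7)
The strategy is to apply Theorem \ref{T:MainTheorem1} to the family of controls $\{u^\delta\}$, to identify any subsequential limit of $(X^{\varepsilon,\delta,u^\delta},\mathrm{P}^{\varepsilon,\Delta})$ as the viable pair associated to $\tilde v$ and $\tilde\psi$, and to use uniqueness of the deterministic control equation (Lemma \ref{Lm:UniqueSolvability}) to conclude. First, Theorem \ref{Theorem:RegularMinimizer} supplies a bound $\sup_{Y\in\mathcal{Y}}|\tilde v(s,Y)|_U\leq \gamma(s)$ with $\gamma\in L^2([0,T])$, whence $\int_0^T|u^\delta(s)|_U^2\,ds\leq N:=\int_0^T\gamma(s)^2\,ds$ almost surely, so $u^\delta\in\mathcal{P}_2^N$ uniformly in $\varepsilon,\delta$. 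Theorem \ref{T:MainTheorem1} then yields tightness of $(X^{\varepsilon,\delta,u^\delta},\mathrm{P}^{\varepsilon,\Delta})$ in $C([0,T];H)\times\mathscr{P}(U\times\mathcal{Y}\times[0,T])$, and any accumulation point $(\bar X,\mathrm{P})$ is a viable pair, so $\mathrm{P}(du\,dY\,ds)=\eta(du|Y,s)\mu^{\bar X(s)}(dY)ds$ and $\bar X$ satisfies \eqref{Eq:ViablePairAveragingDynamics}.

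The core step will be to show $\eta(du|Y,s)=\delta_{\tilde v(s,Y)}(du)$. Testing against a bounded Lipschitz $g:\mathcal{Y}\to\mathbb{R}$ and a bounded linear $\phi:U\to\mathbb{R}$, I would rewrite $\int g(Y)\phi(u)\,\mathrm{P}^{\varepsilon,\Delta}(du\,dY\,ds)$ as $\mathbf{E}\int_0^T g(Y^{\varepsilon,\delta,u^\delta}(s))\phi(u^\delta(s))\,ds$ up to a vanishing boundary term in the $\Delta$-average, and substitute $u^\delta(s)=\tilde v([s/\Delta]\Delta,Y^{\delta,\tilde\psi}(s))$. The Lipschitz continuity of $\tilde v$ in $Y$ and its continuity in $t$ from Theorem \ref{Theorem:RegularMinimizer} permit replacement of $\tilde v([s/\Delta]\Delta,Y^{\delta,\tilde\psi}(s))$ by $\tilde v(s,Y^{\varepsilon,\delta,u^\delta}(s))$ at the cost of a term dominated by $\bigl(\mathbf{E}\int_0^T|Y^{\delta,\tilde\psi}(s)-Y^{\varepsilon,\delta,u^\delta}(s)|_H^2\,ds\bigr)^{1/2}$. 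The resulting integrand $g(Y)\phi(\tilde v(s,Y))$ is continuous on $\mathcal{Y}\times[0,T]$ in the relevant topology, so the averaging argument of Section \ref{S:LLNInvariantMeasure} (extending Lemma \ref{Lm:ViablePairInvMeas} to continuous time-dependent test functions via a standard step-function approximation in $s$) identifies the limit as $\int_0^T\int_\mathcal{Y} g(Y)\phi(\tilde v(s,Y))\mu^{\bar X(s)}(dY)ds$, giving the claimed form of $\eta$.

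The principal obstacle is controlling $\mathbf{E}\int_0^T|Y^{\delta,\tilde\psi}(s)-Y^{\varepsilon,\delta,u^\delta}(s)|_H^2\,ds$. I expect that an adaptation of the energy-plus-convolution bounds of Lemmas \ref{Lm:ClosenessL2Integrated} and \ref{Lm:ClosenessL2IntegratedFrozenSlow}, applied to the coupled difference of the two fast equations driven by the slow components $\tilde\psi([s/\Delta]\Delta)$ and $X^{\varepsilon,\delta,u^\delta}(s)$ respectively, yields an estimate of the form $\mathbf{E}\int_0^T|Y^{\delta,\tilde\psi}(s)-Y^{\varepsilon,\delta,u^\delta}(s)|_H^2\,ds\leq C(N,T,\varepsilon)+C\,\mathbf{E}\int_0^T|X^{\varepsilon,\delta,u^\delta}(s)-\tilde\psi([s/\Delta]\Delta)|_H^2\,ds$, where $C(N,T,\varepsilon)\to 0$ under Hypothesis 5 and the contraction constant remains below one thanks to the smallness condition \eqref{Eq:Hypothesis2Equation3}. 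Substituting this bound into the viable-pair equation for $\bar X$ and subtracting \eqref{Eq:ControlEq} for $\tilde\psi$, then applying Gronwall together with the Lipschitz continuity of $B_1,\Sigma_1,\tilde v$, produces a self-bootstrapping inequality $\|\bar X-\tilde\psi\|_{C([0,T];H)}^2\leq C_\varepsilon+\kappa\|\bar X-\tilde\psi\|_{C([0,T];H)}^2$ with $\kappa<1$ and $C_\varepsilon\to 0$, forcing $\bar X=\tilde\psi$. Since every subsequential limit equals the deterministic $\tilde\psi$, the full sequence $X^{\varepsilon,\delta,u^\delta}$ converges in probability, and hence in distribution, to $\tilde\psi$ in $C([0,T];H)$, as required.
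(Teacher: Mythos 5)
Your route—invoking Theorem \ref{T:MainTheorem1} to extract a viable pair, identifying the stochastic kernel as $\eta(du|Y,s)=\delta_{\tilde v(s,Y)}(du)$, and concluding by uniqueness via Lemma \ref{Lm:UniqueSolvability}—is genuinely different from the paper's. The paper does not pass through the occupation-measure machinery at all here: it subtracts \eqref{Eq:ControlEq} for $\tilde\psi$ from the mild formulation of $X^{\varepsilon,\delta,u^\delta}$, decomposes the difference into three terms, substitutes the $Y$-difference estimate \eqref{Eq:L2-Y-diff} directly into the Gr\"onwall integrand (in a power $p$), and lets the exponential Gr\"onwall factor absorb the self-reference; the residual term is shown to vanish by the ergodic theorem and dominated convergence. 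This is cleaner because the circularity between the $Y$-difference and the $X$-difference is resolved \emph{inside} the Gr\"onwall inequality, where no smallness of the multiplying constant is required.

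In your argument the same circularity becomes a genuine gap. To show $\eta=\delta_{\tilde v(s,\cdot)}$ you must replace $Y^{\delta,\tilde\psi}(s)$ by $Y^{\varepsilon,\delta,u^\delta}(s)$ in the control argument, and the cost of this replacement is dominated by $\bigl(\mathbf{E}\int_0^T|Y^{\delta,\tilde\psi}-Y^{\varepsilon,\delta,u^\delta}|_H^2\bigr)^{1/2}$, which (via the analogue of \eqref{Eq:L2-Y-diff}) is controlled by $\bigl(\mathbf{E}\int_0^T|X^{\varepsilon,\delta,u^\delta}-\tilde\psi|_H^2\bigr)^{1/2}$ plus a vanishing term. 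Thus the error in identifying $\eta$ is only small \emph{conditional on} already knowing $\bar X=\tilde\psi$, which is exactly the conclusion sought. Your proposed fix---a self-bootstrap $\|\bar X-\tilde\psi\|^2\le C_\varepsilon+\kappa\|\bar X-\tilde\psi\|^2$ with $\kappa<1$ ``thanks to \eqref{Eq:Hypothesis2Equation3}''---does not hold: \eqref{Eq:Hypothesis2Equation3} guarantees $\mathfrak{L}^Y_{b_2,\sigma_2}<1$, a contraction for the \emph{fast} motion only. The constant $\kappa$ you would obtain after Gr\"onwall involves the Lipschitz constants $L^X_{b_1},L^X_{\sigma_1}$, the time horizon $T$, and the prefactor of the $Y$-difference estimate (itself proportional to $1/(1-(\mathfrak{L}^Y_{b_2,\sigma_2})^2)$), none of which is assumed small. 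There is no hypothesis in the paper that makes this slow-motion bootstrap constant strictly less than one, and in general it is not. One would have to work on short time intervals and patch, or---much more simply---do what the paper does and fold the $Y$-difference bound into the Gr\"onwall kernel directly, sidestepping the need to identify $\eta$ altogether.
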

\begin{proof}
First by similar techniques in Lemma \ref{Lm:ClosenessL2Integrated}, Lemma \ref{Lm:ClosenessL2IntegratedFrozenSlow}, and by using
(\ref{Eq:ClosenessL2Integrated}) of Lemma \ref{Lm:ClosenessL2Integrated} we see that we have
to show that there exists $C>0$ and $\varepsilon_0>0$ such that for $0< \varepsilon< \varepsilon_0$ and $T>0$ we have
  \begin{equation} \label{Eq:L2-Y-diff}
    \int_0^T |Y^{\varepsilon,\delta,u^\delta}(t)-Y^{\delta, \tilde\psi}(t)|_H^2 dt \leq C \int_0^T |X^{\varepsilon,\delta,u^\delta}(t) -\tilde\psi(t)|_H^2 dt+ I_0\ ,
  \end{equation}
where the term $I_0$ depends on $\varepsilon$ and $I_0(\varepsilon)\rightarrow 0$ as $\varepsilon\rightarrow 0$. Here and below, we use the constant $C>0$ to represent
a general positive constant that is independent of $\varepsilon$ and $\delta$, but may depend on $t$.

Then, we observe that
\begin{align*}
  X^{\varepsilon,\delta,u^\delta}(t) - \tilde\psi(t) & = \int_0^t S_1(t-s)(B_1(X^{\varepsilon,\delta,u^\delta}(s),Y^{\varepsilon,\delta,u^\delta}(s)) - B_1(\tilde\psi(s),Y^{\delta,\tilde\psi}(s)))ds\\
  & \qquad + \sqrt{\varepsilon} \int_0^t S_1(t-s)\Sigma_1(X^{\varepsilon,\delta,u^\delta}(s),Y^{\varepsilon,\delta,u^\delta}(s))dw(s)\\
  & \qquad + \Bigg(\int_0^t S_1(t-s) \Sigma_1(X^{\varepsilon,\delta,u^\delta}(s),Y^{\varepsilon,\delta,u^\delta}) \tilde{v}([s/\Delta]s,Y^{\delta,\tilde\psi}(s))ds\\
  & \qquad  \ \ \ \ \ -  \int_0^t \int_{\mathcal{Y}} \Sigma_1(\tilde\psi(s),Y) \tilde{v}(s,Y) \mu^{\tilde\psi(s)}(dY)ds \Bigg)\\
  &=:I_1 + I_2 + I_3 \ ,
\end{align*}
where the terms $I_1$, $I_2$ and $I_3$ depend on $\varepsilon$, and $w$ is a cylindrical Wiener process.

By the Lipschitz continuity of $B_1$ and (\ref{Eq:L2-Y-diff}),
\begin{align*}
  |I_1|_H &\leq C\int_0^t  \left(|X^{\varepsilon,\delta,u^\delta}(s)-\tilde\psi(s)|_H  + |Y^{\varepsilon,\delta,u^\delta}(s) - Y^{\delta,\psi}(s)|_H \right)ds\\
  &\leq C \int_0^t  |X^{\varepsilon,\delta,u^\delta}(s)-\tilde\psi(s)|_H ds \ ,
\end{align*}
so that, by H\"{o}lder inequality we have
\begin{align*}
  |I_1|^p_H\leq C \int_0^t  |X^{\varepsilon,\delta,u^\delta}(s)-\tilde\psi(s)|^p_H ds \ .
\end{align*}

Due to the fact that $\Sigma_1$ is bounded, as $\varepsilon\rightarrow 0$, we have $I_2 \rightarrow 0$ since it is multiplied by $\sqrt{\varepsilon}$.

As for the $I_3$ term, it is helpful to rewrite it as
\begin{align*}
  |I_3|_H \leq &\left|\int_0^t S_1(t-s) \bigg(\Sigma_1(X^{\varepsilon,\delta,u^\delta}(s), Y^{\varepsilon,\delta,u^\delta}(s)) - \Sigma_1(\tilde\psi(s), Y^{\delta,\tilde\psi}(s))\bigg) \tilde{v}([s/\Delta]\Delta,Y^{\delta,\tilde\psi}(s))ds \right|_H\\
   &+\Bigg| \int_0^t S_1(t-s) \Sigma_1(\tilde\psi(s), Y^{\delta,\tilde\psi}(s))\tilde{v}([s/\Delta]\Delta, Y^{\delta,\tilde\psi}(s))ds\\
   &\qquad\qquad -  \int_0^t \int_{\mathcal{Y}} S_1(t-s)\Sigma_1(\tilde\psi(s), Y)\tilde{v}(s,Y)\mu^{\tilde\psi(s)}(dY)ds
   \Bigg|_H\\
   =:& |I_{3,1}|_H + |I_{3,2}|_H \ ,
\end{align*}
where the terms $I_{3,1}$ and $I_{3,2}$ both depend on $\varepsilon$.

By arguments similar to Lemma \ref{Lm:Control-terms-estimates} and by (\ref{Eq:L2-Y-diff}), we can show that
\begin{align*}
  |I_{3,1}|_H^p \leq & \ C \left(\int_0^t |\tilde{v}([s/\Delta]\Delta,Y^{\delta,\tilde\psi}(s))|_H^2 ds \right)^{\frac{p}{2}} \times\nonumber\\
  &\times\int_0^t \left(  |X^{\varepsilon,\delta,u^\delta}(s) - \tilde\psi(s)|_H^p ds + |Y^{\varepsilon,\delta,u^\delta}(s) - Y^{\delta,\tilde\psi}(s)|^2 \right) ds\\
  \leq & \ C \int_0^t |X^{\varepsilon,\delta,u^\delta}(s) - \tilde\psi(s)|_H^p ds \ .
\end{align*}

Combining all of these estimates and applying some H\"older inequalities we see that
\[|X^{\varepsilon,\delta,u^\delta}(t) - \tilde\psi(t)|_H^p \leq C \int_0^t |X^{\varepsilon,\delta,u^\delta}(s) -\tilde\psi(s)|_H^pds + I_2 + I_{3,2} \ ,\]
 and using the Gr\"onwall inequality, we conclude that
\begin{align*}
  |X^{\varepsilon,\delta,u^\delta} - \tilde\psi|_{C([0,T];H)} \leq C e^{CT} \left( I_2 + I_{3,2}\right).
\end{align*}
We conclude by showing that $I_{3,2}$ goes to zero by the Ergodic Theorem 3.3.1 of \cite{DaPrato-ZabczykErgodicityBook}, the fact that $\Sigma_1$ is bounded, as well as the Dominated Convergence Theorem.
%
%
%Secondly we have
%  \begin{align*}
%    X^{\varepsilon,\delta,u^\delta}(t) = &S_1(t)X_0 + \int_0^t S_1(t-s)B_1(X^{\varepsilon,\delta,u^\delta}(s), Y^{\varepsilon,\delta,u^\delta}(s))ds\\
%    &+ \int_0^t S_1(t-s)\Sigma_1(X^{\varepsilon,\delta,u^\delta}(s),Y^{\varepsilon,\delta,u^\delta}(s))Q_1(\sqrt{\varepsilon}dw(s) + \tilde{v}(s,Y^{\delta,\psi}(s)))ds,
%  \end{align*}
% and we take a convergent subsequence converging to $\bar{X}$ solving
%  \begin{align*}
%    \bar{X}(t) = &S_1(t)X_0 + \int_0^t \int_{\mathcal{Y}} S_1(t-s)B_1(\bar{X}(s),Y) \mu^{\bar{X}}(dY)ds\\
%    &+ \int_0^t\int_{\mathcal{Y}} S_1(t-s) \Sigma_1(\bar{X}(s),Y)\tilde{v}(s,Y) \mu^{\bar{X}}(dY)ds.
%  \end{align*}
%  Then by uniqueness of the viable pair, $\bar{X} = \psi$.
%  [ADD MISSING DETAILS]
\end{proof}

Now we can finish the proof of the lower bound.
By the variational representation and using the specific control $u^\delta (t) = \tilde{v}([t/\Delta]\Delta,Y^{\delta,\tilde\psi})$ and Theorem \ref{Theorem:RegularMinimizer},
\begin{align*}
  \liminf_{\varepsilon \to 0} \varepsilon \ln \mathbf{E} \left[\exp\left(-\frac{h(X^{\varepsilon,\delta})}{\varepsilon} \right) \right]
  &= \liminf_{\varepsilon \to 0}\left(-\inf_{ u \in \mathcal{P}_2^N} \mathbf{E} \left[\frac{1}{2}\int_0^T|u(s)|_U^2 ds + h(X^{\varepsilon,\delta,u}) \right]\right)\\
  &\geq \liminf_{\varepsilon \to 0}\left(-  \mathbf{E} \left[\frac{1}{2}\int_0^T|u^\delta(s)|_U^2 ds + h(X^{\varepsilon,\delta,u^\delta}) \right]\right)\\
  &\geq -\left(\frac{1}{2} \int_0^T \int_{\mathcal{Y}} |\tilde{v}(s,Y)|_U^2 \mu^{\tilde\psi(s)}(dY)ds + h(\tilde\psi)\right)\\
  &\geq -\left(S(\tilde\psi) + h(\tilde\psi) + \frac{2\eta}{3}\right)\\
  &\geq -\left(\inf_{\phi \in C([0,T];H)} \{S(\phi) + h(\phi)\} +\eta\right).
\end{align*}
Because $\eta>0$ was arbitrary, the result is proven.

We conclude this subsection with the proofs of Lemmas \ref{L:Invertible_q}-\ref{Lm:ApproachControl}.
\begin{proof}[Proof of Lemma \ref{L:Invertible_q}]
  First, observe $\Sigma_1^\star(X,Y) = \Sigma_1(X,Y)$ because for any $h,u \in H=U$,
  \[\left<\Sigma_1(X,Y)u, h\right>_H = \int\limits_{[0,1]} \sigma_1(X(x),Y(x))u(x)h(x) dx
  =\left<u, \Sigma_1^\star(X,Y) h \right>_U.\]
  In particular, the fact that $\sigma^{2}_1(x,X,Y)\geq c_0$ for all $x,y \in \mathbb{R}$, implies that for any $h \in H$, $X \in \mathcal{X}$, and $Y \in \mathcal{Y}$,
  \[|[\Sigma_1(X,Y)\Sigma_1^\star(X,Y)h](x)| = \sigma_1^2(X(x),Y(x))|h(x)| \geq c_0 |h(x)|.\]
  Then because $\mu^X(\mathcal{Y})=1$,
  \[\left|\left[\int_\mathcal{Y} \Sigma_1(X,Y)\Sigma_1^\star(X,Y)h\mu^X(dY)\right](x) \right| \geq c_0 |h(x)|.\]
  We can conclude that
  \[|q(X)h|_H \geq c_0 |h|_H.\]
\end{proof}
\begin{proof}[Proof of Lemma \ref{Lm:Lipschitz-control}]
  By the assumption that $S(\psi) <+\infty$ and by the representation \eqref{Eq:ActionFunctOrdinaryForm} and $\mathcal{A}^o_{\psi,T}$, there exists $u: [0,T]\times \mathcal{Y} \to U$ such that
  \[\frac{1}{2} \int_0^T \int_\mathcal{Y} |u(s,Y)|_U^2 \mu^{\psi(s)}(dY)ds \leq  S(\psi) + \eta\]
  and
  \begin{align*}
    \psi(t) = &S_1(t)\psi(0) + \int_0^t \int_\mathcal{Y} S_1(t-s) B_1(\psi(s),Y) \mu^{\psi(s)}(dY) ds \\
    &+ \int_0^t \int_\mathcal{Y} S_1(t-s) \Sigma_1(\psi(s), Y)u(s,Y) \mu^{\psi(s)}(dY)ds.
  \end{align*}
  Of course, there is no guarantee that $u(t,Y)$ is bounded or Lipschitz continuous in $Y$. We only know that it is in $L^2([0,T]\times\mathcal{Y}, \mu^{\psi(s)}(dY)ds;U)$.
  We build ${v}(t,Y) = \Sigma_1^\star(\psi(t), Y)q^{-1}(\psi(t)) a(\psi(t)) u(t,\cdot)$ and claim that this has the desired properties.

  For any $t>0$, $Y\in \mathcal{Y}$,
  \begin{align*}
  |{v}(t,Y)|_U  &\leq \sqrt{c_1} \|q^{-1}(t)\|_{\mathcal{L}(H)}\|a(\psi(t))\|_{\mathcal{L}(L^2(\mathcal{Y},\mu^{\psi(t)};U),H)} |u(t,\cdot)|_{L^2(\mathcal{Y},\mu^{\psi(t)};U)}\nonumber\\
   &\leq \frac{c_1}{c_0}|u(t,\cdot)|_{L^2(\mathcal{Y},\mu^{\psi(t)};U)}.
  \end{align*}
  Therefore, there exists $C>0$ such that for all $t\in[0,T]$
  \[\sup_{Y} |{v}(t,Y)|_U^2dt \leq C  \int_{\mathcal{Y}}|u(t,Z)|_U^2 \mu^{\psi(t)}(dZ).\]
  Similarly, by the Lipschitz continuity of $\Sigma_1^\star$,
  \[|{v}(t,Y_1)-{v}(t,Y_2)|_U \leq \frac{\sqrt{c_1}L^Y_{\sigma_1}|Y_1-Y_2|_H}{c_0}\left(\int_{\mathcal{Y}}|u(t,Z)|_U^2 \mu^{\psi(t)}(dZ)\right)^{\frac{1}{2}}.\]
  We can set
  \[\gamma(t) = C \left(\int_{\mathcal{Y}}|u(t,Z)|_U^2 \mu^{\psi(t)}(dZ)\right)^{\frac{1}{2}}\]
  for an appropriately big constant. This is square integrable  in $t$.

  Additionally, ${v}$ solves the same equations as $u$ because for any $t>0$,
  \begin{align*}
    &a(\psi(t)){v}(t,\cdot) = a(\psi(t))a^\star(\psi(t))q^{-1}(\psi(t))a(\psi(t))u(t,\cdot)\\
    &=q(\psi(t))q^{-1}(\psi(t))a(\psi(t))u(t,\cdot) = a(\psi(t))u(t,\cdot).
  \end{align*}
  Then because $a(\psi(t)){v}(t,\cdot) = \int_\mathcal{Y} \Sigma_1(\psi(t),Y){v}(t,Y)\mu^{\psi(t)}(dY)$,
 (\ref{Eq:v-form}) and (\ref{Eq:ControlEq}) are satisfied.

  Notice that
  \begin{align*}
    &\int_0^T \int_{\mathcal{Y}} |{v}(t,Y)|_U^2 \mu^{\psi(t)}(dY)dt = \int_0^T |a^\star(\psi(t))q^{-1}(\psi(t)) a(\psi(t))u(t,\cdot)|_{L^2(\mathcal{Y},\mu^{\psi(t)};U)}^2 dt\\
    &\leq \int_0^T |u(t,\cdot)|_{L^2(\mathcal{Y},\mu^{\psi(t)};U)}^2 dt = \int_0^T \int_\mathcal{Y} |u(t,Y)|_U^2 \mu^{\psi(t)}(dY)dt \leq S(\psi) + \eta.
  \end{align*}
  The last line is a consequence Lemma \ref{Lm:HilbertSpaceNorms}.
\end{proof}
\begin{proof}[Proof of Lemma \ref{Lm:HilbertSpaceNorms}]
  For any $u \in H_1$,
  \begin{align*}
    & \left| a^\star q^{-1} a u\right|_{H_1}^2  = \left< a^\star q^{-1} a u, a^\star q^{-1} a u\right>_{H_1} = \left<a a^\star q^{-1} a u, q^{-1} a u \right>_{H_2}\\
    & \leq \left< a u, q^{-1} a u \right>_{H_2} = \left< u, a^\star q^{-1} a u \right>_{H_1} \leq |u|_{H_1} |a^\star q^{-1} a u|_{H_1}.
  \end{align*}
  The second line follows from the fact that $a a^\star q^{-1}$ is equal to the identity operator.
  The result follows by dividing both sides by $|a^\star q^{-1} a u|_{H_1}$.
\end{proof}
\begin{proof}[Proof of Lemma \ref{Lm:UniqueSolvability}]
  We prove this using the contraction mapping principle.
  Let $\mathscr{K}: C([0,T];H) \to C([0,T];H)$ be defined by
  \begin{align*}
    \mathscr{K}(\varphi)(t) = &S(t)X_0 + \int_0^t \int_\mathcal{Y} S_1(t-s) B_1(\varphi(s),Y) \mu^{\varphi(s)}(dY)ds\\
     &+ \int_0^t \int_{\mathcal{Y}} S_1(t-s) \Sigma_1(\varphi(s),Y) v(s,Y) \mu^{\varphi(s)}(dY)ds.
  \end{align*}
  For any $\varphi_1, \varphi_2 \in C([0,T];H)$,
  \begin{align*}
    \mathscr{K}(\varphi_1)(t) &- \mathscr{K}(\varphi_2)(t)  =
    \int_0^t \int_\mathcal{Y} S_1(t-s)(B_1(\varphi_1(s),Y) - B(\varphi_2(s),Y))\mu^{\varphi_1(s)}(dY)ds\\
    & + \int_0^t \int_\mathcal{Y} S_1(t-s)B_1(\varphi_2(s),Y)(\mu^{\varphi_1(s)}(dY) - \mu^{\varphi_2(s)}(dY))ds\\
    &+ \int_0^t \int_\mathcal{Y} S_1(t-s)(\Sigma_1(\varphi_1(s),Y) - \Sigma_1(\varphi_2(s),Y))v(s,Y)\mu^{\varphi_1(s)}(dY)ds\\
    &+ \int_0^t \int_\mathcal{Y} S_1(t-s) \Sigma_1(\varphi_2(s),Y)v(s,Y)(\mu^{\varphi_1(s)}(dY) - \mu^{\varphi_2(s)}(dY))ds.
  \end{align*}
  Notice that for any $X \in \mathcal{X}$ and $s \in [0,T]$, $Y \mapsto \Sigma_1(X,Y)v(s,Y)$ is Lipschitz continuous. In particular, for $Y_1,Y_2 \in \mathcal{Y}$, and $s \in [0,T]$, by the Lipschitz continuity and boundedness of $\Sigma_1$,
  \begin{align} \label{Eq:Smv-Lipschitz}
    &\sup_{X \in \mathcal{X}}|\Sigma_1(X,Y_1)v(s,Y_1) - \Sigma_1(X,Y_2)v(s,Y_2)|\nonumber\\
    &\leq \sup_{X \in \mathcal{X}}\left( |(\Sigma_1(X,Y_1) - \Sigma_1(X,Y_2))v(s,Y_1)|_H +  |\Sigma_1(X,Y_2)(v(s,Y_1) - v(s,Y_2))|_H\right)\nonumber\\
    &\leq C\gamma(s)|Y_1-Y_2|_H.
  \end{align}
  By the Lipschitz continuity of $B_1$, $\Sigma_1$, $v$, and the Lipschitz properties of the measures $\mu^X$ (see (\ref{Eq:WeaklyContinuousInvariantMeasure})), and (\ref{Eq:Smv-Lipschitz}), it follows that
  \begin{align*}
    &|\mathscr{K}(\varphi_1)(t) - \mathscr{K}(\varphi_2)(t)|_H
    \leq C\int_0^t \left|\varphi_1(s) - \varphi_2(s)\right|_H ds\\
    &+ C\int_0^t \gamma(s)\left|\varphi_1(s) - \varphi_2(s) \right|_H ds\\
    &\leq C\left(T^{\frac{1}{2}} + \left(\int_0^T \gamma^2(s)ds \right)^{\frac{1}{2}} \right) \left( \int_0^T |\varphi_1(s) - \varphi_2(s)|_H^2 ds\right)^{\frac{1}{2}}.
  \end{align*}
  The last line follows by H\"older inequality. By taking $T_0$ small enough, we can guarantee that $\mathscr{K}$ is a contraction mapping on $C([0,T_0])$. Using standard arguments we can string together solutions until we get a unique fixed point in $C([0,T];H)$.
\end{proof}

\begin{proof}[Proof of Lemma \ref{Lm:ApproachControl}]
  For any $t>0$,
  \begin{align*}
    \psi_n(t) - \psi(t) =
    & \int_0^t \int_{\mathcal{Y}} S_1(t-s) (B_1(\psi_n(s),Y) - B_1(\psi(s),Y))\mu^{\psi_n(s)}(dY)ds\\
    &+ \int_0^t \int_{\mathcal{Y}} S_1(t-s)B_1(\psi(s),Y)(\mu^{\psi_n(s)}(dY) - \mu^{\psi(s)}(dY))ds \\
    &+ \int_0^t \int_{\mathcal{Y}} S_1(t-s) (\Sigma_1(\psi_n(s),Y) - \Sigma_1(\psi(s),Y))v_n(s,Y) \mu^{\psi_n(s)}(dY)ds\\
    &+ \int_0^t \int_{\mathcal{Y}} S_1(t-s)\Sigma_1(\psi(s),Y) v_n(s,Y) (\mu^{\psi_n(s)}(dY) - \mu^{\psi(s)}(dY))ds\\
    &+ \int_0^t \int_{\mathcal{Y}} S_1(t-s) \Sigma_1(\psi(s),Y) (v_n(s,Y) - v(s,Y)) \mu^{\psi(s)}(dY).
  \end{align*}
  By the Lipschitz properties of $B_1$, $\Sigma_1$, $v_n$, $v$, and $\mu^X$, and a few applications of H\"older's inequality, we see that for $t \in [0,T]$,
  \begin{align*}
    |\psi_n(t) - \psi(t)|_H \leq
    & C \int_0^t |\psi_n(s) - \psi(s)|_Hds +  C \left(\int_0^t \gamma_n^2(s)ds \right)^{\frac{1}{2}} \left( \int_0^t |\psi_n(s) - \psi(s)|_H^2 ds\right)^{\frac{1}{2}}\\
    &+ C\sqrt{T} \left(\int_0^t \int_{\mathcal{Y}} |v_n(s,Y) - v(s,Y)|_U^2 \mu^{\psi(s)}(dY)ds \right)^{\frac{1}{2}}.
  \end{align*}
  the fact that $\psi_n \to \psi \in C([0,T];H)$ follows by squaring both sides and applying a Gr\"onwall inequality.

  Finally we show that the energies converge. We claim that $Y \mapsto |v_n(t,Y)|_U^2$ is Lipschitz continuous with Lipschitz constant $2\gamma^2_n(t)$.
  Notice that for $Y_1,Y_2 \in \mathcal{Y}$,
  \begin{align*}
    &|v_n(t,Y_1)|_U^2 - |v(t,Y_2)|_U^2 = \left(|v_n(t,Y_1)|_U + |v_n(t,Y_2)|_U \right) \left(|v_n(t,Y_1)|_U - |v_n(t,Y_2)|_U \right)\\
    &\leq 2 \gamma_n^2(t) |Y_1 - Y_2|_\mathcal{Y}.
  \end{align*}

  Therefore, using the Lipschitz property from Lemma \ref{Lm:mu-continuous},
  \begin{align*}
    &\frac{1}{2}\int_0^T \int_{\mathcal{Y}} |v_n(t,Y)|_U^2 \mu^{\psi_n(t)}(dY)dt - \frac{1}{2}\int_0^T \int_{\mathcal{Y}} |v(t,Y)|_U^2 \mu^{\psi(t)}(dY)dt\\
    &= \frac{1}{2}\int_0^T \int_{\mathcal{Y}} |v_n(t,Y)|_U^2 (\mu^{\psi_n(t)}(dY) - \mu^{\psi(t)}(dY))dt\\
     &\qquad+ \frac{1}{2}\int_0^T \int_{\mathcal{Y}} (|v_n(t,Y)|_U^2 - |v(t,Y)|_U^2) \mu^{\psi(t)}(dY)dt
    \end{align*}
    \begin{align*}
    &\leq  \int_0^T \gamma_n^2(t)|\psi_n(t) - \psi(t)|_Hdt + \frac{1}{2}\int_0^T \int_{\mathcal{Y}} (|v_n(t,Y)|_U^2 - |v(t,Y)|_U^2) \mu^{\psi(t)}(dY)dt\\
    & \leq  \left(\int_0^T \gamma_n^2(t)dt \right)|\psi_n - \psi|_{C([0,T];H)}  + \frac{1}{2}\int_0^T \int_{\mathcal{Y}} (|v_n(t,Y)|_U^2 - |v(t,Y)|_U^2) \mu^{\psi(t)}(dY)dt.
  \end{align*}
  This converges to zero by the assumptions of the lemma and the previously established fact that $\psi_n \to \psi$.
\end{proof}

\subsubsection{Lower bound for the $d\geq 1$ case with $\sigma_{1}(x,X,Y)=\sigma_{1}(x,X)$}\label{SS:UpperBound_MultiDim}
If $\Sigma_1(X,Y) = \Sigma_1(X)$ is independent of $Y$, then the proof of the Laplace principle lower bound is very similar to the standard cases \cite{LDPInfiniteSDE,VariationalInfniteBM}. Let $h: C([0,T];H) \to \mathbb{R}$ be bounded and continuous. Fix $\eta>0$, and let $\psi \in C([0,T];H)$ with $\psi_0 = X_0$ such that
\[S(\psi) + h(\psi) \leq \inf_{\phi \in C([0,T];H)}[S(\phi) + h(\phi)] + \frac{\eta}{2}.\]

There exists a function $v: [0,T]\times \mathcal{Y} \to U$ such that
{\[ \frac{1}{2} \int_0^T \int_\mathcal{Y} |v(s,Y)|_U^2 \mu^{\psi(s)}(dY) ds  \leq S(\psi) + \frac{\eta}{2}\]}
and
{\begin{align*}
  \psi(t) = S_1(t)X_0 &+ \int_0^t \int_\mathcal{Y} S_1(t-s)B_1(\psi(s), Y) \mu^{\psi(s)}(dY) ds \\
  &+ \int_0^t \int_\mathcal{Y} S_1(t-s)\Sigma_1(\psi(s))Q_1v(s,Y)\mu^{\psi(s)}(dY)ds.
\end{align*}}
Define the time dependent  control $u \in L^2([0,T];U)$
\[u(s) = \int_\mathcal{Y} v(s,Y)\mu^{\psi(s)}(dY).\]
Notice that because $\Sigma_1$ is independent of $Y$, $\psi$ solves
\begin{align*}
  \psi(t) = S_1(t)X_0 &+ \int_0^t \int_\mathcal{Y} S_1(t-s)B_1(\psi(s), Y) \mu^{\psi(s)}(dY) ds \\
  &+ \int_0^t S_1(t-s)\Sigma_1(\psi(s))Q_1u(s)ds.
\end{align*}

Consider the sequence of controlled processes $X^{\varepsilon,\delta,u}$ with this control. By Lemma \ref{L:LemmaForODElimit1}, we can show that $X^{\varepsilon,\delta,u}$ converges to $\psi$. {Indeed, by Lemma \ref{Lm:TightnessSlowProcess} we get tightness of the family of processes $\{X^{\varepsilon,\delta,u}: \varepsilon \in (0,1)\}$  in $C([0,T]; H)$. Then, Lemma \ref{L:LemmaForODElimit1} shows that $X^{\varepsilon,\delta,u}\rightarrow\bar{X}$ in distribution, where
\begin{align*}
  \bar{X}(t) = S_1(t)X_0 &+ \int_0^t \int_\mathcal{Y} S_1(t-s)B_1(\bar{X}(s), Y) \mu^{\bar{X}(s)}(dY) ds \\
  &+ \int_0^t S_1(t-s)\Sigma_1(\bar{X}(s))Q_1u(s)ds.
\end{align*}
Uniqueness of this equation shows that $\bar{X}(t)=\psi(t)$ for every $t\in[0,T]$ with probability one. Then
\begin{align*}
  \liminf_{\varepsilon \to 0} \varepsilon \ln \mathbf{E} \left[\exp\left(-\frac{h(X^{\varepsilon,\delta})}{\varepsilon} \right) \right]
  &= \liminf_{\varepsilon \to 0}\left(-\inf_{ u \in L^{2}([0,T];U)} \mathbf{E} \left[\frac{1}{2}\int_0^T|u(s)|_U^2 ds + h(X^{\varepsilon,\delta,u}) \right]\right)\\
  &\geq \liminf_{\varepsilon \to 0} \left(- \mathbf{E} \left[\frac{1}{2}\int_0^T|u(s)|_U^2 ds + h(X^{\varepsilon,\delta,u}) \right]\right)\\
  &= -\left(\frac{1}{2} \int_0^T \int_{\mathcal{Y}} |v(s,Y)|_U^2 \mu^{\psi(s)}(dY)ds + h(\psi)\right)\\
  &\geq -\left(S(\psi) + h(\psi) + \frac{\eta}{2}\right)\\
  &\geq -\left(\inf_{\phi \in C([0,T];H)} \{S(\phi) + h(\phi)\} +\eta\right).
\end{align*}}
Because $\eta>0$ was arbitrary, the result is proven.
Notice that this method cannot work if $\Sigma_1$ depends on $Y$.

%\textcolor{blue}{[K: Shall we comment on alternate forms of the rate function in this case? The reviewer seems to be suggesting that we should. If we decide to do so, perhaps we should add a corollary at the end of section 3 (after the statement of the main result) with its proof here (basically just a comment as the proof follows directly from the general result and the special forms that we have in this case.).]}
\subsection{Compactness of level sets}\label{SS:CompactLevelSets}
We want to prove that for each $s<\infty$ and for any $X_0 \in H$, the set
$$\Phi_{s,X_0}=\{\phi\in C([0,T]; H): S_{X_{0}}(\phi)\leq s\}$$
is a compact subset of $C([0,T]; H)$. %It suffices to show that for every $\phi\in C([0,T]; H)$, the mapping $X_{0}\mapsto S(\phi)$ is a lower semicontinuous map from $H^{\theta}_{1}$ to $[0,\infty)$.
\begin{lemma}\label{Lm:PreCompactnessLevelSet}
Fix $K<\infty$, $\theta>0$ and consider any sequence $\{(\phi^n, \mathrm{P}^n) \ , \ n>0\}$ such that
 for every $n>0$ $(\phi^n, \mathrm{P}^n)\in \mathcal{V}_{(\xi, \mathcal{L})} $ is viable and
$$\int_{U\times \mathcal{Y}\times [0,T]}\left(|u|_U^2+|Y|_{\theta,2}^{2}\right)\mathrm{P}^{n}(dudYdt)<K \ .$$

Then $\{(\phi^n, \mathrm{P}^n) \ , \ n>0 \}$ is pre-compact.
\end{lemma}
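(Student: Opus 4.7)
The plan is to establish pre-compactness of the pairs in two stages: first tightness of the occupation measures $\{\mathrm{P}^n\}$ in $\mathscr{P}(U\times\mathcal{Y}\times[0,T])$, then relative compactness of the trajectories $\{\phi^n\}$ in $C([0,T];H)$. Once both are in hand, pre-compactness of the pair follows from pre-compactness of each factor.

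For the measures, I would recycle the tightness-function argument of Lemma \ref{Lm:TightnessOccupationMeasure}. The function $g(u,Y,t) := |u|_U^2 + |Y|_{\theta,2}^2$ is a tightness function on $U\times\mathcal{Y}\times[0,T]$ equipped with the weak topology on $U$, the norm topology on $\mathcal{Y}$, and the usual topology on $[0,T]$, since $\{g\leq M\}$ is contained in $\{|u|_U\leq \sqrt M\}\times\{|Y|_{\theta,2}\leq \sqrt M\}\times[0,T]$, which is precompact by Alaoglu's theorem together with the compact embedding $H^\theta_2\hookrightarrow H$. The uniform bound $\int g\,d\mathrm{P}^n < K$ and Theorem A.3.17 of \cite{DupuisEllis} then yield tightness of $\{\mathrm{P}^n\}$, so by Prokhorov some subsequence converges weakly.

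For the trajectories, I would work directly with the deterministic mild form (\ref{Eq:ViablePairAveragingDynamics}), splitting
\[
\phi^n(t) - S_1(t)X_0 = \int_{U\times\mathcal{Y}\times[0,t]} S_1(t-s)B_1(\phi^n(s),Y)\,\mathrm{P}^n(dudYds) + \int_{U\times\mathcal{Y}\times[0,t]} S_1(t-s)\Sigma_1(\phi^n(s),Y)Q_1u\,\mathrm{P}^n(dudYds).
\]
For the drift term, the sublinear-in-$Y$ growth of $B_1$ (Hypothesis 3) combined with the uniform bound on $\int |Y|_{\theta,2}^2 \mathrm{P}^n$ (and hence on $\int |Y|_H^{2\zeta}\mathrm{P}^n$) produces a bound of the form $c_T(1+\sup_{s\leq t}|\phi^n(s)|_H)$ in $H$, and by the smoothing of the semigroup a bound in $H_1^{\theta'}$ for $\theta'$ small. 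For the control term, I would use the semigroup estimate (\ref{Eq:Smu-bound}) and a Cauchy--Schwarz inequality with respect to $\mathrm{P}^n$ that separates the $|u|_U$ factor (square-integrable against $\mathrm{P}^n$ with uniform bound $K$) from the kernel $(t-s)^{-\theta'/2-\beta_1(\rho_1-2)/(2\rho_1)} e^{-\lambda(t-s)/2}$, exactly as in Lemma \ref{Lm:Control-terms-estimates}. For $\theta'$ small enough (using $\beta_1(\rho_1-2)/\rho_1<1$ from Hypothesis 1), the singular kernel is integrable and the linear-in-$X$, sublinear-in-$Y$ growth of $\Sigma_1$ gives a uniform $H_1^{\theta'}$-bound after a Gr\"onwall step. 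The same kind of estimate applied to $\phi^n(t+h)-\phi^n(t)$, together with the equicontinuity of $t\mapsto S_1(t)X_0$ in $H$, yields equicontinuity of $\{\phi^n\}$; combined with the compact embedding $H_1^{\theta'}\hookrightarrow H$, the Arzela--Ascoli theorem delivers relative compactness of $\{\phi^n\}$ in $C([0,T];H)$.

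The main obstacle is the control term, which is only square-integrable against $\mathrm{P}^n$; this is exactly what forces the use of the $H_1^{\theta'}$-regularization of $S_1$ (rather than trying to work in $H$ directly), and is the point where Hypothesis 1's condition on $\beta_1$ and $\rho_1$ is essential. Everything else is a deterministic, pathwise version of the a priori estimates already carried out for the controlled SRDEs in Section \ref{S:LLN_ControlledSRDE}, with the bound $K$ playing the role that the bound $N$ on $\mathcal{P}_2^N$ played there.
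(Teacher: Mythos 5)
Your proposal is correct and follows essentially the same strategy as the paper: the tightness-function argument of Lemma \ref{Lm:TightnessOccupationMeasure} for the measures $\{\mathrm{P}^n\}$, and a deterministic repetition of the a priori estimates (uniform $L^p$ and $H_1^{\theta'}$ bounds via semigroup smoothing, Cauchy--Schwarz against $\mathrm{P}^n$ to handle the control term, Gr\"onwall, then equicontinuity and Arzel\`a--Ascoli) for the trajectories $\{\phi^n\}$, with the bound $K$ playing the role of $N$.
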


\begin{proof}
Noticing that the third marginal of $\mathrm{P}^n(dudYdt)$ is the Lebesgue measure,  a similar, but easier technically, argument as in Lemma \ref{Lm:LpEstimateSlowAndFastProcess} establishes that for $p=2/\zeta$
\[
\sup_{t\in[0,T],n\in\mathbb{N}}|\phi^{n}(t)|_{H}^{p}\leq c_{T,K} (1+|X_{0}|_{H}^{p})
\]

The last display and the fact that for any $t,h\geq 0$ such that $t,t+h\in[0,T]$ we have
\begin{align*}
\phi(t+h)-\phi(t)&=(S_1(h)-I)\phi(t)+\int_{U\times \mathcal{Y}\times [t,t+h]}S_1(t+h-s)\xi(\phi(s), Y, u)\mathrm{P}(dudYds)
\end{align*}
imply via an argument similar to the proof of Lemma \ref{Lm:OscillationEstimateSlowProcess} that there exists $0<\bar{\theta}<\frac{1-\zeta}{2}$ and $p=\frac{2}{\zeta}$,
such that for any  $T>0$, $X_0\in H$,  it holds
\begin{equation*}
\sup\limits_{n\in\mathbb{N}}|\phi^{n}(t+h)-\phi^{n}(t)|_H^p\leq c_{\theta,p,T,K}\left(h^{\beta(\theta)p}(|X_0|_{H}^p+1)+\left|(S_{1}(h)-I)X_{0}\right|_{H}^{p}\right)
\end{equation*}
for some positive constants $c_{\theta,p,T,K}$ and $\beta(\theta)>0$.

Pre-compactness of $\{\phi^{n},n>0\}$ then follows. Pre-compactness of  $\{\mathrm{P}^n \ , \ n>0 \}$ follows exactly as in the proof of Lemma \ref{Lm:TightnessOccupationMeasure}, concluding the proof of the lemma.
\end{proof}

Next, we prove that limit of a viable pair is also viable.
\begin{lemma}\label{Lm:LimitOfViablePairStillViable}
For $K<\infty$ and $\theta>0$, consider any convergent sequence $\{(\phi^n, \mathrm{P}^n), n>0\}$,
such that for  every $n>0$, $(\phi^n, \mathrm{P}^n)\in\mathcal{V}_{(\xi, \mathcal{L})}$ is viable and{
$$\int_{U\times \mathcal{Y}\times [0,T]}\left(|u|_U^2+|Y|_{\theta,2}^{2}\right)\mathrm{P}^n(dudYdt)<K \ .$$}
Then the limit $(\phi, \mathrm{P})\in\mathcal{V}_{(\xi, \mathcal{L})}$, i.e. it is a viable pair.
\end{lemma}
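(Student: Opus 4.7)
The plan is to verify that the limit pair $(\phi,\mathrm{P})$ satisfies each of the four requirements of Definition \ref{Def:ViablePair}: the square-integrability condition, the normalization \eqref{Eq:ViablePairNormalization}, the invariant-measure structure \eqref{Eq:ViablePairInvariantMeasure}, and the averaging dynamics \eqref{Eq:ViablePairAveragingDynamics}. Throughout I exploit the fact that the convergence on $U$ is taken in the weak topology and on $\mathcal{Y}$ in the norm topology, exactly the topology for which tightness and weak convergence of $\mathrm{P}^n$ were established in Section \ref{S:LLN_ControlledSRDE}.

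First, the square-integrability of $\mathrm{P}$ follows from the lower semicontinuity of $(u,Y,t)\mapsto |u|_U^2+|Y|_{\theta,2}^2$ in the chosen topology (weak lower semicontinuity of the norm on $U$, continuity of the norm on $\mathcal{Y}$): approximating this functional from below by bounded continuous tightness functions $g_M = \min(|u|_U^2+|Y|_{\theta,2}^2,M)$ and using weak convergence of $\mathrm{P}^n$ gives $\int g_M\,d\mathrm{P}\le\liminf_n\int g_M\,d\mathrm{P}^n\le K$, so monotone convergence in $M$ yields $\int(|u|_U^2+|Y|_{\theta,2}^2)\,d\mathrm{P}\le K$. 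For the normalization \eqref{Eq:ViablePairNormalization}, each $\mathrm{P}^n$ has third marginal equal to Lebesgue measure on $[0,T]$, so the same holds in the weak limit and $\mathrm{P}(U\times\mathcal{Y}\times[0,t])=t$ for every $t$.

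Next I verify the invariant-measure structure \eqref{Eq:ViablePairInvariantMeasure}. For any Lipschitz $f\in C_b(\mathcal{Y})$, viability of $(\phi^n,\mathrm{P}^n)$ gives
\[
\int_{U\times\mathcal{Y}\times[0,T]} f(Y)\,\mathrm{P}^n(dudYdt)=\int_0^T\!\!\int_\mathcal{Y} f(Y)\mu^{\phi^n(t)}(dY)\,dt.
\]
The left-hand side converges to $\int f(Y)\mathrm{P}(dudYdt)$ by weak convergence (since $f$ depends only on $Y$ and is bounded continuous in the norm topology on $\mathcal{Y}$). For the right-hand side, Lemma \ref{Lm:mu-continuous} ensures that $X\mapsto\int f(Y)\mu^X(dY)$ is Lipschitz, and since $\phi^n\to\phi$ in $C([0,T];H)$, the integrands converge uniformly on $[0,T]$; hence the right-hand side converges to $\int_0^T\!\int_\mathcal{Y} f(Y)\mu^{\phi(t)}(dY)\,dt$. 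A density argument extends the identity to all $f\in C_b(\mathcal{Y})$, and a standard disintegration of $\mathrm{P}$ along the map $(u,Y,t)\mapsto(Y,t)$ produces the stochastic kernel $\eta(du|Y,t)$ so that $\mathrm{P}(dudYdt)=\eta(du|Y,t)\mu^{\phi(t)}(dY)dt$.

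Finally, to pass \eqref{Eq:ViablePairAveragingDynamics} to the limit, write
\[
\phi^n(t)=S_1(t)X_0+\int_{U\times\mathcal{Y}\times[0,t]}S_1(t-s)\xi(\phi^n(s),Y,u)\,\mathrm{P}^n(dudYds),
\]
and split $\xi=B_1+\Sigma_1Q_1u$. Replacing $\phi^n(s)$ by $\phi(s)$ in the integrand produces an error that vanishes by the Lipschitz continuity of $B_1,\Sigma_1$ together with the uniform convergence $\phi^n\to\phi$ and the uniform bound $\int|u|_U^2\mathrm{P}^n<K$ (used via H\"older exactly as in estimates \eqref{Eq:XCont-B} and \eqref{Eq:XCont-Sig}). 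It then remains to show
\[
\int_{U\times\mathcal{Y}\times[0,t]}S_1(t-s)\xi(\phi(s),Y,u)\mathrm{P}^n(dudYds)\longrightarrow\int_{U\times\mathcal{Y}\times[0,t]}S_1(t-s)\xi(\phi(s),Y,u)\mathrm{P}(dudYds),
\]
which is precisely the content of the convergence argument used in Lemma \ref{L:LemmaForODElimit1}: finite-dimensional projections $\Pi_{1,N}$ applied to the integrand yield bounded continuous scalar integrands (the compactness of $u\mapsto S_1(t-s)\Sigma_1(\phi(s),Y)Q_1u$ from Lemma \ref{Lm:Smu-compact} makes them continuous in the weak topology on $U$), so each finite-dimensional piece converges by weak convergence of $\mathrm{P}^n$, while the tails $(I-\Pi_{1,N})S_1(t-s)\xi$ are controlled uniformly in $n$ via \eqref{Eq:SmQf-tail}, \eqref{Eq:Smu-bound}, and the uniform bound on $\int(|u|_U^2+|Y|_{\theta,2}^2)\,\mathrm{P}^n$. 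The main technical obstacle is exactly this last step, because the integrand is unbounded and $H$-valued; however, the hypothesis $\int(|u|_U^2+|Y|_{\theta,2}^2)\mathrm{P}^n<K$ supplies precisely the uniform integrability needed to mimic the projection/tail argument of Lemma \ref{L:LemmaForODElimit1}, completing the proof.
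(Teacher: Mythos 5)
Your proof is correct and follows essentially the same route as the paper's: Fatou/truncation for the square-integrability bound, preservation of the third (Lebesgue) marginal under weak convergence for the normalization, Lipschitz weak continuity of $X\mapsto\mu^X$ (Lemma \ref{Lm:mu-continuous}) to pass the invariant-measure identity to the limit, and the finite-dimensional-projection/tail-estimate mechanism of Lemma \ref{L:LemmaForODElimit1} together with the uniform integrability of the $\mathrm{P}^n$ to pass equation \eqref{Eq:ViablePairAveragingDynamics} to the limit. Your write-up is in fact more explicit than the paper's rather terse argument, which simply points to the continuity/affine-growth properties of $\xi$ and references the analogues of Lemmas \ref{Lm:UniformIntegrabilityOccupationMeasure} and \ref{Lm:ViablePairInvMeas}.
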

\begin{proof} Since $(\phi^n, \mathrm{P}^n)\in\mathcal{V}_{(\xi, \mathcal{L})}$  we have
\begin{equation}\label{Eq:SequenceViablePairAveragingEquation}
\phi^n(t)=S_1(t)X_0+\int_{U\times \mathcal{Y}\times [0,t]} S_1(t-s)\xi(\phi^n(s), Y, u)\mathrm{P}^n(dudYds) \ ,
\end{equation}
and
\begin{equation}\label{Eq:SequenceViablePairInvariantMeasure}
\mathrm{P}^n(dudYds)=\eta^{n}(du|Y,s)\mu^{\psi^{n}(s)}(dY)ds.
\end{equation}

By Fatou's lemma we can show that $\mathrm{P}$ satisfies
\[
\int_{U\times \mathcal{Y}\times [0,T]}\left(|u|_U^2+|Y|_{\theta,2}^{2}\right)\mathrm{P}(dudYdt)<\infty
\]

Now, observe that the function $\xi(X,Y,u)$ is continuous in $X$ and $Y$, grows at most sublinearly in $Y$
and is affine in $u$. In addition, one can prove a uniform integrability lemma for $\mathrm{P}^n$ analogously to Lemma \ref{Lm:UniformIntegrabilityOccupationMeasure}. Hence, since by the assumption of this lemma we know that
$(\phi^n,\mathrm{P}^n)\rightarrow (\phi, \mathrm{P})$ and thus that $(\phi, \mathrm{P})$
also satisfy equation (\ref{Eq:SequenceViablePairAveragingEquation}) with $(\phi^n, \mathrm{P}^n)$
replaced by $(\phi, \mathrm{P})$.

Next we show that (\ref{Eq:SequenceViablePairInvariantMeasure}) holds with $(\phi^n, \mathrm{P}^n)$ replaced by $(\phi, \mathrm{P})$. Essentially it is enough to show that the second marginal of $\mathrm{P}(dudYds)$ will be
$\mu^{\phi(s)}(dY)$. This follows, by the fact that for any $X$, $\mu^{X}(dY)$ is Lipschitz weakly continuous with respect to X, which due to Hypothesis 2 follows as in the proof of Lemma \ref{Lm:ViablePairInvMeas}.

Finally, it follows from $\mathrm{P}^n(U\times \mathcal{Y}\times [0,t])=t$ and $\mathrm{P}(U\times \mathcal{Y}\times \{t\})=0$ that
$\mathrm{P}(U\times \mathcal{Y}\times [0,t])=t$ for all $t\in [0,T]$.
\end{proof}

We finally have  that for each $X_{0}\in H$, the action functional $S_{X_{0}}(\phi)$ is lower semicontinuous. The proof of this lemma is omitted as it follows from Lemmas \ref{Lm:PreCompactnessLevelSet} and \ref{Lm:LimitOfViablePairStillViable} in a standard way.
\begin{lemma}\label{Lm:LowerSemicontinuityActionFunctional}
For every $X_{0}\in H$, the map $\phi\mapsto S_{X_{0}}(\phi)$ is a lower semicontinuous map from $C\left([0,T];H\right)$ to $[0,\infty)$.
\end{lemma}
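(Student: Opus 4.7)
The plan is to prove lower semicontinuity directly: given a sequence $\phi^n \to \phi$ in $C([0,T];H)$, I need to show that $S_{X_0}(\phi) \le \liminf_{n \to \infty} S_{X_0}(\phi^n)$. After passing to a subsequence, I may assume this liminf equals a finite number $L$ (otherwise the inequality is trivial), and that $S_{X_0}(\phi^n)$ actually converges to $L$. For each $n$, by the definition of $S_{X_0}$ as an infimum over viable pairs, I can pick a viable pair $(\phi^n, \mathrm{P}^n) \in \mathcal{V}_{(\xi,\mathcal{L})}$ such that
\[
\tfrac{1}{2}\int_{U \times \mathcal{Y} \times [0,T]} |u|_U^2\, \mathrm{P}^n(du\,dY\,dt) \le S_{X_0}(\phi^n) + \tfrac{1}{n}.
\]

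Next I would verify the uniform bound hypothesis of Lemma \ref{Lm:PreCompactnessLevelSet}, namely
\[
\sup_n \int_{U\times\mathcal{Y}\times[0,T]} \bigl(|u|_U^2 + |Y|_{\theta,2}^2\bigr) \mathrm{P}^n(du\,dY\,dt) < \infty.
\]
The $|u|_U^2$ part is immediate from the choice of $(\phi^n,\mathrm{P}^n)$ and the assumed convergence of $S_{X_0}(\phi^n)$ to $L$. For the $|Y|_{\theta,2}^2$ part, I use the viability condition $\mathrm{P}^n(du\,dY\,dt) = \eta^n(du|Y,t)\mu^{\phi^n(t)}(dY)\,dt$, which reduces the estimate to controlling $\int_0^T \int_{\mathcal{Y}} |Y|_{\theta,2}^2 \mu^{\phi^n(t)}(dY)\,dt$ uniformly in $n$. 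Since the sequence $\phi^n$ is bounded in $C([0,T];H)$ (it converges) and the moment $X \mapsto \int_{\mathcal{Y}} |Y|_{\theta,2}^2 \mu^X(dY)$ is locally bounded on $H$ (this follows from the ergodic bounds for the fast process established in Appendix \ref{S:ErgodicProperties} together with Hypothesis 2, and in particular from the uniform bounds used in Lemma \ref{Lm:HighRegFastProcess}), the desired uniform bound holds.

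With the uniform bound in hand, Lemma \ref{Lm:PreCompactnessLevelSet} yields a further subsequence along which $(\phi^n, \mathrm{P}^n) \to (\tilde\phi, \mathrm{P})$ in $C([0,T];H) \times \mathscr{P}(U \times \mathcal{Y} \times [0,T])$. Since $\phi^n \to \phi$ already by assumption, we must have $\tilde\phi = \phi$. Lemma \ref{Lm:LimitOfViablePairStillViable} then guarantees that the limit $(\phi, \mathrm{P})$ is itself viable. Finally, the map $\mathrm{P} \mapsto \int |u|_U^2\, \mathrm{P}(du\,dY\,dt)$ is lower semicontinuous with respect to weak convergence (since $|u|_U^2$ is a nonnegative lower semicontinuous function on $U$ equipped with the weak topology, being the supremum of continuous functions $\langle u, v\rangle_U^2$ over $v$ in a countable dense set of the unit ball), so by Fatou-type arguments,
\[
S_{X_0}(\phi) \le \tfrac{1}{2}\int |u|_U^2\,\mathrm{P}(du\,dY\,dt) \le \liminf_{n\to\infty}\tfrac{1}{2}\int |u|_U^2\,\mathrm{P}^n(du\,dY\,dt) \le L,
\]
which is the desired lower semicontinuity.

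The main obstacle I anticipate is the uniform bound on the $|Y|_{\theta,2}^2$ moment of the $\mathrm{P}^n$, since the definition of a viable pair only supplies finiteness for each individual $n$. The key observation rescuing this is that the second marginal of $\mathrm{P}^n$ is pinned to the invariant measure $\mu^{\phi^n(t)}$ driven by the convergent sequence $\phi^n$, so uniformity reduces to a standard moment estimate for the invariant measure of the uncontrolled fast SRDE with frozen and bounded slow component, which is exactly the content of the ergodic results in the Appendix. Once this is secured, the rest of the argument is a routine combination of pre-compactness, stability under limits, and weak lower semicontinuity of the quadratic cost.
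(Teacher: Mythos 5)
Your argument is the intended one; the paper itself omits the proof and merely states that it follows ``in a standard way'' from Lemmas \ref{Lm:PreCompactnessLevelSet} and \ref{Lm:LimitOfViablePairStillViable}, which is exactly the structure you build: extract near-minimizing viable pairs $(\phi^n,\mathrm{P}^n)$, obtain a uniform bound, invoke pre-compactness and closedness of $\mathcal{V}_{(\xi,\mathcal{L})}$, and close with weak lower semicontinuity of the quadratic cost. The decomposition and final chain of inequalities are all correct, and you rightly identify the only non-immediate ingredient: a uniform bound on $\sup_n\int_0^T\int_{\mathcal{Y}}|Y|_{\theta,2}^2\,\mu^{\phi^n(t)}(dY)\,dt$ for some fixed $\theta>0$.

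One small imprecision in how you justify that bound: Lemma \ref{Lm:HighRegFastProcess} concerns the time-integrated $H^\theta_2$-norm of the \emph{controlled} process $Y^{\varepsilon,\delta,u}$, not the invariant measure $\mu^X$, and the Appendix estimate \eqref{Eq:NegativeSobolevEstimateFastProcessReg1} gives only a \emph{first} moment in $H^\theta_2$. What you actually need is a second-moment bound $\int_{\mathcal{Y}}|Y|_{\theta,2}^2\,\mu^X(dY)\le c\,(1+|X|_H^2)$ uniformly for $X$ in bounded subsets of $H$. This does hold for a suitable $\theta>0$: write the stationary solution in its mild form (integrating from $-\infty$), use the smoothing and exponential decay of $S_2$, the linear growth of $B_2$, the $Y$-boundedness of $\Sigma_2$ from Hypothesis~2, and the $H$-moment bound \eqref{Eq:LpEstimateInvariantMeasureFastProcessReg1} for $\mu^X$ — essentially the same computation as in the proof of Lemma \ref{Lm:HighRegFastProcess} but performed on the stationary process with frozen slow component $X$. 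With that bound spelled out, your proof is complete and in line with what the authors intended by ``standard.''
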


\section{Remarks and Generalizations}\label{S:Generalizations}

In this section we comment on the obstacles that one faces when trying to extend the proof of the Laplace principle lower bound to $d>1$ under the general Hypothesis 3. In addition, we comment on the possibility of considering scaling regimes different from the one considered in this paper, i.e., different than $\delta/\sqrt{\varepsilon}\rightarrow 0$.
\subsection{Difficulties for proving Laplace principle lower bound for $d>1$}
In order to prove the Laplace principle lower bound, we need to construct a nearly optimal control that achieves the lower bound.
Under the general Hypothesis 3 (under which we can prove averaging Theorem \ref{T:MainTheorem1}), in dimension $d>1$, $\sigma_{1}(x,X,Y)$ depends on both $X$ and $Y$ and the nearly optimal control $v(t,Y)$ will be a true feedback form control with respect to $Y$ (see the discussion in Subsection \ref{SS:LDPUpperBound}). The generalization of (\ref{Eq:v-form}) to $d>1$ now takes the form,
\[v(t,Y) = Q_1^\star \Sigma_1^\star(\psi(t),Y)  q^{-1}(\psi(t)) a(\psi(t))v(t,\cdot) \]
with
\[
a(X)u=\int_{\mathcal{Y}}\Sigma_{1}(X,Y)Q_{1}u(Y)\mu^{X}(dY)
\]
and
\[
q(X)h =a(X)a^\star(X)h = \int_\mathcal{Y}  \Sigma_1(X,Y)Q_{1}Q_1^\star\Sigma_1^\star(X,Y) h \mu^X(dY).
\]

Notice now that the covariance matrix $Q_{1}$ enters the calculations and recall that for $d>1$ it needs to have decaying eigenvalues. But in the formula for $v(t,Y)$, the inverse operator $q^{-1}(X)$ appears which now is an unbounded operator. The issue of unboundedness of $q^{-1}(X)$ complicates the subsequent mathematical analysis of Subsection \ref{SS:UpperBound_OneDim} significantly. For instance, the statement of Lemma \ref{L:ConvergenceOfCost} would not be necessarily true anymore, or at least a different non-obvious argument is needed.

We believe that this is a technical issue that one should be able to overcome. However, despite our best efforts, we had not been able to do so.
\subsection{Generalization to other regimes}
In this paper, we analyzed the regime $\delta/\sqrt{\varepsilon}\downarrow 0$. One can of course ask what is the behavior in all possible interaction regimes
\begin{equation*}
  \lim_{\varepsilon \downarrow 0} \frac{\delta}{\sqrt{\varepsilon}} =
   \begin{cases}
     0, & \text{Regime 1}, \\
     \gamma \in (0, \infty), & \text{Regime 2}, \\
     \infty, & \text{Regime 3}.
   \end{cases}
\end{equation*}

Regime 1, that we studied in this paper, allows to decouple the invariant measure and the control from the limiting occupation measures $\mathrm{P}$. Namely, it allows us to write
\[
\mathrm{P}(dudYdt)=\eta_t(du|Y)\mu^{\psi_t}(dY)dt
\]
and what is important is that the measure $\mu$ does not depend on the control variable $u$. However, it is easy to see that in the cases of Regimes 2 and 3, one would have
\[
\mathrm{P}(dudYdt)=\eta_t(du|Y)\mu^{\psi_t}(dY|u)dt
\]
which means that in these cases $\mu$ depends on the control variable $u$. This dependence on $u$ makes the analysis considerably more complicated and in particular there is no guarantee that $\mu$ is invariant measure to some process, as this process is a controlled process in which case one needs to know regularity properties of the optimal controls.

This program was carried out in the finite dimensional case with periodic coefficients, in \cite{LDPWeakConvergence}, using the characterization of optimal controls through solutions to Hamilton--Jacobi--Bellman equations. Such a characterization is not rigorously known in infinite dimensions and even if that becomes the case, one would need to establish sufficient regularity properties of such equations that would then imply that the resulting controlled process has a well defined invariant measure that is regular enough.

\appendix

\section{Ergodic and mixing properties of the fast process $Y^{X, Y_0}$}\label{S:ErgodicProperties}

Let us start with reviewing some basic ergodic and mixing properties
of the fast process $Y^{X,Y_0}$. We show the exponential ergodicity of the fast transition semigroup defined by (\ref{Eq:FastProcessSRDEReg1}). For more details we refer the interested reader to \cite{CerraiRDE2006,CerraiRDEAveraging1}.

Under Hypotheses 1 and 2, for any $T>0$ and $p\geq 1$, and any fixed frozen slow variable $X\in H$ and initial condition $Y_0\in H$,
such a problem admits a unique mild solution $Y^{X, Y_0}\in \mathcal{C}_{T, p}$ (\cite[Theorem 5.3.1]{DaPrato-ZabczykErgodicityBook}). As it is proven in Theorem 7.3 of \cite{CerraiRDE2006},  there
exists some $\delta_1>0$ such that for any $p\geq 1$ we have
\begin{equation*}%\label{Eq:LpEstimateFastProcessReg1}
\mathbf{E}|Y^{X, Y_0}(t)|_H^p\leq c_p (1+|X|_H^p+e^{-\delta_1 p t}|Y_0|_H^p) \ , \ t\geq 0 \ .
\end{equation*}

In addition, the latter statement  implies that there exists some $\theta>0$ such that for any $a>0$ we have
\begin{equation}\label{Eq:NegativeSobolevEstimateFastProcessReg1}
\sup\limits_{t\geq a}\mathbf{E}|Y^{X, Y_0}(t)|_{H^\theta_{2}}\leq c_a (1+|X|_H+|Y_0|_H) \ .
\end{equation}

Due to (\ref{Eq:NegativeSobolevEstimateFastProcessReg1}), the family $\{\mathcal{L}(Y^{X,Y_0}(t))\}_{t\geq 0}$
is tight in the space $\mathcal{P}(H, \mathcal{B}(H))$ and thus by Krylov--Bogoliubov theorem there exists
an invariant measure $\mu^X$ for the semigroup $P_t^X$ generated by the process
$Y^{X, Y_0}(t)$. Moreover, by Lemma 3.4 of \cite{Cerrai-FreidlinRDEAveraging1} we  have
\begin{equation}\label{Eq:LpEstimateInvariantMeasureFastProcessReg1}
\int_{H}|Y|_H^p \mu^X(dY)\leq c_p(1+|X|_H^p) \ .
\end{equation}

As in \cite[Theorem 7.4]{CerraiRDE2006}, it is possible to show that if $\lambda$
is sufficiently large and/or $L_{b_2}^Y$, $L_{\sigma_2}^Y$, $\zeta_2$ and $\kappa_2$ are sufficiently
small, then there exist some $c, \delta_2>0$ such that
\begin{equation}\label{Eq:CouplingEstimateFastProcessReg1}
\sup\limits_{X\in H}\mathbf{E}|Y^{X, Y_1}(t)-Y^{X, Y_2}(t)|_H\leq ce^{-\delta_2 t}|Y_1-Y_2|_H \ , \ t \geq 0 \ ,
\end{equation}
for any $Y_1, Y_2\in H$. In particular, this implies that $\mu^X$ is the unique invariant measure
for $P_t^X$ and is strongly mixing. By arguing as in \cite[Theorem 3.5 and Remark 3.6]{Cerrai-FreidlinRDEAveraging1},
from (\ref{Eq:LpEstimateInvariantMeasureFastProcessReg1}) and (\ref{Eq:CouplingEstimateFastProcessReg1}), we have, for some $\delta>0$,
\footnote{In fact $\delta=\frac{\lambda-L_{b_2}^Y}{2}>0$ by Hypothesis 2 part 2.}

\begin{equation*}%\label{Eq:ExponentialConvergenceInvariantMeasureFastProcessReg1}
\left|P_t^X \varphi(Y_0)-\int_H \varphi(Y) \mu^X(dY)\right|\leq c(1+|X|_H+|Y_0|_H)e^{-\delta t}[\varphi]_{\text{Lip}(H)}
\end{equation*}
for any $X,Y_0\in H$ and $\varphi\in \text{Lip}(H)$, and
\begin{equation*}%\label{Eq:ExponentialConvergenceInvariantMeasureFastProcessReg1BoundedVariation}
\left|P_t^X \varphi(Y_0)-\int_H \varphi(Y) \mu^X(dY)\right|\leq c(1+|X|_H+|Y_0|_H)e^{-\delta t}(t\wedge 1)^{-\frac{1}{2}}|\varphi|_0
\end{equation*}
for any $X,Y_0\in H$ and $\varphi\in B_b(H)$.

As in \cite[Lemma 2.3]{CerraiRDEAveraging1}, we have the following lemma.
\begin{lemma}\label{Lm:ErgodicTheoremFastProcessReg1}
Under the above conditions, for any $\varphi\in \text{Lip}(H)$, $T>0$, $X, Y_0\in H$
and $t\geq 0$ we have
\begin{equation*}%\label{Eq:ErgodicTheoremFastProcessReg1}
\mathbf{E}\left|\dfrac{1}{T}\int_t^{t+T}\varphi(Y^{X, Y_0}(s))ds-\int_H \varphi(Y)\mu^X(dY)\right|\leq \dfrac{c}{\sqrt{T}}(H_{\varphi}(X,Y_0)+|\varphi(0)|) \ ,
\end{equation*}
for some $c>0$, where $$H_{\varphi}(X, Y_0):=[\varphi]_{\text{Lip}(H)}(1+|X|_H+|Y_0|_H) \ .$$
\end{lemma}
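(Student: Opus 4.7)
Write $\bar\varphi := \int_H \varphi(Y)\,\mu^X(dY)$ and set $\Psi(Y) := \varphi(Y)-\bar\varphi$. My plan is the standard second--moment argument: by Cauchy--Schwarz,
\[
\mathbf{E}\Bigl|\tfrac{1}{T}\!\int_t^{t+T}\!\Psi(Y^{X,Y_0}(s))\,ds\Bigr|
\;\le\;\Bigl(\mathbf{E}\Bigl|\tfrac{1}{T}\!\int_t^{t+T}\!\Psi(Y^{X,Y_0}(s))\,ds\Bigr|^{2}\Bigr)^{1/2},
\]
so it suffices to show that the right hand side is $O(T^{-1})$ with the correct constant. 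Expanding the square and using Fubini,
\[
\mathbf{E}\Bigl|\tfrac{1}{T}\!\int_t^{t+T}\Psi(Y^{X,Y_0}(s))ds\Bigr|^{2}
=\tfrac{2}{T^{2}}\int_t^{t+T}\!\!\int_s^{t+T}\mathbf{E}\bigl[\Psi(Y^{X,Y_0}(s))\,\Psi(Y^{X,Y_0}(r))\bigr]\,dr\,ds.
\]

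Next, I would exploit the Markov property and conditioning on $\mathcal{F}_s$: for $r\ge s$ one has $\mathbf{E}[\Psi(Y^{X,Y_0}(r))\mid\mathcal{F}_s]=P^{X}_{r-s}\varphi(Y^{X,Y_0}(s))-\bar\varphi$, so the covariance equals
\[
\mathbf{E}\!\left[\Psi(Y^{X,Y_0}(s))\bigl(P^{X}_{r-s}\varphi(Y^{X,Y_0}(s))-\bar\varphi\bigr)\right].
\]
The key input is the pre--established Lipschitz exponential--ergodicity estimate
$\bigl|P_{\tau}^{X}\varphi(Z)-\bar\varphi\bigr|\le c(1+|X|_H+|Z|_H)e^{-\delta\tau}[\varphi]_{\mathrm{Lip}(H)}$, applied with $\tau=r-s$ and $Z=Y^{X,Y_0}(s)$, and the pointwise bound $|\Psi(Z)|\le[\varphi]_{\mathrm{Lip}(H)}|Z|_H+|\varphi(0)|+|\bar\varphi|$. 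Combining these with Cauchy--Schwarz inside the expectation, the $(s,r)$--covariance is dominated by
\[
c\,e^{-\delta(r-s)}\bigl([\varphi]_{\mathrm{Lip}(H)}^{2}(1+|X|_H^{2}+\mathbf{E}|Y^{X,Y_0}(s)|_H^{2})+|\varphi(0)|^{2}\bigr),
\]
and the moment estimate $\mathbf{E}|Y^{X,Y_0}(s)|_H^{2}\le c(1+|X|_H^{2}+|Y_0|_H^{2})$ (from the $L^p$ bounds recalled at the start of the appendix) makes the bracketed quantity $\le c(H_\varphi(X,Y_0)+|\varphi(0)|)^{2}$ uniformly in $s$.

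Plugging this back in, the double integral is bounded by
\[
\tfrac{c}{T^{2}}(H_\varphi(X,Y_0)+|\varphi(0)|)^{2}\!\int_t^{t+T}\!\!\int_s^{t+T}e^{-\delta(r-s)}\,dr\,ds
\;\le\;\tfrac{c}{\delta T}(H_\varphi(X,Y_0)+|\varphi(0)|)^{2},
\]
since the inner integral is at most $\delta^{-1}$ and the outer one contributes $T$. Taking the square root yields the asserted $c/\sqrt{T}$ rate. The only nontrivial step is producing the exponential covariance decay from the already established exponential convergence of $P^X_t$ to $\mu^X$ together with the uniform--in--$s$ second moment bound on $Y^{X,Y_0}(s)$; once that is in hand, the rest is bookkeeping of Cauchy--Schwarz and the convolution estimate above.
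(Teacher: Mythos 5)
Your proof is correct and is essentially the same as the argument the paper appeals to: the paper states this lemma by reference to Lemma~2.3 of Cerrai (\cite{CerraiRDEAveraging1}), whose proof is exactly this second--moment argument (Cauchy--Schwarz, Fubini, Markov property, the exponential mixing estimate for $P_\tau^X\varphi-\bar\varphi$, and the convolution bound). The only small thing to make explicit is that $|\bar\varphi|$ itself is controlled by $H_\varphi(X,Y_0)+|\varphi(0)|$ via the first--moment bound $\int_H|Y|_H\,\mu^X(dY)\le c(1+|X|_H)$ from \eqref{Eq:LpEstimateInvariantMeasureFastProcessReg1}, which you use implicitly when absorbing the bracketed quantity into $c(H_\varphi(X,Y_0)+|\varphi(0)|)^2$.
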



\begin{thebibliography}{30}


\bibitem{Baldi} \textsc{Baldi, P.}, Large deviations for diffusions processes with
homogenization and applications, \textit{Annals of Probability}, \textbf{19(2)}, (1991), pp. 509--524.

\bibitem{VariationalInfniteBM} \textsc{Budhiraja, A.}, \textsc{Dupuis, P.}, A variational
representation for positive functionals of infinite dimensional Brownian motions,
\textit{Probability and Mathematical Statistics} \textbf{20}, (2000), pp. 39--61.

\bibitem{LDPInfiniteSDE} \textsc{Budhiraja, A.}, \textsc{Dupuis, P.}, \textsc{Maroulas, V.},
Large deviations for infinite dimensional stochastic dynamical systems,
\textit{Annals of Probability}, \textbf{36(4)}, (2008), pp. 1390--1420.

\bibitem{CerraiRDE2006} \textsc{Cerrai, S.}, Asymptotic behavior of systems of stochastic partial differential equations
with multiplicative noise. In \textit{Stochastic Partial Differential Equations and Applications--VII}.
Lecture Notes in Pure and Applied Mathematics,
\textbf{245}, pp. 61--75, Chapman and Hall/CRC, Boca Raton, FL. MR2227220.



\bibitem{CerraiRDEAveraging1} \textsc{Cerrai, S.}, A Khasminskii type averaging principle for stochastic reaction-diffusion equations,
\textit{Annals of Applied Probability}, \textbf{19}, (2009), pp. 899--948.

\bibitem{CerraiRDEAveraging2} \textsc{Cerrai, S.}, Averaging principle for systems of RDEs with
polynomial nonlinearities perturbed by multiplicative noise. \textit{SIAM Journal on Mathematical Analysis}, \textbf{43(6)}, (2011),
pp. 2482--2518.

\bibitem{Cerrai-FreidlinRDEAveraging1} \textsc{Cerrai, S.}, \textsc{Freidlin, M.}, Averaging principle for a class of stochastic reaction-diffusion equations,
\textit{Probability Theory and Related Fields}, \textbf{144(1--2)}, (2008), pp. 137--177.

\bibitem{Cerrai-FreidlinRDEAveraging2} \textsc{Cerrai, S.}, \textsc{Freidlin, M.}, Fast transport asymptotics for stochastic RDE with boundary noise,
\textit{Annals of Probability}, \textbf{39(1)}, (2011), pp. 369--405.


\bibitem{Cerrai-RocknerLDP} \textsc{Cerrai, S.}, \textsc{Rockner, M.}, Large deviations for stochastic reaction-diffusion systems with multiplicaitve noise  and non-Lipschitz reaction term, \textit{Annals of Probability}, \textbf{32}, (2004), pp. 1100-1139.

\bibitem{Chenal-MilletLDP} \textsc{Chenal, F.}, \textsc{Millet, A.}, Uniform large deviations for parabolic SPDEs and applications, \textit{Stochastic Processes and Applications}, \textbf{72}, (1997), pp. 161-187.

\bibitem{ChowLDP}\textsc{Chow, P.L.}, Large deviation problem for some parabolic It\^{o} equations, \textit{Communication in Pure and Applied Mathematics}, \textbf{45}, (1992), pp. 97-120.

\bibitem{Conway} \textsc{Conway, J.}, \textit{A Course in Functional Analysis}, Springer, 1997.

\bibitem{DaPrato-Zabczyk} \textsc{Da Prato, G.}, \textsc{Zabczyk, J.}, \textit{Stochastic Equations in Infinite Dimensions},
Cambridge University Press, Cambridge, 1992.

\bibitem{DaPrato-ZabczykErgodicityBook} \textsc{Da Prato, G.}, \textsc{Zabczyk, J.}, \textit{Ergodicity for infinite--dimensional systems},
London Mathematical Society Lecture Note Series, \textbf{229}, Cambridge University Press, Cambridge, 1996.


\bibitem{DupuisEllis}\textsc{Dupuis, P.}, and \textsc{Ellis, R.S.}: \textit{A Weak Conergence
Approach to the Theory of Large Deviations}, John Wiley \& Sons, New York, 1997.


\bibitem{LDPWeakConvergence} \textsc{Dupuis, P.}, \textsc{Spiliopoulos, K.}, Large deviations for multiscale problems via weak convergence
methods, \textit{Stochastic Processes and their Applications}, \textbf{122}, (2012),
pp. 1947--1987.

\bibitem{EithierKurtz1986}  \textsc{Eithier, S.N.} and   \textsc{Kurtz, T.G.} \textit{Markov Processes:  Characterization and Convergence}, John Wiley \& Sons, New York, 1986.

\bibitem{FS}\textsc{Freidlin, M.}, \textsc{Sowers, R.B.}, A comparison of homogenization and large
deviations, with applications to wavefront propagation , \textit{Stochastic
Process and Their Applications}, \textbf{82(1)}, (1999), pp. 23--52.

\bibitem{FWbook} \textsc{Freidlin, M.}, \textsc{Wentzell, A.}, \textit{Random perturbations
of dynamical systems}, Second Edition, Springer, 1998.


%\bibitem{HuSpiliopoulos} Hu, W., Spiliopoulos, K., Hypo--elliptic
%multiscale Langevin diffusions: Large deviations, invariant measures
%and small mass asymptotics. \textit{Electronic Journal of Probability}, to appear. \verb"arXiv:1506.06181math.PR"

\bibitem{Kallianpur-Xiong1996} \textsc{Kallianpur, G.}, \textsc{Xiong, J.}, Large deviations for a class of stochastic partial differential equations,
\textit{Annals of Probability}, \textbf{24}, (1996), pp. 320--345.



\bibitem{Peszat1994} \textsc{Peszat, S.}, Large deviations estimates for stochastic evolution equations, \textit{Probability Theory and Related Fields}, \textbf{98}, (1994), pp. 113-136.

\bibitem{Sowers1992} \textsc{Sowers, R.B.}, Large deviations for a reaction-diffusion equation with non-Gaussian perturbations, \textit{Annals of Probability}, \textbf{20}, (1992), pp. 504-537.

\bibitem{LDPFastSlow} \textsc{Spiliopoulos, K.}, Large Deviations and Importance Sampling for Systems of Slow--Fast Motion, 2013,
\textit{Applied Mathematics and Optimization}, \textbf{67}, pp. 123--161.

\bibitem{LDPQuenchedMultiscale} \textsc{Spiliopoulos, K.}, Quenched Large Deviations for Multiscale
Diffusion Processes in Random Environments, 2015,
\textit{Electronic Journal of Probability}, \textbf{20(15)}, (2015), pp. 1--29.

\bibitem{Veretennikov} \textsc{Veretennikov, A. Yu.}, On large deviations in the
averaging principle for {SDEs} with a ``full dependence'', correction,
{\tt arXiv:math/0502098v1 [math.PR]} (2005). Initial
article in \textit{Annals of Probability}, \textbf{27(1)}, (1999), pp. 284-296.

\bibitem{VeretennikovSPA2000}\textsc{Veretennikov, A. Yu.}, On large deviations for
SDEs with small diffusion and averaging, \textit{Stochastic Processes and
their Applications}, \textbf{89(1)}, (2000), pp. 69-79.

\bibitem{Volkenstein} \textsc{Vol'kenstein, M.V.}, \textit{General biophysics}, Academic Press, New York, 1983.

\bibitem{LDPFastSlowDuanEtAl} \textsc{Wang, W.}, \textsc{Roberts, A.J.}, \textsc{Duan, J.}, Large deviations for slow--fast
stochastic partial differential equations, \textit{Journal of Differential Equations.} \textbf{253}, (2012), pp. 3501--3522.

\bibitem{Zabczyk} \textsc{Zabczyk, J.}, On large deviations for stochastic evolution equations, \textit{Stochastic systems and Optimization (Warsaw 1988), Lecture notes in Control and Information Science}, Springer, Berlin, \textbf{136}, (1988), pp. 240-253.
\end{thebibliography}
\end{document}